\numberwithin{equation}{section}
\theoremstyle{plain}
\newtheorem{thm}{Theorem}[section]
\newtheorem{thm-dfn}[thm]{Theorem-Definition}
\newtheorem{prop}[thm]{Proposition}
\newtheorem{prop-dfn}[thm]{Proposition-Definition}
\newtheorem{lem}[thm]{Lemma}
\newtheorem{cor}[thm]{Corollary}
\theoremstyle{definition}
\newtheorem{rem}[thm]{Remark}
\newtheorem{dfn}[thm]{Definition}
\newtheorem{notn}[thm]{Notation}
\newcommand\fg{\ensuremath{\mathfrak g}}
\newcommand\cC{\ensuremath{\mathcal C}}
\newcommand\bbC{\ensuremath{\mathbb C}}
\newcommand\bbR{\ensuremath{\mathbb R}}
\newcommand{\cop}{\Delta}
\newcommand{\counit}{\varepsilon}
\renewcommand{\phi}{\varphi}
\newcommand{\cvect}{\mathbf{\bbC Vect}}
\newcommand{\cvectf}{\mathbf{\bbC Vect}_f}
\newcommand{\hmod}{\mathbf{HMod}}
\newcommand{\hmodf}{\mathbf{HMod}_f}
\newcommand{\hmods}{\hmod^*}
\newcommand{\hmodsp}{\hmod_+^*}
\newcommand{\subcat}{\cC}
\newcommand{\us}[1]{{}^* #1}
\newcommand{\ue}[1]{{}^{e} #1}
\DeclareMathOperator{\ev}{ev}
\DeclareMathOperator{\End}{End}
\DeclareMathOperator{\Hom}{Hom}
\DeclareMathOperator{\cHom}{cHom}
\DeclareMathOperator{\id}{id}
\DeclareMathOperator{\im}{im}
\DeclareMathOperator{\tr}{tr}
\newcommand{\homH}{\Hom_H}
\newcommand{\homl}{\Hom_\ell}
\newcommand{\homr}{\Hom_r}
\newcommand{\cHomH}{\cHom_H}
\newcommand{\Endl}{\End_\ell}
\newcommand{\Endr}{\End_r}
\renewcommand{\rhd}{\triangleright}
\newcommand{\brhd}{\blacktriangleright}
\renewcommand{\bar}{\overline}
\newcommand{\inprod}[1][\, , \,]{\left\langle #1 \right\rangle}
\newcommand{\inprodv}[1][\, , \,]{\left\langle #1 \right\rangle_V}
\newcommand{\inprodw}[1][\, , \,]{\left\langle #1 \right\rangle_W}
\newcommand{\rinprod}[1][\, , \,]{\left( #1 \right)}
\newcommand{\rinprodv}[1][\, , \,]{\left( #1 \right)_V}
\newcommand{\rinprodw}[1][\, , \,]{\left( #1 \right)_W}
\renewcommand{\epsilon}{\varepsilon}
\newcommand{\vth}{\vartheta}
\newcommand{\set}[1]{\{ #1 \}}
\newcommand{\rmat}{\mathcal{R}}
\title{\(\ast\)-structures on module-algebras}
\author{Matthew Tucker-Simmons}
\address{Department of Mathematics \\
University of California \\
Berkeley, CA 94720-3840}
\email{mbtucker@math.berkeley.edu}
\thanks{The research reported here was supported in part by National Science Foundation grant DMS-1066368.}
\begin{document}

\begin{abstract}
  This chapter lays out a framework for discussing \(\ast\)-structures on module-algebras over a Hopf \(\ast\)-algebra \(H\).
  We define a complex conjugation functor \(V \mapsto \bar{V}\), which is an involution on the module category \(\hmod\), and discuss its interaction with natural constructions such as direct sums, duality, Hom, and tensor products.
  We define \(\ast\)-structures first at the level of modules.
  We say that \(V\) is a \(\ast\)-module if there is an isomorphism \(\ast : \bar{V} \to V\) in \(\hmod\) which is involutive in an appropriate sense.
  Then we define \(\ast\)-structures on algebras in \(\hmod\) by requiring compatibility with multiplication.
  We show that a \(\ast\)-structure on a module lifts uniquely to the tensor algebra, and we prove that the tensor algebra has a universal mapping properly for morphisms of \(\ast\)-modules.
  We also discuss inner products and adjoints in this framework.
  Finally, we discuss the interaction between \(\ast\)-structures, \(R\)-matrices, and braidings.
\end{abstract}

\maketitle

\section{Background and notation}
\label{sec:background}

\subsection{Background}
\label{sec:motivation}

My motivation for writing this document was to understand what is the appropriate notion of a $*$-algebra in the category of modules over a Hopf $*$-algebra.
Although this is not very deep, it is perhaps a little tricky to phrase things properly.
Since $*$-structures are antilinear, one is forced to work with antilinear maps of complex vector spaces. 
This is aesthetically displeasing since one is then required to move outside the category of vector spaces and linear maps to deal with what is almost a linear phenomenon.
The framework of complex conjugate linear algebra allows one to phrase everything in terms of linear maps rather than antilinear ones.
It is then relatively straightforward to generalize the notions to modules (and module-algebras) over a Hopf $*$-algebra.

The structure of this document is as follows.
In the remainder of Section \ref{sec:background} we set notation.
In Section \ref{sec:hopfstar-background} we give the necessary background on Hopf $*$-algebras and discuss some properties of their module categories.
In Section \ref{sec:linalg} we discuss antilinear maps, define complex conjugation as an endofunctor on the category of complex vector spaces, and show that this functor is natural with respect to many common operations on linear spaces.
In Section \ref{sec:conj-of-modules} we extend the notions from Section \ref{sec:linalg} to modules over a Hopf $*$-algebra, and in Section \ref{sec:conj-of-algebras} we examine what these notions mean for module-algebras.
In Section \ref{sec:star-modules} we define \(\ast\)-structures on modules, and in Section \ref{sec:star-algebras} we extend this to module-algebras.
In Section \ref{sec:inner-products} we treat inner products and adjoints of linear maps in our framework.
Finally, in Section \ref{sec:stars-and-braidings} we discuss the interaction between \(\ast\)-structures, \(R\)-matrices, and braidings.

\subsection{Notation}
\label{sec:notation}

All vector spaces are over $\bbC$.
We denote the category of vector spaces over $\bbC$ with linear maps by $\cvect$, and we denote the full subcategory of finite-dimensional vector spaces by $\cvectf$.
The linear dual of a vector space $V$ will be denoted $V^*$.  
In order to avoid a proliferation of $*$'s, of which there are quite enough already, we denote the transpose (or dual map) of a linear map $T : V \to W$ by $T^{\tr} : W^* \to V^*$.
The vector space of all linear maps from $V$ to $W$ will be denoted $\Hom(V,W)$.
An undecorated $\Hom$ will always refer to linear maps, not module maps.

For the rest of this document, $H$ will denote a Hopf algebra over $\bbC$ with coproduct $\cop$, counit $\counit$, and antipode $S$.
We use the Sweedler notation
\[  \cop(a) = a_{(1)} \otimes a_{(2)} \]
for the coproduct of $H$, with implied summation.
When we refer to $H$-modules, we mean left modules for the underlying algebra structure of $H$.
We denote the category of left $H$-modules with $H$-module maps by $\hmod$, and we denote the full subcategory of finite-dimensional modules by $\hmodf$.
The vector space of all module morphisms from $V$ to $W$ will be denoted by $\homH(V,W)$.
The vector space $\Hom(V,W)$ of all linear maps from $V$ to $W$ has the structure of an $H$-module, but as we will see in Section \ref{sec:hopfstar-background} there are two choices for the action when $H$ is a Hopf $*$-algebra and one must distinguish between them.

For any category $\cC$, we write $X \in \cC$ to mean that $X$ is an object of $\cC$.

\section{Hopf $*$-algebras and their modules}
\label{sec:hopfstar-background}

In this section we recall some necessary definitions and facts about Hopf $*$-algebras.
One important (and not immediately obvious) fact is that the antipode of a Hopf $*$-algebra is invertible.
This has the consequence that some linear spaces--namely the linear dual $\Hom(V,\bbC)$ of a module $V$ and the space $\Hom(V,W)$ of linear maps between two modules--carry two natural actions of $H$.
We assume throughout the rest of this document that $H$ is a Hopf $*$-algebra (see Definition \ref{dfn:hopfstar-alg}), although we note that in \S \ref{sec:hopfstar-duals} and \S \ref{sec:hopfstar-hom} we only require the antipode to be invertible.

Hopf $*$-algebras are discussed, for example, in Section 1.2.7 of \cite{KliSch97} and in Section 4.1.F of \cite{ChaPre95}.
Tensor products and duals of modules, as well as the module structure on $\Hom$-spaces, can be found in Section 4.1.C of \cite{ChaPre95}.

\begin{notn}
  The symbol $\rhd$ will generally indicate the action of a Hopf algebra on a module, so that $a \rhd v$ means the action of $a$ on $v$.
  When we consider more than one action of $H$ on the same vector space, we will distinguish one of the actions by using the symbol $\brhd$ rather than $\rhd$.
\end{notn}

\subsection{Hopf $*$-algebras}
\label{sec:hopfstar-algebras}

\begin{dfn}
  \label{dfn:hopfstar-alg}
  A \emph{$*$-structure} on $H$ is an antilinear map $* : H \to H$ such that
  \begin{enumerate}[(a)]
  \item $*$ is involutive: $(a^*)^* = a$ for all $a \in H$.
  \item $*$ reverses products: $(ab)^* = b^*a^*$ for all $a,b \in H$.
  \item $\cop$ is a $*$-homomorphism: $\cop(a^*) = a_{(1)}^* \otimes a_{(2)}^*$ for all $a \in H$.
  \end{enumerate}
  If $H$ is equipped with a $*$-structure then we say that $H$ is a \emph{Hopf $*$-algebra}.
\end{dfn}

It follows from Definition \ref{dfn:hopfstar-alg} that the counit is also a $*$-homomorphism, i.e.\ that $\counit(a^*) = \bar{\counit(a)}$ for $a \in H$.
Another consequence of the definition is that
\begin{equation}
  \label{eq:sstar}
  * \circ S \circ * \circ S  = \id_H,
\end{equation}
which is shown in Proposition 10, Section 1.2.7 of \cite{KliSch97}.
The proof proceeds by showing that $* \circ S \circ *$ is an antipode for $H^{op}$ (which \emph{a priori} is only a bialgebra), and hence $S$ is invertible with $S^{-1} = * \circ S \circ *$ by Proposition 6, Section 1.2.4 of the same reference.

\begin{rem}
  \label{rem:sstar}
  The fact that the antipode is necessarily invertible shows that not every Hopf algebra over $\bbC$ can be given a $*$-structure.
  In \cite{Tak71}, a so-called free Hopf algebra $H(C)$ is constructed for any coalgebra $C$, and those coalgebras for which the antipode of $H(C)$ is invertible are classified.  
  In particular, if $C = M_n(\bbC)^*$ is the linear dual of a matrix algebra with $n > 1$, then the antipode of $H(C)$ is not invertible, and hence $H(C)$ cannot be equipped with a $*$-structure.
  
  However, there are also examples of Hopf algebras over $\bbC$ with bijective antipode which cannot be endowed with a $*$-structure.
  It is shown in Example 10, Section 1.2.7 of \cite{KliSch97} that if $\fg$ is a complex Lie algebra, then $*$-structures on the universal enveloping algebra $U(\fg)$ correspond bijectively to real forms of $\fg$.
  It is shown in Example 1, Section 1.7.1 of \cite{Vin94} that the complex Lie algebra spanned by $X,Y$, and $Z$ with relations
  \[  [X,Y] = Y, \quad [X,Z] = 2iZ, \quad [Y,Z] = 0   \]
  has no real form, and hence its enveloping algebra has no $*$-structures.
\end{rem}

\subsection{Tensor products of modules}
\label{sec:hopfstar-tensor}

We recall that the coproduct of $H$ allows us to form the tensor product of modules.
For $V,W \in \hmod$ we endow $V \otimes W$ with the tensor product action
\begin{equation}
  \label{eq:tensorprod_action}
  a \rhd (v \otimes w) = (a_{(1)} \rhd v ) \otimes (a_{(2)} \rhd w).
\end{equation}
This gives $\hmod$ the structure of a monoidal category in which the monoidal unit is the ground field $\bbC$, which is an $H$-module via the counit $\counit$.

\subsection{Duals of modules}
\label{sec:hopfstar-duals}

For any vector space $V$ we can form the dual vector space $\Hom(V,\bbC)$.
If $V$ is a left module for a complex algebra $A$, then $\Hom(V,\bbC)$ is naturally a right $A$-module, or equivalently a left $A^{op}$-module.
For a Hopf algebra $H$, the antipode can be viewed as an algebra homomorphism $S : H \to H^{op}$.
Hence for $V \in \hmod$, the linear space $\Hom(V,\bbC)$ becomes a left $H$-module via the action
\begin{equation}
  \label{eq:dual-action}
  (a \rhd f)(v) = f(S(a) \rhd v)
\end{equation}
for all $a \in H$, $f \in \Hom(V,\bbC)$, and $v \in V$.

We have seen in \S \ref{sec:hopfstar-algebras} that the antipode $S$ is invertible since $H$ is a Hopf $*$-algebra.
Thus $S^{-1}$ gives a second algebra homomorphism $H \to H^{op}$, and corresponding to this is a second left $H$-module structure on $\Hom(V,\bbC)$ given by
\begin{equation}
  \label{eq:black-dual-action}
  (a \brhd f)(v) = f(S^{-1}(a) \rhd v)
\end{equation}
for all $a \in H$, $f \in \Hom(V,\bbC)$, and $v \in V$.
As this action is less well-known than the standard dual action \eqref{eq:dual-action}, we briefly verify that it is an action.
Linearity in $a$ and $f$ is clear, so we just need to check compatibility with multiplication in $H$.
For $a,b \in H$ and $f \in \Hom(V, \bbC)$ we have
\begin{align*}
  (a \brhd (b \brhd f)) (v) & = (b \brhd f)(S^{-1}(a) \rhd v) \\
  & = f (S^{-1}(b) \rhd (S^{-1}(a) \rhd v)) \\
  & = f ((S^{-1}(b) S^{-1}(a)) \rhd v) \\
  & = f (S^{-1}(ab) \rhd v) \\
  & = (ab \brhd f) (v),
\end{align*}
so $a \brhd (b \brhd f) = ab \brhd f$.

\begin{dfn}
  \label{dfn:dual-actions}
  For $V \in \hmod$, we define $V^* = \Hom(V, \bbC)$ with the action $\rhd$ of $H$ given by \eqref{eq:dual-action}, and we refer to this as the \emph{left dual of $V$}.
  We define $\us{V} = \Hom(V,\bbC)$ with the action $\brhd$ of $H$ given by \eqref{eq:black-dual-action}, and we refer to this as the \emph{right dual of $V$.}
\end{dfn}

\begin{rem}
  \label{rem:eval-maps}
  Lemma \ref{lem:eval-maps} below explains how to remember which dual is the left and which one is the right.
  The left dual, $V^*$, goes to the left of $V$ in the evaluation pairing, while the right dual, $\us{V}$, goes to the right.
  In both cases, the superscript $*$ is adjacent to the $\otimes$ symbol; one can think of the $*$ as ``eating'' the vector.
\end{rem}

\begin{lem}
  \label{lem:eval-maps}
  For $V \in \hmod$, we have
  \begin{enumerate}[(a)]
  \item The evaluation map $\ev_V : V^* \otimes V \to \bbC$ given by $\phi \otimes v \mapsto \phi(v)$ is a morphism of $H$-modules.
  \item The evaluation map ${}_V\ev : V \otimes \us{V} \to \bbC$ given by $v \otimes \phi \mapsto \phi(v)$ is a morphism of $H$-modules.
  \end{enumerate}
\end{lem}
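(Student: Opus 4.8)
The plan is to verify directly that each evaluation map intertwines the relevant $H$-actions. In both parts the codomain $\bbC$ carries the action given by the counit, so $a \rhd z = \counit(a) z$ for $z \in \bbC$; hence the module-map condition reduces to showing that feeding the $H$-twisted inputs into the pairing produces $\counit(a)$ times the untwisted pairing. The whole argument is an unwinding of the action formulas \eqref{eq:tensorprod_action}, \eqref{eq:dual-action}, and \eqref{eq:black-dual-action}, followed by an application of an antipode identity.

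For part (a), I would expand $\ev_V(a \rhd (\phi \otimes v))$ using the tensor product action \eqref{eq:tensorprod_action} and the left dual action \eqref{eq:dual-action}:
\begin{align*}
  \ev_V\bigl(a \rhd (\phi \otimes v)\bigr)
    &= (a_{(1)} \rhd \phi)(a_{(2)} \rhd v) \\
    &= \phi\bigl(S(a_{(1)}) \rhd (a_{(2)} \rhd v)\bigr) \\
    &= \phi\bigl((S(a_{(1)}) a_{(2)}) \rhd v\bigr).
\end{align*}
The standard antipode identity $S(a_{(1)}) a_{(2)} = \counit(a) 1$ then collapses this to $\counit(a)\phi(v) = a \rhd \ev_V(\phi \otimes v)$. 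This half is a routine check.

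Part (b) is identical in spirit but uses the right dual action \eqref{eq:black-dual-action}. Expanding ${}_V\ev(a \rhd (v \otimes \phi))$ and pairing the second factor against the first gives
\begin{align*}
  {}_V\ev\bigl(a \rhd (v \otimes \phi)\bigr)
    &= (a_{(2)} \brhd \phi)(a_{(1)} \rhd v) \\
    &= \phi\bigl((S^{-1}(a_{(2)}) a_{(1)}) \rhd v\bigr),
\end{align*}
so to conclude I need the \emph{reversed} antipode identity $S^{-1}(a_{(2)}) a_{(1)} = \counit(a) 1$. I would derive this by applying $S^{-1}$ to the standard axiom $S(a_{(1)}) a_{(2)} = \counit(a) 1$: since $S$ is an algebra antihomomorphism so is $S^{-1}$, whence $S^{-1}(S(a_{(1)}) a_{(2)}) = S^{-1}(a_{(2)}) a_{(1)}$, while the right-hand side is fixed by $S^{-1}$.

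The main (and essentially only) obstacle is this last identity, since the antipode relations come stated for $S$ rather than $S^{-1}$. The key points are that $S^{-1}$ exists at all—guaranteed because $H$ is a Hopf $*$-algebra, by \eqref{eq:sstar}—and that applying the antihomomorphism $S^{-1}$ reverses the order of the two factors, turning the $S$-relation into precisely the form demanded by the $\brhd$-action. Everything else is mechanical, and the symmetry between the two parts mirrors the left/right mnemonic of Remark \ref{rem:eval-maps}.
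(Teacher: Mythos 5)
Your proof is correct and follows essentially the same route as the paper: a direct unwinding of the tensor-product and dual actions followed by the antipode identity, with part (b) handled symmetrically (the paper simply writes ``similar to (a)'' where you spell out the $S^{-1}$-version of the antipode axiom, which you derive correctly).
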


\begin{proof}
  \begin{enumerate}[(a)]
  \item For $f \in V^*$, $v \in V$, $a \in H$ we have
    \begin{align*}
      \ev_V(a \rhd (f \otimes v)) & = (a_{(1)} \rhd f)(a_{(2)} \rhd v) \\
      & = f(S(a_{(1)})a_{(2)} \rhd v) \\
      & = \counit(a)f(v) \\
      & = a \rhd (\ev_V(f \otimes v)).   
    \end{align*}
  \item Similar to $\mathrm{(a)}$. \qedhere
  \end{enumerate}
\end{proof}

\begin{prop}
  \label{prop:double-dual}
  Let $V \in \hmod$, and for $v \in V$ let $\delta_v : \Hom(V,\bbC) \to \bbC$ be the evaluation map $\delta_v(f) = f(v)$.
  \begin{enumerate}[(a)]
  \item The linear map $V \to \us{(V^*)}$ given by $v \mapsto \delta_v$ is a morphism of $H$-modules, and it is an isomorphism for $V \in \hmodf$.
  \item The linear map $V \to (\us{V})^*$ given by $v \mapsto \delta_v$ is a morphism of $H$-modules, and it is an isomorphism for $V \in \hmodf$.
  \end{enumerate}
\end{prop}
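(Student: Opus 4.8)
The plan is to observe that in both (a) and (b) the underlying linear map $v \mapsto \delta_v$ is nothing but the classical canonical embedding of $V$ into its algebraic double dual $\Hom(\Hom(V,\bbC),\bbC)$, which is a linear isomorphism precisely when $V$ is finite-dimensional. Hence the only real content is that this map intertwines the particular $H$-actions carried by the two mixed double duals. The finite-dimensional assertions will then follow formally: a bijective morphism of $H$-modules has an inverse that is automatically $H$-linear, so $v \mapsto \delta_v$ is an isomorphism in $\hmod$ whenever $V \in \hmodf$.

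For (a), I would unwind the action on $\us{(V^*)} = \Hom(V^*, \bbC)$, which by Definition \ref{dfn:dual-actions} is the $\brhd$ action built from $S^{-1}$ out of the $\rhd$ action on $V^*$, the latter itself built from $S$. Concretely, for $a \in H$, $f \in V^*$ and $v \in V$ I would compute $(a \brhd \delta_v)(f) = \delta_v(S^{-1}(a) \rhd f) = (S^{-1}(a) \rhd f)(v) = f(S(S^{-1}(a)) \rhd v) = f(a \rhd v)$, using $S \circ S^{-1} = \id_H$ (recall from \eqref{eq:sstar} that $S$ is invertible). Since the right-hand side equals $\delta_{a \rhd v}(f)$, this gives $a \brhd \delta_v = \delta_{a \rhd v}$, i.e.\ the map is $H$-linear.

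For (b), the computation is the mirror image on $(\us{V})^* = \Hom(\us{V}, \bbC)$, whose action is the $\rhd$ action built from $S$ out of the $\brhd$ action on $\us{V}$, the latter built from $S^{-1}$. For $a \in H$, $f \in \us{V}$ and $v \in V$ I would compute $(a \rhd \delta_v)(f) = \delta_v(S(a) \brhd f) = (S(a) \brhd f)(v) = f(S^{-1}(S(a)) \rhd v) = f(a \rhd v) = \delta_{a \rhd v}(f)$, now using $S^{-1} \circ S = \id_H$, again yielding $H$-linearity.

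The calculations themselves are short; the only thing to watch is the bookkeeping of which antipode appears at each stage. The conceptual point---and the reason the statement concerns the mixed double duals $\us{(V^*)}$ and $(\us{V})^*$ rather than $(V^*)^*$---is exactly that pairing a left dual with a right dual causes $S$ and $S^{-1}$ to cancel, whereas iterating the left dual twice would leave a factor of $S^2$ and the naive map $v \mapsto \delta_v$ would fail to be $H$-linear in general. So the main (and it is a minor) obstacle is purely the notational discipline of tracking the two antipodes.
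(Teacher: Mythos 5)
Your proof is correct and follows essentially the same route as the paper's: the identity $(a \brhd \delta_v)(f) = f(S(S^{-1}(a)) \rhd v) = f(a \rhd v) = \delta_{a\rhd v}(f)$ is exactly the computation given there for part (a), with (b) handled as the mirror image, and the finite-dimensional case disposed of by injectivity plus equality of dimensions. No gaps.
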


\begin{proof}
  \begin{enumerate}[(a)]
  \item On the one hand, for $a \in H$, $v \in V$ and $f \in V^*$ we have
    \[  \delta_{a \rhd v}(f) = f(a \rhd v),   \]
    while on the other hand we have
    \begin{align*}
      (a \brhd \delta_v) (f) & = \delta_v (S^{-1}(a) \rhd f) \\
      & = (S^{-1}(a) \rhd f)(v) \\
      & = f (S(S^{-1}(a)) \rhd v  ) \\
      & = f(a \rhd v),
    \end{align*}
    and hence $\delta_{a \rhd v} = a \brhd \delta_v$.
    If $V$ is finite-dimensional then the dimensions of $V$ and $\us{(V^*)}$ coincide.  
    Since $v \mapsto \delta_v$ is clearly injective, it is also surjective, and hence is an isomorphism.
  \item Similar to $\mathrm{(a)}$. \qedhere
  \end{enumerate}
\end{proof}

In some Hopf algebras, e.g.\ quasitriangular ones, the square of the antipode is an inner automorphism; see Proposition 5, Section 8.1.3 of \cite{KliSch97}.
In that case the left and right duals coincide, and then for finite-dimensional modules the the second dual is naturally isomorphic to the original module:

\begin{prop}
  \label{prop:ssquared-inner}
  Suppose that there is an invertible element $u$ of $H$ such that $S^2(a) = uau^{-1}$ for all $a \in A$.  Then:
  \begin{enumerate}[(a)]
  \item $au \rhd f = ua \brhd f$ for all $f \in \Hom(V,\bbC)$ and $a \in H$.  In particular $u \rhd f = u \brhd f$.
  \item The linear map $\us{V} \to V^*$ given by $f \mapsto u \rhd f$ is an isomorphism of modules whose inverse is $f \mapsto u^{-1} \rhd f$.
  \item The linear map $V \to V^{**}$ given by $v \mapsto \delta_{u \rhd v}$ is a morphism of modules, and it is an isomorphism if $V \in \hmodf$, where $\delta$ is as defined in Proposition \ref{prop:double-dual}.
  \end{enumerate}
\end{prop}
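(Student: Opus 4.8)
The plan is to treat part (a) as the heart of the matter and to deduce (b) and (c) formally from it together with Proposition \ref{prop:double-dual}. Throughout I would use two consequences of the hypothesis $S^2(a) = uau^{-1}$: first, setting $a = u$ gives $S^2(u) = u$; second, since $S$ is invertible (see \eqref{eq:sstar}), replacing $a$ by $S^{-1}(a)$ gives $S(a) = uS^{-1}(a)u^{-1}$, that is,
\[ S^{-1}(a) = u^{-1}S(a)u \qquad \text{for all } a \in H. \]

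For part (a), the key preliminary observation is that $S(u)u$ is central and commutes with $u$. To establish this I would apply $S$ to $S^2(a) = uau^{-1}$ to get $S^3(a) = S(u)^{-1}S(a)S(u)$, and compare with $S^3(a) = S^2(S(a)) = uS(a)u^{-1}$. Since $S$ is bijective, $S(a)$ ranges over all of $H$, so $S(u)^{-1}bS(u) = ubu^{-1}$ for every $b \in H$; rearranging gives $b\,(S(u)u) = (S(u)u)\,b$, so $z := S(u)u$ is central, and cancelling $u$ in $uz = zu$ yields $uS(u) = S(u)u = z$. With this in hand, part (a) reduces to the single identity $S(au) = S^{-1}(ua)$ in $H$, because then both $(au \rhd f)(v)$ and $(ua \brhd f)(v)$ equal $f(S(au)\rhd v)$ by \eqref{eq:dual-action} and \eqref{eq:black-dual-action}. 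I would verify the identity by the computation $S^{-1}(ua) = u^{-1}S(a)S(u)u = u^{-1}S(a)z = u^{-1}z\,S(a) = S(u)S(a) = S(au)$, using centrality of $z$ and $uS(u) = S(u)u$. The ``in particular'' claim $u \rhd f = u \brhd f$ is then the case $a = 1$.

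Part (b) follows formally. The map $f \mapsto u \rhd f$ is a morphism $\us{V} \to V^*$ because, for $a \in H$, the left-action axiom and part (a) give $a \rhd (u \rhd f) = (au)\rhd f = (ua)\brhd f = u \brhd (a \brhd f)$, which equals $u \rhd (a \brhd f)$ by the ``in particular'' statement; this is exactly the required intertwining condition. That $f \mapsto u^{-1}\rhd f$ is its inverse is immediate from the fact that $\rhd$ is a genuine action: $u^{-1}\rhd(u \rhd f) = (u^{-1}u)\rhd f = f$, and symmetrically.

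Finally, for part (c) I would factor the map as the composite
\[ V \xrightarrow{\,v \mapsto \delta_v\,} \us{(V^*)} \xrightarrow{\,g \mapsto u \rhd g\,} (V^*)^* = V^{**}, \]
where the first arrow is the isomorphism of Proposition \ref{prop:double-dual}(a) and the second is part (b) applied to $V^*$ in place of $V$. The composite sends $v$ to $u \rhd \delta_v$, and a one-line check using $S^2(u) = u$ shows $(u \rhd \delta_v)(f) = \delta_v(S(u) \rhd f) = f(S^2(u)\rhd v) = \delta_{u \rhd v}(f)$, so the composite is precisely $v \mapsto \delta_{u \rhd v}$. Being a composite of module maps it is a module map, and when $V \in \hmodf$ both factors are isomorphisms, so it is too. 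The one genuinely non-formal step is the centrality of $S(u)u$ underlying part (a); everything else is bookkeeping with the two dual actions.
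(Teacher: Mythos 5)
Your proof is correct, and parts of it take a genuinely different route from the paper's. The main divergence is in part (a): both arguments ultimately rest on the single identity $S(au) = S^{-1}(ua)$, but the paper obtains it in one line by applying the hypothesis to the element $au$ itself, namely $S^2(au) = u(au)u^{-1} = ua$, and then applying $S^{-1}$. Your detour through the centrality of $z = S(u)u$ and the formula $S^{-1}(a) = u^{-1}S(a)u$ is valid (and the centrality of $S(u)u$ is a worthwhile fact in its own right, echoing what happens for the Drinfeld element in the quasitriangular case), but it is considerably more work than the statement requires; you might note the shorter derivation. Part (b) is essentially the paper's computation. In part (c) you genuinely improve on the paper's presentation: the paper verifies directly by an element computation that $v \mapsto \delta_{u\rhd v}$ intertwines the actions, whereas you factor the map as the composite of the canonical morphism $V \to \us{(V^*)}$ from Proposition \ref{prop:double-dual}(a) with the isomorphism $\us{(V^*)} \to V^{**}$ from part (b) applied to $V^*$; this buys you both the module-map property and the finite-dimensional bijectivity for free, at the cost of only the one-line check $(u \rhd \delta_v)(f) = f(S^2(u)u^{-1}u \rhd v) = \delta_{u\rhd v}(f)$ using $S^2(u) = u$. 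Both approaches are sound.
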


\begin{proof}
  \begin{enumerate}[(a)]
  \item  First note that for $a \in H$ we have 
    \[ S(au) = S^{-1}(S^2(au)) = S^{-1}(ua),  \]
    so for any $v \in V$ we have
    \begin{align*}
      (au \rhd f)(v) & = f(S(au) \rhd v) \\
      & = f (S^{-1}(ua) \rhd v) \\
      & = (ua \brhd f)(v),
    \end{align*}
    so $au \rhd f = ua \brhd f$.
  \item  We compute
    \[ u \rhd (a \brhd f) = u \brhd (a \brhd f) = ua \brhd f = au \rhd f = a \rhd (u \rhd f),   \]
    using $\mathrm{(a)}$ for the first and third equalities.
    Hence $f \mapsto u \rhd f$ is a module map $\us{V} \to V^*$.
  \item  For $a \in A$ and $v \in V$ we have
    \begin{align*}
      (a \rhd \delta_{u \rhd v})(f) & = \delta_{u \rhd v}(S(a) \rhd f) \\
      & = (S(a) \rhd f) (u \rhd v) \\
      & = f (S^2(a)u \rhd v) \\
      & = f(ua \rhd v) \\
      & = \delta_{u \rhd (a \rhd v)}(f),
    \end{align*}
    so we see that $a \rhd \delta_{u \rhd v} = \delta_{u \rhd (a \rhd v)}$, i.e.\ $v \mapsto \delta_{u \rhd v}$ is a module map.
    The map is injective, and hence bijective if $V$ is finite-dimensional since the dimensions of $V$ and $V^{**}$ coincide.  \qedhere
  \end{enumerate}
\end{proof}

\begin{rem}
  \label{rem:rigidity-etc}
  The existence of left and right duals makes $\hmodf$ into a \emph{rigid} or \emph{autonomous} category; see, for example, Section 2.1 of \cite{BakKir01} for a precise definition.
  If in addition $S^2$ is inner then part $\mathrm{(c)}$ of Proposition \ref{prop:ssquared-inner} implies that $\hmodf$ is a \emph{pivotal} category in the sense of Definition 5.1 of \cite{FreYet92}.
\end{rem}

\subsection{Module structures on $\Hom(V,W)$}
\label{sec:hopfstar-hom}

The two actions of $H$ on $\Hom(V,\bbC)$ give rise to two actions of $H$ on the space $\Hom(V,W)$ for $V,W \in \hmod$.  The standard action is given by
\begin{equation}
  \label{eq:hom-action}
  a \rhd T = a_{(1)}T S(a_{(2)})
\end{equation}
for $T \in \Hom(V,W)$ and $a \in H$.  
It is not difficult to check directly that \eqref{eq:hom-action} defines an action of $H$.
However, the definition can be better motivated by viewing the space of linear maps as
\begin{equation}
  \label{eq:left-hom-decomp}
  \Hom(V,W) \simeq W \otimes V^*,
\end{equation}
where a simple tensor $w \otimes \phi \in W \otimes V^*$ acts as a linear map on a vector $v \in V$ by
\[  (w \otimes \phi)(v) = w \phi(v).    \]
Note that we have written the scalar on the right here.
Then it is straightforward to see that
\begin{align*}
  [a \rhd (w \otimes \phi)](v) & = [(a_{(1)} \rhd w) \otimes (a_{(2)} \rhd \phi)](v) \\  
  & = (a_{(1)} \rhd w) (a_{(2)} \rhd \phi)(v) \\
  & = a_{(1)} \rhd [w \phi (S(a_{(2)}) \rhd v )] \\
  & = [a_{(1)} (w \otimes \phi) S(a_{(2)})] (v),
\end{align*}
which explains \eqref{eq:hom-action}.

\begin{rem}
  \label{rem:hom-decomp-caveat}
  We should note that the decomposition \eqref{eq:left-hom-decomp} holds only when at least one of $V$ and $W$ is finite-dimensional.
  In general there is an injective linear map $W \otimes V^* \to \Hom(V,W)$ whose range consists of all finite-rank operators.
  The point here is that we only use the correspondence \eqref{eq:left-hom-decomp} heuristically; the action \eqref{eq:hom-action} is well-defined no matter whether $V$ or $W$ is finite-dimensional.
  The same remarks apply to the decomposition \eqref{eq:right-hom-decomp} and the action \eqref{eq:black-hom-action} below as well.
\end{rem}

In order to get the tensor product action to agree with \eqref{eq:hom-action} it was essential that we used $V^*$ rather than $\us{V}$ in the decomposition \eqref{eq:left-hom-decomp}.
Also note that the order of the two factors in this decomposition was arranged so that evaluation of linear maps on vectors is given by
\begin{equation}
  \label{eq:hom-eval}
  \id_W \otimes \ev_V : W \otimes V^* \otimes V \to W,
\end{equation}
which itself is a morphism of modules.
We prove this in general (i.e.~for modules which are not necessarily finite-dimensional) in Proposition \ref{lem:hom-eval-maps}.

This tells us how we can get another canonical action of $H$ on $\Hom(V,W)$; we exchange $\us{V}$ for $V^*$ and swap the order of the factors.  Thus we write
\begin{equation}
  \label{eq:right-hom-decomp}
  \Hom(V,W) \simeq \us{V} \otimes W,
\end{equation}
with the action of a simple tensor $\phi \otimes w \in \us{V} \otimes W$ on $v \in V$ defined by
\[  (\phi \otimes w)(v) = \phi(v)w.    \]
\begin{rem}
  If we were bolder we would write $v$ on the left of $\phi \otimes w$ in the preceding display, but as the overwhelming convention is to write functions to the left of their arguments, we seem to be stuck with the current clunky formulation.
\end{rem}
In any case, the decomposition \eqref{eq:right-hom-decomp} allows us to define another action of $H$ on $\Hom(V,W)$, which we denote using the symbol $\brhd$.  
For $a \in H$, $\phi \otimes w \in \us{V} \otimes W$ and $v \in V$, the tensor product action gives
\begin{align*}
  [a \brhd (\phi \otimes w)](v) & = [(a_{(1)} \brhd \phi) \otimes (a_{(2)} \rhd w)](v) \\
  & = (a_{(1)} \brhd \phi)(v) a_{(2)}\rhd w \\
  & = a_{(2)} \rhd [\phi(S^{-1}(a_{(1)}) \rhd v ) w] \\
  & = [a_{(2)}(\phi \otimes w) S^{-1}(a_{(1)})](v).
\end{align*}
Hence for $T \in \Hom(V,W)$ the action $\brhd$ of $H$ is given by
\begin{equation}
  \label{eq:black-hom-action}
  a \brhd T = a_{(2)} T S^{-1}(a_{(1)}).
\end{equation}

\begin{dfn}
  \label{dfn:hom-actions}
  For $V,W \in \hmod$, we define $\homl(V,W)$ to be the linear space $\Hom(V,W)$ with the action $\rhd$ given by~\eqref{eq:hom-action}, and we call this the \emph{left $\Hom$-space}.  
  We define $\homr(V,W)$ to be the linear space $\Hom(V,W)$ with the action $\brhd$ given by \eqref{eq:black-hom-action}, and we call this the \emph{right $\Hom$-space}.
  When $V = W$, we denote $\homl(V,V)$ and $\homr(V,V)$ by $\Endl(V)$ and $\Endr(V)$, respectively.
\end{dfn}

We now formalize the statements made in Remark \ref{rem:hom-decomp-caveat}:

\begin{prop}
  \label{prop:hom-decomps}
  Let $V,W \in \hmod$.
  \begin{enumerate}[(a)]
  \item The map $W \otimes V^* \to \homl(V,W)$ given by $(w \otimes \phi)(v) = w \phi(v)$ for $v \in V$, $w \in W$, and $\phi \in V^*$ is an injective module map.
    It is an isomorphism if $V$ and $W$ are finite-dimensional.
  \item The map $\us V \otimes W \to \homr(V,W)$ given by $(\phi \otimes w)(v) = \phi(v)w$ for $v \in V$, $w \in W$, and $\phi \in V^*$ is an injective module map.
    It is an isomorphism if $V$ and $W$ are finite-dimensional.
  \end{enumerate}
\end{prop}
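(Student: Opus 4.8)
The plan is to treat the two parts in parallel, since they are mirror images of one another; I focus on part (a), and part (b) will follow by the same argument with $V^*$ replaced by $\us V$, the action $\rhd$ replaced by $\brhd$, and the two tensor factors transposed. First I would observe that the assignment $(w,\phi) \mapsto [v \mapsto w\,\phi(v)]$ is bilinear, so by the universal property of the tensor product it induces a well-defined linear map $\Phi : W \otimes V^* \to \Hom(V,W)$. That $\Phi$ is moreover a morphism of $H$-modules—where $W \otimes V^*$ carries the tensor-product action \eqref{eq:tensorprod_action} built from the left dual $V^*$, and $\Hom(V,W)$ carries the action $\rhd$ of \eqref{eq:hom-action}—is precisely the content of the four-line computation used just above to motivate \eqref{eq:hom-action}: evaluating both sides on an arbitrary $v \in V$ yields $\Phi(a \rhd (w \otimes \phi)) = a \rhd \Phi(w \otimes \phi)$. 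I would simply point to that computation rather than repeat it; for part (b) the analogous intertwining is the computation preceding \eqref{eq:black-hom-action}.

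Next I would establish injectivity by a standard linear-algebra argument. Given a nonzero $\xi \in W \otimes V^*$, write $\xi = \sum_i w_i \otimes \phi_i$ with the $w_i$ linearly independent; then $\xi \neq 0$ forces some $\phi_j \neq 0$, and choosing $v$ with $\phi_j(v) \neq 0$ makes $\Phi(\xi)(v) = \sum_i \phi_i(v)\, w_i$ a combination of the linearly independent $w_i$ in which the coefficient of $w_j$ is nonzero, hence $\Phi(\xi)(v) \neq 0$. Thus $\ker \Phi = 0$. Since each simple tensor is sent to a rank-one operator, the image of $\Phi$ lands in (and in fact exhausts) the finite-rank operators, which substantiates the claim made in Remark \ref{rem:hom-decomp-caveat}.

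Finally, for the finite-dimensional case I would compare dimensions: when $V$ and $W$ are finite-dimensional we have $\dim(W \otimes V^*) = \dim W \cdot \dim V = \dim \Hom(V,W)$, so an injective linear map between these spaces is automatically surjective, and being a module map it is therefore an isomorphism in $\hmod$. The same count works verbatim in part (b), since $\us V$ and $V^*$ share the underlying vector space $\Hom(V,\bbC)$ and hence the same dimension.

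As for the main obstacle: there is no genuine difficulty, because the substantive content—the intertwining of the two $H$-actions—was already dispatched in the motivational calculations preceding the statement. The only thing requiring care is bookkeeping, namely keeping straight which dual ($V^*$ versus $\us V$) and which action ($\rhd$ versus $\brhd$) belongs to each part, so that the intertwining invoked matches the correct one of \eqref{eq:hom-action} and \eqref{eq:black-hom-action}; everything else is routine linear algebra.
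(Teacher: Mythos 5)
Your proposal is correct and follows essentially the same route as the paper's proof: it appeals to the motivational computations preceding Remark \ref{rem:hom-decomp-caveat} and Definition \ref{dfn:hom-actions} for the module-map property, and concludes with a dimension count in the finite-dimensional case. The only difference is that you spell out the injectivity argument (via a representation with linearly independent $w_i$) which the paper dismisses as ``straightforward''; this is a welcome bit of extra detail but not a different approach.
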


\begin{proof}
  The proofs that these are module maps are exactly the displayed computations immediately preceding Remark \ref{rem:hom-decomp-caveat} and Definition \ref{dfn:hom-actions}, respectively.
  Injectivity is straightforward, and then these maps are isomorphisms for finite-dimensional modules just by a dimension count.
\end{proof}

\begin{rem}
  \label{rem:hom-side-clarification}
  As with duals, there is a simple way to to remember which is the left $\Hom$-space and which is the right.
  The left $\Hom$-space $\homl(V,W)$ goes to the left of $V$ in the evaluation map, and it is constructed using the left dual of $V$.
  The right $\Hom$-space $\homr(V,W)$ goes to the right of $V$ in the evaluation map, and it is constructed using the right dual of $V$.
  This is encapsulated in:
\end{rem}

\begin{lem}
  \label{lem:hom-eval-maps}
  For $V,W \in \hmod$, we have
  \begin{enumerate}[(a)]
  \item The evaluation map $\homl (V,W) \otimes V \to W$ given by $T \otimes v \mapsto T(v)$ is a morphism of $H$-modules.
  \item The evaluation map $V \otimes \homr (V,W) \to W$ given by $v \otimes T \mapsto T(v)$ is a morphism of $H$-modules.
  \end{enumerate}
\end{lem}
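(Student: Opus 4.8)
The plan is to verify both statements by a direct Sweedler-notation computation, working with the module actions \eqref{eq:hom-action} and \eqref{eq:black-hom-action} on the $\Hom$-spaces themselves rather than routing through the tensor-product decompositions of Proposition \ref{prop:hom-decomps}; this keeps the argument valid for $V$ and $W$ of arbitrary dimension, as promised in the remark preceding the statement.

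For part (a) I would fix $a \in H$, $T \in \homl(V,W)$, and $v \in V$, and compute the image under evaluation of $a \rhd (T \otimes v)$. Expanding the tensor-product action \eqref{eq:tensorprod_action} gives $(a_{(1)} \rhd T) \otimes (a_{(2)} \rhd v)$, but the action on $T$ itself invokes a further coproduct, so the honest expression is governed by the threefold coproduct $a_{(1)} \otimes a_{(2)} \otimes a_{(3)}$: the operator acting is $a_{(1)} T S(a_{(2)})$ and the vector fed to it is $a_{(3)} \rhd v$. Evaluating the operator on the vector yields $a_{(1)} \rhd \bigl( T(( S(a_{(2)}) a_{(3)}) \rhd v) \bigr)$. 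The key simplification is that $S(a_{(2)}) a_{(3)} = \counit(a_{(2)}) 1$ by the antipode axiom applied to the last two legs, after which the counit axiom collapses $a_{(1)} \counit(a_{(2)})$ back to $a$, leaving exactly $a \rhd T(v)$, which is $a \rhd \ev_V(T \otimes v)$.

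Part (b) runs along identical lines with the data adapted to the right $\Hom$-space: starting from $a \rhd (v \otimes T) = (a_{(1)} \rhd v) \otimes (a_{(2)} \brhd T)$ and expanding \eqref{eq:black-hom-action} into the threefold coproduct, the operator acting is $a_{(3)} T S^{-1}(a_{(2)})$ applied to $a_{(1)} \rhd v$, which produces $a_{(3)} \rhd \bigl( T(( S^{-1}(a_{(2)}) a_{(1)}) \rhd v) \bigr)$. Here the collapse uses the $S^{-1}$-version of the antipode identity, namely $S^{-1}(a_{(2)}) a_{(1)} = \counit(a) 1$, which is just the antipode axiom read off in the co-opposite Hopf algebra $H^{\mathrm{cop}}$, for which $S^{-1}$ is the antipode. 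Applying the counit axiom once more then recovers $a \rhd T(v) = a \rhd \ev(v \otimes T)$, completing the verification.

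The only genuine obstacle is bookkeeping. Because the action on a $\Hom$-space is itself assembled from a coproduct, one must be disciplined about passing to the threefold coproduct \emph{before} attempting to apply any antipode relation, and about applying that relation to the correct adjacent pair of legs; a careless contraction of the wrong legs will produce a formula that does not simplify. Once the indices are tracked correctly the computation is a routine invocation of the antipode and counit axioms. The one point to watch in part (b) is that it is the $S^{-1}$-form of the antipode relation that is required, rather than the standard one for $S$; this is exactly the place where invertibility of the antipode (guaranteed by \eqref{eq:sstar}) is used.
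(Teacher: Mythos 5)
Your proposal is correct and follows essentially the same route as the paper: the paper proves part (a) by exactly this direct Sweedler computation, passing to the threefold coproduct and contracting $S(a_{(2)})a_{(3)} = \counit(a_{(2)})1$, and dismisses part (b) as similar. Your explicit verification of part (b), including the correct use of the identity $S^{-1}(a_{(2)})a_{(1)} = \counit(a)1$, fills in that omitted detail accurately.
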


\begin{proof}
  We do part (a) only; part (b) is similar.
  For finite-dimensional modules the result follows from Equation \eqref{eq:hom-eval}.
  We now give a direct proof which works for arbitrary modules.
  Denoting the evaluation map by $\ev$, for $a \in H$ we have
  \begin{align*}
    \ev \left( a \rhd (T \otimes v)    \right) & = \ev \left( (a_{(1)} \rhd T) \otimes (a_{(2)} \rhd v)    \right)   \\
    & = a_{(1)} \rhd T(S(a_{(2)})a_{(3)} \rhd v) \\
    & = a_{(1)} \rhd T(\counit(a_{(2)})v) \\
    & = a \rhd \left( Tv  \right) \\
    & = a \rhd \ev(T \otimes v).  \qedhere
  \end{align*}
\end{proof}

An element $v$ of an $H$-module $V$ is called \emph{invariant} if $a \rhd v = \counit(a)v$ for all $a \in H$.  The submodule of invariant elements is denoted by $V^H$.
For the standard action \eqref{eq:hom-action} of $H$ on $\Hom(V,W)$, it is well-known that the invariant elements are precisely the $H$-module maps.
It is therefore natural to ask what the invariants are for the action \eqref{eq:black-hom-action}.  
It turns out that they are the same:

\begin{prop}
  \label{prop:hom-invariants}
  For $V,W \in \hmod$ we have
  \[ \homl(V,W)^H = \homH(V,W) = \homr(V,W)^H.   \]
\end{prop}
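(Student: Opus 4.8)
The plan is to prove the two equalities separately, showing in each case that invariance under the relevant action is equivalent to being an $H$-module map; no finite-dimensionality is needed, so the argument will match the generality of the preceding results. Throughout I will use the antipode axioms $\sum a_{(1)} S(a_{(2)}) = \counit(a) 1 = \sum S(a_{(1)}) a_{(2)}$ together with their mirror versions $\sum a_{(2)} S^{-1}(a_{(1)}) = \counit(a) 1 = \sum S^{-1}(a_{(2)}) a_{(1)}$, which hold because $S$ is invertible: applying the injective, product-reversing map $S$ to either mirror identity returns an ordinary antipode axiom. Recall that $T \in \Hom(V,W)$ lies in $\homH(V,W)$ precisely when $T(a \rhd v) = a \rhd T(v)$ for all $a$ and $v$; this condition refers neither to $S$ nor to $S^{-1}$, and that symmetry is the conceptual reason both actions should cut out the same invariants.

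For the left equality $\homl(V,W)^H = \homH(V,W)$, I first treat the inclusion $\supseteq$. Given a module map $T$, I compute $(a \rhd T)(v) = \sum a_{(1)} \rhd T(S(a_{(2)}) \rhd v)$ from \eqref{eq:hom-action}, pull $S(a_{(2)})$ through $T$ using the module-map property, and collapse $\sum a_{(1)} S(a_{(2)})$ to $\counit(a)$, obtaining $\counit(a) T$, so $T$ is invariant. For the reverse inclusion I would use the sandwich trick: assuming $\sum a_{(1)} T S(a_{(2)}) = \counit(a) T$, I evaluate the threefold expression $\sum a_{(1)} T S(a_{(2)}) a_{(3)}$ in two ways. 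Contracting the last two legs via $\sum S(a_{(2)}) a_{(3)} = \counit(a_{(2)}) 1$ produces post-composition $a \rhd T(\cdot)$, while applying the invariance hypothesis to the first two legs produces pre-composition $T(a \rhd \cdot)$; equating the two expressions gives exactly the module-map identity.

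For the right equality I expect the cleanest route is to recognize the action $\brhd$ of \eqref{eq:black-hom-action}, namely $a \brhd T = a_{(2)} T S^{-1}(a_{(1)})$, as the \emph{standard} $\Hom$-action of the co-opposite Hopf algebra $H^{\mathrm{cop}}$. Since $S$ is invertible, $H^{\mathrm{cop}}$ is a Hopf algebra with the same algebra structure, coproduct $a \mapsto \sum a_{(2)} \otimes a_{(1)}$, and antipode $S^{-1}$, and substituting these data into \eqref{eq:hom-action} reproduces \eqref{eq:black-hom-action}. As $H$ and $H^{\mathrm{cop}}$ share the same underlying algebra and the same counit, $H$-module maps coincide with $H^{\mathrm{cop}}$-module maps and the notion of invariant element is unchanged. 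Hence $\homr(V,W)^H$ is the space of $H^{\mathrm{cop}}$-invariants for the standard action, which by the left equality applied to $H^{\mathrm{cop}}$ equals $\Hom_{H^{\mathrm{cop}}}(V,W) = \homH(V,W)$. Alternatively, and just as easily, one can rerun the sandwich computation verbatim with $S$ replaced by $S^{-1}$ and the legs reordered, using the mirror antipode identities recorded above.

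I expect the only genuine obstacle to be the Sweedler bookkeeping in the reverse direction: keeping straight which leg is contracted against the antipode and which against the invariance hypothesis, and re-bracketing by coassociativity so that one contraction lands on the source $V$ and the other on the target $W$. Everything else is routine, and the $H^{\mathrm{cop}}$ observation has the pleasant feature of delivering both inclusions of the second equality at once while sparing me a second $S^{-1}$ calculation.
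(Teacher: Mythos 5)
Your proof is correct, and the first equality is argued essentially as in the paper: the easy inclusion is the identical one-line collapse of $a_{(1)}S(a_{(2)})$ to $\counit(a)$, and your ``sandwich trick'' on $\sum a_{(1)}\,T\,S(a_{(2)})\,a_{(3)}$ is precisely the computation the paper performs, except that the paper factors it through Lemma \ref{lem:hom-eval-maps} (the evaluation map $\homl(V,W)\otimes V\to W$ is a module map) and then applies invariance, rather than contracting the two pairs of legs inline. Where you genuinely diverge is the second equality: the paper simply reruns both computations with $S^{-1}$ and the legs reordered (using part (b) of Lemma \ref{lem:hom-eval-maps} and the mirror identity $a_{(2)}S^{-1}(a_{(1)})=\counit(a)1$), whereas you observe that \eqref{eq:black-hom-action} is the standard action \eqref{eq:hom-action} for $H^{\mathrm{cop}}$, whose antipode is $S^{-1}$, and that $H$- and $H^{\mathrm{cop}}$-module maps and invariants coincide since the algebra and counit are unchanged. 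That reduction is valid (the invertibility of $S$, guaranteed here by the $*$-structure, is exactly what makes $H^{\mathrm{cop}}$ a Hopf algebra) and buys you the second equality without a second calculation; the paper's direct rerun is more self-contained and avoids introducing $H^{\mathrm{cop}}$, which it never uses elsewhere. Your verification of the mirror antipode identities by applying the injective map $S$ matches the paper's own justification.
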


\begin{proof}
  First suppose that $T$ is a module map.  Then for $a \in H$ we have
  \[ a \rhd T = a_{(1)} T S(a_{(2)}) = a_{(1)}S(a_{(2)}) T = \counit(a)T,   \]
  and similarly we see that
  \[ a \brhd T = a_{(2)} T S^{-1}(a_{(1)}) = a_{(2)}S^{-1}(a_{(1)})T = \counit(a) T;   \]
  the last equality follows since $S(a_{(2)}S^{-1}(a_{(1)})) = a_{(1)}S(a_{(2)}) = \counit(a)$.

  Now suppose that $T \in \homl(V,W)^H$.  Using part $\mathrm{(a)}$ of Lemma \ref{lem:hom-eval-maps}, for $v \in V$ we have
  \begin{align*}
    a \rhd (T(v)) & = (a_{(1)} \rhd T) (a_{(2)} \rhd v) \\
    & = \counit(a_{(1)})T(a_{(2)} \rhd v) \\
    & = T(a \rhd v).
  \end{align*}
  Similarly, if $T \in \homr(V,W)^H$, we have using part $\mathrm{(b)}$ of Lemma \ref{lem:hom-eval-maps} that
  \begin{align*}
    a \rhd (T(v)) & = (a_{(2)}\brhd T)(a_{(1)} \rhd v) \\
    & = \counit(a_{(2)}) T(a_{(1)} \rhd v) \\
    & = T(a \rhd v). \qedhere
  \end{align*}
\end{proof}

\section{Antilinear maps and the conjugation functor on $\cvect$}
\label{sec:linalg}

While our main goal is to describe complex conjugates of modules for Hopf $*$-algebras, these notions also make sense for vector spaces over $\bbC$.  
Actually vector spaces are just the special case of modules over the Hopf $*$-algebra $\bbC$ itself, but anyway$\dots$  

Here we set out the basics of complex conjugate linear algebra before moving on to study modules in Section \ref{sec:conj-of-modules}.
We begin with some elementary remarks on antilinear maps.
Then we introduce the complex conjugation functor, our main technical tool in what follows.
We show that the conjugation functor allows antilinear maps to be interpreted naturally as linear ones.
Finally we discuss several natural constructions in $\cvect$ and examine their interactions with conjugation.
We emphasize the functorial nature of the constructions throughout.

\subsection{Elementary remarks on antilinear maps}
\label{sec:antilinear}

When doing linear algebra over $\bbC$, one often encounters the notion of an antilinear map between vector spaces.
A trivial example is the map $\lambda \mapsto \bar{\lambda}$ of $\bbC$ to itself.
A less trivial example is the map $T \mapsto T^*$, where $T$ is a linear map between complex inner product spaces.
Another example is a complex inner product itself, which is antilinear in the first variable.

While these maps are generally not difficult to deal with on their own, the framework of complex conjugate linear algebra allows us to understand antilinear maps while working within $\cvect$, at the cost of some added complexity (pun intended).

\begin{dfn}
  \label{dfn:chom}
  Let $V,W \in \cvect$.  We say that a function $T : V \to W$ is \emph{antilinear} if
  \[  T(\alpha u + \beta v) = \bar{\alpha} T(u) + \bar{\beta} T(v)   \]
  for $\alpha,\beta \in \bbC$ and $u,v \in V$.
  If $T$ is bijective then we say that $T$ is an \emph{anti-isomorphism}; an anti-isomorphism of $V$ with itself will be called an \emph{anti-automorphism}.
  We denote the collection of antilinear maps from $V$ to $W$ by $\cHom (V,W)$.
\end{dfn}

\begin{lem}
  \label{lem:antilinearmaps_vectorspace}
  \begin{enumerate}[(a)]
  \item \label{lem:part:almaps1} For $V, W \in \cvect$, the set $\cHom(V,W)$ is a subspace of the vector space of all functions from $V$ to $W$.
  \item \label{lem:part:almaps2} The composition of two antilinear maps is linear.  The composition of an antilinear map and a linear map, in either order, is antilinear.
  \end{enumerate}
\end{lem}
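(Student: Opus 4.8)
The plan is to verify both assertions by unwinding Definition \ref{dfn:chom} directly; everything reduces to careful bookkeeping of complex conjugates, with no deeper input required.

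For part $\mathrm{(a)}$, I would recall that the collection of all functions $V \to W$ is a vector space under pointwise operations, so it suffices to check that $\cHom(V,W)$ contains the zero map and is closed under addition and scalar multiplication. The zero map is trivially antilinear. For closure under addition, if $S, T \in \cHom(V,W)$ then evaluating $S+T$ on $\alpha u + \beta v$ and distributing reproduces the antilinearity identity with the conjugated coefficients $\bar\alpha, \bar\beta$, since each of $S$ and $T$ already satisfies it. For closure under scalar multiplication the one point to watch is that the scalar $c$ scaling the map $T$ is \emph{not} conjugated: by definition $(cT)(x) = c\,T(x)$, so $c$ slides past the conjugated input coefficients without being altered, and $cT$ is again antilinear. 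Hence $\cHom(V,W)$ is a subspace.

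For part $\mathrm{(b)}$, I would simply compose and count conjugations. Given antilinear maps $S : U \to V$ and $T : V \to W$, evaluating $T \circ S$ on $\alpha u + \beta v$ produces $\bar\alpha, \bar\beta$ after applying $S$, and then a second conjugation after applying $T$; since $\bar{\bar\alpha} = \alpha$ and $\bar{\bar\beta} = \beta$, the composite satisfies the linearity identity. For the mixed cases, composing an antilinear map with a linear one, in either order, introduces exactly one conjugation of each input coefficient, so the result is antilinear in both situations.

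The only subtlety anywhere is tracking which scalars get conjugated and observing that a double conjugate returns the original scalar. There is no genuine obstacle here: the argument is entirely self-contained and elementary, which is appropriate since this lemma is foundational groundwork for the conjugation functor introduced in the sequel.
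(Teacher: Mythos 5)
Your proof is correct and is exactly the direct verification the paper has in mind (its own proof is simply ``Straightforward''). You even flag the one genuinely delicate point --- that the scalar multiplying the map is \emph{not} conjugated --- which is precisely the issue the paper singles out in Remark \ref{rem:antilinearmaps_vectorspace}.
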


\begin{proof}
  Straightforward.
\end{proof}

\begin{rem}
  \label{rem:antilinearmaps_vectorspace}
  While the proof of Lemma \ref{lem:antilinearmaps_vectorspace} is trivial, there is some nontrivial content.
  While it is clear how to define addition in $\cHom(V,W)$, there is some choice in the definition of scalar multiplication.  
  If we defined instead $(\lambda T)(v)= \bar{\lambda}T(v)$, this would give us a vector space structure different from that defined in Lemma \ref{lem:antilinearmaps_vectorspace}.
  The key observation is that there is a natural vector space structure on the set of all (not necessarily linear) functions from $V$ to $W$; viewing $\cHom(V,W)$ inside this space allows us to choose naturally between the two options.

  We would like to be able to interpret $\cHom(V,W)$ as a $\Hom$-set in $\cvect$.
  \emph{A priori} this is not possible due to the banal fact that antilinear maps are not linear.
  In \S \ref{sec:functor} we introduce the complex conjugation functor, which is our main technical tool in the rest of this text.
  This will allow us to interpret antilinear maps naturally as linear ones.
\end{rem}

\subsection{The conjugation functor}
\label{sec:functor}

\begin{dfn}
  \label{dfn:conjugation-functor}
  For $V \in \cvect$, we define the \emph{complex conjugate vector space of $V$} or just \emph{complex conjugate of $V$} to be the complex vector space $\bar{V}$ consisting of formal symbols $c_V(v) = \bar{v}$ for $v \in V$ with addition and scalar multiplication given by
  \begin{equation}
    \bar{v} + \bar{w} = \bar{v + w}, \quad \lambda \cdot \bar{v} = \bar{\bar{\lambda} v}
    \label{eq:conj_vectorspace}
  \end{equation}
  for $v,w \in V, \lambda \in \bbC$, respectively.
  Equivalently, we can define the operations in $\bar{V}$ by declaring the map $c_V : V \to \bar{V}$ given by $v \mapsto \bar{v} = c_V(v)$ to be an antilinear bijection.

  For a linear map $T : V \to W$, we define the \emph{complex conjugate of $T$} to be the map $\bar{T} : \bar{V} \to \bar{W}$ given by $\bar{T}(\bar{v}) = \bar{T(v)}$ for $v \in V$, i.e.\ $\bar{T} = c_W \circ T \circ c_V^{-1}$.
  In other words, $\bar{T}$ is the unique map making the diagram
  \begin{equation}
    \label{eq:conj_lin_map_def}
    \begin{CD}
      V  @>T>> W \\
      @V{c_V}VV   @VV{c_W}V \\
      \bar{V} @>>{\bar{T}}> \bar{W}
    \end{CD}
  \end{equation}
  commute.
  It follows from Lemma \ref{lem:antilinearmaps_vectorspace} that $\bar{T}$ is linear.
\end{dfn}

\begin{rem}
  \label{rem:conj_dfn}
  The map $c_V$ is an isomorphism of the underlying real vector spaces of $V$ and $\bar{V}$.  
  Although $V$ and $\bar{V}$ are isomorphic as complex vector spaces since their dimensions are the same, there is in general no natural isomorphism.
  The exception is the ground field $\bbC$, as we will see below in Lemma \ref{lem:cself_conj}.

  We emphasize that $\bar{T}$ is the \emph{unique} linear map making (\ref{eq:conj_lin_map_def}) commute; this follows immediately from the fact that $c_V$ and $c_W$ are bijective.

  Note also that the map $T \mapsto \bar{T}$ is itself an antilinear map since
  \[   \bar{\lambda T} = c_W \circ (\lambda T) \circ c_V^{-1} = \bar{\lambda} \cdot c_W \circ T \circ c_V^{-1} = \bar{\lambda} \cdot \bar{T}.   \]
\end{rem}

\begin{notn}
  We generally use the notation $\bar{v}$ rather than the more cumbersome $c_V(v)$ except in the following two situations when confusion may arise from doing so.
  
  The first possible source of confusion is when $V = \bbC$, where $\bar{\lambda}$ may refer either to the element $c_\bbC(\lambda) \in \bar{\bbC}$ or to the complex conjugate element $\bar{\lambda} \in \bbC$.
  We show in Lemma \ref{lem:cself_conj} that $\bar{\bbC}$ is naturally isomorphic to $\bbC$ and that $c_\bbC(\lambda)$ is identified with $\bar{\lambda}$.

  The second possible source of confusion occurs when discussing linear maps.
  We have seen that we can define the complex conjugate of a linear map $T : V \to W$; the symbol $\bar{T}$ can then refer either to the conjugate map $\bar{T} : \bar{V} \to \bar{W}$ or to the element $c_{\Hom(V,W)}(T)$ of $\bar{\Hom(V,W)}$.
  We show in Proposition \ref{prop:conjugate-hom} that there is a natural way to identify $\bar{T}$ with $c_{\Hom(V,W)}(T)$.
  
  Nevertheless, we will explicitly say what we mean whenever the notation may cause confusion.
\end{notn}

\begin{lem}
  \label{lem:functoriality}
  Complex conjugation is a functor from the category $\cvect$ to itself.  That is, for $V,W,X \in \cvect$ and linear maps $U : V \to W$ and $T : W \to X$ we have
  \begin{enumerate}[(a)]
  \item $\bar{\id_V} = \id_{\bar{V}}$.
  \item $\bar{T \circ U} = \bar{T} \circ \bar{U}$.
  \end{enumerate}
\end{lem}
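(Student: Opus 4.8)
The plan is to work directly from the formula $\bar{T} = c_W \circ T \circ c_V^{-1}$ recorded in Definition \ref{dfn:conjugation-functor}, which exhibits $\bar{T}$ as the unique linear map making the square \eqref{eq:conj_lin_map_def} commute. Both identities then reduce to elementary manipulations of these composites, using only that each $c_V$ is a bijection and that the conjugate of a linear map is again linear by Lemma \ref{lem:antilinearmaps_vectorspace}.

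For part (a), I would simply substitute $T = \id_V$ (so that $W = V$) into the defining formula, obtaining $\bar{\id_V} = c_V \circ \id_V \circ c_V^{-1} = c_V \circ c_V^{-1} = \id_{\bar V}$. For part (b), I would write out the two conjugates $\bar{U} = c_W \circ U \circ c_V^{-1}$ and $\bar{T} = c_X \circ T \circ c_W^{-1}$, compose them, and use $c_W^{-1} \circ c_W = \id_W$ to cancel the middle pair, so that $\bar{T} \circ \bar{U} = c_X \circ T \circ U \circ c_V^{-1} = \bar{T \circ U}$.

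A slightly more diagrammatic alternative, perhaps more in keeping with the spirit of Definition \ref{dfn:conjugation-functor}, would be to stack the two commuting squares \eqref{eq:conj_lin_map_def} for $U$ and $T$ vertically. The resulting outer rectangle commutes and carries $T \circ U$ along its top edge and $\bar{T} \circ \bar{U}$ along its bottom edge; since $\bar{T} \circ \bar{U}$ is linear (each factor being linear), the uniqueness clause emphasized in Remark \ref{rem:conj_dfn} forces this bottom arrow to coincide with $\bar{T \circ U}$.

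I expect no genuine obstacle here: the argument is purely formal bookkeeping with composites of bijections. The only point meriting a moment's care is to confirm that $\bar{T} \circ \bar{U}$ is a legitimate morphism of $\cvect$, that is, that it is linear; but this is immediate, since $\bar{T}$ and $\bar{U}$ are each linear by the final sentence of Definition \ref{dfn:conjugation-functor} and the composite of linear maps is linear.
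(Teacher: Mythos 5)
Your proposal is correct and matches the paper's argument: the paper proves both parts diagrammatically, exactly as in your second alternative, stacking the commuting squares and invoking the uniqueness clause of Remark \ref{rem:conj_dfn}. Your first, purely algebraic version using $\bar{T} = c_W \circ T \circ c_V^{-1}$ is just the same argument written out as equations rather than diagrams.
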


\begin{proof}
   \begin{enumerate}[(a)]
   \item According to (\ref{eq:conj_lin_map_def}) we have
     \[
     \begin{tikzpicture}[description/.style={fill=white,inner sep=2pt}]
       \matrix (m) [matrix of math nodes, row sep=3em,
       column sep=3em, text height=1.5ex, text depth=0.25ex]
       { V  & V \\
         \bar{V} & \bar{V} \\ };
       \path[->,font=\scriptsize]
       (m-1-1) edge node[auto] {$ \id_V $} (m-1-2)
       edge node[auto,swap] {$ c_V $} (m-2-1)
       (m-1-2) edge node[auto] {$ c_V $} (m-2-2)
       (m-2-1) edge node[auto,swap] {$\bar{\id_V}$} (m-2-2) ;
     \end{tikzpicture}
     \]
     which proves the claim.
   \item Using (\ref{eq:conj_lin_map_def}) twice we have
     \[
     \begin{tikzpicture}[description/.style={fill=white,inner sep=2pt}]
       \matrix (m) [matrix of math nodes, row sep=3em,
       column sep=3em, text height=1.5ex, text depth=0.25ex]
       { V  & W & X \\
         \bar{V} & \bar{W} & \bar{X} \\ };
       \path[->,font=\scriptsize]
       (m-1-1) edge node[auto] {$ U $} (m-1-2)
       (m-1-1) edge node[auto,swap] {$ c_V $} (m-2-1)
       (m-1-2) edge node[auto] {$ T $} (m-1-3)
       (m-1-2) edge node[auto] {$ c_W $} (m-2-2)
       (m-1-3) edge node[auto] {$ c_X $} (m-2-3)
       (m-2-1) edge node[auto,swap] {$\bar{U}$} (m-2-2) 
       (m-2-2) edge node[auto,swap] {$\bar{T}$} (m-2-3) ;
     \end{tikzpicture}
     \]
     so we see that $\bar{T} \circ \bar{U} : \bar{V} \to \bar{X}$ is a linear map which fulfils the uniqueness criterion discussed in Remark \ref{rem:conj_dfn}, and hence $\bar{T} \circ \bar{U} = \bar{T \circ U}$.  \qedhere
  \end{enumerate}
\end{proof}

The next proposition allows us to view antilinear maps as linear ones, as mentioned in Remark \ref{rem:antilinearmaps_vectorspace}.
In order to state the result properly, note that the functor taking a pair of vector spaces $(V,W)$ to $\cHom(V,W)$ is contravariant in $V$ and covariant in $W$: given linear maps $U : V' \to V$ and $R : W \to W'$, the map $\cHom(V,W) \to \cHom(V',W')$ is given by $T \mapsto R \circ T \circ U$, i.e.\
\[ \cHom(U,R) : \left( V \overset{T}{\longrightarrow} W  \right) \mapsto \left( V' \overset{U}{\longrightarrow} V \overset{T}{\longrightarrow}  W \overset{R}{\longrightarrow} W'\right) .  \]
Similarly the functor taking $(V,W)$ to $\Hom(\bar{V}, W)$ is contravariant in $V$ and covariant in $W$: for linear maps $U : V' \to V$ and $R : W \to W'$, the map $\Hom(\bar{V},W) \to \Hom(\bar{V'},W')$ is given by
\[ \Hom(\bar{U},R) : \left( \bar{V} \overset{T}{\longrightarrow} W  \right) \mapsto \left( \bar{V'} \overset{\bar{U}}{\longrightarrow} \bar{V} \overset{T}{\longrightarrow}  W \overset{R}{\longrightarrow} W'\right) .  \]

\begin{prop}
  \label{prop:antilinear_representable}
  For $V,W \in \cvect$, the map 
  \[ \Psi_{VW} : \cHom(V,W) \to \Hom(\bar{V},W) \]
  given by $T \mapsto T \circ c_V^{-1}$ is a linear isomorphism.  If $T$ is an antilinear isomorphism then $\Psi_{VW}(T)$ is a linear isomorphism.  The collection of isomorphisms $(\Psi_{VW})$ is a natural transformation in the sense that for linear maps $U : V' \to V$ and $R : W \to W'$, the following diagram commutes:
  \begin{equation}
    \label{eq:antilinear_representable}
    \begin{CD}
      \cHom(V,W) @>{\cHom(U,R)}>> \cHom(V',W') \\
      @V{\Psi_{VW}}VV @VV{\Psi_{V'W'}}V \\
      \Hom(\bar{V},W) @>>{\Hom(\bar{U},R)}> \Hom(\bar{V'},W)
    \end{CD}
  \end{equation}
\end{prop}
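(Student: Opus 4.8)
The plan is to treat the three assertions in turn, all of which reduce to formal consequences of the fact that $c_V$ is an antilinear bijection together with Lemma~\ref{lem:antilinearmaps_vectorspace}. First I would check that $\Psi_{VW}$ is well defined and linear. Since $c_V$ is an antilinear bijection, its inverse $c_V^{-1} : \bar{V} \to V$ is again antilinear, so for antilinear $T$ the composite $T \circ c_V^{-1}$ is a composition of two antilinear maps and hence linear by part (b) of Lemma~\ref{lem:antilinearmaps_vectorspace}; thus $\Psi_{VW}(T)$ genuinely lands in $\Hom(\bar{V},W)$. Linearity of the assignment $T \mapsto T \circ c_V^{-1}$ is then immediate from the pointwise vector space structure on $\cHom(V,W)$ described in Remark~\ref{rem:antilinearmaps_vectorspace}, since precomposition with a fixed map respects pointwise addition and scalar multiplication. (It is worth noting that this comes out \emph{linear}, not antilinear, precisely because the structure on $\cHom(V,W)$ is the pointwise one rather than the twisted alternative.)

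Next I would establish bijectivity by exhibiting an explicit inverse, namely $S \mapsto S \circ c_V$ for $S \in \Hom(\bar{V},W)$. Here $S \circ c_V$ is a linear map precomposed with the antilinear map $c_V$, hence antilinear, so it lies in $\cHom(V,W)$; and the two assignments are mutually inverse because $c_V^{-1} \circ c_V = \id_V$ and $c_V \circ c_V^{-1} = \id_{\bar{V}}$. The second claim is then automatic: if $T$ is an antilinear isomorphism then $T \circ c_V^{-1}$ is a composite of two bijections, so $\Psi_{VW}(T)$ is a linear bijection.

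Finally, for naturality I would chase diagram~\eqref{eq:antilinear_representable} starting from $T \in \cHom(V,W)$. Going across the top and down the right gives $R \circ T \circ U \circ c_{V'}^{-1}$, while going down the left and across the bottom gives $R \circ T \circ c_V^{-1} \circ \bar{U}$. Comparing the two composites, it suffices to prove the intertwining identity
\[  U \circ c_{V'}^{-1} = c_V^{-1} \circ \bar{U},  \]
which is just a rearrangement of the defining commuting square~\eqref{eq:conj_lin_map_def} for $\bar{U}$, namely $c_V \circ U = \bar{U} \circ c_{V'}$: solving this for $U$ and composing with $c_{V'}^{-1}$ on the right yields exactly the desired equation.

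The only real subtlety — the main obstacle, such as it is — is bookkeeping the direction of this intertwining relation, since $c_V$ and $c_V^{-1}$ are antilinear and it is easy to misplace an inverse or confuse the source and target of $\bar{U}$; once~\eqref{eq:conj_lin_map_def} is invoked in the correct orientation, the commutativity of the square is immediate.
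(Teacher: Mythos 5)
Your proposal is correct and follows essentially the same route as the paper: the image lands in $\Hom(\bar{V},W)$ because a composite of two antilinear maps is linear, the inverse is $S \mapsto S \circ c_V$, and naturality reduces to the identity $c_V \circ U = \bar{U} \circ c_{V'}$ coming from the defining square \eqref{eq:conj_lin_map_def}. Your explicit verification that the assignment $T \mapsto T \circ c_V^{-1}$ is linear (not antilinear) for the pointwise vector space structure on $\cHom(V,W)$ is a point the paper leaves implicit, but otherwise the arguments coincide.
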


\begin{rem}
  \label{rem:antilinear_representable}
  The definition of $\Psi_{VW}$ is best captured by a diagram.  For an antilinear map $T : V \to W$, the linear map $\Psi_{VW}(T)$ is the unique map making the following diagram commute:
  \[
  \begin{tikzpicture}[description/.style={fill=white,inner sep=2pt}]
    \matrix (m) [matrix of math nodes, row sep=3em,
    column sep=3em, text height=1.5ex, text depth=0.25ex]
    { V  & W \\
      \bar{V} &  \\ };
    \path[->,font=\scriptsize]
    (m-1-1) edge node[auto] {$ T $} (m-1-2)
    edge node[auto,swap] {$ c_V $} (m-2-1)
    (m-2-1) edge node[auto,swap] {$ \Psi_{VW}(T)  $} (m-1-2) ;
  \end{tikzpicture}
  \]
  In fancy terms, this proposition states that for a fixed $V \in \cvect$, the pair $(\bar{V},(\Psi_{V-}))$ represents the covariant functor $\cHom(V, -)$.
\end{rem}

\begin{proof}[Proof of Proposition \ref{prop:antilinear_representable}]
  The image of $\Psi_{VW}$ lands in $\Hom(\bar{V},W)$ since the composition of two antilinear maps is linear.
  The inverse of $\Psi_{VW}$ is given by $T \mapsto T \circ c_V$, so indeed $\Psi_{VW}$ is an isomorphism.
  Since $c_V$ is bijective, we see that $\Psi_{VW}(T)$ is bijective if and only if $T$ is.
  The naturality statement~\eqref{eq:antilinear_representable} follows from commutativity of the diagram
  \[
  \begin{tikzpicture}
    [description/.style={fill=white,inner sep=2pt}]
    \matrix (m) [matrix of math nodes, row sep=3em,
    column sep=3em, text height=1.5ex, text depth=0.25ex]
    { V' & V & W & W' \\
      \bar{V'} & \bar{V} & &  \\ };
    \path[->,font=\scriptsize]
    (m-1-1) edge node[auto] {$ U $} (m-1-2)
    (m-1-1) edge node[auto,swap] {$ c_{V'} $} (m-2-1)
    (m-1-2) edge node[auto] {$ T $} (m-1-3)
    (m-1-2) edge node[auto,swap] {$ c_V $} (m-2-2)
    (m-1-3) edge node[auto] {$ R $} (m-1-4)
    (m-2-1) edge node[auto,swap] {$\bar{U}$} (m-2-2) 
    (m-2-2) edge node[auto,swap] {$ \Psi_{VW}(T) $} (m-1-3) ;    
  \end{tikzpicture} \qedhere
  \]
\end{proof}

\subsection{Further properties of the conjugation functor}
\label{sec:further-conjugation}

In this subsection we examine how the conjugation functor interacts with some natural constructions in $\cvect$.  
We show also that conjugation is an autoequivalence of $\cvect$ and that its square is naturally isomorphic to the identity functor.
We begin by showing that $\bbC$ is naturally isomorphic to its own conjugate.
Our main tool in this subsection will be Proposition \ref{prop:antilinear_representable}.

\begin{lem}
  \label{lem:cself_conj}
  Let $(\lambda \mapsto \bar{\lambda}) \in \cHom (\bbC, \bbC)$ be the usual complex conjugation map from $\bbC$ to itself.
  Then $\gamma = \Psi_{\bbC \bbC}(\lambda \mapsto \bar{\lambda}) \in \Hom(\bar{\bbC}, \bbC)$ is a linear isomorphism.
  Explicitly, $\gamma$ is given by $\gamma(c_\bbC(\lambda)) = \bar{\lambda}$.
\end{lem}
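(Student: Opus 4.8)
The plan is to recognize that this lemma is essentially an immediate specialization of Proposition \ref{prop:antilinear_representable} to the case $V = W = \bbC$, supplemented by a direct computation of the explicit formula. The key observation is that the usual complex conjugation map $\lambda \mapsto \bar\lambda$ is not merely an antilinear map $\bbC \to \bbC$, but an antilinear \emph{isomorphism}: it lies in $\cHom(\bbC,\bbC)$ by the elementary identity $\overline{\alpha\lambda + \beta\mu} = \bar\alpha\bar\lambda + \bar\beta\bar\mu$, and it is bijective because it is its own inverse, since $\overline{\overline{\lambda}} = \lambda$.

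First I would record that complex conjugation is an anti-isomorphism of $\bbC$, as just noted. Then, invoking the second assertion of Proposition \ref{prop:antilinear_representable}---that $\Psi_{VW}$ carries antilinear isomorphisms to linear isomorphisms---I conclude at once that $\gamma = \Psi_{\bbC\bbC}(\lambda \mapsto \bar\lambda)$ is a linear isomorphism from $\bar\bbC$ to $\bbC$. No further argument is needed for the isomorphism claim; it is a black-box consequence of the preceding proposition.

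For the explicit formula, I would simply unwind the definition $\Psi_{VW}(T) = T \circ c_V^{-1}$ from Proposition \ref{prop:antilinear_representable}. Evaluating at a typical element $c_\bbC(\lambda) \in \bar\bbC$ gives $\gamma(c_\bbC(\lambda)) = \big((\mu \mapsto \bar\mu) \circ c_\bbC^{-1}\big)(c_\bbC(\lambda)) = (\mu \mapsto \bar\mu)(\lambda) = \bar\lambda$, which is exactly the asserted formula.

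There is no real obstacle here, only a point requiring care: the notational collision flagged in the Notation block preceding the lemma. The symbol $\bar\lambda$ on the right-hand side of $\gamma(c_\bbC(\lambda)) = \bar\lambda$ must be read as the complex-conjugate scalar in $\bbC$, not as the formal symbol $c_\bbC(\lambda) \in \bar\bbC$. I would make this reading explicit in the computation, so that the identification of $\bar\bbC$ with $\bbC$ furnished by $\gamma$ is unambiguous; indeed this is precisely the identification that justifies the convention $c_\bbC(\lambda) \leftrightarrow \bar\lambda$ announced in the Notation block.
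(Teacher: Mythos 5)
Your proposal is correct and matches the paper's intent exactly: the paper's proof is just ``Straightforward,'' and the intended argument is precisely the one you give, namely applying Proposition \ref{prop:antilinear_representable} to the antilinear isomorphism $\lambda \mapsto \bar{\lambda}$ and unwinding $\Psi_{\bbC\bbC}(T) = T \circ c_\bbC^{-1}$ to obtain the explicit formula. Your remark disambiguating the two readings of $\bar{\lambda}$ is a sensible addition consistent with the Notation block.
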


\begin{proof}
  Straightforward.
\end{proof}

The analogue of Lemma \ref{lem:cself_conj} holds for any complex vector space with an antilinear automorphism.
Of course, any vector space has such an anti-automorphism if one fixes a basis; the point is that the anti-automorphism of $\bbC$ is canonical.

\begin{lem}
  \label{lem:conj_involutory}
  The map  $ \sigma_V :  \bar{\bar{V}} \to V$ given by $ \bar{\bar{v}} \mapsto  v$ is a linear isomorphism.  
  Given a linear map $T : V \to W$, the following diagram commutes:
  \begin{equation}
    \label{eq:conj_involutory}
    \begin{CD}
      \bar{\bar{V}}  @>\bar{\bar{T}}>> \bar{\bar{W}} \\
      @V{\sigma_V}VV   @VV{\sigma_W}V \\
      V @>>T> W 
    \end{CD}
  \end{equation}
\end{lem}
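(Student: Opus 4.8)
The plan is to recognize $\sigma_V$ as the inverse of a composition of conjugation maps, and then to exploit the formula $\bar{T} = c_W \circ T \circ c_V^{-1}$ from Definition \ref{dfn:conjugation-functor}, so that both claims reduce to bookkeeping with the antilinear bijections $c_V$ and $c_{\bar V}$.

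First I would unwind the formal symbols. By definition $\bar{\bar v} = c_{\bar V}(c_V(v))$, so the composite $c_{\bar V} \circ c_V : V \to \bar{\bar V}$ is exactly the map $v \mapsto \bar{\bar v}$. Being a composition of two antilinear maps, it is linear by part (b) of Lemma \ref{lem:antilinearmaps_vectorspace}, and it is bijective because $c_V$ and $c_{\bar V}$ are. Since $\sigma_V$ is precisely its inverse, $\sigma_V = (c_{\bar V} \circ c_V)^{-1} = c_V^{-1} \circ c_{\bar V}^{-1}$, we conclude that $\sigma_V$ is a linear isomorphism, which disposes of the first claim.

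For the commuting square I would compute both legs directly. Expanding the definition of the conjugate map twice gives $\bar{\bar T} = c_{\bar W} \circ c_W \circ T \circ c_V^{-1} \circ c_{\bar V}^{-1}$. Composing on the left with $\sigma_W = c_W^{-1} \circ c_{\bar W}^{-1}$, the block $c_W^{-1} \circ c_{\bar W}^{-1} \circ c_{\bar W} \circ c_W$ collapses to the identity, leaving $\sigma_W \circ \bar{\bar T} = T \circ c_V^{-1} \circ c_{\bar V}^{-1} = T \circ \sigma_V$, which is exactly the commutativity of \eqref{eq:conj_involutory}. The same identity also drops out of a one-line element chase: $\bar{\bar T}(\bar{\bar v}) = \bar{\bar{T(v)}}$, whence $\sigma_W(\bar{\bar T}(\bar{\bar v})) = T(v) = T(\sigma_V(\bar{\bar v}))$.

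There is no serious obstacle here; the content is entirely formal, and in a sense the lemma is just the statement that the square of the conjugation functor is naturally isomorphic to the identity. The only point requiring genuine care is not to identify $\bar{\bar v}$ with $v$ prematurely: they live in different vector spaces, and the whole purpose of $\sigma_V$ is to record the canonical identification between them. Keeping $\bar{\bar v}$ and $v$ notationally distinct until $\sigma_V$ is actually applied is what makes the naturality computation transparent rather than circular.
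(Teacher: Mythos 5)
Your proposal is correct and follows essentially the same route as the paper: both identify $\sigma_V = (c_{\bar V} \circ c_V)^{-1}$, deduce linearity from the fact that a composite of two antilinear bijections is a linear isomorphism, and obtain commutativity of \eqref{eq:conj_involutory} by stacking two instances of the defining square \eqref{eq:conj_lin_map_def} for $\bar{T}$ and $\bar{\bar{T}}$. Your explicit cancellation $c_W^{-1} \circ c_{\bar W}^{-1} \circ c_{\bar W} \circ c_W = \id$ is just the algebraic form of the paper's diagram chase, so nothing further is needed.
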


\begin{proof}
  Note that $\sigma_V = (c_{\bar{V}} \circ c_V)^{-1} = \Psi_{\bar{V} V}(c_V^{-1})$; since both $c_V$ and $c_{\bar{V}}$ are antilinear isomorphisms, $\sigma_V$ is a linear isomorphism.  
  Commutativity of \eqref{eq:conj_involutory} follows from commutativity of the diagram
\[
  \begin{CD}
    V  @>{c_V}>> \bar{V}  @>{c_{\bar{V}}}>> \bar{\bar{V}} \\
    @V{T}VV @V{\bar{T}}VV @V{\bar{\bar{T}}}VV \\
    W  @>>{c_W}> \bar{W}  @>>{c_{\bar{W}}}> \bar{\bar{W}} 
  \end{CD} 
\] \qedhere
\end{proof}

\begin{prop}
  \label{prop:conjugation-equiv-of-cats}
  The conjugation functor is an autoequivalence of $\cvect$ whose quasi-inverse is itself.  
  The conjugation functor is exact.
\end{prop}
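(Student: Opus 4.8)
The plan is to let Lemma \ref{lem:conj_involutory} do essentially all the work. Write $F$ for the conjugation functor. Lemma \ref{lem:conj_involutory} supplies, for each $V \in \cvect$, a linear isomorphism $\sigma_V : \bar{\bar{V}} \to V$, and the commuting diagram \eqref{eq:conj_involutory} is exactly the assertion that these isomorphisms are natural in $V$. In other words, $\sigma$ is a natural isomorphism $F \circ F \Rightarrow \id_{\cvect}$.

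For the first claim, recall that $F$ is an autoequivalence with quasi-inverse $G$ precisely when there are natural isomorphisms $G \circ F \cong \id_{\cvect}$ and $F \circ G \cong \id_{\cvect}$. Taking $G = F$, both composites coincide with $F \circ F$, and the single natural isomorphism $\sigma$ (or its inverse) furnishes each of the two required isomorphisms at once. Hence $F$ is an autoequivalence whose quasi-inverse is itself, with no additional verification needed.

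For exactness I would proceed in either of two ways. The high-level route is to invoke the general fact that any equivalence of abelian categories preserves all limits and colimits, hence is exact; this applies immediately in view of the first part. For a self-contained argument I would instead check directly that $F$ preserves kernels and images. Since $\bar{T} = c_W \circ T \circ c_V^{-1}$ with $c_V, c_W$ antilinear bijections, injectivity of $c_W$ and surjectivity of $c_V^{-1}$ give
\[ \im \bar{T} = c_W(\im T), \qquad \ker \bar{T} = c_V(\ker T). \]
Given a sequence $V \xrightarrow{f} W \xrightarrow{g} X$ exact at $W$, these identities yield $\im \bar{f} = c_W(\im f) = c_W(\ker g) = \ker \bar{g}$, so the conjugated sequence is exact at $\bar{W}$; additivity of $F$ is immediate from the definitions, so $F$ is exact.

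The only mild obstacle is the bookkeeping in the direct exactness argument, namely tracking how $c_V$ and $c_W$ push kernels and images forward; but since each $c_V$ is a bijection this is entirely routine, and everything else is formal once Lemma \ref{lem:conj_involutory} is in hand.
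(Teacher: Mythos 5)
Your proposal is correct and follows the same route as the paper: Lemma \ref{lem:conj_involutory} gives the natural isomorphism $F \circ F \cong \id_{\cvect}$, which immediately makes $F$ its own quasi-inverse, and exactness is deduced from the general fact that equivalences of abelian categories are exact. The extra direct verification of exactness via $\ker \bar{T} = c_V(\ker T)$ and $\im \bar{T} = c_W(\im T)$ is a sound (if optional) supplement that the paper omits.
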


\begin{proof}
  Lemma \ref{lem:conj_involutory} shows that the square of the conjugation functor is naturally isomorphic to the identity functor of $\cvect$.  
  Exactness holds because any equivalence of abelian categories is exact.
\end{proof}

\begin{lem}
  \label{lem:sum_naturality}
  Let $\mathrm{V} = (V_j)_{j \in J}$ be any family of vector spaces.  
  Then the map
  \[ \pi_{\mathrm{V}} : \bar{\bigoplus_{j \in J} V_j} \to \bigoplus_{j \in J} \bar{V_j}  \]
  given by
  \[ \pi_{\mathrm{V}} \left( \bar{(v_j)_{j \in J}} \right) = \left( \bar{v_j}  \right)_{j \in J}   \]
  is a linear isomorphism.
  If $\mathrm{W} = (W_j)_{j \in J}$ is another family of vector spaces indexed by $J$ and if $(T_j : V_j \to W_j)_{j \in J}$ is a family of linear maps, then the diagram
  \begin{equation}
    \begin{CD}
      \label{eq:sum_naturality}
      \bar{\bigoplus_{j \in J}{V_j}}  @>\bar{T_{\mathrm{V}}}>> \bar{\bigoplus_{j \in J} W_j} \\
      @V\pi_{\mathrm{V}}VV @VV\pi_{\mathrm{W}}V \\
      \bigoplus_{j \in J} \bar{V_j} @>>{T_{\bar{\mathrm{V}}}}> \bigoplus_{j \in J} \bar{W_j}
    \end{CD}
  \end{equation}
  commutes, where $T_{\mathrm{V}} = \oplus_{j\in J} T_j$ and $T_{\bar{\mathrm{V}}} = \oplus_{j \in J} \bar{T_j}$.
\end{lem}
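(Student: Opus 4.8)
The plan is to obtain the linearity and bijectivity of $\pi_{\mathrm{V}}$ in one stroke from Proposition~\ref{prop:antilinear_representable}, and then to settle the naturality square~\eqref{eq:sum_naturality} by a direct element chase. The guiding idea is that $\pi_{\mathrm{V}}$ is nothing but the map $\Psi$ applied to the ``componentwise conjugation'' antilinear isomorphism, so that all of the work is done by a result we already have in hand.

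First I would introduce the map $\alpha = \bigoplus_{j \in J} c_{V_j} : \bigoplus_{j \in J} V_j \to \bigoplus_{j \in J} \bar{V_j}$ defined by $(v_j)_{j \in J} \mapsto (\bar{v_j})_{j \in J}$; this is well defined on the direct sum because each $c_{V_j}$ sends $0$ to $0$ and hence preserves finite support. Each $c_{V_j}$ is an antilinear bijection by Definition~\ref{dfn:conjugation-functor}, so a componentwise check shows that $\alpha$ is antilinear and bijective, i.e.\ an antilinear isomorphism. Applying $\Psi$ gives $\Psi(\alpha) = \alpha \circ c_{\bigoplus_j V_j}^{-1}$, and evaluating on $\bar{(v_j)_{j \in J}} = c_{\bigoplus_j V_j}\big((v_j)_{j \in J}\big)$ returns $\alpha\big((v_j)_{j \in J}\big) = (\bar{v_j})_{j \in J}$, which is exactly the formula defining $\pi_{\mathrm{V}}$. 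Since $\alpha$ is an antilinear isomorphism, Proposition~\ref{prop:antilinear_representable} immediately yields that $\pi_{\mathrm{V}} = \Psi(\alpha)$ is a linear isomorphism. (Should one prefer to avoid $\Psi$, linearity can be checked by hand from~\eqref{eq:conj_vectorspace}: additivity is immediate, the scalar identity reads $\pi_{\mathrm{V}}\big(\lambda \cdot \bar{(v_j)_j}\big) = \pi_{\mathrm{V}}\big(\bar{(\bar{\lambda} v_j)_j}\big) = (\bar{\bar{\lambda} v_j})_j = \lambda \cdot (\bar{v_j})_j$, and the evident two-sided inverse is $(\bar{v_j})_j \mapsto \bar{(v_j)_j}$.)

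For the naturality square I would chase $\bar{(v_j)_{j \in J}}$ around both paths. Going across the top and down, the definition of the conjugate map gives $\bar{T_{\mathrm{V}}}\big(\bar{(v_j)_j}\big) = \bar{(T_j v_j)_j}$, which $\pi_{\mathrm{W}}$ then sends to $(\bar{T_j v_j})_j$. Going down and then across the bottom, $\pi_{\mathrm{V}}$ produces $(\bar{v_j})_j$, and $T_{\bar{\mathrm{V}}} = \bigoplus_j \bar{T_j}$ then sends this to $\big(\bar{T_j}(\bar{v_j})\big)_j = (\bar{T_j v_j})_j$, again by the defining property of each $\bar{T_j}$. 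The two outputs coincide, so~\eqref{eq:sum_naturality} commutes.

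I do not expect a genuine obstacle here; the content is routine. The only thing requiring care is notational bookkeeping, namely distinguishing the conjugation bar on the whole sum $\bigoplus_j V_j$ from componentwise conjugation of the summands, together with the finite-support condition in the infinite-index case. The factorization through $\Psi$ is attractive precisely because it isolates that distinction into the single clean identity $\pi_{\mathrm{V}} = \alpha \circ c_{\bigoplus_j V_j}^{-1}$ and lets Proposition~\ref{prop:antilinear_representable} supply linearity and invertibility automatically.
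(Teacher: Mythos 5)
Your proposal is correct and follows the paper's own argument exactly: the paper likewise observes that $\pi_{\mathrm{V}} = \Psi_{\oplus_j V_j,\, \oplus_j \bar{V_j}}(\oplus_j c_{V_j})$ and invokes Proposition \ref{prop:antilinear_representable}, dismissing the commutativity of \eqref{eq:sum_naturality} as straightforward. Your element chase simply fills in the detail the paper leaves to the reader.
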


\begin{proof}
  Note that $\pi_{\mathrm{V}} = \Psi_{\oplus_j V_j, \oplus_j \bar{V_j}}(\oplus_j c_{V_j})$.  
  Since $\oplus_j c_{V_j}$ is an antilinear isomorphism, it follows from Proposition \ref{prop:antilinear_representable} that $\pi_{\mathrm{V}}$ is a linear isomorphism.
  Commutativity of \eqref{eq:sum_naturality} is straightforward.
\end{proof}

\begin{rem}
  \label{rem:other-constructions}

  We may well ask about the interaction of complex conjugation with several other natural constructions in $\cvect$, namely duals, tensor products, and $\Hom$.
  We defer discussion of these constructions to later sections because of additional complications which are introduced when working with $H$-modules rather than vector spaces.
\end{rem}

\section{Complex conjugation of modules}
\label{sec:conj-of-modules}

In this section $H$ is a fixed Hopf $*$-algebra.
We define complex conjugation of $H$-modules and define the concept of an anti-module map.
We show how complex conjugation interacts with tensor products and duals of modules and we examine the complex conjugates of $\Hom$-spaces also.
We discuss how the results of Section \ref{sec:linalg} extend to $H$-modules.

\subsection{The conjugation functor on $\hmod$}
\label{sec:conj-hmod}

If $H$ is a Hopf algebra over $\bbC$ then $\hmod$ is a subcategory of $\cvect$.  
It is therefore natural to ask whether the complex conjugation functor restricts to an endofunctor on $\hmod$.
This boils down to two questions: first, whether we can define a module structure on $\bar{V}$ for $V \in \hmod$; and second, whether complex conjugates of module maps are again module maps.
The fact that $H$ is a Hopf $*$-algebra allows us to answer these questions affirmatively.

\begin{dfn}
  \label{dfn:conj-module}
  Given any module $V \in \hmod$, we define its \emph{complex conjugate module} to be the complex conjugate vector space $\bar{V}$ with action given by
  \begin{equation}
    \label{eq:conj-module-def}
    a \rhd \bar{v} = \bar{S(a)^* \rhd v}, \text{ or equivalently }  a \rhd c_V(v) = c_V (S(a)^* \rhd v)
  \end{equation}
  for $a \in H$ and $v \in V$.
\end{dfn}

\begin{rem}
  \label{rem:conj-module-def}
  It is straightforward to check that \eqref{eq:conj-module-def} defines an action of $H$ on $\bar{V}$; for the sake of completeness we carry out this computation in the proof of Proposition \ref{prop:conj-module-maps}.
  It is interesting to note that, except for interchanging the order of the antipode and the $*$-operation, this is essentially the only the choice for a module structure on $\bar{V}$: the $*$ is necessary in order to keep the operation of $a$ on $\bar{v}$ linear in $a$, but since the $*$ reverses products, the $S$ is required also to make $\bar{V}$ into a left module.

  If we instead switched the order of the $S$ and the $*$ in \eqref{eq:conj-module-def}, we would have an analogous concept.
  More precisely, we would define $\widetilde{V}$ to be $\bar{V}$ as a complex vector space, with the $H$-action given by $a \rhd \tilde{v} = \widetilde{S(a^*) \rhd v}$ for $a \in A$ and $v \in V$.
  Certainly all of the following theory could be developed in that framework, and while it does not appear that there is a natural transformation which directly connects these two complex conjugation functors, there are some relationships between them.

  First, we claim that $(\widetilde{V})^* \simeq \us \bar{V}$.
  Indeed, note that the underlying complex vector space of each of these modules is just $\Hom(\bar{V},\bbC)$.
  Then for $f\in \Hom(\bar{V},\bbC)$ and $v \in V$, we have 
  \[  (a \rhd f)(\tilde{v}) = f(S(a) \rhd \tilde{v}) = f(\widetilde{S(S(a)^*) \rhd v}) = f(\widetilde{a^* \rhd v}),    \]
  while on the other hand we have
  \[  (a \brhd f)(\bar{v}) = f(S^{-1}(a) \rhd \bar{v}) = f(\bar{a^* \rhd v}),    \]
  so the identity map on the underlying vector space $\Hom(\bar{V},\bbC)$ in fact is an isomorphism of modules $(\widetilde{V})^* \simeq \us \bar{V}$.

  Also we can consider the modules $\widetilde{\bar{V}}$ and $\bar{\widetilde{V}}$.
  For $v \in V$ and $a \in A$, in the former module we have
  \[ a \rhd \widetilde{\bar{v}} = \widetilde{S(a^*) \rhd \bar{v}} = \widetilde{\bar{S^2(a^*)^* \rhd v}} = \widetilde{\bar{S^{-2}(a) \rhd v}},     \]
  while in the latter we get
  \[  a \rhd \bar{\tilde{v}} = \bar{S(a)^* \rhd \tilde{v}} = \bar{\widetilde{S^2(a) \rhd v}}.   \]
  Thus the two actions are related by the automorphism $S^4$ of $H$.
  Hence if $S^4$ is an inner automorphism, the two modules are isomorphic, while if $S^2$ is an inner automorphism then both modules are isomorphic to $V$ (and hence the two conjugation functors are quasi-inverse to one another).
  As mentioned above, if $H$ is quasitriangular then in fact $S^2$ is inner.
  In general, however, this is not the case, although in some situations $S^4$ is ``almost inner'' in a certain sense, and this may allow one to say more about the relationship between the modules $\bar{\widetilde{V}}$ and $\widetilde{\bar{V}}$.
  For finite-dimensional Hopf algebras the result on $S^4$ is due to Radford \cite{Rad76}, extending a result of Larson; for various generalizations to different classes of infinite-dimensional Hopf algebras one can see \cite{BeaBulTor07}, \cite{BeaBul09}, and references therein.
\end{rem}

\begin{prop}
  \label{prop:conj-module-maps}
  Let $V, W \in \hmod$.
  \begin{enumerate}[(a)]
  \item Equation \eqref{eq:conj-module-def} defines an action of $H$ on $\bar{V}$, i.e.~$\bar{V} \in \hmod$.
  \item If $T : V \to W$ is a module map, then $\bar{T} : \bar{V} \to \bar{W}$ is also a module map.
  \item Complex conjugation is an endofunctor of $\hmod$.
  \end{enumerate}
\end{prop}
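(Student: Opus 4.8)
The plan is to verify the three claims in order, with parts (a) and (b) doing the real work and part (c) following formally. Each part reduces to checking that a formula built from $S$ and $*$ respects the relevant algebraic structure; the recurring technical input is that $*$ reverses products while $S$ is an antihomomorphism, so their composite $S(-)^*$ behaves well as an algebra map.

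For part (a), I would check directly that \eqref{eq:conj-module-def} defines a left action, i.e.\ that $a \rhd (b \rhd \bar{v}) = (ab) \rhd \bar{v}$ and $1 \rhd \bar{v} = \bar{v}$. Unwinding the definition, $a \rhd (b \rhd \bar{v}) = a \rhd \bar{S(b)^* \rhd v} = \bar{S(a)^* \rhd (S(b)^* \rhd v)} = \bar{(S(a)^* S(b)^*) \rhd v}$, and the key computation is that $S(a)^* S(b)^* = (S(b)S(a))^* = (S(ab))^*$, using that $*$ reverses products and $S$ is an antihomomorphism. This gives exactly $(ab) \rhd \bar{v}$. I should also confirm linearity in $a$: since $*$ is antilinear and the conjugation $v \mapsto \bar{v}$ is antilinear, the two antilinearities cancel, leaving $a \mapsto a \rhd \bar{v}$ linear, as noted in Remark \ref{rem:conj-module-def}. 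The unit axiom is immediate from $S(1) = 1$ and $1^* = 1$.

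For part (b), I would use the characterization of $\bar{T}$ as the unique linear map making \eqref{eq:conj_lin_map_def} commute, i.e.\ $\bar{T}(\bar{v}) = \bar{T(v)}$. Given that $T$ is a module map, I compute $\bar{T}(a \rhd \bar{v}) = \bar{T}(\bar{S(a)^* \rhd v}) = \bar{T(S(a)^* \rhd v)} = \bar{S(a)^* \rhd T(v)} = a \rhd \bar{T(v)} = a \rhd \bar{T}(\bar{v})$, where the third equality is $H$-linearity of $T$ and the remaining equalities are just the definitions of the conjugate action and the conjugate map. Thus $\bar{T}$ intertwines the actions.

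For part (c), the functoriality axioms $\bar{\id_V} = \id_{\bar{V}}$ and $\bar{T \circ U} = \bar{T} \circ \bar{U}$ are already established in Lemma \ref{lem:functoriality} at the level of vector spaces, and parts (a) and (b) show that $\bar{V}$ lands in $\hmod$ and that $\bar{T}$ is a morphism there; so conjugation is a well-defined endofunctor of $\hmod$ with no further verification required. I do not anticipate a genuine obstacle here — the content is entirely in the bookkeeping — but the one place to be careful is the identity $S(a)^* S(b)^* = S(ab)^*$ in part (a), since getting the order of $S$ and $*$ right (and tracking that each individually reverses order, so their composite preserves it) is exactly the subtlety flagged in Remark \ref{rem:conj-module-def}.
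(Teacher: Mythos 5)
Your proposal is correct and follows essentially the same route as the paper: the same computation $S(a)^*S(b)^* = (S(b)S(a))^* = S(ab)^*$ for part (a), the same chain of equalities for part (b), and the same appeal to Lemma \ref{lem:functoriality} for part (c). The only difference is that you spell out the unit axiom and linearity in $a$, which the paper dismisses as clear.
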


\begin{proof}
  \begin{enumerate}[(a)]
  \item It is clear that the action is linear in $H$ and in $\bar{v}$.  
    We just check that it is compatible with multiplication in $H$.
    For $a,b \in H$ and $\bar{v} \in \bar{V}$ we have
    \begin{align*}
      a \rhd (b \rhd \bar{v}) & = a \rhd \bar{S(b)^* \rhd v} \\
      & = \bar{ S(a)^* \rhd (S(b)^* \rhd v)} \\
      & = \bar{(S(a)^* S(b)^*) \rhd v} \\
      & = \bar{(S(b)S(a))^* \rhd v} \\
      & = \bar{S(ab)^* \rhd v} \\
      & = (ab) \rhd \bar{v}.
    \end{align*}
  \item 
    For $v \in V$ and $a \in H$ we have
    \begin{align*}
      \bar{T} (a \rhd \bar{v}) & = \bar{T} (\bar{S(a)^* \rhd v}) \\
      & = \bar{ T(S(a)^* \rhd v) } \\
      & = \bar{S(a)^* \rhd T(v)} \\
      & = a \rhd \bar{T(v)} \\
      & = a \rhd (\bar{T}(\bar{v}))
    \end{align*} 
    Thus $\bar{T}$ is a module map.
  \item We know already from Lemma \ref{lem:functoriality} that complex conjugation takes identities to identities and preserves composition.  By part $\mathrm{(b)}$ it takes module maps to module maps, so it defines a functor from $\hmod$ to itself.  \qedhere
  \end{enumerate}
\end{proof}

\subsection{Antimodule maps}
\label{sec:antimodule-maps}

In Section \ref{sec:linalg} we defined antilinear maps and introduced complex conjugation of vector spaces as a way to turn antilinear maps into linear ones.
Here we do the opposite: having defined complex conjugation of modules, we use this to motivate the definition of antimodule maps.
We then show that complex conjugation of modules turns antimodule maps into module maps, as one would hope.
This will prove useful later on when we show how the results of Section \ref{sec:linalg} extend to the category of $H$-modules.

\begin{dfn}
  \label{dfn:antimodule-maps}
  Let $V, W \in \hmod$.  We say that a function $T : V \to W$ is an \emph{antimodule map} if $T$ is antilinear and satisfies
  \[  T(a \rhd v) = S(a)^* \rhd T(v) \]
  for all $a \in H$ and $v \in V$.
  We denote the collection of antimodule maps from $V$ to $W$ by $\cHomH(V,W)$.
\end{dfn}

Of course, the prototype for the definition of an antimodule map is $c_V : V \to \bar{V}$.  The analogue of Lemma \ref{lem:antilinearmaps_vectorspace} $\mathrm{
(b)}$ holds:

\begin{lem}
  \label{lem:antimodulemaps-composition}
  The composition of two antimodule maps is a module map.
  The composition of an antimodule map and a module map, in either order, is an antimodule map.
\end{lem}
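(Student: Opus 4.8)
The plan is to verify the two claimed composition properties directly from the definitions, mirroring the proof of Lemma \ref{lem:antilinearmaps_vectorspace}(b) but now tracking the extra module-compatibility condition from Definition \ref{dfn:antimodule-maps}. The antilinearity/linearity bookkeeping is already handled by Lemma \ref{lem:antilinearmaps_vectorspace}(b), so the only real content is checking how the twist $a \mapsto S(a)^*$ behaves under composition, and this is where the relation \eqref{eq:sstar}, i.e.\ $* \circ S \circ * \circ S = \id_H$, does the work.

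First I would treat the composition of two antimodule maps. Suppose $T : V \to W$ and $R : W \to X$ are antimodule maps. Their composite $R \circ T$ is linear by Lemma \ref{lem:antilinearmaps_vectorspace}(b), so it remains to check module-compatibility. For $a \in H$ and $v \in V$ I compute
\begin{align*}
  (R \circ T)(a \rhd v) & = R(S(a)^* \rhd T(v)) \\
  & = S(S(a)^*)^* \rhd R(T(v)).
\end{align*}
The key simplification is that $S(S(a)^*)^* = a$: indeed this is exactly relation \eqref{eq:sstar}, since $* \circ S \circ * \circ S = \id_H$ gives $(S((S(a))^*))^* = a$ after using involutivity of $*$. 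Hence $(R \circ T)(a \rhd v) = a \rhd (R \circ T)(v)$, so $R \circ T$ is a genuine module map.

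Next I would handle the two mixed cases. If $T : V \to W$ is an antimodule map and $P : W \to X$ is a module map, then $P \circ T$ is antilinear by Lemma \ref{lem:antilinearmaps_vectorspace}(b), and for $a \in H$, $v \in V$ we get $(P \circ T)(a \rhd v) = P(S(a)^* \rhd T(v)) = S(a)^* \rhd P(T(v))$, so $P \circ T$ satisfies the antimodule condition. For the other order, if $Q : U \to V$ is a module map and $T : V \to W$ is an antimodule map, then $T \circ Q$ is again antilinear, and $(T \circ Q)(a \rhd u) = T(a \rhd Q(u)) = S(a)^* \rhd T(Q(u))$, which is the antimodule condition for $T \circ Q$. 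There is no real obstacle here; the only point requiring care is the composition-of-two case, where one must recognize that the iterated twist $S(S(\cdot)^*)^*$ collapses to the identity precisely because of the invertibility relation \eqref{eq:sstar} guaranteed by the Hopf $*$-algebra structure.
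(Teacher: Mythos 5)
Your proof is correct and is exactly the direct verification the paper leaves as ``straightforward'': the only nontrivial point is the collapse $S(S(a)^*)^* = a$ in the two-antimodule-map case, which you correctly identify as the relation \eqref{eq:sstar}, and the mixed cases are immediate. Nothing is missing.
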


\begin{proof}
  Straightforward.
\end{proof}

\begin{prop}
  \label{prop:antimodule-representable}
  Let $V,W \in \hmod$.  Then the linear isomorphism $\Psi_{VW}$ from Proposition \ref{prop:antilinear_representable} restricts to an isomorphism 
  \[  \Psi_{VW} : \cHomH(V,W) \to \homH(\bar{V},W).  \]
  The analogue of the naturality statement \eqref{eq:antilinear_representable} holds with respect to module maps $S : V' \to V$ and $R : W \to W'$.
\end{prop}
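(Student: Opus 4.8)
The plan is to leverage Proposition \ref{prop:antilinear_representable} directly: we already know that $\Psi_{VW} : \cHom(V,W) \to \Hom(\bar{V},W)$, given by $T \mapsto T \circ c_V^{-1}$, is a linear isomorphism. To prove the present statement, I would show that this isomorphism carries the subspace $\cHomH(V,W)$ of antimodule maps onto the subspace $\homH(\bar{V},W)$ of module maps, and vice versa. Since $\Psi_{VW}$ is already a bijection on the ambient spaces, it suffices to check that $T$ is an antimodule map if and only if $\Psi_{VW}(T) = T \circ c_V^{-1}$ is a module map; the restriction is then automatically a linear isomorphism of subspaces.

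First I would recall that $\bar{V}$ carries the action $a \rhd c_V(v) = c_V(S(a)^* \rhd v)$ from Definition \ref{dfn:conj-module}, equivalently $a \rhd \bar{v} = \bar{S(a)^* \rhd v}$. Given an antimodule map $T : V \to W$, set $\tilde{T} = \Psi_{VW}(T) = T \circ c_V^{-1}$. For $w = c_V(v) \in \bar{V}$ I would compute $\tilde{T}(a \rhd c_V(v)) = \tilde{T}(c_V(S(a)^* \rhd v)) = T(S(a)^* \rhd v)$. Now applying the antimodule property $T(b \rhd v) = S(b)^* \rhd T(v)$ with $b = S(a)^*$, and using the identity $S(S(a)^*)^* = a$ (which follows from \eqref{eq:sstar}, namely $* \circ S \circ * \circ S = \id_H$), this equals $a \rhd T(v) = a \rhd \tilde{T}(c_V(v))$. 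Thus $\tilde{T}$ is a module map. The reverse implication runs the same computation backwards, using that $c_V^{-1}$ is an anti-isomorphism and that every element of $H$ is of the form $S(a)^*$ (since $a \mapsto S(a)^*$ is a bijection of $H$, again by invertibility of $S$ and of $*$).

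For the naturality claim, I would observe that $\cHomH$ and $\homH(\bar{\,\cdot\,},-)$ are subfunctors of $\cHom$ and $\Hom(\bar{\,\cdot\,},-)$ respectively: by Lemma \ref{lem:antimodulemaps-composition}, precomposing an antimodule map with a module map $S : V' \to V$ and postcomposing with a module map $R : W \to W'$ yields an antimodule map, so the functor $\cHom(S,R)$ restricts to $\cHomH$; likewise $\Hom(\bar{S},R)$ restricts to $\homH(\bar{V'},W')$ since $\bar{S}$ is a module map by Proposition \ref{prop:conj-module-maps}(b) and composition of module maps is a module map. The commutative square \eqref{eq:antilinear_representable} then restricts verbatim to the subspaces, since all four maps in that square carry the relevant subspaces into one another.

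The main obstacle is bookkeeping around the twisted action on $\bar{V}$ and the correct use of the identity $S(S(a)^*)^* = a$; one must be careful to apply the antimodule defining relation with the argument $b = S(a)^*$ rather than $a$ itself, and to verify that $a \mapsto S(a)^*$ is a bijection so that checking the module-map condition on elements $a \rhd c_V(v)$ exhausts all of $\bar{V}$. Once these identities are correctly tracked, both implications are short and the naturality is essentially formal given Lemma \ref{lem:antimodulemaps-composition} and Proposition \ref{prop:conj-module-maps}(b). I expect no genuinely hard step, only the need to keep the antipode-and-star twisting straight.
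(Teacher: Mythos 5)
Your proposal is correct and follows essentially the same route as the paper: restrict the already-established bijection $\Psi_{VW}$ to the subspaces and note that naturality is inherited from \eqref{eq:antilinear_representable}. The only difference is that where you verify by an explicit element computation (using $S(S(a)^*)^* = a$) that $T\circ c_V^{-1}$ is a module map exactly when $T$ is an antimodule map, the paper gets both directions in one line by observing that $c_V$ is an antimodule map and invoking Lemma \ref{lem:antimodulemaps-composition}; your computation is just that lemma unwound.
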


\begin{proof}
  Since $c_V$ is an antimodule map, Lemma \ref{lem:antimodulemaps-composition} implies that $\Psi_{VW}$ carries antimodule maps to module maps.  The inverse of $\Psi_{VW}$ is given by $T \mapsto T \circ c_V$.  The naturality statement is immediate from \eqref{eq:antilinear_representable}.
\end{proof}

\begin{prop}
  \label{prop:modulemaps_naturality}
  Let $V$ and $W$ be objects in $\hmod$ and let $\mathrm{V} = (V_j)_{j \in J}$ be a family of objects in $\hmod$.
  \begin{enumerate}[(a)]
  \item The map $\gamma : \bar{\bbC} \to \bbC$ from Lemma \ref{lem:cself_conj} is a module isomorphism.  
  \item The map $\sigma_V : \bar{\bar{V}} \to V$ from Lemma \ref{lem:conj_involutory} is a module isomorphism.
  \item The map $\pi_{\mathrm{V}} : \bar{\bigoplus_{j \in J} V_j} \to \bigoplus_{j \in J} \bar{V_j}$ from Lemma \ref{lem:sum_naturality} is a module isomorphism.
  \end{enumerate}
\end{prop}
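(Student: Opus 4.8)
The plan is to verify that each of the three maps, which are already known to be linear isomorphisms from the corresponding lemmas in Section \ref{sec:linalg}, intertwines the relevant $H$-actions. Since all three maps $\gamma$, $\sigma_V$, and $\pi_{\mathrm{V}}$ are of the form $\Psi_{\bullet\bullet}(f)$ for an antilinear isomorphism $f$ (as recorded in the proofs of Lemmas \ref{lem:cself_conj}, \ref{lem:conj_involutory}, and \ref{lem:sum_naturality}), the cleanest route is to invoke Proposition \ref{prop:antimodule-representable}: the map $\Psi_{VW}(T)$ is a module map precisely when $T$ is an antimodule map. Thus in each case it suffices to check that the underlying antilinear map is in fact an \emph{antimodule} map, i.e.\ that it satisfies $T(a \rhd v) = S(a)^* \rhd T(v)$ with respect to the actions defined in Definition \ref{dfn:conj-module}. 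This reduces all three parts to short antimodule-compatibility verifications rather than direct computations with $\gamma$, $\sigma_V$, $\pi_{\mathrm{V}}$ themselves.

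For part (a), the relevant antilinear map is the usual complex conjugation $\lambda \mapsto \bar{\lambda}$ on $\bbC$, where $\bbC$ carries the trivial module structure via the counit $\counit$. I would check that $\overline{a \rhd \lambda} = \overline{\counit(a)\lambda} = \overline{\counit(a)}\,\bar\lambda$ equals $S(a)^* \rhd \bar\lambda = \counit(S(a)^*)\bar\lambda$; this follows because $\counit \circ S = \counit$ and $\counit$ is a $*$-homomorphism (noted just after Definition \ref{dfn:hopfstar-alg}), so $\counit(S(a)^*) = \overline{\counit(S(a))} = \overline{\counit(a)}$. For part (c), the antilinear map is $\oplus_j c_{V_j}$, and I would check the antimodule condition coordinatewise, where it is immediate since each $c_{V_j}$ is an antimodule map by construction and the action on the direct sum is diagonal. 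These two parts are essentially mechanical.

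Part (b) is where I expect the only real friction. Here the antilinear map is $c_V^{-1} : \bar V \to V$ (since $\sigma_V = \Psi_{\bar V V}(c_V^{-1})$), and one must verify it is an antimodule map from $\bar V$ to $V$, meaning $c_V^{-1}(a \rhd \bar v) = S(a)^* \rhd c_V^{-1}(\bar v)$. Unwinding the action on $\bar V$ from \eqref{eq:conj-module-def}, the left side is $c_V^{-1}(\overline{S(a)^* \rhd v}) = S(a)^* \rhd v$, while the right side is $S(a)^* \rhd v$ directly, so the two agree once one keeps the double-conjugation bookkeeping straight. The main obstacle is purely notational: tracking how the action unfolds through two layers of conjugation on $\bar{\bar V}$ versus one on $\bar V$, and making sure the $*$ and $S$ land in the correct order. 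Alternatively, one can bypass $\Psi$ and check $\sigma_V$ directly against the module structure on $\bar{\bar V}$ obtained by applying \eqref{eq:conj-module-def} twice, using the identity \eqref{eq:sstar} to simplify $S(S(a)^*)^* $ back to $a$; this is the one spot where the Hopf $*$-algebra axioms do genuine work. Either way the verification is short, and I would state each part as ``this follows from Proposition \ref{prop:antimodule-representable} once we check the underlying antilinear map is an antimodule map,'' supplying the one-line antimodule computation in each case.
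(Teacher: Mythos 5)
Your proposal is correct and follows essentially the same route as the paper: reduce each part to showing the underlying antilinear map ($\lambda \mapsto \bar\lambda$, $c_V^{-1}$, and $\oplus_j c_{V_j}$) is an antimodule map and then apply Proposition \ref{prop:antimodule-representable}. The only difference is that you spell out the antimodule verification for $c_V^{-1}$ in part (b), where the paper simply cites that $c_V$ is an antimodule map; your extra care there is harmless and arguably clarifying.
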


\begin{proof}
  We have already shown that $\gamma, \sigma_V$, and $\pi_{\mathrm{V}}$ are linear isomorphisms in Section \ref{sec:linalg}, so it is only left to show that they are morphisms of modules.
  Since these maps are all constructed from antilinear maps using the natural isomorphisms $\Psi$, our strategy is to show that these antilinear maps are actually anti\emph{module} maps; then the result will follow from Proposition \ref{prop:antimodule-representable}. 
  \begin{enumerate}[(a)]
  \item For $a \in H$ and $\lambda \in \bbC$ we have
    \[  \bar{a \rhd \lambda} = \bar{ \counit(a) \lambda} = \bar{\counit(S(a))} \cdot \bar{\lambda} = \counit(S(a)^*) \bar{\lambda} = S(a)^* \rhd \bar{\lambda},   \]
    so $\lambda \mapsto \bar{\lambda}$ is an antimodule map.  
    Hence $\gamma = \Psi_{\bbC \bbC}(\lambda \mapsto \bar{\lambda})$ is a module map.
  \item Since $c_V$ is an antimodule map, $\sigma_V = \Psi_{\bar{V} V} (c_V^{-1})$ is a module map.
  \item Each $c_{V_j}$ is an antimodule map, so $\oplus_{j} c_{V_j}$ is as well.
    Hence $\pi_{\mathrm{V}} = \Psi_{\oplus_j V_j, \oplus_j \bar{V_j}}(\oplus_j c_{V_j})$ is a module map.  \qedhere
  \end{enumerate}
\end{proof}

\subsection{Conjugation of tensor products}
\label{sec:tensor-conj}

In this subsection we show that complex conjugation reverses the order of tensor products of modules.

\begin{prop}
  \label{prop:conj_tensor_prod}
  Given any modules $V,W \in \hmod$, the map $\rho_{VW} : \bar{V \otimes W} \to \bar{W} \otimes \bar{V}$ given by
  $ \bar{v \otimes w} \mapsto \bar{w} \otimes \bar{v}$ is an isomorphism of $H$-modules.  Given module maps $S : W \to Y$ and $T: X \to Z$, the diagram
  \begin{equation}
    \label{eq:conj_tensor_prod}
    \begin{CD}
      \bar{W \otimes X}    @>\bar{S \otimes T}>>      \bar{Y \otimes Z} \\
      @V{\rho_{WX}}VV    @VV{\rho_{YZ}}V \\
      \bar{X} \otimes \bar{W}   @>>{\bar{T} \otimes \bar{S}}>  \bar{Z} \otimes \bar{Y}
    \end{CD}
  \end{equation}
  commutes.  Furthermore, for any $U,V,W$ in $\hmod$, the diagram
  \begin{equation}
    \label{eq:conj_tensor_prod_assoc}
    \begin{CD}
      \bar{U \otimes V \otimes W} @>{\rho_{U,V \otimes W}}>>  \bar{V \otimes W} \otimes \bar{U} \\
      @V{\rho_{U \otimes V, W}}VV   @VV{\rho_{VW} \otimes \id_{\bar{U}}}V \\
      \bar{W} \otimes \bar{U \otimes V} @>>{\id_{\bar{W}} \otimes \rho_{UV}}> \bar{W} \otimes \bar{V} \otimes \bar{U}
    \end{CD}
  \end{equation}
  commutes.
\end{prop}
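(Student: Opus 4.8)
The plan is to follow the strategy used in Proposition~\ref{prop:modulemaps_naturality}: exhibit $\rho_{VW}$ as $\Psi$ applied to a suitable anti\emph{module} map and then invoke Proposition~\ref{prop:antimodule-representable}. Let $\tau : V \otimes W \to W \otimes V$ denote the linear flip $v \otimes w \mapsto w \otimes v$ and consider the antilinear map
\[ F = (c_W \otimes c_V) \circ \tau : V \otimes W \to \bar W \otimes \bar V, \qquad F(v \otimes w) = \bar w \otimes \bar v. \]
First I note that the tensor product of two antilinear maps is well-defined and antilinear, since scalars pass freely across $\otimes$; thus $c_W \otimes c_V$, and hence $F$, is antilinear and bijective (being a composite of the bijections $\tau$ and $c_W \otimes c_V$). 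By construction $\rho_{VW} = \Psi_{V \otimes W,\, \bar W \otimes \bar V}(F)$, so once $F$ is shown to be an antimodule map, Propositions~\ref{prop:antilinear_representable} and~\ref{prop:antimodule-representable} give at once that $\rho_{VW}$ is an isomorphism of $H$-modules.

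The heart of the matter is checking the antimodule condition $F(a \rhd (v \otimes w)) = S(a)^* \rhd F(v \otimes w)$. Expanding the left side with the tensor action~\eqref{eq:tensorprod_action} and using that $c_V$ and $c_W$ are antimodule maps, I get
\[ F\big((a_{(1)} \rhd v) \otimes (a_{(2)} \rhd w)\big) = \big(S(a_{(2)})^* \rhd \bar w\big) \otimes \big(S(a_{(1)})^* \rhd \bar v\big). \]
To match this against the right side I must know $\cop(S(a)^*)$, and the crucial identity is
\[ \cop(S(a)^*) = S(a_{(2)})^* \otimes S(a_{(1)})^*, \]
which I would obtain by combining the standard fact that the antipode reverses the coproduct, $\cop(S(a)) = S(a_{(2)}) \otimes S(a_{(1)})$, with axiom~(c) of Definition~\ref{dfn:hopfstar-alg} that $\cop$ is a $*$-homomorphism. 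Feeding this into the tensor action then reproduces exactly the expression above. I expect lining up the Sweedler legs to be the only genuine obstacle: it is precisely the order reversal of the antipode on $\cop$, combined with the fact that $*$ preserves the leg order of $\cop$, that forces $\rho_{VW}$ to swap the two tensor factors.

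For the naturality square~\eqref{eq:conj_tensor_prod} and the coherence diagram~\eqref{eq:conj_tensor_prod_assoc}, every arrow is linear, so it suffices to chase a simple tensor through each. For~\eqref{eq:conj_tensor_prod}, tracing $\bar{w \otimes x}$ along both paths yields $\bar{T(x)} \otimes \bar{S(w)}$, using the definitions of $\rho$ and of the conjugate of a morphism. For~\eqref{eq:conj_tensor_prod_assoc}, tracing $\bar{u \otimes v \otimes w}$ through both composites yields $\bar w \otimes \bar v \otimes \bar u$, the two routes agreeing up to the suppressed bracketing of the threefold tensor product. Both are routine element chases requiring no input beyond the definition of $\rho_{VW}$ established in the first part.
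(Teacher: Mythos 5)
Your proposal is correct and follows essentially the same route as the paper: both exhibit $\rho_{VW}$ as $\Psi_{V\otimes W,\,\bar W\otimes\bar V}$ applied to the antilinear flip-and-conjugate map, verify the antimodule condition via the identity $\cop(S(a)^*)=S(a_{(2)})^*\otimes S(a_{(1)})^*$, and dispose of the two diagrams by routine element chases. The only difference is cosmetic (you build the antimodule map as $(c_W\otimes c_V)\circ\tau$ and justify its well-definedness, where the paper writes it down directly), so no further comment is needed.
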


\begin{proof}
  Our goal is to show that $\eta :  v \otimes w \mapsto \bar{w} \otimes \bar{v}$ is an antimodule map; then it will follow from Proposition \ref{prop:antimodule-representable} that $\rho_{VW} = \Psi_{V \otimes W, \bar{W} \otimes \bar{V}}(\eta)$ is a module map.
  Noting that $\cop(S(a)^*) = S(a_{(2)})^* \otimes S(a_{(1)})^*$, we have
  \begin{align*}
    \eta ( a \rhd v \otimes w) & = \eta (a_{(1)} \rhd v \otimes a_{(2)} \rhd w) \\
    & = \bar{a_{(2)} \rhd w} \otimes \bar{a_{(1)} \rhd v} \\
    & = (S(a_{(2)})^* \rhd \bar{w}) \otimes (S(a_{(1)})^* \rhd \bar{v}) \\
    & = S(a)^* \rhd \eta(v \otimes w),
  \end{align*}
  so $\eta$ is an antimodule map, as claimed. 
  It is clear that $\rho_{VW}$ is a linear isomorphism and that (\ref{eq:conj_tensor_prod}) and (\ref{eq:conj_tensor_prod_assoc}) commute.
\end{proof}

\subsection{Conjugation of duals}
\label{sec:dual-conj}

It is natural to ask whether (left or right) dualization commutes with complex conjugation of modules.
It turns out that the situation is slightly subtler than that.
In fact, complex conjugation intertwines left and right duals, i.e.\ $\bar{V^*} \simeq \us{\bar{V}}$.
This can be seen as follows.
Consider the evaluation pairing $\ev_V : V^* \otimes V \to \bbC$.  
Taking the complex conjugate of this map, composing with the isomorphism $\gamma : \bar{\bbC} \to \bbC$, and precomposing with the isomorphism $\rho_{V^*V}^{-1}$ of $\bar{V} \otimes \bar{V^*}$ with $\bar{V^* \otimes V}$ gives
\[  \gamma \circ \bar{\ev_V} \circ \rho_{V^*V}^{-1} : \bar{V} \otimes \bar{V^*} \to \bbC;   \]
thus $\bar{V^*}$ plays the role of the right dual of $\bar{V}$.
We formalize this as follows:

\begin{prop}
  \label{prop:conjugate-dual}
  Let $V,W \in \hmod$.
  \begin{enumerate}[(a)]
  \item The map $b_V : V^* \to \us{\bar{V}}$ given by
    \[  b_V(f)(\bar{v}) = \bar{f(v)}   \]
    is a bijective antimodule map.
  \item The map $\beta_V : \bar{V^*} \to \us{\bar{V}}$ given by
    \[  \beta_V = \Psi_{V^*, \us{\bar{V}}}(b_V)  \]
    is an isomorphism of modules.
  \item For any module map $T : V \to W$, the following diagram commutes:
    \begin{equation}
      \label{eq:conjugate-dual-naturality}
      \begin{CD}
        \bar{W^*}  @>{\bar{T^{\tr}}}>> \bar{V^*} \\
        @V{\beta_W}VV   @VV{\beta_V}V \\
        \us{\bar{W}}  @>>{\bar{T}^{\tr}}>  \us{\bar{V}}
      \end{CD}
    \end{equation}
  \item The collection of maps $(\beta_V)$ is a natural equivalence of the (contravariant) endofunctors of $\hmod$ given by left dual followed by conjugation, and conjugation followed by right dual, respectively.
  \end{enumerate}
\end{prop}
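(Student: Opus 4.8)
The plan is to treat parts (a)--(d) in order, with part (a) carrying essentially all of the content and the remaining parts following formally from it together with the representability machinery of Proposition~\ref{prop:antimodule-representable}.

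For part (a) I would verify three things about $b_V$. First, that $b_V(f)$ really is an element of $\us{\bar V} = \Hom(\bar V, \bbC)$, i.e.\ that it is \emph{linear} on $\bar V$: this is a one-line check, $b_V(f)(\lambda\cdot\bar v) = \bar{f(\bar\lambda v)} = \lambda\,\bar{f(v)}$, with the conjugation absorbing the bar on $\lambda$. Second, that $f \mapsto b_V(f)$ is antilinear, which is immediate since $b_V(\mu f)(\bar v) = \bar{\mu f(v)} = \bar\mu\, b_V(f)(\bar v)$. Third, and this is the essential point, that $b_V$ satisfies the antimodule identity $b_V(a \rhd f) = S(a)^* \brhd b_V(f)$, where on the right $\brhd$ is the right-dual action on $\us{\bar V}$. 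Unwinding both sides on a vector $\bar v$, the left side equals $\bar{f(S(a)\rhd v)}$ by the left-dual action~\eqref{eq:dual-action}, while the right side requires computing $S^{-1}(S(a)^*)$ and feeding it through the conjugate-module action~\eqref{eq:conj-module-def}. Bijectivity is then clear from the explicit inverse $g \mapsto (v \mapsto \bar{g(\bar v)})$.

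The main obstacle is precisely this antipode-and-$*$ bookkeeping. The tool I would use throughout is the identity $*\circ S\circ *\circ S = \id_H$ from~\eqref{eq:sstar}, in the equivalent form $S(a^*) = (S^{-1}(a))^*$. With it one computes $S^{-1}(S(a)^*) = (S^2(a))^*$, and then $(S^2(a))^* \rhd \bar v = \bar{S(a)\rhd v}$ in $\bar V$; substituting shows that $S(a)^* \brhd b_V(f)$ evaluated at $\bar v$ equals $\bar{f(S(a)\rhd v)}$, matching the left side. Getting the order of $S$, $S^{-1}$, and $*$ right at each step is the only delicate part.

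Parts (b)--(d) are then formal. Part (b) is immediate: since $b_V$ is a bijective antimodule map, Proposition~\ref{prop:antimodule-representable} tells us $\beta_V = \Psi_{V^*,\us{\bar V}}(b_V)$ is a module isomorphism. For part (c), I would substitute $\beta_V = b_V\circ c_{V^*}^{-1}$ and expand $\bar{T^{\tr}} = c_{V^*}\circ T^{\tr}\circ c_{W^*}^{-1}$ from Definition~\ref{dfn:conjugation-functor}; after the $c_{V^*}$'s cancel and the common factor $c_{W^*}^{-1}$ is removed, commutativity of~\eqref{eq:conjugate-dual-naturality} reduces to the elementwise identity $b_V\circ T^{\tr} = \bar{T}^{\tr}\circ b_W$, which unwinds on $f \in W^*$ and $\bar v\in\bar V$ with both sides equal to $\bar{f(T(v))}$. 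Finally, part (d) simply packages (b) and (c): the two contravariant functors $V\mapsto\bar{V^*}$ and $V\mapsto\us{\bar V}$ send a module map $T$ to $\bar{T^{\tr}}$ and $\bar{T}^{\tr}$ respectively (functoriality coming from Proposition~\ref{prop:conj-module-maps} together with functoriality of dualization), each $\beta_V$ is an isomorphism by (b), and the naturality square is exactly (c).
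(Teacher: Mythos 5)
Your proposal is correct and follows essentially the same route as the paper: part (a) is the only substantive step, resting on the identity \eqref{eq:sstar} to sort out the interaction of $S$, $S^{-1}$, and $*$ (the paper verifies the equivalent identity $a \brhd b_V(f) = b_V(S(a)^* \rhd f)$ rather than the definition as literally stated, but this is the same computation), and parts (b)--(d) follow formally from Proposition \ref{prop:antimodule-representable} exactly as you describe. Your elementwise check of $b_V \circ T^{\tr} = \bar{T}^{\tr} \circ b_W$ correctly fills in the step the paper dismisses as ``straightforward.''
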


\begin{proof}
  \begin{enumerate}[(a)]
  \item Note first that $b_V(f) = (\lambda \mapsto \bar{\lambda}) \circ f \circ c_V^{-1}$, where $\lambda \mapsto \bar{\lambda}$ is the complex conjugation map of $\bbC$ to itself.  
    This shows both that $b_V(f)$ is a linear functional on $\bar{V}$ and that $b_V$ itself is an antilinear map.
    The inverse of $b_V$ is given by $g \mapsto (\lambda \mapsto \bar{\lambda}) \circ g \circ c_V$, so $b_V$ is bijective.
    Thus we just need to show that $b_V$ is an antimodule map.
    For $f \in V^*$, $v \in V$, and $a \in H$ we have
    \begin{align*}
      (a \brhd b_V(f)) (\bar{v}) & = b_V(f)(S^{-1}(a) \rhd \bar{v}) \\
      & = b_V(f)(\bar{a^* \rhd v}) \\
      & = \bar{f(a^* \rhd v)} \\
      & = \bar{ (S^{-1}(a^*) \rhd f)(v) } \\
      & = b_V(S(a)^* \rhd f)(\bar{v}),
    \end{align*}
    so $a \brhd b_V(f) = b_V(S(a)^* \rhd f)$.
    Hence $b_V$ is an antimodule map.
  \item Immediate from $\mathrm{(a)}$ together with Proposition \ref{prop:antimodule-representable}.
  \item Straightforward.
  \item Immediate from $\mathrm{(b)}$ and $\mathrm{(c)}$.  \qedhere
  \end{enumerate}
\end{proof}

\subsection{Conjugation of $\Hom$'s}
\label{sec:hom-conj}

The fact that conjugation switches left and right duals implies also that conjugation switches left and right $\Hom$-spaces.
Heuristically, we have
\[ \bar{\homl(V,W)} \simeq \bar{W \otimes V^*} \simeq \bar{V^*} \otimes \bar{W}  \simeq \us{\bar{V}} \otimes \bar{W} \simeq \homr(\bar{V},\bar{W}),  \]
where the first isomorphism comes from the decomposition \eqref{eq:left-hom-decomp}, the second from Proposition \ref{prop:conj_tensor_prod}, the third from Proposition \ref{prop:conjugate-dual}, and the last from \eqref{eq:right-hom-decomp}.
As we noted in Remark \ref{rem:hom-decomp-caveat}, these tensor product decompositions of the $\Hom$-spaces are valid only when at least one of $V$ and $W$ is finite-dimensional.
However, the following results are valid for all modules since they refer only to actions on $\Hom$-spaces and not to the tensor product decompositions.

\begin{prop}
  \label{prop:conjugate-hom}
  Let $V,W \in \hmod$.
  \begin{enumerate}[(a)]
  \item The map $b_V : \homl(V,W) \to \homr(\bar{V},\bar{W})$ given by
    \[  b_V(T)  = \bar{T}  \]
    is a bijective antimodule map.
  \item The map $\beta_V : \bar{\homl(V,W)} \to \homr(\bar{V},\bar{W})$ given by
    \[  \beta_V = \Psi_{\homl(V,W),\homr(\bar{V},\bar{W})}(b_V)   \]
    is an isomorphism of modules.
    On elements $\beta_V$ is given by
    \[  \beta_V(c_{\homl(V,W)}(T)) = \bar{T}.    \]
  \end{enumerate}
\end{prop}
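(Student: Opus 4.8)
The plan is to follow the template used repeatedly in this section: first verify that the antilinear map $b_V$ is in fact an \emph{antimodule} map, and then promote it to the module isomorphism $\beta_V$ by a single application of Proposition \ref{prop:antimodule-representable}. This reduces the entire proposition to the verification of one intertwining identity.

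For part (a), I would first record that $b_V(T) = \bar{T} = c_W \circ T \circ c_V^{-1}$, so that $b_V$ is antilinear by the computation in Remark \ref{rem:conj_dfn} and is bijective with inverse $R \mapsto c_W^{-1} \circ R \circ c_V$. The substantive point is the antimodule condition of Definition \ref{dfn:antimodule-maps}, which here reads
\[
  b_V(a \rhd T) = S(a)^* \brhd b_V(T),
\]
where the left-hand action is the left $\Hom$-action \eqref{eq:hom-action} on $\homl(V,W)$ and the right-hand action is the right $\Hom$-action \eqref{eq:black-hom-action} on $\homr(\bar V, \bar W)$. I would check this by evaluating both sides on $\bar v \in \bar V$. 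The left side unwinds directly to
\[
  b_V(a \rhd T)(\bar v) = \bar{a_{(1)} \rhd T(S(a_{(2)}) \rhd v)}
\]
from the definition of $\bar T$ together with \eqref{eq:hom-action}. For the right side I would expand \eqref{eq:black-hom-action} with $b = S(a)^*$ and then apply the conjugate-module action \eqref{eq:conj-module-def} on both $\bar V$ and $\bar W$, obtaining an expression of the form $\bar{S(b_{(2)})^* \rhd T(b_{(1)}^* \rhd v)}$.

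The two facts that force the sides to coincide are the coproduct identity $\cop(S(a)^*) = S(a_{(2)})^* \otimes S(a_{(1)})^*$ (already used in the proof of Proposition \ref{prop:conj_tensor_prod}) and the relation $S(x^*)^* = S^{-1}(x)$, which is just a restatement of \eqref{eq:sstar}. The former gives $b_{(1)} = S(a_{(2)})^*$ and $b_{(2)} = S(a_{(1)})^*$; then $b_{(1)}^* = S(a_{(2)})$ by involutivity of $*$, while $S(b_{(2)})^* = S(S(a_{(1)})^*)^* = a_{(1)}$ by the latter relation. Substituting collapses the right-hand side to $\bar{a_{(1)} \rhd T(S(a_{(2)}) \rhd v)}$, matching the left. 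I expect this bookkeeping---tracking which of $*$, $S$, and $S^{-1}$ lands on which Sweedler leg, and in particular the cancellation producing $a_{(1)}$---to be the only real obstacle, though it is purely mechanical once the two identities above are in hand. Part (b) is then immediate: since $b_V$ is a bijective antimodule map, Proposition \ref{prop:antimodule-representable} shows that $\beta_V = \Psi(b_V)$ is a module isomorphism, and the element-wise formula follows from $\Psi_{VW}(T) = T \circ c_V^{-1}$ of Proposition \ref{prop:antilinear_representable}, which gives $\beta_V(c_{\homl(V,W)}(T)) = b_V(T) = \bar{T}$.
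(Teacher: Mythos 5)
Your proposal is correct and follows essentially the same route as the paper: both reduce everything to verifying that $b_V$ is a bijective antimodule map by an element-wise computation using $\cop(S(a)^*) = S(a_{(2)})^* \otimes S(a_{(1)})^*$ and \eqref{eq:sstar}, and then invoke Proposition \ref{prop:antimodule-representable} for part (b). The only cosmetic difference is that you verify $b_V(a \rhd T) = S(a)^* \brhd b_V(T)$ while the paper checks the equivalent identity $a \brhd b_V(T) = b_V(S(a)^* \rhd T)$; these are interchanged by the substitution $a \mapsto S(a)^*$.
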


\begin{proof}
  \begin{enumerate}[(a)]
  \item Note first that $b_V(T) = c_W \circ T \circ c_V^{-1}$.  This shows both that $b_V(T)$ is a linear map $\bar{V} \to \bar{W}$ and that $b_V$ itself is an antimodule map.
    The inverse of $b_V$ is given by $U \mapsto c_W^{-1} \circ U \circ c_V$, so $b_V$ is bijective.
    Thus we just need to show that $b_V$ is an antimodule map.
    For $T \in \Hom(V,W)$, $v \in V$, and $a \in H$ we have
    \begin{align*}
      (a \brhd \bar{T}) (\bar{v}) & = a_{(2)} \rhd (\bar{T}(S^{-1}(a_{(1)}) \rhd \bar{v})) \\
      & = a_{(2)} \rhd ( \bar{T}(\bar{a_{(1)}^* \rhd v}  )  ) \\
      & = a_{(2)} \rhd \bar{T (a_{(1)}^* \rhd v)} \\
      & = \bar{S(a_{(2)})^* \rhd T(a_{(1)}^* \rhd v)  } \\
      & = \bar{(S(a)^* \rhd T)(v)} \\
      & = (\bar{S(a)^* \rhd T})(\bar{v}),
    \end{align*}
    so we see that 
    \[  a \brhd b_V(T) = a \brhd \bar{T} =  \bar{S(a)^* \rhd T} = b_V (S(a)^* \rhd T)  \]
    and hence $b_V$ is an antimodule map.
  \item Follows immediately from $\mathrm{(a)}$ together with Proposition \ref{prop:antimodule-representable}.  \qedhere
  \end{enumerate}
\end{proof}

\begin{rem}
  \label{rem:conj-maps-identification}
  Part $\mathrm{(b)}$ of Proposition \ref{prop:conjugate-hom} realizes the identification of the linear map $\bar{T}$ with the abstract element $c_{\Hom(V,W)}(T)$ that we promised in the discussion immediately preceding Lemma \ref{lem:functoriality}. 
\end{rem}

\section{Complex conjugation of algebras}
\label{sec:conj-of-algebras}

In this section we look at algebras in the category $\hmod$ and we show that the complex conjugate of an algebra is again an algebra.
We examine tensor algebras of modules in this light and show that for a module $V$, both $\bar{T(V)}$ and $T(\bar{V})$ satisfy a universal property with respect to lifting of \emph{antimodule} maps from $V$ into algebras in $\hmod$.

For an introduction to module-algebras over a Hopf algebra one can see Section 1.3.3 of \cite{KliSch97} or Section 4.1.C of \cite{ChaPre95}; a much deeper treatment can be found in Chapter 4 of \cite{Mon93}.

\subsection{The complex conjugate of an algebra}
\label{sec:conj_algebra}

Recall that an \emph{$H$-module algebra} is a $\bbC$-algebra $(A,m,u)$, where $A \in \hmod$ and $ m : A \otimes A \to A$ and $u : \bbC \to A$ are morphisms in $\hmod$.  
We will generally just say that $A$ is an algebra in $\hmod$ and refer to the maps $m$ and $u$ explicitly only when necessary; we write the product as $m(x \otimes y) = xy$ and the unit as $u(1_\bbC) = 1_A$.
In terms of elements, the fact that $m$ and $u$ are module maps means that for $a \in H$ and $x,y \in A$ we have
\begin{equation}
  \label{eq:module-alg-conditions}
  a \rhd (xy) = (a_{(1)} \rhd x) (a_{(2)} \rhd y) \quad \text{ and } \quad a \rhd 1_A = \counit(a) 1_A.    
\end{equation}
If $A$ and $B$ are algebras in $\hmod$, then we say that $f : A \to B$ is a \emph{morphism of module-algebras} or a \emph{module-algebra morphism} if $f$ is simultaneously a module map and an algebra homomorphism.

\begin{prop}
  \label{prop:conj-module-alg}
  \begin{enumerate}[(a)]
  \item If $(A,m,u)$ is an algebra in $\hmod$ then $(\bar{A},m_{\bar{A}},u_{\bar{A}})$ is an algebra in $\hmod$ with the structure maps given by 
    \[ u_{\bar{A}} = \bar{u} \circ \gamma^{-1}, \quad m_{\bar{A}} = \bar{m} \circ \rho_{AA}^{-1}, \] 
    where $\gamma : \bar{\bbC} \to \bbC$ and $\rho_{AA} : \bar{A \otimes A} \to \bar{A} \otimes \bar{A}$ are the isomorphisms from Proposition \ref{prop:modulemaps_naturality} $(a)$ and Proposition \ref{prop:conj_tensor_prod}, respectively.
  \item If $f : A \to B$ is a morphism of module-algebras, then $\bar{f} : \bar{A} \to \bar{B}$ is also a morphism of module-algebras.
  \item Complex conjugation is an endofunctor of the category of $H$-module algebras with module-algebra morphisms.
  \end{enumerate}
\end{prop}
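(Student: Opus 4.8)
The plan is to regard conjugation, together with the natural isomorphisms $\rho$ of Proposition \ref{prop:conj_tensor_prod} and $\gamma$ of Proposition \ref{prop:modulemaps_naturality}, as an order-reversing monoidal functor on $\hmod$; such a functor automatically sends monoid objects to monoid objects and monoid morphisms to monoid morphisms, which is precisely the content of (a) and (b). To orient the computation it helps to unwind the definitions on elements: since $\rho_{AA}^{-1}(\bar x \otimes \bar y) = \bar{y \otimes x}$, the induced product is $\bar x \cdot \bar y = \bar{yx}$, the conjugate of the opposite multiplication of $A$, and $u_{\bar A}$ picks out $\bar{1_A}$. Thus at the level of elements the whole statement reduces to the familiar fact that the opposite of an associative unital algebra is again one.

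First I would treat (a). That $m_{\bar A}$ and $u_{\bar A}$ are module maps is immediate, since each is a composite of module maps: $\bar m$ and $\bar u$ by Proposition \ref{prop:conj-module-maps}(b), $\rho_{AA}^{-1}$ by Proposition \ref{prop:conj_tensor_prod}, and $\gamma^{-1}$ by Proposition \ref{prop:modulemaps_naturality}(a). For associativity I would compare the composites $m_{\bar A}\circ(m_{\bar A}\otimes\id_{\bar A})$ and $m_{\bar A}\circ(\id_{\bar A}\otimes m_{\bar A})$ on $\bar A\otimes\bar A\otimes\bar A$; expanding each in terms of $\bar m$ and $\rho^{-1}$ and inserting the associativity coherence \eqref{eq:conj_tensor_prod_assoc} reduces both sides to the conjugate of the associativity identity $m\circ(m\otimes\id)=m\circ(\id\otimes m)$ for $A$. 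The unit axioms follow in the same spirit from the unit axioms of $(A,m,u)$ together with the naturality square \eqref{eq:conj_tensor_prod} for $\rho$ and the definition of $\gamma$; on elements these are simply $\bar{1_A}\,\bar x=\bar x=\bar x\,\bar{1_A}$.

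Next, for (b), the map $\bar f$ is already a module map by Proposition \ref{prop:conj-module-maps}(b), so only the homomorphism identities remain. Starting from $f\circ m_A=m_B\circ(f\otimes f)$, I would apply the conjugation functor (using functoriality, Lemma \ref{lem:functoriality}) and then use the naturality square \eqref{eq:conj_tensor_prod} with $S=T=f$ to commute $\bar{f\otimes f}$ past the maps $\rho_{AA}$ and $\rho_{BB}$, which yields $\bar f\circ m_{\bar A}=m_{\bar B}\circ(\bar f\otimes\bar f)$; compatibility with units follows similarly from $f\circ u_A=u_B$ and the fixed isomorphism $\gamma$. On elements this is the one-line check $\bar f(\bar x\cdot\bar y)=\bar f(\bar{yx})=\bar{f(y)f(x)}=\bar f(\bar x)\cdot\bar f(\bar y)$ together with $\bar f(\bar{1_A})=\bar{1_B}$.

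Finally, (c) is immediate: conjugation preserves identities and composition by Lemma \ref{lem:functoriality}, it carries $H$-module algebras to $H$-module algebras by (a), and it carries module-algebra morphisms to module-algebra morphisms by (b), so it is an endofunctor of the category of $H$-module algebras. I expect the only genuine content to sit in the associativity verification of (a), where the associativity coherence \eqref{eq:conj_tensor_prod_assoc} must be threaded through correctly; once that diagram is available there is no real obstacle, as every other ingredient is a map already shown to be a module morphism.
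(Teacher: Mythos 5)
Your proof is correct and follows essentially the same route as the paper: both arguments reduce everything to the element-level formulas $\bar{x}\cdot\bar{y}=\bar{yx}$ and $1_{\bar A}=\bar{1_A}$ (so that (a) is the statement that the opposite algebra is an algebra), obtain the module-map property of $m_{\bar A}$, $u_{\bar A}$, and $\bar f$ from Proposition \ref{prop:conj-module-maps} and the fact that $\rho$ and $\gamma$ are module isomorphisms, and deduce (c) from functoriality. Your explicit threading of the coherence diagrams \eqref{eq:conj_tensor_prod} and \eqref{eq:conj_tensor_prod_assoc} is a slightly more careful rendering of what the paper simply declares ``clear from \eqref{eq:conj-algebra-operations},'' but it is the same argument.
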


\begin{rem}
  \label{rem:conj-module-alg}
  Unwinding the definition of the multiplication in $\bar{A}$, we see that for $a,b \in A$ we have
  \[  \bar{a} \cdot \bar{b} = m_{\bar{A}}(\bar{a} \otimes \bar{b}) = \bar{m}(\bar{b \otimes a}) = \bar{m(b \otimes a)} = \bar{ba}; \]
  this implies that 
  \[ \bar{1_A} \cdot \bar{a} = \bar{a \cdot 1_A} = \bar{a}   \]
  and similarly $\bar{a} \cdot \bar{1_A} = \bar{a}$.
  To summarize, in the conjugate algebra $\bar{A}$ we have
  \begin{equation}
    \label{eq:conj-algebra-operations}
    \bar{a} \cdot \bar{b} = \bar{ba} \quad \text{ and } \quad 1_{\bar{A}} = \bar{1_A}.
  \end{equation}
\end{rem}

\begin{proof}[Proof of Proposition \ref{prop:conj-module-alg}]
  \begin{enumerate}[(a)]
  \item  By Proposition \ref{prop:conj-module-maps}, both $\bar{m}$ and $\bar{u}$ are module maps.
  Then $m_{\bar{A}}$ and $u_{\bar{A}}$ are both module maps, as they are compositions of module maps.
  We just need to verify that $m_{\bar{A}}$ and $u_{\bar{A}}$ satisfy the associativity and unit laws.
  But this is clear from \eqref{eq:conj-algebra-operations}.
  \item  We already know that $\bar{f}$ is a module morphism by Proposition \ref{prop:conj-module-maps}, so we just need to check that it is an algebra homomorphism.  
    For $a,b \in A$ we have
    \begin{align*}
      \bar{f}(\bar{a} \cdot \bar{b}) & = \bar{f}(\bar{ba}) \\
      & = \bar{f(ba)} \\
      & = \bar{f(b) f(a)} \\
      & = \bar{f(a)} \cdot \bar{f(b)} \\
      & = \bar{f}(\bar{a}) \cdot \bar{f}(\bar{b}),
    \end{align*}
    so $\bar{f}$ is multiplicative.
    We have also
    \[ \bar{f}(1_{\bar{A}}) = \bar{f}(\bar{1_A}) = \bar{f(1_A)} = \bar{1_B} = 1_{\bar{B}},   \]
    so $\bar{f}$ is unital, and hence is an algebra homomorphism.
  \item Follows immediately from $\mathrm{(a)}$ and $\mathrm{(b)}$ since we already know that conjugation respects identity maps and composition.  \qedhere
  \end{enumerate}
\end{proof}

We now investigate some properties of morphisms of module-algebras and the corresponding conjugate-linear notions.  
For this we need the following:
\begin{dfn}
  \label{dfn:anti-module-alg-maps}
  If $A$ and $B$ are algebras in $\hmod$ then we say that a function $f : A \to B$ is an \emph{antimodule-algebra morphism} if $f$ is simultaneously an antimodule map, $f(ab) = f(b)f(a)$ for all $a,b \in A$, and $f(1_A) = 1_B$.
\end{dfn}

\begin{lem}
  \label{lem:anti-module-alg-maps}
  Let $A$ and $B$ be algebras in $\hmod$.
  \begin{enumerate}[(a)]
  \item The composition of two antimodule-algebra morphisms is a module-algebra morphism.
    The composition of an antimodule-algebra morphism and a module-algebra morphism, in either order, is an antimodule-algebra morphism.
  \item The map $c_A : A \to \bar{A}$ is an antimodule-algebra morphism.
  \item If $f : A \to B$ is an antimodule map, then $f$ is an antimodule-algebra morphism if and only if $\Psi_{AB}(f) : \bar{A} \to B$ is a module-algebra morphism.
  \end{enumerate}
\end{lem}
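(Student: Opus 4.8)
The plan is to reduce all three parts to two ingredients already in hand: the composition lemma for antimodule maps (Lemma~\ref{lem:antimodulemaps-composition}, or equivalently Proposition~\ref{prop:antimodule-representable}) to handle the underlying (anti)module structure, and the product-reversal formula \eqref{eq:conj-algebra-operations} for the conjugate algebra. In each part the linear/module bookkeeping is thus dispatched by earlier results, and the only genuinely new content is the behavior with respect to multiplication and units.

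For part~(a) I would take antimodule-algebra morphisms $f : A \to B$ and $g : B \to C$. Lemma~\ref{lem:antimodulemaps-composition} already shows $g \circ f$ is a module map, so it remains to check the algebra conditions. Since each of $f,g$ reverses products, the two reversals cancel:
\[ (g\circ f)(xy) = g\bigl(f(y)f(x)\bigr) = g(f(x))\,g(f(y)) = (g\circ f)(x)\,(g\circ f)(y), \]
and $(g\circ f)(1_A) = g(1_B) = 1_C$, so $g\circ f$ is a module-algebra morphism. The mixed case (one morphism of each type) is identical except that exactly one reversal survives, so the composite reverses products and is an antimodule-algebra morphism; the underlying-map assertion is again Lemma~\ref{lem:antimodulemaps-composition}. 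For part~(b), $c_A$ is the prototype antimodule map noted just after Definition~\ref{dfn:antimodule-maps}, and the algebra conditions are read straight off \eqref{eq:conj-algebra-operations}: $c_A(ab) = \bar{ab} = \bar{b}\cdot\bar{a} = c_A(b)\,c_A(a)$ and $c_A(1_A) = \bar{1_A} = 1_{\bar A}$.

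For part~(c) I would write $g = \Psi_{AB}(f) = f \circ c_A^{-1}$, so that $g(\bar a) = f(a)$. Proposition~\ref{prop:antimodule-representable} already supplies the equivalence ``$f$ is an antimodule map $\iff$ $g$ is a module map,'' so I only need to match the algebra conditions on the two sides, which a direct computation does in both directions simultaneously. Using \eqref{eq:conj-algebra-operations},
\[ g(\bar a \cdot \bar b) = g(\bar{ba}) = f(ba) \quad\text{while}\quad g(\bar a)\,g(\bar b) = f(a)\,f(b), \]
so $g$ is multiplicative exactly when $f(ba) = f(a)f(b)$ for all $a,b$, which is precisely the product-reversal clause of Definition~\ref{dfn:anti-module-alg-maps}; likewise $g(1_{\bar A}) = g(\bar{1_A}) = f(1_A)$, so $g$ is unital iff $f(1_A) = 1_B$. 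Combining, $g$ is a module-algebra morphism iff $f$ is an antimodule-algebra morphism. (One could instead deduce the forward implication from parts~(a) and~(b), since $f = g \circ c_A$, but the direct computation is cleaner and covers both directions at once.)

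There is no real obstacle here beyond bookkeeping: the single point requiring care is that the order reversal must land in exactly the right places, so that the product-reversal in $\bar A$ coming from \eqref{eq:conj-algebra-operations} matches the product-reversal built into the definition of an antimodule-algebra morphism. Once that matching is recognized, every verification collapses to a one-line computation, and I would expect the written proof to consist of little more than the displays above.
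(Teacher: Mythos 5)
Your proof is correct and matches the paper's approach: the paper dismisses (a) as straightforward, gives exactly your computation for (b), and for (c) simply invokes parts (a) and (b) via $f = \Psi_{AB}(f)\circ c_A$ — the route you mention parenthetically — while your direct computation is an equally valid (and slightly more explicit) way to get both directions at once.
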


\begin{proof}
  \begin{enumerate}[(a)]
  \item Straightforward.
  \item We have $c_A(1_A) = \bar{1_A} = 1_{\bar{A}}$, and for $a,b \in A$ we have
    \[  c_A(ab) = \bar{ab} = \bar{b}\bar{a} = c_A(b)c_A(a).   \]
  \item Since $\Psi_{AB}(f) = f \circ c_A^{-1}$, the result follows immediately from parts $\mathrm{(b)}$ and $\mathrm{(c)}$.
  \end{enumerate}
\end{proof}

\begin{cor}
  \label{cor:c-self-conj-as-algebra}
  The morphism $\gamma : \bar{\bbC} \to \bbC$ from Lemma \ref{lem:cself_conj} is an isomorphism of module-algebras.
\end{cor}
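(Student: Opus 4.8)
The plan is to verify that $\gamma : \bar{\bbC} \to \bbC$, already known to be a module isomorphism by Proposition \ref{prop:modulemaps_naturality}(a), also respects the algebra structure. Since $\bbC$ is the monoidal unit, it is an algebra in $\hmod$ in the obvious way, with multiplication being scalar multiplication and unit $1 \in \bbC$. The conjugate algebra $\bar{\bbC}$ carries the structure described in Proposition \ref{prop:conj-module-alg}, so by Remark \ref{rem:conj-module-alg} its operations are given by $\bar\lambda \cdot \bar\mu = \bar{\mu\lambda}$ and $1_{\bar\bbC} = \bar{1}$, where here I am writing $\bar\lambda$ for $c_\bbC(\lambda)$.

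First I would recall from Lemma \ref{lem:cself_conj} the explicit formula $\gamma(c_\bbC(\lambda)) = \bar\lambda$, where on the right $\bar\lambda$ denotes the ordinary complex conjugate scalar. Then I would check multiplicativity directly: using the conjugate-algebra product, $\gamma(c_\bbC(\lambda) \cdot c_\bbC(\mu)) = \gamma(c_\bbC(\mu\lambda)) = \bar{\mu\lambda} = \bar\lambda\,\bar\mu = \gamma(c_\bbC(\lambda))\,\gamma(c_\bbC(\mu))$, where the commutativity of $\bbC$ absorbs the order reversal. For the unit, $\gamma(1_{\bar\bbC}) = \gamma(c_\bbC(1)) = \bar{1} = 1 = 1_\bbC$. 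Since $\gamma$ is already a linear isomorphism and a module map, these two computations show it is a module-algebra morphism, and being bijective it is an isomorphism of module-algebras.

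Alternatively, and perhaps more cleanly, I would invoke Lemma \ref{lem:anti-module-alg-maps}(c). The map $\gamma = \Psi_{\bbC\bbC}(\lambda \mapsto \bar\lambda)$ is the image under $\Psi$ of the ordinary complex conjugation map $\kappa : \bbC \to \bbC$. It suffices to check that $\kappa$ is an antimodule-algebra morphism: it is an antimodule map by Proposition \ref{prop:modulemaps_naturality}(a), and $\kappa(\lambda\mu) = \bar{\lambda\mu} = \bar\mu\,\bar\lambda = \kappa(\mu)\kappa(\lambda)$ together with $\kappa(1) = 1$ verify the antimultiplicativity (again trivial by commutativity) and unitality conditions of Definition \ref{dfn:anti-module-alg-maps}. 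Then Lemma \ref{lem:anti-module-alg-maps}(c) gives at once that $\gamma = \Psi_{\bbC\bbC}(\kappa)$ is a module-algebra morphism.

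There is essentially no obstacle here: the commutativity of $\bbC$ makes the order-reversal in the conjugate product invisible, so multiplicativity is automatic, and the only genuine content—that $\gamma$ is a module isomorphism—has already been established. The main thing to be careful about is keeping the two meanings of the overline notation straight (the formal symbol $c_\bbC(\lambda) \in \bar\bbC$ versus the scalar $\bar\lambda \in \bbC$), which is precisely why I would lean on the explicit formula from Lemma \ref{lem:cself_conj} rather than manipulate bars abstractly.
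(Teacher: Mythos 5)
Your proposal is correct, and your second argument — applying Lemma \ref{lem:anti-module-alg-maps}(c) to the observation that ordinary complex conjugation $\lambda \mapsto \bar{\lambda}$ is an antimodule-algebra morphism — is exactly the paper's proof. The direct computation you give first is just an unwinding of the same facts and is equally valid.
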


\begin{proof}
  Since $\gamma = \Psi_{\bbC \bbC}(\lambda \mapsto \bar{\lambda})$, this follows from Lemma \ref{lem:anti-module-alg-maps}~$\mathrm{(c)}$ and the observation that $\lambda \mapsto \bar{\lambda}$ is an antimodule-algebra morphism.
\end{proof}

\begin{cor}
  \label{cor:sigma-alg-hom}
  If $A$ is an algebra in $\hmod$ then the map $\sigma_A : \bar{\bar{A}} \to A$ introduced in Lemma~\ref{lem:conj_involutory} is an isomorphism of module-algebras.
\end{cor}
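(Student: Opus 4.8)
The plan is to reduce everything to results already proved. By Proposition~\ref{prop:modulemaps_naturality}(b) the map $\sigma_A : \bar{\bar A} \to A$ is already known to be an isomorphism of modules, so the only new content is that $\sigma_A$ respects multiplication and the unit. Rather than verify these directly, I would exploit the identification of $\sigma_A$ with a composite of the canonical conjugation maps, which fits the representable/functorial style used throughout.

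First I would recall from the proof of Lemma~\ref{lem:conj_involutory} that $\sigma_A = (c_{\bar A} \circ c_A)^{-1}$, so that its inverse is the honest composite $\sigma_A^{-1} = c_{\bar A} \circ c_A : A \to \bar{\bar A}$. Here $\bar A$ is an algebra in $\hmod$ by Proposition~\ref{prop:conj-module-alg}(a), so Lemma~\ref{lem:anti-module-alg-maps}(b) applies twice: once to $A$, giving that $c_A : A \to \bar A$ is an antimodule-algebra morphism, and once to $\bar A$, giving that $c_{\bar A} : \bar A \to \bar{\bar A}$ is an antimodule-algebra morphism. By Lemma~\ref{lem:anti-module-alg-maps}(a), the composition of two antimodule-algebra morphisms is a \emph{module}-algebra morphism, so $\sigma_A^{-1}$ is a module-algebra morphism. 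It then remains only to invert: the set-theoretic inverse of a bijective algebra homomorphism is an algebra homomorphism, and the inverse of a module isomorphism is a module map, so $\sigma_A$ is itself a module-algebra morphism, hence an isomorphism of module-algebras.

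I do not expect a genuine obstacle here; the only point requiring care is the bookkeeping of product-reversal, since each application of $c$ reverses the order of multiplication by \eqref{eq:conj-algebra-operations}, and it is precisely the \emph{two} reversals that cancel and land the composite among order-preserving morphisms rather than antimodule-algebra ones. As a sanity check one can verify the claim on elements: \eqref{eq:conj-algebra-operations} gives $\bar{\bar a} \cdot \bar{\bar b} = \bar{\bar{ab}}$ in $\bar{\bar A}$ and $1_{\bar{\bar A}} = \bar{\bar{1_A}}$, whence $\sigma_A(\bar{\bar a} \cdot \bar{\bar b}) = ab = \sigma_A(\bar{\bar a})\,\sigma_A(\bar{\bar b})$ and $\sigma_A(1_{\bar{\bar A}}) = 1_A$, confirming the result.
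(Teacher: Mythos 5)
Your proof is correct and follows essentially the same route as the paper: both decompose $\sigma_A$ (or its inverse) into the canonical maps $c_A$ and $c_{\bar A}$ and then apply parts (a) and (b) of Lemma~\ref{lem:anti-module-alg-maps}. The paper writes $\sigma_A = c_A^{-1}\circ c_{\bar A}^{-1}$ directly rather than inverting at the end, but this is only a difference in bookkeeping.
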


\begin{proof}
  We know that $\sigma_A$ is a linear isomorphism from Lemma \ref{lem:conj_involutory} and that it is a module map by Proposition \ref{prop:modulemaps_naturality}.
  Since $\sigma_A = c_A^{-1} \circ c_{\bar{A}}^{-1}$, we have that $\sigma_A$ is a morphism of module-algebras from parts $\mathrm{(a)}$ and $\mathrm{(b)}$ of Lemma \ref{lem:anti-module-alg-maps}.
\end{proof}

\subsection{The complex conjugate of the tensor algebra}
\label{sec:conj-tensor-alg}

For $V$ in $\hmod$, the tensor algebra of $V$ is defined to be 
\[ T(V)  = \bigoplus_{n=0}^\infty V^{\otimes n}, \]
where by definition $V^{\otimes 0} = \bbC$ (with $H$-action given by the counit), and all of the tensor products are taken over $\bbC$.
We denote by $\iota_V$ the canonical injection of $V$ into $T(V)$, and note that $\iota_V$ is a module map.
We denote homogeneous elements of $T(V)$ by $v_1 \dots v_n$ rather than $v_1 \otimes \dots \otimes v_n$ in order to save space.

Our goal is to show that there is a natural isomorphism of module-algebras between $\bar{T(V)}$ and $T(\bar{V})$.
In order to do so, we begin with a discussion on the universal mapping property of the tensor algebra.
Although the following result is well-known, we include the formal statement and proof for the sake of completeness.

\begin{lem}
  \label{lem:tensor-alg-universal-property}
  Let $V \in \hmod$ and let $T(V)$ be defined as above.
  \begin{enumerate}[(a)]
  \item $T(V)$ is an algebra in $\hmod$ with multiplication given by concatenation of tensors and the unit $\bbC \to T(V)$ given by the isomorphism $\bbC \simeq V^{\otimes 0}$.
  \item For any algebra $A$ in $\hmod$ and any module map $f : V \to A$, there is a unique morphism of module-algebras $\tilde{f} : T(V) \to A$ such that $\tilde{f} \circ \iota_V = f$.
  \end{enumerate}
\end{lem}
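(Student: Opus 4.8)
The plan is to establish part (a) by reducing the algebra axioms to the corresponding well-known facts for the tensor algebra of the underlying vector space, adding only the verification that the structure maps are $H$-equivariant; and to establish part (b) by defining $\tilde{f}$ on homogeneous elements, checking that it is simultaneously an algebra homomorphism and a module map, and deducing uniqueness from the fact that $V$ generates $T(V)$.

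First I would address part (a). As a direct sum of the modules $V^{\otimes n}$, the space $T(V)$ is an object of $\hmod$. The multiplication restricted to $V^{\otimes m} \otimes V^{\otimes n}$ is the canonical identification with $V^{\otimes(m+n)}$, and the unit is the inclusion of $V^{\otimes 0} = \bbC$. Associativity and the unit laws are inherited verbatim from the tensor algebra of the underlying vector space, so the only genuine content is that these structure maps are morphisms in $\hmod$. For the unit this is the statement that $V^{\otimes 0} = \bbC$ carries the counit action, i.e.\ is the monoidal unit. For the multiplication, the action of $a \in H$ on both $V^{\otimes m} \otimes V^{\otimes n}$ and on $V^{\otimes(m+n)}$ is given by an iterated coproduct applied componentwise (see \eqref{eq:tensorprod_action}), and the two iterated coproducts agree by coassociativity of $\cop$; hence concatenation commutes with the $H$-action.

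Next I would treat part (b). Given a module map $f : V \to A$, I define $\tilde{f}$ on homogeneous elements by $\tilde{f}(v_1 \cdots v_n) = f(v_1) \cdots f(v_n)$, the product taken in $A$, together with $\tilde{f}(1_{T(V)}) = 1_A$. The assignment $(v_1, \dots, v_n) \mapsto f(v_1) \cdots f(v_n)$ is multilinear, so it factors through $V^{\otimes n}$ and $\tilde{f}$ is well-defined and linear; it is an algebra homomorphism because concatenation of tensors is sent to the product in $A$. That $\tilde{f}$ is a module map is the crux: writing the action on $V^{\otimes n}$ through the iterated coproduct, using that $f$ is a module map on each factor, and then using that the multiplication of $A$ is a module map, one obtains
\[ \tilde{f}(a \rhd v_1 \cdots v_n) = (a_{(1)} \rhd f(v_1)) \cdots (a_{(n)} \rhd f(v_n)) = a \rhd (f(v_1) \cdots f(v_n)) = a \rhd \tilde{f}(v_1 \cdots v_n), \]
where the middle equality is the module-algebra condition \eqref{eq:module-alg-conditions} applied iteratively. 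The identity $\tilde{f} \circ \iota_V = f$ is immediate from the definition, and uniqueness holds because any module-algebra morphism agreeing with $f$ on $\iota_V(V)$ is determined on all of $T(V)$, since the homogeneous elements generate $T(V)$ as an algebra and an algebra homomorphism is fixed by its values on generators.

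The argument is essentially the classical universal property together with equivariance checks, so there is no deep obstacle. The only point demanding care is the iterated bookkeeping with the coproduct: one must be comfortable that the $n$-fold coproduct is unambiguous (coassociativity) and that the module-algebra axiom extends from binary to $n$-fold products by induction. These are routine, but they are the steps where an indexing error would most easily creep in.
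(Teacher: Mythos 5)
Your proposal is correct and follows essentially the same route as the paper's proof: associativity and unit laws are inherited from the classical tensor algebra, equivariance of multiplication reduces to coassociativity of $\cop$, and $\tilde{f}$ is built degreewise from the multilinear maps $(v_1,\dots,v_n) \mapsto f(v_1)\cdots f(v_n)$, with the module-map property checked via the iterated coproduct and the module-algebra condition, and uniqueness following from generation by $\iota_V(V)$. No gaps.
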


\begin{proof}
  \begin{enumerate}[(a)]
  \item The fact that $T(V)$ is an associative algebra over $\bbC$ is straightforward.
    The crux of the matter is to show that the multiplication and unit are maps of $H$-modules.
    For this proof only we denote the multiplication map by $m : T(V) \otimes T(V) \to T(V)$.
    For $v_1, \dots, v_m,w_1, \dots, w_n \in V$, denote $v = v_1 \dots v_m$ and $w = w_1 \dots w_n$.  Then for $a \in H$ we have
    \begin{align*}
      m (a \rhd (v \otimes w)) & = m((a_{(1)} \rhd v)  \otimes  (a_{(2)} \rhd w)) \\
      & = (a_{(1)} \rhd v_1) \dots (a_{(m)} \rhd v_m) (a_{(m+1)} \rhd w_1) \dots (a_{(m+n)} \rhd w_n) \\
      & = a \rhd (v_1 \dots v_m w_1 \dots w_n) \\
      & = a \rhd m(v \otimes w),
    \end{align*}
    so we see that all we really used was coassociativity of the comultiplication.
    The fact that the unit is a module map is immediate, as the module action on $\bbC \simeq V^{\otimes 0}$ is just given by the counit of $H$.
  \item 
    Fix $n \ge 0$.
    If $n = 0$, define $f^0 : V^{\otimes 0} \to A$ by $f^0(1) = 1_A$.
    If $n > 0$, the map $\prod_{j=1}^n V \to A$ given by
    \[  (v_1, \dots, v_n) \mapsto f(v_1) \dots f(v_n)    \]
    is $\bbC$-multilinear and hence induces a unique map $f^n : V^{\otimes n} \to A$ given by
    \[  f^n(v_1 \dots v_n) = f(v_1) \dots f(v_n);  \]
    recall that we are omitting $\otimes$ signs inside $T(V)$.
    We claim that $f^n$ is a map of $H$-modules.
    For $a \in A$, we have
    \begin{align*}
      f^n(a \rhd (v_1 \dots v_n)) & = f^n((a_{(1)} \rhd v_1) \dots (a_{(n)} \rhd v_n)) \\
      & = f(a_{(1)}\rhd v_1) \dots f(a_{(n)} \rhd v_n) \\
      & = (a_{(1)} \rhd f(v_1)) \dots (a_{(n)} \rhd f(v_n)) \\
      & = a \rhd (f(v_1) \dots f(v_n)) \\
      & = a \rhd f^n(v_1 \dots v_n),
    \end{align*}
    which verifies the claim.
    Finally, we define $\tilde{f}$ to be the direct sum of the maps $f^n$ for $n \ge 0$.
    It is immediate from the definition that $\tilde{f}$ is an algebra homomorphism.
    It is also a map of $H$-modules since it is a direct sum of such maps.
    Clearly $\tilde{f} \circ \iota_V = f$, and $\tilde{f}$ is unique with this property since $\iota(V)$ generates $T(V)$ as an algebra.
    \qedhere
  \end{enumerate}
\end{proof}

  We will use similar ideas to show that $T(\bar{V})$ and $\bar{T(V)}$ are isomorphic; both of these algebras are equipped with antimodule maps from $V$ and they satisfy a universal mapping property with respect to algebras with antimodule maps coming from $V$.
  Then the usual abstract nonsense will show that there is a unique isomorphism between $T(\bar{V})$ and $\bar{T(V)}$.

We define maps $\theta_V : V \to T(\bar{V})$ and $\vth_V : V \to \bar{T(V)}$ by
\begin{equation}
  \label{eq:tensor-alg-embedding-maps}
  \theta_V = \iota_{\bar{V}} \circ c_V \quad \text{ and } \quad \vth_V = c_{T(V)} \circ \iota_V,
\end{equation}
where $\iota_V$ and $\iota_{\bar{V}}$ are the embeddings of $V$ and $\bar{V}$ into $T(V)$ and $T(\bar{V})$, respectively, as defined in the discussion at the beginning of the subsection.
Both $\theta_V$ and $\vth_V$ are antimodule maps by Lemma \ref{lem:antimodulemaps-composition}.

\begin{prop}
  \label{prop:conj-tensor-alg-mapping-properties}
  Let $V \in \hmod$ and let $\theta_V$ and $\vth_V$ be as in \eqref{eq:tensor-alg-embedding-maps}.
  Let $A$ be an algebra in $\hmod$ and let $f : V \to A$ be an antimodule map.
  \begin{enumerate}[(a)]
  \item There is a unique module-algebra morphism $\hat{f} : T(\bar{V}) \to A$ such that $\hat{f} \circ \theta_V = f$.
  \item There is a unique module-algebra morphism $\check{f} : \bar{T(V)} \to A$ such that $\check{f} \circ \vth_V = f$.
  \end{enumerate}
\end{prop}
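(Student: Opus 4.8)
The plan is to deduce both statements from the ordinary universal property of the tensor algebra (Lemma \ref{lem:tensor-alg-universal-property}) by passing through the conjugation machinery of Section \ref{sec:conj-of-modules}. The key observation is that the defining maps $\theta_V$ and $\vth_V$ are antimodule maps, and that $\Psi$ converts antimodule maps into module maps while, by Lemma \ref{lem:anti-module-alg-maps}, converting antimodule-algebra morphisms into module-algebra morphisms. Part $\mathrm{(a)}$ will follow almost directly, while part $\mathrm{(b)}$ requires a short detour through the conjugate algebra $\bar{A}$.

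For part $\mathrm{(a)}$, first I would note that since $f : V \to A$ is an antimodule map, Proposition \ref{prop:antimodule-representable} shows that $\Psi_{VA}(f) = f \circ c_V^{-1} : \bar{V} \to A$ is a module map. Applying Lemma \ref{lem:tensor-alg-universal-property}$\mathrm{(b)}$ to this module map yields a unique module-algebra morphism $\hat{f} : T(\bar{V}) \to A$ with $\hat{f} \circ \iota_{\bar{V}} = f \circ c_V^{-1}$. Composing with $c_V$ and using $\theta_V = \iota_{\bar{V}} \circ c_V$ gives $\hat{f} \circ \theta_V = f$. For uniqueness, since $c_V$ is bijective the condition $\hat{f} \circ \theta_V = f$ is equivalent to $\hat{f} \circ \iota_{\bar{V}} = f \circ c_V^{-1}$, so uniqueness is inherited directly from Lemma \ref{lem:tensor-alg-universal-property}$\mathrm{(b)}$.

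For part $\mathrm{(b)}$, the subtlety, and the main obstacle, is that $\vth_V = c_{T(V)} \circ \iota_V$ lands in $\bar{T(V)}$ rather than in the tensor algebra of a conjugate module, so there is no ready-made universal property to invoke. The remedy is to establish an \emph{antimodule} universal property of $T(V)$: for any algebra $A$ in $\hmod$ and any antimodule map $f : V \to A$, there is a unique antimodule-algebra morphism $F : T(V) \to A$ with $F \circ \iota_V = f$. To prove this I would use that $c_A : A \to \bar{A}$ is an antimodule-algebra morphism (Lemma \ref{lem:anti-module-alg-maps}$\mathrm{(b)}$), so that $c_A \circ f : V \to \bar{A}$ is a module map; the ordinary universal property then gives a unique module-algebra morphism $g : T(V) \to \bar{A}$ with $g \circ \iota_V = c_A \circ f$, and $F = c_A^{-1} \circ g$ is the desired antimodule-algebra morphism by Lemma \ref{lem:anti-module-alg-maps}$\mathrm{(a)}$. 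Here I would record that $c_A^{-1}$ is itself an antimodule-algebra morphism, which follows from the product reversal in \eqref{eq:conj-algebra-operations}.

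Finally, I would convert this back to a statement about $\bar{T(V)}$: by Lemma \ref{lem:anti-module-alg-maps}$\mathrm{(c)}$, the map $\check{f} := \Psi_{T(V),A}(F) = F \circ c_{T(V)}^{-1}$ is a module-algebra morphism $\bar{T(V)} \to A$, and $\check{f} \circ \vth_V = F \circ \iota_V = f$. Uniqueness transfers through the bijection $\Psi_{T(V),A}$ together with the uniqueness in the antimodule universal property just established. The only genuine care needed throughout is the bookkeeping of product reversal: each application of $c$ or $\Psi$ flips the order of multiplication, so at every stage one must verify that the relevant composite is honestly an (anti)module-\emph{algebra} morphism, and not merely a module map.
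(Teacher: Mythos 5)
Your proof is correct, and part (a) is essentially identical to the paper's argument. Part (b) constructs the same map $\check{f} = c_A^{-1}\circ f_0 \circ c_{T(V)}^{-1}$ that the paper does, but organizes the logic in the reverse order: you first establish the antimodule lifting property of $T(V)$ (that every antimodule map $V \to A$ extends uniquely to an antimodule-algebra morphism $T(V) \to A$) and then transport it across $\Psi_{T(V),A}$ using Lemma \ref{lem:anti-module-alg-maps}~(c), whereas the paper proves (b) directly by conjugating $f_0 : T(V) \to \bar{A}$ to get $\bar{f_0} : \bar{T(V)} \to \bar{\bar{A}}$ and composing with $\sigma_A$, and only afterward derives your intermediate statement as Proposition \ref{prop:extending-antimodule-maps}. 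Your ordering has the small advantage of making the uniqueness transfers completely mechanical (everything passes through the bijections $c_A$ and $\Psi_{T(V),A}$), and it requires checking that $c_A^{-1}$ is an antimodule-algebra morphism, which you correctly flag; the paper's ordering avoids that check at the cost of invoking $\sigma_A$ and Corollary \ref{cor:sigma-alg-hom}. Both are valid and there is no circularity in your version, since your antimodule universal property is deduced from Lemma \ref{lem:tensor-alg-universal-property} and the conjugation machinery alone.
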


\begin{proof}
  \begin{enumerate}[(a)]
  \item By Proposition \ref{prop:antimodule-representable}, $\Psi_{VA}(f) : \bar{V} \to A$ is a module map.
    Then the universal property of the tensor algebra implies that there is a unique module-algebra morphism $\hat{f} : T(\bar{V}) \to A$ such that $\hat{f} \circ \iota_{\bar{V}} = \Psi_{VA}(f)$.
    Then we have 
    \[  \hat{f} \circ \theta_V = \hat{f} \circ \iota_{\bar{V}} \circ c_V = \Psi_{VA}(f) \circ c_V = f.  \]
    This is encapsulated in the commutativity of the diagram
    \begin{equation*}
      \begin{tikzpicture}
        [description/.style={fill=white,inner sep=2pt}]
        \matrix (m) [matrix of math nodes, row sep=3em,
        column sep=3em, text height=2ex, text depth=0.25ex]
        { & & & A \\
          T(\bar{V}) & & V & \\
          & & & \bar{V} \\  };
        \path[->,font=\scriptsize]
        (m-2-1) edge node[auto] {$\hat{f}$}  (m-1-4)
        (m-2-3) edge node[auto,swap] {$f$} (m-1-4)
        (m-2-3) edge node[auto,swap] {$\theta_V$} (m-2-1)
        (m-2-3) edge node[auto] {$c_V$} (m-3-4)
        (m-3-4) edge node[auto] {$\iota_{\bar{V}}$} (m-2-1)
        (m-3-4) edge node[auto,swap] {$\Psi_{VA}(f)$} (m-1-4);
      \end{tikzpicture}
    \end{equation*}
    The lower left triangle commutes by definition of $\theta_V$, the right-hand triangle commutes by definition of $\Psi_{VA}(f)$, and the large triangle commutes by the universal property of the tensor algebra.
    Hence the upper left triangle commutes as well, and this shows that $\hat{f}$ has the desired property.

    To see that $\hat{f}$ is unique, note that $\theta_V(V)$ generates $T(\bar{V})$ as an algebra, and the equation $\hat{f} \circ \theta_V = f$ uniquely specifies $\hat{f}$ on the generators.
    Since $\hat{f}$ is an algebra homomorphism, this implies that it is uniquely specified on the algebra $T(\bar{V})$.

  \item By Lemma \ref{lem:antimodulemaps-composition}, the function $c_A \circ f : V \to \bar{A}$ is a module map.
    The universal property of $T(V)$ then gives a unique extension to a module-algebra morphism $f_0 : T(V) \to \bar{A}$ such that $f_0 \circ \iota_V = c_A \circ f$.
    Then $\bar{f_0} : \bar{T(V)} \to \bar{\bar{A}}$ is a morphism of module-algebras by Proposition \ref{prop:conj-module-alg}~$\mathrm{(b)}$.
    We define $\check{f} : \bar{T(V)} \to A$ by $\check{f} = \sigma_A \circ \bar{f_0}$.  
    Since $\sigma_A$ is a module-algebra morphism by Corollary \ref{cor:sigma-alg-hom}, then $\check{f}$ is a module-algebra morphism by Lemma~\ref{lem:anti-module-alg-maps}~$\mathrm{(a)}$. 
    We have
    \begin{align*}
      \check{f} \circ \vth_V & = \sigma_A \circ \bar{f_0} \circ c_{T(V)} \circ \iota_V \\
m      & = \sigma_A \circ c_{\bar{A}} \circ f_0 \circ \iota_V \\
      & = c_{A}^{-1} \circ c_A \circ f \\
      & = f.
    \end{align*}
    This is encapsulated in the commutativity of the diagram
    \begin{equation*}
      \label{eq:conj-tensor-alg-diagram2}
      \begin{tikzpicture}
        [description/.style={fill=white,inner sep=2pt}]
        \matrix (m) [matrix of math nodes, row sep=3em,
        column sep=3em, text height=2ex, text depth=0.25ex]
        {T(V) & & & & \bar{A} \\
        & V & & A & \\
        \bar{T(V)} & & & & \bar{\bar{A}} \\ };
        \path[->,font=\scriptsize]
        (m-1-1) edge node[auto] {$f_0$} (m-1-5)
        (m-1-1) edge node[auto,swap] {$c_{T(V)}$} (m-3-1)
        (m-1-5) edge node[auto] {$c_A^{-1}$} (m-2-4)
        (m-1-5) edge node[auto] {$c_{\bar{A}}$} (m-3-5)
        (m-2-2) edge node[auto,swap] {$\iota_V$} (m-1-1)
        (m-2-2) edge node[auto,swap] {$\vth_V$} (m-3-1)
        (m-2-2) edge node[auto] {$c_A \circ f$} (m-1-5)
        (m-2-2) edge node[auto] {$f$} (m-2-4)
        (m-3-5) edge node[auto] {$\sigma_A$} (m-2-4)
        (m-3-1) edge node[auto,swap] {$\check{f}$} (m-2-4)
        (m-3-1) edge node[auto,swap] {$\bar{f_0}$} (m-3-5);
      \end{tikzpicture}
    \end{equation*} 
    The large rectangle commutes by \eqref{eq:conj_lin_map_def}.
    For the five triangles starting on the left-hand side of the diagram and going clockwise, the reasons they commute are, respectively: by definition of $\vth_V$; by the universal property of $T(V)$ (i.e.~ since $f_0$ lifts $c_A \circ f$); trivially; by definition of $\sigma_A$; and by definition of $\check{f}$.
    Then the final interior triangle commutes because everything else does, and this shows that $\check{f}$ has the desired property.

    The proof of uniqueness from part (a) carries over here almost verbatim.  \qedhere
  \end{enumerate}
\end{proof}

Since $T(\bar{V})$ and $\bar{T(V)}$ satisfy the same universal property, they are isomorphic:

\begin{cor}
  \label{cor:conj-tensor-algs-isomorphic}
  There is a unique isomorphism of module-algebras $\kappa_V : T(\bar{V}) \to \bar{T(V)}$ such that $\kappa_V(\bar{v}) = \bar{v}$ for all $v \in V$.
  On simple tensors $\kappa_V$ is given by
  \begin{equation}
    \label{eq:conj-tensor-algs-isomorphism}
    \kappa_V( \bar{v_1} \dots \bar{v_n}) = \bar{v_n \dots v_1},
  \end{equation}
  and hence the restriction of $\kappa_V$ to $\bar{V}^{\otimes n}$ gives an isomorphism $\bar{V}^{\otimes n} \simeq \bar{V^{\otimes n}}$.
\end{cor}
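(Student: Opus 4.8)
The plan is to derive existence and uniqueness of $\kappa_V$ entirely from the two universal mapping properties established in Proposition \ref{prop:conj-tensor-alg-mapping-properties}, and only afterward to read off the explicit formula \eqref{eq:conj-tensor-algs-isomorphism} from the (reversed) algebra structure on $\bar{T(V)}$ recorded in \eqref{eq:conj-algebra-operations}.

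First I would unwind the definitions in \eqref{eq:tensor-alg-embedding-maps} to see that both $\theta_V$ and $\vth_V$ carry $v$ to the degree-one element $\bar v$: indeed $\theta_V(v) = \iota_{\bar V}(c_V(v)) = \bar v$ in $T(\bar V)$ and $\vth_V(v) = c_{T(V)}(\iota_V(v)) = \bar v$ in $\bar{T(V)}$. Consequently the normalization $\kappa_V(\bar v) = \bar v$ for all $v \in V$ is literally the same condition as $\kappa_V \circ \theta_V = \vth_V$, so the corollary is exactly a statement about morphisms compatible with the embeddings $\theta_V$ and $\vth_V$.

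Since $\vth_V : V \to \bar{T(V)}$ is an antimodule map, part (a) of Proposition \ref{prop:conj-tensor-alg-mapping-properties} (with $A = \bar{T(V)}$ and $f = \vth_V$) yields a \emph{unique} module-algebra morphism $\kappa_V : T(\bar V) \to \bar{T(V)}$ with $\kappa_V \circ \theta_V = \vth_V$; this simultaneously gives existence and the asserted uniqueness. To show $\kappa_V$ is an isomorphism I would run the symmetric construction: as $\theta_V : V \to T(\bar V)$ is also an antimodule map, part (b) of the same proposition produces a unique module-algebra morphism $\lambda_V : \bar{T(V)} \to T(\bar V)$ with $\lambda_V \circ \vth_V = \theta_V$. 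Then $\lambda_V \circ \kappa_V$ is a module-algebra endomorphism of $T(\bar V)$ satisfying $(\lambda_V \circ \kappa_V) \circ \theta_V = \lambda_V \circ \vth_V = \theta_V$; since $\id_{T(\bar V)}$ also satisfies this equation, the uniqueness clause of part (a) applied with $A = T(\bar V)$ and $f = \theta_V$ forces $\lambda_V \circ \kappa_V = \id_{T(\bar V)}$, and the mirror-image argument using part (b) gives $\kappa_V \circ \lambda_V = \id_{\bar{T(V)}}$. Hence $\kappa_V$ is an isomorphism with inverse $\lambda_V$.

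Finally I would compute $\kappa_V$ on simple tensors. Because $\kappa_V$ is an algebra homomorphism with $\kappa_V(\bar v) = \bar v$, the concatenation $\bar{v_1} \dots \bar{v_n}$ in $T(\bar V)$ is sent to the product $\bar{v_1} \cdot \dots \cdot \bar{v_n}$ computed in $\bar{T(V)}$. By \eqref{eq:conj-algebra-operations} this multiplication is reversed, so a one-line induction ($\bar{v_1} \cdot \bar{v_2} = \bar{v_2 v_1}$, and so on) gives $\bar{v_1} \cdot \dots \cdot \bar{v_n} = \bar{v_n \dots v_1}$, which is \eqref{eq:conj-tensor-algs-isomorphism}. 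Since $\bar{v_1} \dots \bar{v_n} \in \bar V^{\otimes n}$ and $\bar{v_n \dots v_1} \in \bar{V^{\otimes n}}$, this formula shows $\kappa_V$ is graded and restricts to the stated isomorphism $\bar V^{\otimes n} \simeq \bar{V^{\otimes n}}$. There is no genuine obstacle here, only a bookkeeping hazard: the order reversal must be tracked consistently between the abstract matching of universal properties and the reversed product of $\bar{T(V)}$, so that the explicit map comes out with a single reversal rather than two (which would cancel) or none.
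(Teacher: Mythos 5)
Your proposal is correct and follows essentially the same route as the paper: apply Proposition \ref{prop:conj-tensor-alg-mapping-properties}~(a) to $\vth_V$ to get $\kappa_V$, apply part (b) to $\theta_V$ to get the candidate inverse, use the uniqueness clauses to show the two composites are identities, and then read off \eqref{eq:conj-tensor-algs-isomorphism} from the reversed multiplication \eqref{eq:conj-algebra-operations}. The only cosmetic difference is the order in which you verify the two composite identities; your explicit observation that $\kappa_V(\bar v)=\bar v$ is literally the equation $\kappa_V\circ\theta_V=\vth_V$ is a point the paper leaves implicit.
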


\begin{proof}
  By Proposition \ref{prop:conj-tensor-alg-mapping-properties} $\mathrm{(a)}$ applied to the antimodule map $\vth_V : V \to \bar{T(V)}$, there is a unique map of module-algebras $\kappa_V : T(\bar{V}) \to \bar{T(V)}$ such that $\kappa_V \circ \theta_V = \vth_V$, i.e.~such that $\kappa_V(\bar{v}) = \bar{v}$.

  Applying part $(\mathrm{b})$ of Proposition \ref{prop:conj-tensor-alg-mapping-properties} to the antimodule map $\theta_V : V \to T(\bar{V})$ gives a unique map of module-algebras $\psi_V : \bar{T(V)} \to T(\bar{V})$ such that $\psi_V \circ \vth_V = \theta_V$.

  According to part $\mathrm{(v)}$ of Proposition \ref{prop:conj-tensor-alg-mapping-properties}, there is a unique morphism of module-algebras $\check{\vth}_{V} :  \bar{T(V)} \to \bar{T(V)}$ such that $\check{\vth}_{V} \circ \vth_V = \vth_V$; it is clear that $\check{\vth}_{V} = \id_{\bar{T(V)}}$.
  But on the other hand, we have
  \[ (\kappa_V \circ \psi_V) \circ \vth_V = \kappa_V \circ \theta_V = \vth_V,  \]
  so we must have $\kappa_V \circ \psi_V = \id_{\bar{T(V)}}$ by uniqueness.
  Similarly we have $\psi_V \circ \kappa_V = \id_{T(\bar{V})}$, so both maps are isomorphisms of module-algebras.

  Finally, the formula \eqref{eq:conj-tensor-algs-isomorphism} follows immediately from the description of multiplication in the conjugate of an algebra given in Remark \ref{rem:conj-module-alg}.
\end{proof}

We now use Proposition \ref{prop:conj-tensor-alg-mapping-properties} to show that $T(V)$ has a universal property allowing us to lift antimodule morphisms from $V$ to antimodule-algebra morphisms:

\begin{prop}
  \label{prop:extending-antimodule-maps}
  Let $V \in \hmod$, let $A$ be an algebra in $\hmod$, and let $f : V \to A$ be an antimodule map.
  Then there is a unique antimodule-algebra morphism $f^\sharp : T(V) \to A$ such that $f^\sharp \circ \iota_V = f$.
\end{prop}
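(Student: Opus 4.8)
The plan is to reduce everything to the universal mapping property we already have, by composing with the canonical conjugation isomorphisms. The key observation is that Proposition~\ref{prop:conj-tensor-alg-mapping-properties}~(b) already provides exactly the kind of lift we want, but targeting $\bar{T(V)}$ through the map $\vth_V$, whereas here we want a lift through the genuine embedding $\iota_V : V \to T(V)$. These two are linked by the isomorphism $\kappa_V$ (equivalently its inverse $\psi_V$) of Corollary~\ref{cor:conj-tensor-algs-isomorphic}, so the natural move is to define $f^\sharp$ in terms of the map $\check{f}$ supplied by part~(b).

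First I would invoke Proposition~\ref{prop:conj-tensor-alg-mapping-properties}~(b) to obtain the unique \emph{module-algebra} morphism $\check{f} : \bar{T(V)} \to A$ with $\check{f} \circ \vth_V = f$. Then I would set
\[
  f^\sharp = \check{f} \circ c_{T(V)} : T(V) \to A.
\]
Since $c_{T(V)} : T(V) \to \bar{T(V)}$ is an antimodule-algebra morphism by Lemma~\ref{lem:anti-module-alg-maps}~(b), and $\check{f}$ is a module-algebra morphism, Lemma~\ref{lem:anti-module-alg-maps}~(a) immediately tells us that $f^\sharp$ is an antimodule-algebra morphism. The defining property is then a one-line check: recalling that $\vth_V = c_{T(V)} \circ \iota_V$ from \eqref{eq:tensor-alg-embedding-maps}, we get
\[
  f^\sharp \circ \iota_V = \check{f} \circ c_{T(V)} \circ \iota_V = \check{f} \circ \vth_V = f.
\]

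For uniqueness, I would argue exactly as in the uniqueness portion of Lemma~\ref{lem:tensor-alg-universal-property}~(b): the subspace $\iota_V(V)$ generates $T(V)$ as an algebra, so the condition $f^\sharp \circ \iota_V = f$ pins down $f^\sharp$ on a generating set, and since $f^\sharp$ is required to be multiplicative (antimultiplicative, in the sense $f^\sharp(xy) = f^\sharp(y)f^\sharp(x)$) and unital, it is determined everywhere. Concretely $f^\sharp(v_1 \dots v_n) = f(v_n) \dots f(v_1)$, and any two antimodule-algebra morphisms agreeing with $f$ on generators must coincide.

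I do not anticipate a genuine obstacle here, since the statement is essentially a repackaging of part~(b) of Proposition~\ref{prop:conj-tensor-alg-mapping-properties} through the antimodule-algebra isomorphism $c_{T(V)}$; the only point requiring a little care is bookkeeping the order-reversal—confirming that composing a module-algebra morphism with the order-reversing $c_{T(V)}$ yields the \emph{anti}multiplicativity $f^\sharp(xy) = f^\sharp(y)f^\sharp(x)$ rather than ordinary multiplicativity—but this is precisely what Lemma~\ref{lem:anti-module-alg-maps}~(a) guarantees.
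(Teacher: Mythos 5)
Your proposal is correct and follows essentially the same route as the paper: both invoke Proposition~\ref{prop:conj-tensor-alg-mapping-properties}~(b) to obtain $\check{f}$, define $f^\sharp = \check{f} \circ c_{T(V)}$, deduce the antimodule-algebra property from Lemma~\ref{lem:anti-module-alg-maps}, and settle uniqueness by noting that $\iota_V(V)$ generates $T(V)$.
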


\begin{proof}
  By part (b) of Proposition \ref{prop:conj-tensor-alg-mapping-properties} implies that there is a unique homorphism of module-algebras $\check{f} : \bar{T(V)} \to A$ such that $\check{f} \circ \vth_V = f$.
  Then define $f^\sharp$ by $f^\sharp = \check{f} \circ c_{T(V)}$, i.e.~so that the top right triangle of the diagram
  \begin{equation*}
    \label{eq:tensor-alg-antimodule-map-lift}
    \begin{tikzpicture}
      [description/.style={fill=white,inner sep=2pt}]
      \matrix (m) [matrix of math nodes, row sep=3em,
      column sep=3em, text height=2ex, text depth=0.25ex]
      {  T(V) & \bar{T(V)} \\
         V & A \\  };
      \path[->,font=\scriptsize]
      (m-1-1) edge node[auto] {$c_{T(V)}$} (m-1-2)
      (m-1-1) edge node[auto] {$f^\sharp$} (m-2-2)
      (m-1-2) edge node[auto] {$\check{f}$} (m-2-2)
      (m-2-1) edge node[auto] {$\iota_V$} (m-1-1)
      (m-2-1) edge node[auto,swap] {$f$} (m-2-2);
    \end{tikzpicture}
  \end{equation*}
  commutes; recall that $\vth_V = c_{T(V)} \circ \iota_V$.
  The square commutes by the defining property of $\check{f}$, so the lower left triangle commutes as well, i.e.~we have $f^\sharp \circ \iota_V = f$.
  Then $f^\sharp$ is an antimodule-algebra morphism by Lemma \ref{lem:anti-module-alg-maps}, and it is unique since it is uniquely determined on the generators $\iota_V(V)$ of $T(V)$.
\end{proof}

\section{$*$-structures on modules}
\label{sec:star-modules}

We now turn to the main theme of this chapter, which is $*$-structures.
Although our goal is to formulate the correct notion of a $*$-algebra in the category $\hmod$, it is useful to consider modules first and then extend to algebras afterward.
In \cref{sec:star-modules-definition} we begin by defining $*$-modules and $*$-morphisms.
We show in \cref{prop:star-structs-basic-properties} that \(*\)-modules are the ``fixed points up to homotopy'' of the conjugation functor in the sense that the conjugation functor is naturally isomorphic to the identity functor when restricted to the subcategory of \(\hmod\) consisting of \(*\)-modules.
We then show that modules of the form $\bar{V} \otimes V$ and $V \otimes \bar{V}$ carry natural $*$-structures; these $*$-modules will be used later in Section \ref{sec:inner-products} to formulate the notion of an inner product in our framework.
We also discuss $*$-submodules and quotients.

\subsection{The category $\hmods$}
\label{sec:star-modules-definition}

The usual definition of a $*$-structure on, say, a complex algebra $A$ is an antilinear map $* : A \to A$ such that $(a^*)^* = a$ and $(ab)^* = b^*a^*$.
Simply omitting the last condition then gives a reasonable definition of a $*$-structure on a vector space.
Translating this into our framework of complex conjugate modules using the map $\Psi_{VV}$ leads to the following definition:

\begin{dfn}
  \label{dfn:star-module}
  We say that a module $V \in \hmod$ is a \emph{$*$-module} if $V$ is equipped with a module map $* : \bar{V} \to V$, such that $* \circ \bar{*} = \sigma_{VV} : \bar{\bar{V}} \to V$.
  We refer to the map $*$ as a \emph{$*$-structure on $V$}.
  
  If $W \in \hmod$ is also a $*$-module, then we say that a linear map $T : V \to W$ is a \emph{$*$-morphism} or a \emph{$*$-map} if $T \circ * = * \circ \bar{T}$, i.e.~if the following diagram commutes:
  \begin{equation}
    \label{eq:star-morphism-dfn}
    \begin{CD}
      \bar{V} @>{*}>> V \\
      @V{\bar{T}}VV @VV{T}V \\
      \bar{W} @>>{*}> W
    \end{CD}
  \end{equation}
  If $T$ is also a module map then we may refer to it as a \emph{$*$-module morphism} or \emph{$*$-module map}.
  If $v \in V$ and $\bar{v}^* = v$ then we will say that $v$ is a \emph{self-adjoint element} or just that $v$ is \emph{self-adjoint}.
\end{dfn}

\begin{rem}
  \label{rem:star-struct-dfn}
  It might seem more natural for a $*$-structure on $V$ to be a map originating from $V$ rather than from $\bar{V}$.
  However, since a $*$-structure in the usual formulation is an antilinear map from $V$ to $V$, Proposition \ref{prop:antimodule-representable} tells us that Definition \ref{dfn:star-module} is appropriate.

  It follows immediately from the requirement $* \circ \bar{*} = \sigma_V$ that the map $*$ itself must be an isomorphism of modules, hence we do not require it in the definition.
\end{rem}

\begin{lem}
  \label{lem:star-modules-are-a-category}
  \begin{enumerate}[(a)]
  \item For any $*$-module $V$, $\id_V$ is a morphism of $*$-modules.
  \item The composition of two $*$-module morphisms is a $*$-module morphism.
  \end{enumerate}
\end{lem}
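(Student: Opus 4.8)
The plan is to verify each part directly from the defining square \eqref{eq:star-morphism-dfn}, the only genuine input being that complex conjugation is a functor (Lemma \ref{lem:functoriality}). Both statements are purely formal, so no serious computation is needed; the work is entirely a matter of pasting commuting squares and invoking functoriality.

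For part (a), I would observe that by Lemma \ref{lem:functoriality}(a) we have $\bar{\id_V} = \id_{\bar{V}}$. The condition for $\id_V$ to be a $*$-morphism is $\id_V \circ * = * \circ \bar{\id_V}$; here the left-hand side is simply $*$ and the right-hand side is $* \circ \id_{\bar{V}} = *$, so the square \eqref{eq:star-morphism-dfn} commutes trivially. (Note that here only the structure map $*$ of $V$ is involved, not the involutivity condition $* \circ \bar{*} = \sigma_V$, so this is immediate.)

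For part (b), suppose $T : V \to W$ and $U : W \to X$ are $*$-module morphisms between $*$-modules, each equipped with its own structure map denoted $*$. The hypotheses are exactly that the two defining squares commute, i.e.\ $T \circ * = * \circ \bar{T}$ and $U \circ * = * \circ \bar{U}$. Using functoriality (Lemma \ref{lem:functoriality}(b)) in the form $\bar{U \circ T} = \bar{U} \circ \bar{T}$, I would paste these squares and compute
\[
(U \circ T) \circ * = U \circ (* \circ \bar{T}) = (* \circ \bar{U}) \circ \bar{T} = * \circ \bar{U \circ T},
\]
which is precisely the statement that $U \circ T$ satisfies \eqref{eq:star-morphism-dfn} and so is a $*$-morphism.

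The main obstacle is, frankly, that there is no real obstacle: both claims are formal consequences of functoriality of conjugation and the horizontal pasting of commuting squares. The one point worth flagging is the mild notational overloading, since the symbol $*$ denotes a different map $\bar{Z} \to Z$ for each $*$-module $Z$. In the pasting computation the inner $*$ is the structure map of $W$ while the outer ones are those of $V$ and $X$, and one must keep track of which $*$ lives on which module. Once that bookkeeping is made explicit (e.g.\ by temporarily writing $*_V$, $*_W$, $*_X$), the chain of equalities above is unambiguous and the result follows.
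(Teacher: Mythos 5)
Your proof is correct, and since the paper dismisses this lemma with ``Straightforward,'' your argument is exactly the intended one: part (a) follows from $\bar{\id_V} = \id_{\bar{V}}$ and part (b) from pasting the two defining squares using $\bar{U \circ T} = \bar{U}\circ\bar{T}$. Nothing further is needed.
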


\begin{proof}
  Straightforward.
\end{proof}

\begin{dfn}
  \label{dfn:category-of-star-modules}
  The preceding lemma tells us that the collection of all $*$-modules in $\hmod$ together with all $*$-module maps forms a subcategory of $\hmod$.
  We denote this category by $\hmods$.
\end{dfn}

\begin{notn}
  We will mostly not need to work with more than one $*$-structure on any given module at a time, so we generally employ the notation $*$ for all $*$-structures we consider.
  If necessary we will decorate the $*$'s with labels.

  If $V \in \hmods$, we write $\bar{v}^*$ rather than $*(\bar{v})$.
  In this notation, the requirement that $* \circ \bar{*} = \sigma_V$ means that $((\bar{\bar{v}})^{\bar{*}})^* = v$ for all $v \in V$.
  It is quite cumbersome to carry around the $\bar{*}$ in the superscript, and henceforth we will just write $*$ for the morphism $\bar{V} \to V$ as well as for its complex conjugate $\bar{\bar{V}} \to \bar{V}$.
  Thus the condition of involutivity becomes the slightly more palatable $(({\bar{\bar{v}}})^*)^* = v$; omitting parentheses gives the almost satisfactory $\bar{\bar{v}}^{**} = v$.
\end{notn}

We now show that out definition of a $*$-structure is equivalent to the usual definition in terms of an involutive antilinear (antimodule) map:

\begin{prop}
  \label{prop:constructing-stars}
  Let $V,W \in \hmod$.
  \begin{enumerate}[(a)]
  \item If $\dagger : v \mapsto v^\dagger$ is an antimodule map $V \to V$, then $* = \Psi_{VV}(\dagger) : \bar{V} \to V$ is a $*$-structure in the sense of Definition \ref{dfn:star-module} if and only if $(v^\dagger)^\dagger = v$ for all $v \in V$.
  \item Let $\dagger : V \to V$ and $\dagger : W \to W$ be involutive antimodule maps with corresponding $*$-structures as in (a).  
    Then a linear map $T : V \to W$ is a $*$-map if and only if $T(v^\dagger) = (Tv)^\dagger$ for all $v \in V$.
  \end{enumerate}
\end{prop}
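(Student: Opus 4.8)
The plan is to unwind both statements into element-level identities, using the explicit description of $\Psi_{VV}$ together with the fact, supplied by Proposition~\ref{prop:antimodule-representable}, that $*=\Psi_{VV}(\dagger)$ is automatically a module map $\bar V\to V$. Recall that $\Psi_{VV}(\dagger)=\dagger\circ c_V^{-1}$, so on elements $\bar v^{\,*}=\Psi_{VV}(\dagger)(\bar v)=v^\dagger$. Since $\dagger$ is assumed to be an antimodule map, Proposition~\ref{prop:antimodule-representable} guarantees that $*$ is a morphism in $\hmod$; consequently the \emph{only} remaining requirement for $*$ to be a $*$-structure in the sense of Definition~\ref{dfn:star-module} is the involutivity condition $*\circ\bar *=\sigma_V$, and the whole of part~(a) reduces to showing that this single equation of module maps is equivalent to $(v^\dagger)^\dagger=v$ for all $v$.

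For part~(a) I would evaluate $*\circ\bar *$ on a general element $\bar{\bar v}\in\bar{\bar V}$. By the defining diagram~\eqref{eq:conj_lin_map_def} for the conjugate of a linear map, $\bar *(\bar{\bar v})=\bar{*(\bar v)}=\bar{v^\dagger}$, and then applying $*$ once more gives $*(\bar{v^\dagger})=(v^\dagger)^\dagger$. On the other hand $\sigma_V(\bar{\bar v})=v$ by Lemma~\ref{lem:conj_involutory}. Since $c_V$ and $c_{\bar V}$ are bijective, the elements $\bar{\bar v}$ exhaust $\bar{\bar V}$, so $*\circ\bar *=\sigma_V$ holds if and only if $(v^\dagger)^\dagger=v$ for every $v\in V$. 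This is exactly the asserted equivalence.

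For part~(b) I would similarly unwind the $*$-map condition of Definition~\ref{dfn:star-module}, namely commutativity of the square $T\circ *=*\circ\bar T$. Evaluating both composites on $\bar v$ and using $\bar v^{\,*}=v^\dagger$ together with $\bar T(\bar v)=\bar{Tv}$ (the defining property of $\bar T$), the left-hand composite reads $T(\bar v^{\,*})=T(v^\dagger)$ while the right-hand composite reads $(\bar{Tv})^{\,*}=(Tv)^\dagger$, the latter invoking the $*$-structure on $W$. Hence $T$ is a $*$-map precisely when $T(v^\dagger)=(Tv)^\dagger$ for all $v\in V$, as claimed.

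Neither direction requires real work beyond careful bookkeeping; the one point that deserves attention is keeping the types straight when the single symbol $*$ is used both for the map $\bar V\to V$ and, via the conjugation functor, for its conjugate $\bar *:\bar{\bar V}\to\bar V$, and remembering that the module-map property of $*$ is handed to us for free by Proposition~\ref{prop:antimodule-representable} rather than being part of what must be checked. I expect this type-tracking to be the only mild obstacle.
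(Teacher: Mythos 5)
Your proposal is correct and follows essentially the same route as the paper's proof: both reduce part (a) to evaluating $*\circ\bar{*}$ on the elements $\bar{\bar{v}}$ via $\bar{v}^{\,*}=v^\dagger$ and $\bar{*}(\bar{\bar{v}})=\bar{v^\dagger}$, and both reduce part (b) to chasing $\bar{v}$ around the square \eqref{eq:star-morphism-dfn}. The one point you flag as needing care --- that the module-map property comes for free from Proposition \ref{prop:antimodule-representable} and only involutivity must be checked --- is exactly how the paper's argument is organized.
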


\begin{proof}
  \begin{enumerate}[(a)]
  \item From Proposition \ref{prop:antimodule-representable} we know that $*$ is a module map.
    For $v \in V$ we have 
    \[  \bar{v}^* = \Psi_{VV}(\dagger)(\bar{v}) = v^\dagger, \]
    so $\bar{\bar{v}}^* = \bar{v^\dagger}$.
    Hence
    \[ \bar{\bar{v}}^{**} = (\bar{v^\dagger})^* = (v^\dagger)^\dagger,    \]
    so we see that $\bar{\bar{v}}^{**} = \sigma_V(\bar{\bar{v}}) = v$ if and only if $(v^\dagger)^\dagger = v$.
  \item On the one hand we have
    \[  T(\bar{v}^*) = T(v^\dagger),   \]
    while on the other hand we have
    \[  \left( \bar{T} (\bar{v}) \right)^* = \left( \bar{Tv} \right)^* = (Tv)^\dagger,   \]
    so we see that the diagram \eqref{eq:star-morphism-dfn} commutes if and only if $T(v^\dagger) = (Tv)^\dagger$.  \qedhere
  \end{enumerate}
\end{proof}

\begin{rem}
  \label{rem:star-structs-equiv}
  The upshot of Proposition \ref{prop:constructing-stars} is that we can work with involutive antimodule maps rather than directly with Definition \ref{dfn:star-module}.
  This is convenient, especially in the proofs below, where this approach allows us to avoid the complication of carrying $\bar{\phantom{=}}$'s around all over the place.
\end{rem}

\begin{prop}
  \label{prop:star-structs-basic-properties}
  Let $V,W \in \hmods$.
  \begin{enumerate}[(a)]
  \item The module $\bar{V}$ is a $*$-module with the $*$-structure $\bar{*} : \bar{\bar{V}} \to \bar{V}$.
  \item If $T : V \to W$ is a morphism of $*$-modules then $\bar{T} : \bar{V} \to \bar{W}$ is a morphism of $*$-modules with respect to the $*$-structures defined in $\mathrm{(a)}$.
  \item Complex conjugation is an endofunctor of $\hmods$, and moreover the morphisms $* : \bar{V} \to V$ give a natural isomorphism of the complex conjugation functor with the identity functor of $\hmods$.
  \end{enumerate}
\end{prop}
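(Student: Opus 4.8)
The plan is to obtain all three parts formally, by applying the conjugation functor to the defining equations and invoking its functoriality (Lemma \ref{lem:functoriality}); the single nonformal ingredient, which I would isolate first, is a coherence identity between $\bar{\sigma_V}$ and $\sigma_{\bar{V}}$. From Lemma \ref{lem:conj_involutory} I have $\sigma_V = (c_{\bar V} \circ c_V)^{-1} = c_V^{-1} \circ c_{\bar V}^{-1}$. Unwinding the definition of the conjugate of a map gives $\bar{\sigma_V} = c_V \circ \sigma_V \circ c_{\bar{\bar{V}}}^{-1} = c_{\bar V}^{-1} \circ c_{\bar{\bar{V}}}^{-1}$, whereas $\sigma_{\bar V} = (c_{\bar{\bar{V}}} \circ c_{\bar V})^{-1} = c_{\bar V}^{-1} \circ c_{\bar{\bar{V}}}^{-1}$, so $\bar{\sigma_V} = \sigma_{\bar V}$. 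This compatibility of $\sigma$ with conjugation is the main (and essentially the only) obstacle; everything after it is bookkeeping.

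Parts (a) and (b) then fall out immediately. For (a), the map $\bar{*} : \bar{\bar{V}} \to \bar{V}$ is a module map by Proposition \ref{prop:conj-module-maps}(b), and applying conjugation to the defining relation $* \circ \bar{*} = \sigma_V$ together with functoriality yields $\bar{*} \circ \bar{\bar{*}} = \bar{\sigma_V}$, which by the coherence step equals $\sigma_{\bar V}$; this is exactly the involutivity condition making $\bar{*}$ a $*$-structure on $\bar{V}$. For (b), $\bar{T}$ is again a module map by Proposition \ref{prop:conj-module-maps}(b), and the hypothesis that $T$ is a $*$-morphism is the relation $T \circ * = * \circ \bar{T}$; applying conjugation and functoriality gives $\bar{T} \circ \bar{*} = \bar{*} \circ \bar{\bar{T}}$, which is precisely condition \eqref{eq:star-morphism-dfn} for $\bar{T}$ relative to the conjugate $*$-structures from (a).

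For (c), parts (a) and (b) show that conjugation carries $\hmods$ into itself on objects and on morphisms, and since it already respects identities and composition (Lemma \ref{lem:functoriality}) it is an endofunctor of $\hmods$. For the natural isomorphism I would verify three points. First, each $*_V : \bar{V} \to V$ is itself a $*$-morphism: the $*$-structure on $\bar{V}$ is by construction $\bar{*_V}$, so the required square collapses to the tautology $*_V \circ \bar{*_V} = *_V \circ \bar{*_V}$. Second, $*_V$ is an isomorphism by Remark \ref{rem:star-struct-dfn}, and its inverse is again a $*$-morphism (conjugating the $*$-morphism relation for $*_V$ and rearranging), so $*_V$ is invertible in $\hmods$. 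Third, naturality of $(*_V)$ against a $*$-morphism $T : V \to W$ is the commuting square $T \circ *_V = *_W \circ \bar{T}$, which is literally the definition of $T$ being a $*$-morphism. Hence $(*_V)$ is a natural isomorphism from the conjugation functor to the identity functor of $\hmods$.
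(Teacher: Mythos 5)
Your proof is correct and follows essentially the same route as the paper's: conjugate the defining relations $* \circ \bar{*} = \sigma_V$ and $T \circ * = * \circ \bar{T}$, invoke functoriality of conjugation, and observe that naturality of $(*)$ is literally the $*$-morphism condition. You additionally verify the identity $\bar{\sigma_V} = \sigma_{\bar{V}}$ (which the paper asserts as trivial) and check explicitly in (c) that each $* : \bar{V} \to V$ is itself a morphism of $\hmods$; these are just details the paper leaves implicit.
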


\begin{proof}
  \begin{enumerate}[(a)]
  \item The complex conjugate morphism $\bar{*} : \bar{\bar{V}} \to \bar{V}$ is a module morphism by Proposition \ref{prop:conj-module-maps}.
    Then we have (trivially and unenlighteningly)
    \[  \bar{*} \circ \bar{\bar{*}} = \bar{* \circ \bar{*}} = \bar{\sigma_V} = \sigma_{\bar{V}},   \]
    so $\bar{*}$ is a $*$-structure on $\bar{V}$.
  \item Follows immediately from taking the complex conjugate of the diagram \eqref{eq:star-morphism-dfn}.
  \item Since each morphism $* : \bar{V} \to V$ is an isomorphism, the statement is exactly the commutativity of the diagram \eqref{eq:star-morphism-dfn}.  \qedhere
  \end{enumerate}
\end{proof}

\subsection{Building up the category $\hmods$}
\label{sec:building-up-hmods}

Although we have defined the category $\hmods$, we have not exhibited any modules in it.
In this subsection we show that $\bbC$ is a $*$-module and that we can in fact construct a large class of $*$-modules without knowing anything about the specific Hopf $*$-algebra $H$ that we are working with.
We also discuss direct sums and tensor products of $*$-modules.

\begin{lem}
  \label{lem:c-star-module}
  The map $\gamma : \bar{\bbC} \to \bbC$ is a $*$-structure on $\bbC$.
\end{lem}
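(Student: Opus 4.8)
The statement is that $\gamma : \bar{\bbC} \to \bbC$ is a $*$-structure on $\bbC$. By Definition \ref{dfn:star-module}, this requires $\gamma$ to be a module map $\bar{\bbC} \to \bbC$ satisfying the involutivity condition $\gamma \circ \bar{\gamma} = \sigma_{\bbC} : \bar{\bar{\bbC}} \to \bbC$. The module-map property is already in hand from Proposition \ref{prop:modulemaps_naturality}~(a), so the real content is the involutivity.

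**The plan.** The cleanest route is to invoke Proposition \ref{prop:constructing-stars}~(a), which reduces the problem of checking a $*$-structure to checking that an associated antimodule map $\dagger : \bbC \to \bbC$ is involutive. Recall from Lemma \ref{lem:cself_conj} that $\gamma = \Psi_{\bbC\bbC}(\lambda \mapsto \bar\lambda)$, where $\lambda \mapsto \bar\lambda$ is ordinary complex conjugation on $\bbC$. So here the relevant $\dagger$ is literally $\lambda \mapsto \bar\lambda$. We already know this map is an antimodule map $\bbC \to \bbC$ (this is exactly what was verified in the proof of Proposition \ref{prop:modulemaps_naturality}~(a)). By Proposition \ref{prop:constructing-stars}~(a), $\gamma = \Psi_{\bbC\bbC}(\dagger)$ is a $*$-structure precisely when $(\lambda^\dagger)^\dagger = \lambda$ for all $\lambda$. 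But $(\bar\lambda)^{\bar{\phantom{x}}} = \lambda$ is just the familiar fact that complex conjugation on $\bbC$ is involutive. This completes the argument.

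**Expected obstacle.** There is essentially no obstacle here: every ingredient has been assembled in earlier results, and the only genuinely mathematical fact being used is that ordinary complex conjugation squares to the identity. The one point requiring a small amount of care is making sure the identification of $\gamma$ with $\Psi_{\bbC\bbC}$ applied to complex conjugation is cited correctly, so that Proposition \ref{prop:constructing-stars} applies verbatim. I would write the proof in one or two sentences. The proof:

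\begin{proof}
  By Lemma \ref{lem:cself_conj} we have $\gamma = \Psi_{\bbC\bbC}(\lambda \mapsto \bar{\lambda})$, where $\lambda \mapsto \bar{\lambda}$ is the usual complex conjugation map of $\bbC$ to itself.
  This map is an antimodule map by the computation in the proof of Proposition \ref{prop:modulemaps_naturality}~$\mathrm{(a)}$, and it is involutive since $\bar{\bar{\lambda}} = \lambda$ for all $\lambda \in \bbC$.
  Hence Proposition \ref{prop:constructing-stars}~$\mathrm{(a)}$ implies that $\gamma$ is a $*$-structure on $\bbC$.
\end{proof}
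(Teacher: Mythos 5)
Your proof is correct and follows exactly the route the paper takes: the paper's own proof is the one-liner ``Follows immediately from Proposition \ref{prop:constructing-stars},'' and you have simply spelled out the same citation, noting that $\gamma = \Psi_{\bbC\bbC}(\lambda \mapsto \bar{\lambda})$ with $\lambda \mapsto \bar{\lambda}$ an involutive antimodule map. Nothing to add.
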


\begin{proof}
  Follows immediately from Proposition \ref{prop:constructing-stars}.
\end{proof}

\begin{prop}
  \label{prop:enveloping-modules}
  Let $V \in \hmod$.
  \begin{enumerate}[(a)]
  \item The module $V^e = \bar{V} \otimes V$ is a $*$-module with $*$-structure given by
    \[   (\bar{\bar{v} \otimes w})^* =  \bar{w} \otimes v    \]
    for $v,w \in V$.
  \item The module $\ue{V} = V \otimes \bar{V}$ is a $*$-module with $*$-structure given by
    \[   (\bar{v \otimes \bar{w}})^* =  w \otimes \bar{v}    \]
    for $v,w \in V$.
  \end{enumerate}
\end{prop}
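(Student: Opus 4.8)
The plan is to invoke \cref{prop:constructing-stars}, which lets me trade the direct verification of \cref{dfn:star-module} for the cleaner task of exhibiting an involutive antimodule map. For part $\mathrm{(a)}$ I define $\dagger : V^e \to V^e$ on simple tensors by $(\bar v \otimes w)^\dagger = \bar w \otimes v$. To see this is well-defined and antilinear at once, I would observe that $\dagger = \tau \circ (c_V^{-1} \otimes c_V)$, where $\tau : V \otimes \bar V \to \bar V \otimes V$ is the flip: since $c_V^{-1}$ and $c_V$ are both antilinear their tensor product is a well-defined antilinear map, and postcomposing with the linear flip keeps $\dagger$ antilinear. Involutivity is immediate, as applying $\dagger$ twice returns $\bar v \otimes w$. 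Once $\dagger$ is shown to be an antimodule map, \cref{prop:constructing-stars}~$\mathrm{(a)}$ gives that $* = \Psi_{V^e,V^e}(\dagger)$ is a genuine $*$-structure, and since $\bar{x}^* = x^\dagger$ it agrees with the stated formula $(\bar{\bar v \otimes w})^* = \bar w \otimes v$.

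The crux is the antimodule identity $\dagger(a \rhd \xi) = S(a)^* \rhd \dagger(\xi)$, which I would check on a simple tensor $\xi = \bar v \otimes w$. On the left, expanding with the tensor-product action \eqref{eq:tensorprod_action} and then the conjugate action \eqref{eq:conj-module-def} gives $a \rhd (\bar v \otimes w) = \bar{S(a_{(1)})^* \rhd v} \otimes (a_{(2)} \rhd w)$, so applying $\dagger$ yields $\bar{a_{(2)} \rhd w} \otimes (S(a_{(1)})^* \rhd v)$. On the right, the essential inputs are the coproduct identity $\cop(S(a)^*) = S(a_{(2)})^* \otimes S(a_{(1)})^*$ already established in the proof of \cref{prop:conj_tensor_prod}, together with the simplification $S(a_{(2)})^* \rhd \bar w = \bar{a_{(2)} \rhd w}$; the latter follows from \eqref{eq:conj-module-def} and the identity $S(S(c)^*)^* = c$, which is just \eqref{eq:sstar} read as $* \circ S \circ * \circ S = \id_H$. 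Feeding these in collapses the right-hand side to the same expression, so the two sides coincide.

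Part $\mathrm{(b)}$ is entirely parallel. I would set $\dagger : \ue V \to \ue V$, $(v \otimes \bar w)^\dagger = w \otimes \bar v$, factor it as $\tau \circ (c_V \otimes c_V^{-1})$ to obtain antilinearity and involutivity, and then rerun the computation with the first tensor slot now in $V$ and the second in $\bar V$. The same coproduct formula for $S(a)^*$ and the same instance of \eqref{eq:sstar} confirm the antimodule property, after which \cref{prop:constructing-stars}~$\mathrm{(a)}$ delivers the $*$-structure and matches it to the stated formula $(\bar{v \otimes \bar w})^* = w \otimes \bar v$.

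I expect the only real obstacle to be bookkeeping in the antimodule computation: keeping the Sweedler legs $a_{(1)}, a_{(2)}$ attached to the correct tensor factor after $\dagger$ swaps the two slots, and deploying \eqref{eq:sstar} in exactly the form $S(S(c)^*)^* = c$ to turn $S(a_{(2)})^* \rhd \bar w$ back into $\bar{a_{(2)} \rhd w}$. Everything else—antilinearity, involutivity, and the passage from $\dagger$ to $*$—is routine given the machinery already in place.
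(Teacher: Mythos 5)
Your proposal matches the paper's proof essentially step for step: both define the involutive antimodule map $\dagger(\bar v \otimes w) = \bar w \otimes v$, verify the antimodule identity on simple tensors using $\cop(S(a)^*) = S(a_{(2)})^* \otimes S(a_{(1)})^*$ and the consequence $S(a_{(2)})^* \rhd \bar w = \bar{a_{(2)} \rhd w}$ of \eqref{eq:sstar}, and then invoke Proposition \ref{prop:constructing-stars} to produce the $*$-structure, with part (b) handled by the parallel computation. Your factorization $\dagger = \tau \circ (c_V^{-1} \otimes c_V)$ is a harmless extra justification of well-definedness that the paper leaves implicit; the argument is correct.
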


\begin{proof}
  We show the proof for $\mathrm{(a)}$ only; $\mathrm{(b)}$ is similar.  
  Let us define a map $\dagger : V^e \to V^e$ by
  \[  (\bar{v} \otimes w)^\dagger = \bar{w} \otimes v.   \]
  It is clear that $(x^\dagger)^\dagger = x$ for all $x \in V^e$.
  Moreover we claim that $\dagger$ is an antimodule map.
  For $v,w \in V$ and $a \in H$ we have
  \begin{align*}
    (a \rhd (\bar{v} \otimes w))^\dagger & = [(a_{(1)} \rhd \bar{v}) \otimes (a_{(2)} \rhd w)]^\dagger \\
    & = [( \bar{S(a_{(1)})^* \rhd v  }) \otimes (a_{(2)} \rhd w  )]^\dagger \\
    & = (\bar{a_{(2)} \rhd w}) \otimes (S(a_{(1)})^* \rhd v) \\
    & = (S(a_{(2)})^* \rhd \bar{w}) \otimes (S(a_{(1)})^* \rhd v) \\
    & = S(a)^* \rhd (\bar{w} \otimes v) \\
    & = S(a)^* \rhd (\bar{v} \otimes w)^\dagger,
  \end{align*}
  which verifies the claim.
  Then Proposition \ref{prop:constructing-stars} implies that $* = \Psi_{V^e V^e}(\dagger)$ is a $*$-structure.
\end{proof}

\begin{prop}
  \label{prop:direct-sum-star-modules}
  For a family $(V_j)_{j \in J}$ in $\hmods$ with $*$-structures $* : \bar{V_j} \to V_j$, the direct sum $\bigoplus_{j \in J} V_j$ is a $*$-module with the $*$-structure given by
  \begin{equation}
    \label{eq:direct-sum-star-structure}
    (\bar{(v_j)_{j \in J}})^* = (\bar{v_j}^{*})_{j \in J}.   
  \end{equation}
\end{prop}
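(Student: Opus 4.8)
The plan is to reduce the involutivity condition of Definition~\ref{dfn:star-module} to a purely pointwise statement by passing through the antilinear picture of Proposition~\ref{prop:constructing-stars}. For each $j \in J$, the $*$-structure $* : \bar{V_j} \to V_j$ corresponds under $\Psi_{V_j V_j}$ to an involutive antimodule map $\dagger_j : V_j \to V_j$ characterized by $v^{\dagger_j} = \bar{v}^*$ for $v \in V_j$. I would then define a single map $\dagger : \bigoplus_{j} V_j \to \bigoplus_j V_j$ componentwise by $(v_j)_{j}^\dagger = (v_j^{\dagger_j})_{j}$; this is well defined on the direct sum since each $\dagger_j$ fixes $0$, so finitely-supported tuples are sent to finitely-supported tuples.

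Next I would verify that $\dagger$ is an involutive antimodule map. Antilinearity and the involutivity $(x^\dagger)^\dagger = x$ are immediate, since both hold in each component because every $\dagger_j$ is antilinear and involutive. For the antimodule property I would use that the action on a direct sum is componentwise, $a \rhd (v_j)_j = (a \rhd v_j)_j$, together with the fact that each $\dagger_j$ is an antimodule map:
\[
(a \rhd (v_j)_j)^\dagger = ((a \rhd v_j)^{\dagger_j})_j = (S(a)^* \rhd v_j^{\dagger_j})_j = S(a)^* \rhd (v_j^{\dagger_j})_j = S(a)^* \rhd (v_j)_j^\dagger .
\]
Thus $\dagger$ is an involutive antimodule map on $\bigoplus_j V_j$.

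Proposition~\ref{prop:constructing-stars}(a) then immediately yields that $* = \Psi_{VV}(\dagger)$ is a $*$-structure on $\bigoplus_j V_j$, and unwinding $\Psi_{VV}$ gives $\bar{(v_j)_j}^* = (v_j)_j^\dagger = (\bar{v_j}^*)_j$, which is exactly the formula \eqref{eq:direct-sum-star-structure}. I do not expect any serious obstacle here: once the antilinear reformulation is in place the entire content is pointwise. The only point requiring a little care is the bookkeeping of the identification $\bar{\bigoplus_j V_j} \simeq \bigoplus_j \bar{V_j}$ implicit in \eqref{eq:direct-sum-star-structure}, namely the isomorphism $\pi_{\mathrm V}$ of Proposition~\ref{prop:modulemaps_naturality}(c). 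Working through $\dagger$ and $\Psi_{VV}$, rather than composing $\bigoplus_j *$ with $\pi_{\mathrm V}$ and checking $* \circ \bar{*} = \sigma_{\bigoplus_j V_j}$ directly, lets one sidestep the naturality square \eqref{eq:sum_naturality} that the latter approach would require.
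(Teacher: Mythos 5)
Your proposal is correct and follows essentially the same route as the paper's own proof: pull each $*$-structure back to an involutive antimodule map $\dagger_j = \Psi_{V_j V_j}^{-1}(*)$, take the componentwise direct sum, check it is an involutive antimodule map, and push forward via $\Psi$ using Proposition \ref{prop:constructing-stars}. You merely spell out the componentwise verification that the paper leaves as an observation, and your remark about the implicit identification $\pi_{\mathrm V}$ is a fair point of bookkeeping that the paper also elides.
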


\begin{proof}
  For each $j \in J$, define a map $\dagger : V_j \to V_j$ by $\dagger = \Psi_{V_j V_j}^{-1}(*)$.
  Each $\dagger$ is an antimodule map by Proposition \ref{prop:antimodule-representable}, and $({v_j}^\dagger)^\dagger = v_j$ for all $v_j \in V_j$ by Proposition \ref{prop:constructing-stars}.
  
  Then we define $\dagger : \bigoplus_{j \in J} V_j \to \bigoplus_{j \in J} V_j$ to be the direct sum of the maps $\dagger : V_j \to V_j$ and observe that $\dagger$ is an involutive antimodule map since all of its direct summands are such.
  Finally, the map \eqref{eq:direct-sum-star-structure} is given by
  \[   * = \Psi_{\oplus_j V_j, \oplus_j V_j}(\dagger),    \]
  so it is a $*$-structure by Proposition \ref{prop:constructing-stars}.
\end{proof}

\begin{prop}
  \label{prop:starmods-tensor-powers}
  For $V \in \hmods$ and $n$ a positive integer, the module $V^{\otimes n}$ is a $*$-module with the $*$-structure given by
  \begin{equation}
    \label{eq:tensor-power-star-struct}
    (\bar{v_1 \otimes \dots \otimes v_n})^* = \bar{v_n}^* \otimes \dots \otimes \bar{v_1}^*.   
  \end{equation}
\end{prop}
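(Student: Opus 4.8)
The plan is to follow the pattern already established in Propositions \ref{prop:enveloping-modules} and \ref{prop:direct-sum-star-modules}: rather than build the module map $* : \bar{V^{\otimes n}} \to V^{\otimes n}$ directly, I would exhibit an involutive antimodule map $\dagger$ on $V^{\otimes n}$ and then appeal to Proposition \ref{prop:constructing-stars}(a) to conclude that $* = \Psi_{V^{\otimes n} V^{\otimes n}}(\dagger)$ is a $*$-structure. Writing $\dagger : V \to V$ for the involutive antimodule map corresponding to the given $*$-structure on $V$ (so that $\bar{v}^* = v^\dagger$), I would set
\[ (v_1 \otimes \dots \otimes v_n)^\dagger = v_n^\dagger \otimes \dots \otimes v_1^\dagger. \]
Well-definedness and antilinearity are immediate, since this map is the composite of the order-reversing linear isomorphism $v_1 \otimes \dots \otimes v_n \mapsto v_n \otimes \dots \otimes v_1$ with the $n$-fold tensor power of the antilinear map $\dagger$. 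Involutivity is equally immediate: applying $\dagger$ twice restores the original order and replaces each $v_k$ by $(v_k^\dagger)^\dagger = v_k$.

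The substance of the proof is checking that $\dagger$ is an antimodule map, i.e.\ that $(a \rhd x)^\dagger = S(a)^* \rhd x^\dagger$ for $x \in V^{\otimes n}$. I would compute on a simple tensor $x = v_1 \otimes \dots \otimes v_n$, expanding $a \rhd x = (a_{(1)} \rhd v_1) \otimes \dots \otimes (a_{(n)} \rhd v_n)$ via the iterated coproduct, applying $\dagger$ slot-by-slot using the antimodule property on $V$, and arriving at
\[ (a \rhd x)^\dagger = (S(a_{(n)})^* \rhd v_n^\dagger) \otimes \dots \otimes (S(a_{(1)})^* \rhd v_1^\dagger). \]
On the other side, $S(a)^* \rhd x^\dagger$ is computed by distributing $\cop$ across the reversed tensor $v_n^\dagger \otimes \dots \otimes v_1^\dagger$. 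The key identity, which I expect to be the crux, is the $n$-fold generalization of the relation $\cop(S(a)^*) = S(a_{(2)})^* \otimes S(a_{(1)})^*$ used in the proof of Proposition \ref{prop:conj_tensor_prod}; iterating it with coassociativity gives
\[ \cop^{(n-1)}(S(a)^*) = S(a_{(n)})^* \otimes \dots \otimes S(a_{(1)})^*, \]
which supplies exactly the order-reversed Sweedler legs needed to match the two expressions.

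Once the antimodule property is in hand, Proposition \ref{prop:constructing-stars}(a) yields that $* = \Psi_{V^{\otimes n} V^{\otimes n}}(\dagger)$ is a $*$-structure, and unwinding $\bar{x}^* = x^\dagger$ together with $\bar{v_k}^* = v_k^\dagger$ reproduces the stated formula \eqref{eq:tensor-power-star-struct}. The main obstacle is purely bookkeeping: correctly tracking the order reversal simultaneously through the iterated coproduct and through the tensor slots, so that the indices on the reversed Sweedler legs line up with the reversed tensor factors. A more structural alternative would be to apply Proposition \ref{prop:extending-antimodule-maps} to the antimodule map $\iota_V \circ \dagger : V \to T(V)$, producing an antimodule-algebra endomorphism of $T(V)$ whose restriction to each graded piece $V^{\otimes n}$ is precisely the $\dagger$ above; its involutivity would then follow from the uniqueness clause of the universal property rather than from a direct computation.
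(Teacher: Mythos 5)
Your proposal is correct and follows essentially the same route as the paper: define the involutive antimodule map $\dagger$ on $V^{\otimes n}$ by reversing the tensor factors and applying $\dagger$ slot-by-slot, verify the antimodule property via the order-reversed iterated coproduct of $S(a)^*$, and invoke Proposition \ref{prop:constructing-stars}(a). The key identity you single out is exactly the one used implicitly in the paper's displayed computation, so there is nothing missing.
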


\begin{proof}
  Let $\dagger = \Psi_{VV}^{-1}(*) : V \to V$ be the involutive antimodule map corresponding to the given $*$-structure, and then define another map $\dagger : V^{\otimes n} \to V^{\otimes n}$ by
  \[ (v_1 \otimes \dots \otimes v_n)^\dagger = v_n^\dagger \otimes \dots \otimes v_1^\dagger.   \]
  This map is involutive by construction, and we claim that it is an antimodule map as well.
  For $a \in H$ we have
  \begin{align*}
    [a \rhd (v_1 \otimes \dots \otimes v_n)]^\dagger & = [(a_{(1)} \rhd v_1) \otimes \dots \otimes (a_{(n)} \rhd v_n)]^\dagger \\
    & = (a_{(n)} \rhd v_n)^\dagger \otimes \dots \otimes (a_{(1)} \rhd v_1)^\dagger \\
    & = (S(a_{(n)})^* \rhd v_n^\dagger) \otimes \dots \otimes (S(a_{(1)})^* \rhd v_1^\dagger) \\
    & = S(a)^* \rhd (v_n^\dagger \otimes \dots \otimes v_1^\dagger) \\
    & = S(a)^* \rhd (v_1 \otimes \dots \otimes v_n)^\dagger,
  \end{align*}
  so the claim is verified.
  Finally, the map \eqref{eq:tensor-power-star-struct} is given by $* = \Psi_{V^{\otimes n} V^{\otimes n}}(\dagger)$, so it is a $*$-structure by Proposition \ref{prop:constructing-stars}.
\end{proof}

\begin{rem}
  \label{rem:no-arbitrary-tensor-prods}
  It seems that in general there is no way to define a $*$-structure on the tensor product of two arbitrary $*$-modules $V$ and $W$.
  In order to do so we would need a module map $* : \bar{V \otimes W} \to V \otimes W$; since $\bar{V \otimes W} \simeq \bar{W} \otimes \bar{V}$, however, the tensor product of the $*$-structures gives us a map $* : \bar{V \otimes W} \to W \otimes V$.
  In some situations there is a braiding on the category $\hmod$ (or $\hmodf$), and if there is some compatibility between the braidings and the $*$-structures, then more can be said.
  We will revisit this issue later in Section \ref{sec:stars-and-braidings}.

  It seems also that there is no natural way to define a \(*\)-structure on the dual of a \(*\)-module.
  Indeed, if \(* : \bar{V} \to V\) is a \(*\)-structure, then the dual map is \(*^{\tr} : V^* \to \bar{V}^* \cong \bar{\us V}\), and taking the complex conjugate of this gives a map \(\bar{V^*} \to \us V\), which is not of the correct form to be a \(*\)-structure.
\end{rem}

\subsection{$*$-submodules and quotients}
\label{sec:star-submodules}

We now prove the unsurprising fact that the quotient of a $*$-module $V$ by a submodule $W$ inherits a $*$-structure when $W$ is stable under the $*$-operation, and we show also that the kernel and image of a $*$-module morphism are $*$-submodules.

\begin{dfn}
  \label{dfn:star-submodule}
  Let $V \in \hmods$ and let $W \subseteq V$ be a submodule.
  We say that $W$ is a \emph{$*$-submodule} if $\bar{W}^* \subseteq W$.
  (Note that involutivity of the $*$-structure on $V$ implies that $\bar{W}^* = W$ if $W$ is a $*$-submodule.)
\end{dfn}

\begin{lem}
  \label{lem:quotient-by-star-submodule}
  Let $V \in \hmods$ and let $W$ be a $*$-submodule.
  Then there is a unique $*$-structure on the quotient $V/W$ such that the quotient map $q : V \to V/W$ is a morphism of $*$-modules, i.e.~so that the following diagram commutes:
  \begin{equation}
    \label{eq:quotient-by-star-submodule-diagram}
    \begin{CD}
      \bar{V} @>{*}>> V \\
      @V{\bar{q}}VV @VV{q}V \\
      \bar{V/W} @>>{*}> V/W
    \end{CD}
  \end{equation}
\end{lem}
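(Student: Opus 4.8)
The plan is to work with involutive antimodule maps rather than directly with the maps $\bar V \to V$, as licensed by Remark~\ref{rem:star-structs-equiv}. Let $\dagger = \Psi_{VV}^{-1}(*) : V \to V$ be the involutive antimodule map corresponding to the given $*$-structure on $V$ via Proposition~\ref{prop:constructing-stars}. Since $\bar v^* = v^\dagger$, the defining condition $\bar W^* \subseteq W$ of a $*$-submodule translates into $W^\dagger \subseteq W$; applying $\dagger$ once more and using its involutivity upgrades this to the equality $W^\dagger = W$, exactly as anticipated in the parenthetical remark in Definition~\ref{dfn:star-submodule}.

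The heart of the argument is to push $\dagger$ down to the quotient. I would define a map $\dagger : V/W \to V/W$ by $q(v)^\dagger = q(v^\dagger)$, where $q : V \to V/W$ is the quotient map. The one nontrivial point is well-definedness: if $q(v) = q(v')$ then $v - v' \in W$, and since $\dagger$ is antilinear it respects differences, so $v^\dagger - v'^\dagger = (v - v')^\dagger \in W^\dagger = W$, whence $q(v^\dagger) = q(v'^\dagger)$. This is the only place the $*$-submodule hypothesis enters, and it is the main (if modest) obstacle in the proof.

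With the map in hand, the remaining verifications are routine. Antilinearity of $\dagger$ on $V/W$ is inherited from $V$; it is an antimodule map because $q$ is a module map and $\dagger$ is an antimodule map on $V$, so that $(a \rhd q(v))^\dagger = q((a \rhd v)^\dagger) = q(S(a)^* \rhd v^\dagger) = S(a)^* \rhd q(v)^\dagger$; and it is involutive since $(q(v)^\dagger)^\dagger = q((v^\dagger)^\dagger) = q(v)$. Proposition~\ref{prop:constructing-stars}(a) then promotes $\dagger$ to a $*$-structure $* = \Psi_{V/W,\,V/W}(\dagger)$ on $V/W$, and Proposition~\ref{prop:constructing-stars}(b) shows that $q$ is a $*$-map precisely because the identity $q(v^\dagger) = q(v)^\dagger$ holds by construction. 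As $q$ is already a module map, it is thus a morphism of $*$-modules, which is the content of the commutativity of \eqref{eq:quotient-by-star-submodule-diagram}.

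Finally, for uniqueness I would invoke surjectivity of $q$. Any $*$-structure on $V/W$ making $q$ a $*$-morphism corresponds, again via Proposition~\ref{prop:constructing-stars}, to an involutive antimodule map $\ddagger$ on $V/W$ satisfying $q(v^\dagger) = q(v)^{\ddagger}$ for all $v \in V$. Since every element of $V/W$ has the form $q(v)$, this equation determines $\ddagger$ completely, forcing $\ddagger = \dagger$ and hence uniqueness of the induced $*$-structure.
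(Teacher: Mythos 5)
Your proof is correct and follows essentially the same route as the paper: transfer the $*$-structure to the involutive antimodule map $\dagger$ via Proposition~\ref{prop:constructing-stars}, push $\dagger$ to the quotient using $W^\dagger \subseteq W$, and then convert back. You are in fact slightly more careful than the paper, which leaves the well-definedness check and the uniqueness argument implicit.
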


\begin{proof}
  Let $\dagger : V \to V$ be the involutive antimodule map corresponding to $*$ by Proposition \ref{prop:constructing-stars}.
  The fact that $W$ is a $*$-submodule implies that $W^\dagger \subseteq W$.
  Then $(v + W)^\dagger = v^\dagger + W$ is a well-defined involutive antimodule map on $V/W$, so Proposition \ref{prop:constructing-stars} (a) implies that the corresponding module map is a $*$-structure.
  By definition we have $q(v)^\dagger = q(v^\dagger)$, so $q$ is a $*$-map by part (b) of Proposition \ref{prop:constructing-stars}.
\end{proof}

\begin{lem}
  \label{lem:kernels-and-images-are-star-submodules}
  Let $V,W \in \hmods$ and let $T : V \to W$ be a $*$-module map.
  \begin{enumerate}[(a)]
  \item $\ker(T)$ is a $*$-submodule of $V$.
  \item $\im(T)$ is a $*$-submodule of $W$.
  \end{enumerate}
\end{lem}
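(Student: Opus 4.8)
The plan is to pass from the $*$-structures to the associated involutive antimodule maps via Proposition \ref{prop:constructing-stars}, where the relevant conditions become transparent. Let $\dagger : V \to V$ and $\dagger : W \to W$ be the involutive antimodule maps corresponding to the given $*$-structures, so that $\bar{v}^* = \Psi_{VV}(\dagger)(\bar{v}) = v^\dagger$ for $v \in V$, and similarly on $W$. Under this correspondence, a submodule $U \subseteq V$ satisfies the defining inclusion $\bar{U}^* \subseteq U$ of a $*$-submodule precisely when $U^\dagger \subseteq U$, since $\bar{U}^* = \{ v^\dagger : v \in U \} = U^\dagger$. Thus, to prove that $\ker(T)$ and $\im(T)$ are $*$-submodules, it suffices to verify that each is stable under $\dagger$: both are automatically submodules, being the kernel and image of the module map $T$, so the $\dagger$-stability is the only new content. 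Moreover, since $T$ is a $*$-map, part (b) of Proposition \ref{prop:constructing-stars} gives the single identity $T(v^\dagger) = (Tv)^\dagger$ for all $v \in V$, and this drives both parts.

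For part (a), I would take $v \in \ker(T)$ and compute $T(v^\dagger) = (Tv)^\dagger = 0^\dagger = 0$, using antilinearity of $\dagger$ to conclude $0^\dagger = 0$. Hence $v^\dagger \in \ker(T)$, so $\ker(T)^\dagger \subseteq \ker(T)$, as required. For part (b), I would take $w \in \im(T)$, write $w = Tv$ for some $v \in V$, and compute $w^\dagger = (Tv)^\dagger = T(v^\dagger) \in \im(T)$, so $\im(T)^\dagger \subseteq \im(T)$.

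There is essentially no obstacle here; all the substantive work was done in setting up Proposition \ref{prop:constructing-stars}, which converts the statement into an elementary consequence of the intertwining identity $T(v^\dagger) = (Tv)^\dagger$. If anything, the only point requiring a moment's care is the translation of the inclusion $\bar{U}^* \subseteq U$ into the condition $U^\dagger \subseteq U$, which follows from the explicit formula $\bar{v}^* = v^\dagger$ for the action of the $*$-structure on elements.
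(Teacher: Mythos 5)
Your proof is correct and follows essentially the same route as the paper: the paper's computation $T(\bar v^*) = (\bar T(\bar v))^* = (\bar{Tv})^*$ is exactly your identity $T(v^\dagger) = (Tv)^\dagger$ written in the $*$-notation rather than the $\dagger$-notation, and the rest of the argument is identical. Your translation of the $*$-submodule condition $\bar U^* \subseteq U$ into $U^\dagger \subseteq U$ is the same move the paper makes implicitly (and endorses in Remark~\ref{rem:star-structs-equiv}).
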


\begin{proof}
  \begin{enumerate}[(a)]
  \item For $v \in \ker(T)$, we have
    \[  T(\bar{v}^*) = \left( \bar{T} (\bar{v})  \right)^* = \left( \bar{Tv} \right)^* = 0,    \]
    so $\bar{v}^* \in \ker(T)$, and hence $\ker(T)$ is a $*$-submodule of V.
  \item For $w \in \im(T)$, write $w = T(v)$ for some $v \in V$.
    Then we have
    \[  \bar{w}^* =  \left( \bar{Tv} \right)^* = T(\bar{v}^*) \in \im(T),    \]
    so $\im(T)$ is a $*$-submodule of $W$.  \qedhere
  \end{enumerate}
\end{proof}

\section{$*$-structures on algebras}
\label{sec:star-algebras}

In this section we extend to algebras our discussion of $*$-structures on modules from Section \ref{sec:star-modules}.
We define $*$-algebras and their morphisms and show how to construct $*$-algebra structures from involutive antimodule-algebra maps.
We discuss $*$-ideals and quotients, and give a criterion for when an ideal generated by a submodule of a $*$-algebra is a $*$-ideal.
We show that $*$-structures on a module can be lifted to a $*$-structure on its tensor algebra, and further that the the tensor algebra has a universal mapping property for $*$-module morphisms.
We also make some observations about $*$-structures on algebras presented by generators and relations.

\subsection{Algebras in $\hmods$}
\label{sec:star-algebras-def}

\begin{dfn}
  \label{dfn:star-algebra}
  Suppose that $A$ is an algebra in $\hmod$ and that $A$ is equipped with a $*$-structure in the sense of Definition \ref{dfn:star-module}.
  Then we say that $A$ is a \emph{$*$-module algebra}, or just a \emph{$*$-algebra in $\hmod$} if $* : \bar{A} \to A$ is an algebra homomorphism, and we refer to $*$ as a \emph{$*$-algebra structure on $A$}.

  If $A$ and $B$ are $*$-algebras in $\hmod$, then a \emph{$*$-homomorphism} is an algebra homomorphism $f : A \to B$ which is also a map of $*$-modules.
\end{dfn}

\begin{prop}
  \label{prop:constructing-star-algebras}
  Suppose that $A$ is an algebra in $\hmod$ and $\dagger : A \to A$ is an antimodule map.
  Then $* = \Psi_{AA}(\dagger)$ is a $*$-algebra structure on $A$ if and only if $\dagger$ is an antimodule-algebra homomorphism satisfying $(a^\dagger)^\dagger = a$ for all $a \in A$. 
\end{prop}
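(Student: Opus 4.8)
The plan is to reduce the statement to two results already in hand: Proposition \ref{prop:constructing-stars}~(a), which controls the involutivity condition, and Lemma \ref{lem:anti-module-alg-maps}~(c), which controls the multiplicative structure. The organizing observation is that a \(*\)-algebra structure, per Definition \ref{dfn:star-algebra}, is the conjunction of two logically separate requirements on \(* = \Psi_{AA}(\dagger)\): that \(*\) be a \(*\)-structure in the sense of Definition \ref{dfn:star-module}, and that \(*\) be an algebra homomorphism \(\bar A \to A\). I would match these, respectively, with the involutivity of \(\dagger\) and with \(\dagger\) being an antimodule-algebra homomorphism.

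First I would dispatch the involutivity. Since \(\dagger\) is assumed to be an antimodule map, Proposition \ref{prop:constructing-stars}~(a) says directly that \(* = \Psi_{AA}(\dagger)\) is a \(*\)-structure on the module \(A\) if and only if \((a^\dagger)^\dagger = a\) for all \(a \in A\). Thus the ``\(*\)-structure'' half of Definition \ref{dfn:star-algebra} is equivalent to the involutivity condition in the statement, with no algebra input required.

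Next I would handle the multiplicative part using Lemma \ref{lem:anti-module-alg-maps}~(c), applied to the antimodule map \(\dagger : A \to A\): this gives that \(\dagger\) is an antimodule-algebra morphism if and only if \(\Psi_{AA}(\dagger) = *\) is a module-algebra morphism. A module-algebra morphism is by definition a map that is simultaneously a module map and an algebra homomorphism; but \(*\) is automatically a module map by Proposition \ref{prop:antimodule-representable}, so being a module-algebra morphism is equivalent to being an algebra homomorphism. Hence \(*\) is an algebra homomorphism if and only if \(\dagger\) is an antimodule-algebra morphism---and since \(\dagger\) is given to be an antimodule map, the remaining content of the latter is exactly the multiplicativity \((ab)^\dagger = b^\dagger a^\dagger\) and unitality \(1_A^\dagger = 1_A\).

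Combining the two equivalences yields the claim: \(*\) is a \(*\)-algebra structure (both a \(*\)-structure and an algebra homomorphism) precisely when \(\dagger\) is an involutive antimodule-algebra homomorphism. I do not expect a genuine obstacle, as the proposition is essentially a repackaging of earlier lemmas; the one point needing care is to keep the two clauses of Definition \ref{dfn:star-algebra} separate and to invoke the automatic module-map property of \(*\), so that the module-algebra-morphism condition collapses cleanly to the algebra-homomorphism condition.
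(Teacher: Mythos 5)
Your proposal is correct and follows essentially the same route as the paper: Proposition \ref{prop:constructing-stars}~(a) for the involutivity clause and Lemma \ref{lem:anti-module-alg-maps}~(c) for the multiplicative clause. The only difference is that you spell out the (correct) observation that the module-algebra-morphism condition collapses to the algebra-homomorphism condition because $*$ is automatically a module map, a point the paper leaves implicit.
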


\begin{proof}
  Proposition \ref{prop:constructing-stars} (a) implies that $*$ is a $*$-module structure on $A$ if and only if $\dagger$ is an involutive antimodule map.
  Then Lemma \ref{lem:anti-module-alg-maps} (c) implies that $*$ is a module-algebra morphism if and only if $\dagger$ is an antimodule-algebra morphism.
\end{proof}

\subsection{$*$-ideals and quotients}
\label{sec:star-ideals}

Here we show that, as one might expect, the quotient of a $*$-algebra by an ideal inherits a $*$-structure exactly when the ideal is a $*$-algebra.
This will be useful in \S \ref{sec:tensor-alg-star-structs} when we discuss $*$-structures on algebras defined by generators and relations.

\begin{dfn}
  \label{dfn:star-ideal}
  An ideal (two-sided) $I$ in a $*$-algebra $A$ in $\hmod$ is called a \emph{$*$-ideal} if $(\bar{I})^* \subseteq I$.
  (Note that this implies that $(\bar{I})^* = I$ because of involutivity.)
\end{dfn}

\begin{lem}
  \label{lem:quotient-by-star-ideal}
  Let $A$ be a $*$-algebra in $\hmod$ and suppose that the ideal $I$ is a $*$-submodule of $A$ in the sense of Definition \ref{dfn:star-submodule}.
  With respect to the $*$-structure on $A/I$ defined in Lemma \ref{lem:quotient-by-star-submodule}, the quotient map $q : A \to A/I$ is a morphism of $*$-module algebras.
\end{lem}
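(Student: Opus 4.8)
The plan is to reduce everything to the involutive antimodule-algebra formalism of Proposition \ref{prop:constructing-star-algebras} and Lemma \ref{lem:quotient-by-star-submodule}, so that the algebra structure on the quotient can be handled purely in terms of elements via a $\dagger$ operation rather than directly through the conjugate-space map $*$. This mirrors the strategy used throughout \S\ref{sec:star-modules} and \S\ref{sec:star-algebras}.

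First I would record the two hypotheses in dagger form. Since $A$ is a $*$-algebra, Proposition \ref{prop:constructing-star-algebras} lets me replace the $*$-structure $* : \bar{A} \to A$ by the corresponding involutive antimodule-algebra homomorphism $\dagger = \Psi_{AA}^{-1}(*) : A \to A$, so that $(ab)^\dagger = b^\dagger a^\dagger$, $1_A^\dagger = 1_A$, and $(a^\dagger)^\dagger = a$ for all $a,b \in A$. The assumption that $I$ is a $*$-submodule translates, exactly as in Lemma \ref{lem:quotient-by-star-submodule}, into $I^\dagger \subseteq I$. I would also note at the outset that $A/I$ is already an algebra in $\hmod$: because $I$ is simultaneously a two-sided ideal and a submodule, the multiplication and unit of $A$ descend to module maps on the quotient.

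Next, from Lemma \ref{lem:quotient-by-star-submodule} the induced $*$-structure on $A/I$ corresponds to the well-defined involutive antimodule map $\dagger : A/I \to A/I$ given by $(a+I)^\dagger = a^\dagger + I$. The heart of the argument is to check that this descended $\dagger$ is an antimodule-\emph{algebra} homomorphism, not merely an antimodule map. This is a direct computation: pushing the identities $(ab)^\dagger = b^\dagger a^\dagger$ and $1_A^\dagger = 1_A$ through the quotient gives $((a+I)(b+I))^\dagger = (b+I)^\dagger (a+I)^\dagger$ and $(1_A + I)^\dagger = 1_{A/I}$, so $\dagger$ reverses products and preserves the unit on $A/I$.

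With this in hand I would invoke Proposition \ref{prop:constructing-star-algebras} in the forward direction: since $\dagger$ on $A/I$ is an involutive antimodule-algebra homomorphism, $* = \Psi_{A/I,A/I}(\dagger)$ is a genuine $*$-algebra structure on $A/I$, and it is precisely the $*$-structure produced in Lemma \ref{lem:quotient-by-star-submodule}. Finally, $q$ is an algebra homomorphism by construction of the quotient and is a morphism of $*$-modules by Lemma \ref{lem:quotient-by-star-submodule}, so it is a $*$-homomorphism, i.e.\ a morphism of $*$-module algebras. I do not expect any genuine obstacle; the only points requiring care are confirming that $A/I$ carries a module-algebra structure at all and that the descended $\dagger$ is legitimately multiplicative-reversing and unital, both of which follow immediately by pushing the corresponding identities in $A$ through $q$.
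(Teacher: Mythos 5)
Your proposal is correct and follows essentially the same route as the paper, whose entire proof is the two-line observation that $q$ is a map of module-algebras by construction and a $*$-map by Lemma \ref{lem:quotient-by-star-submodule}. Your version is in fact slightly more complete: you explicitly verify that the descended $\dagger$ on $A/I$ reverses products and preserves the unit, so that the induced $*$-structure on $A/I$ is genuinely a $*$-\emph{algebra} structure (needed for ``morphism of $*$-module algebras'' to make sense) --- a point the paper's proof leaves implicit.
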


\begin{proof}
  The quotient map is a map of module-algebras by construction, and it is a $*$-map by Lemma \ref{lem:quotient-by-star-submodule}.
\end{proof}

\begin{lem}
  \label{lem:generating-star-ideals}
  Let $A$ be a $*$-algebra in $\hmod$ and let $W \subseteq A$ be a $*$-submodule.
  Then the two-sided ideal $J_W$ of $A$ generated by $W$ is a $*$-submodule of $A$, and hence is a $*$-ideal.
\end{lem}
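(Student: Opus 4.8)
The plan is to work throughout with the involutive antimodule-algebra map $\dagger = \Psi_{AA}^{-1}(*)$ rather than with $*$ directly, exactly as advocated in Remark \ref{rem:star-structs-equiv}. By Proposition \ref{prop:constructing-star-algebras}, $\dagger$ is an involutive antimodule-algebra morphism; in particular it is antilinear and reverses products, so $(xy)^\dagger = y^\dagger x^\dagger$ for all $x,y \in A$. In this language the hypothesis that $W$ is a $*$-submodule becomes $W^\dagger \subseteq W$, and the desired conclusion that $J_W$ is a $*$-submodule becomes $J_W^\dagger \subseteq J_W$. Since $\dagger$ is involutive, each of these containments is automatically an equality, so it suffices to verify the inclusions.

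First I would record that $J_W$ is genuinely an $H$-submodule of $A$, since Definition \ref{dfn:star-submodule} requires a $*$-submodule to be a submodule in the first place. As $A$ is unital, $J_W$ is the linear span of the elements $awb$ with $a,b \in A$ and $w \in W$. Applying the module-algebra axioms \eqref{eq:module-alg-conditions} together with coassociativity gives $x \rhd (awb) = (x_{(1)} \rhd a)(x_{(2)} \rhd w)(x_{(3)} \rhd b)$ for $x \in H$, and because $W$ is a submodule the middle factor lies in $W$, so this element again lies in $J_W$. Hence $J_W$ is a submodule, and it remains only to check stability under $\dagger$.

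The heart of the argument is then a one-line computation on generators: for $a,b \in A$ and $w \in W$ the product-reversing property gives $(awb)^\dagger = b^\dagger w^\dagger a^\dagger$, and since $w^\dagger \in W^\dagger \subseteq W$ while $a^\dagger, b^\dagger \in A$, this lies in $J_W$. Because $\dagger$ is additive and antilinear, it carries the linear span of the generators into $J_W$, so $J_W^\dagger \subseteq J_W$; translating back through $\Psi$ this reads $\bar{J_W}^* \subseteq J_W$, whence $J_W$ is a $*$-submodule and, being a two-sided ideal, a $*$-ideal in the sense of Definition \ref{dfn:star-ideal}. I do not expect any genuine obstacle here: the only points needing a little care are confirming that $J_W$ really is an $H$-submodule and keeping the order-reversal in $(awb)^\dagger = b^\dagger w^\dagger a^\dagger$ straight.
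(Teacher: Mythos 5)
Your proposal is correct and follows essentially the same route as the paper: show $J_W$ is an $H$-submodule via $x \rhd (awb) = (x_{(1)} \rhd a)(x_{(2)} \rhd w)(x_{(3)} \rhd b)$, then check stability of the generators under the involution using the product-reversing property, which the paper writes as $(\bar{awb})^* = \bar{b}^*\bar{w}^*\bar{a}^*$ and you write equivalently as $(awb)^\dagger = b^\dagger w^\dagger a^\dagger$. The only difference is the purely notational choice of working with the antimodule map $\dagger$ rather than with $*$ on the conjugate algebra, which is exactly the translation the paper itself endorses in Remark \ref{rem:star-structs-equiv}.
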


\begin{proof}
  The elements of $J_W$ are finite sums of terms of the form $awb$, where $a,b \in A$ and $w \in W$.
  For $x \in H$ we have
  \[  x \rhd (awb) = (x_{(1)} \rhd a)(x_{(2)} \rhd w)(x_{(3)} \rhd b);   \]
  since $W$ is a submodule, the middle term $x_{(2)} \rhd w$ is in $W$, so $x \rhd (awb)$ is in $J_W$.
  Hence $J_W$ is a submodule.

  To see that $J_W$ is a $*$-ideal, we need to show that $(\bar{awb})^* \in J_W$ for all $a,b \in A$ and $w \in W$.
  We have
  \[ (\bar{awb})^* = (\bar{b} \bar{w} \, \bar{a})^* =  \bar{b}^* \bar{w}^* \bar{a}^*; \]
  since $W$ is a $*$-submodule, the middle term $\bar{w}^*$ is in $W$, so $(\bar{awb})^*$ is in $J_W$.
  Thus $J_W$ is a $*$-ideal in $A$.
\end{proof}

\subsection{$*$-structures on $T(V)$}
\label{sec:tensor-alg-star-structs}

Here we show that a $*$-structure on a module $V$ extends uniquely to the tensor algebra $T(V)$, and we show that $T(V)$ has a universal mapping property for $*$-algebras equipped with module maps coming from $V$.

\begin{prop}
  \label{prop:tensor-alg-star-struct}
  Let $V \in \hmods$ and let $T(V)$ be the tensor algebra of $V$.
  There is a unique $*$-module algebra structure on $T(V)$ such that the inclusion map $\iota_V : V \to T(V)$ is a $*$-module morphism, i.e.~such that the following diagram commutes:
  \begin{equation}
    \label{eq:tensor-alg-star-struct}
    \begin{CD}
      \bar{V} @>{*}>> V \\
      @V{\bar{\iota_V}}VV @VV{\iota_V}V \\
      \bar{T(V)} @>>{*}> T(V)
    \end{CD}
  \end{equation}
  For an element $v_1 \otimes \dots \otimes v_n \in V^{\otimes n}$, we have
  \begin{equation}
    \label{eq:tensor-alg-star-struct-2}
    (\bar{v_1 \otimes \dots \otimes v_n})^* = \bar{v_n}^* \otimes \dots \otimes \bar{v_1}^*.
  \end{equation}
  Hence the $*$-structure on $T(V)$ coincides with the direct sum of the $*$-structures on the modules $V^{\otimes n}$ coming from Proposition \ref{prop:starmods-tensor-powers}.
\end{prop}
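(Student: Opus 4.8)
The plan is to avoid working directly with the map $* : \bar{V} \to V$ and instead pass to the involutive antimodule map $\dagger : V \to V$ associated to the $*$-structure via Proposition \ref{prop:constructing-stars}, following the philosophy of Remark \ref{rem:star-structs-equiv}. By Proposition \ref{prop:constructing-star-algebras}, equipping $T(V)$ with a $*$-algebra structure is the same as producing an involutive antimodule-algebra homomorphism $T(V) \to T(V)$. Thus the whole problem reduces to constructing such a map, checking its restriction to $V$, and establishing uniqueness.

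First I would form the antimodule map $\iota_V \circ \dagger : V \to T(V)$, which is antilinear and satisfies the antimodule condition because it is the composite of an antimodule map and a module map (Lemma \ref{lem:antimodulemaps-composition}). Applying Proposition \ref{prop:extending-antimodule-maps} yields a unique antimodule-algebra morphism $\dagger_{T(V)} : T(V) \to T(V)$ with $\dagger_{T(V)} \circ \iota_V = \iota_V \circ \dagger$. Since $\dagger_{T(V)}$ reverses products and agrees with $\dagger$ on the generators, evaluating it on a simple tensor $v_1 \otimes \dots \otimes v_n = \iota_V(v_1)\dots\iota_V(v_n)$ gives at once $\dagger_{T(V)}(v_1 \otimes \dots \otimes v_n) = v_n^\dagger \otimes \dots \otimes v_1^\dagger$, which is precisely the formula \eqref{eq:tensor-alg-star-struct-2} after translating back through the relation $\bar{v}^* = v^\dagger$.

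Next I would verify involutivity. From the explicit formula this is transparent, as applying $\dagger_{T(V)}$ twice reverses the factors twice and applies $\dagger$ to each twice, recovering the original tensor because $\dagger$ is involutive on $V$. (Alternatively, and more in the spirit of the paper, $\dagger_{T(V)} \circ \dagger_{T(V)}$ is a module-algebra morphism restricting to $\iota_V$ on $V$, as is $\id_{T(V)}$, so the two agree by the uniqueness clause of Lemma \ref{lem:tensor-alg-universal-property}.) With involutivity established, Proposition \ref{prop:constructing-star-algebras} shows $* = \Psi_{T(V),T(V)}(\dagger_{T(V)})$ is a genuine $*$-algebra structure, and the defining relation $\dagger_{T(V)} \circ \iota_V = \iota_V \circ \dagger$ is exactly the statement that $\iota_V$ is a $*$-morphism by Proposition \ref{prop:constructing-stars}(b), so the diagram \eqref{eq:tensor-alg-star-struct} commutes.

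For uniqueness, any competing $*$-algebra structure making $\iota_V$ a $*$-morphism corresponds to an involutive antimodule-algebra homomorphism $\dagger'$ with $\dagger' \circ \iota_V = \iota_V \circ \dagger$; but Proposition \ref{prop:extending-antimodule-maps} guarantees that the extension of $\iota_V \circ \dagger$ is unique, forcing $\dagger' = \dagger_{T(V)}$ and hence equality of the $*$-structures. Finally, since $\dagger_{T(V)}$ carries $V^{\otimes n}$ into itself, the formula \eqref{eq:tensor-alg-star-struct-2} matches \eqref{eq:tensor-power-star-struct} summand by summand, so the $*$-structure is the direct sum of those from Proposition \ref{prop:starmods-tensor-powers}. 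I do not expect a genuine obstacle, since the real work was done in establishing the universal property for antimodule maps (Proposition \ref{prop:extending-antimodule-maps}); the only point demanding care is keeping straight the dictionary between $*$-structures and involutive antimodule maps and invoking the two distinct uniqueness statements—that of Proposition \ref{prop:extending-antimodule-maps} and that of Lemma \ref{lem:tensor-alg-universal-property}—in the right places.
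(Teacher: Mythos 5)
Your proposal is correct and follows essentially the same route as the paper: pass to the involutive antimodule map $\dagger$, extend $\iota_V \circ \dagger$ via Proposition \ref{prop:extending-antimodule-maps}, check involutivity on generators, and invoke Proposition \ref{prop:constructing-star-algebras}. The only cosmetic difference is that you obtain commutativity of \eqref{eq:tensor-alg-star-struct} directly from Proposition \ref{prop:constructing-stars}(b) where the paper chases an explicit diagram, and you phrase uniqueness through the universal property rather than through generation by $\iota_V(V)$; both are equivalent.
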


\begin{proof}
  Let $\dagger : V \to V$ be the involutive antimodule map corresponding to the given $*$-structure on $V$.
  Now $\iota_V \circ \dagger$ is an antimodule map $V \to T(V)$, so by Proposition \ref{prop:extending-antimodule-maps} it extends uniquely to an antimodule-algebra morphism, which we also denote $\dagger : T(V) \to T(V)$.
  Note that $\dagger \circ \dagger$ is a module-algebra endomorphism of $T(V)$ which is the identity on the generators, so it must be the identity map.
  Hence Proposition \ref{prop:constructing-stars} (a) implies that $* = \Psi_{T(V) T(V)}(\dagger)$ is a $*$-structure on $T(V)$.

  The fact that $\iota_V$ is a $*$-map follows from commutativity of the diagram
      \begin{equation*}
      \label{eq:tensor-alg-star-struct-bigdiagram}
      \begin{tikzpicture}
        [description/.style={fill=white,inner sep=2pt}]
        \matrix (m) [matrix of math nodes, row sep=3em,
        column sep=3em, text height=2ex, text depth=0.25ex]
        { \bar{V} & & & \bar{T(V)}  \\
          & V & T(V) & \\
          V & & & T(V) \\ };
        \path[->,font=\scriptsize]
        (m-1-1) edge node[auto] {$\bar{\iota_V}$} (m-1-4)
        (m-1-1) edge node[auto,swap] {$*$} (m-3-1)
        (m-1-1) edge node[auto] {$c_V^{-1}$} (m-2-2)
        (m-1-4) edge node[auto,swap] {$c_{T(V)}^{-1}$} (m-2-3)
        (m-1-4) edge node[auto] {$*$} (m-3-4)
        (m-2-2) edge node[auto,swap] {$\iota_V$} (m-2-3)
        (m-2-2) edge node[auto] {$\dagger$} (m-3-1)
        (m-2-3) edge node[auto,swap] {$\dagger$} (m-3-4)
        (m-3-1) edge node[auto,swap] {$\iota_V$} (m-3-4);
      \end{tikzpicture}
    \end{equation*} 
    The upper trapezoid commutes by definition of the complex conjugate of $\iota_V$.
    The triangle on the right commutes by the definition of $*$.
    The lower trapezoid commutes by Proposition \ref{prop:extending-antimodule-maps}.
    The triangle on the left commutes by definition of $\dagger$.
    Thus the large rectangle commutes as well, which gives \eqref{eq:tensor-alg-star-struct}.

    The $*$-structure on $T(V)$ is unique with this property since $T(V)$ is generated as an algebra by $\iota_V(V)$, and \eqref{eq:tensor-alg-star-struct} uniquely specifies $*$ on the generators.

    The equation \eqref{eq:tensor-alg-star-struct-2} follows from the fact that $*$ is an algebra map together with the description \eqref{eq:conj-algebra-operations} of multiplication in the complex conjugate of an algebra.
    We see that \eqref{eq:tensor-alg-star-struct-2} is identical to \eqref{eq:tensor-power-star-struct}, so this $*$-structure on $T(V)$ is indeed the direct sum of the star structures coming from Proposition \ref{prop:starmods-tensor-powers}.
\end{proof}

\begin{rem}
  \label{rem:generators-and-relations-and-stars}
  We now briefly discuss $*$-structures on algebras given by generators and relations.
  This means the following: we take a module $V$ (the generators) and a submodule $W \subseteq T(V)$ (the relations), and form the quotient algebra $A = T(V)/J_W$, where $J_W$ is the 2-sided ideal generated by $W$ as in Lemma \ref{lem:generating-star-ideals}.
  If $V$ is a $*$-module, then $T(V)$ is a $*$-algebra, and we would like to know when this $*$-structure descends to $A$.
  By Lemma \ref{lem:quotient-by-star-ideal}, it is sufficient for $J_W$ to be a $*$-ideal.
  Then by Lemma \ref{lem:generating-star-ideals}, it is sufficient that $W$ is a $*$-submodule of $T(V)$.
  According to Lemma \ref{lem:kernels-and-images-are-star-submodules}, this happens, for instance, when $W$ is the kernel or image of a morphism of $*$-modules.
  While these observations are more or less trivial in the abstract, they may be useful later on in specific circumstances when checking relations by hand is complicated.
\end{rem}

\begin{prop}
  \label{prop:tensor-alg-star-universal-prop}
  Let $V \in \hmods$, and endow $T(V)$ with the corresponding $*$-structure as described in Proposition \ref{prop:tensor-alg-star-struct}.
  Let $A$ be a $*$-algebra in $\hmod$ and let $f : V \to A$ be a $*$-module map.
  Then the morphism $\tilde{f} : T(V) \to A$ from Lemma \ref{lem:tensor-alg-universal-property} is in fact a morphism of $*$-algebras.
\end{prop}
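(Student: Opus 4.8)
The plan is to transfer the whole question into the language of involutive antimodule(-algebra) maps via \cref{prop:constructing-stars} and \cref{prop:constructing-star-algebras}, and then to invoke the uniqueness clause of the antimodule universal property in \cref{prop:extending-antimodule-maps}. We already know from \cref{lem:tensor-alg-universal-property} that $\tilde{f}$ is a module-algebra morphism, so by \cref{dfn:star-algebra} the only thing left to verify is that $\tilde{f}$ is a $*$-map.

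First I would name the relevant $\dagger$-maps. Let $\dagger : V \to V$ be the involutive antimodule map corresponding to the $*$-structure on $V$, and let $\dagger : A \to A$ be the involutive antimodule-algebra morphism corresponding to the $*$-algebra structure on $A$, which exists by \cref{prop:constructing-star-algebras}. Recall from the proof of \cref{prop:tensor-alg-star-struct} that the $*$-structure on $T(V)$ corresponds to the unique antimodule-algebra morphism $\dagger : T(V) \to T(V)$ extending $\iota_V \circ \dagger$, so that $\dagger \circ \iota_V = \iota_V \circ \dagger$. Since $f$ is a $*$-module map, \cref{prop:constructing-stars}~(b) gives $f(v^\dagger) = f(v)^\dagger$ for all $v \in V$, that is, $f \circ \dagger = \dagger \circ f$.

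By \cref{prop:constructing-stars}~(b) applied to $\tilde{f}$, it suffices to prove $\tilde{f} \circ \dagger = \dagger \circ \tilde{f}$ as maps $T(V) \to A$. Both sides are antimodule-algebra morphisms: $\tilde{f} \circ \dagger$ is the composite of the antimodule-algebra morphism $\dagger$ on $T(V)$ with the module-algebra morphism $\tilde{f}$, while $\dagger \circ \tilde{f}$ is the composite of $\tilde{f}$ with the antimodule-algebra morphism $\dagger$ on $A$, so both are antimodule-algebra morphisms by \cref{lem:anti-module-alg-maps}~(a). Precomposing with $\iota_V$ and using $\tilde{f} \circ \iota_V = f$ together with the two identities above, I compute
\[ (\tilde{f} \circ \dagger) \circ \iota_V = \tilde{f} \circ \iota_V \circ \dagger = f \circ \dagger = \dagger \circ f = \dagger \circ \tilde{f} \circ \iota_V = (\dagger \circ \tilde{f}) \circ \iota_V, \]
so the two antimodule-algebra morphisms agree on $\iota_V(V)$.

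The conclusion then follows from the uniqueness part of \cref{prop:extending-antimodule-maps}: two antimodule-algebra morphisms $T(V) \to A$ that agree on the generating set $\iota_V(V)$ must coincide. Hence $\tilde{f} \circ \dagger = \dagger \circ \tilde{f}$, so $\tilde{f}$ is a $*$-map, and being already a module-algebra morphism it is a morphism of $*$-algebras. I do not expect a genuine obstacle here; the only point requiring care is to apply the universal property for \emph{antimodule}-algebra morphisms (\cref{prop:extending-antimodule-maps}) rather than the module-algebra version from \cref{lem:tensor-alg-universal-property}, since the maps being compared are antilinear.
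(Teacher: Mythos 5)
Your proof is correct, and it takes a somewhat different route from the paper's. The paper argues diagrammatically: it assembles a large rectangle expressing the $*$-map condition for $\tilde{f}$ out of smaller commuting pieces (the defining square for the $*$-structure on $T(V)$ from \cref{prop:tensor-alg-star-struct}, the lifting triangle $\tilde{f}\circ\iota_V = f$, the $*$-map square for $f$, and the conjugate of the lifting triangle) and concludes that the outer rectangle commutes. You instead translate everything into the language of involutive antimodule(-algebra) maps via \cref{prop:constructing-stars} and \cref{prop:constructing-star-algebras}, observe that $\tilde{f}\circ\dagger$ and $\dagger\circ\tilde{f}$ are both antimodule-algebra morphisms $T(V)\to A$ agreeing on $\iota_V(V)$, and invoke the uniqueness clause of \cref{prop:extending-antimodule-maps}. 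A virtue of your version is that it makes fully explicit the step that the paper's diagram chase leaves implicit: the inner pieces of the paper's diagram only force the outer rectangle to commute on the image of $\bar{\iota_V}$, and extending this to all of $\bar{T(V)}$ requires exactly the ``two (anti)multiplicative maps agreeing on generators coincide'' argument that you supply via the universal property. The paper's approach, in exchange, stays entirely in the categorical $\bar{(\ )}$-and-$*$ formalism without unwinding to elements or $\dagger$'s. Both are sound.
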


\begin{proof}
  We know from Lemma \ref{lem:tensor-alg-universal-property} that $\tilde{f}$ is a morphism of module-algebras, so we only need to show that $\tilde{f}$ respects the $*$-structures on $T(V)$ and $A$.
  We claim that the following diagram commutes:
  \begin{equation*}
    \label{eq:tensor-alg-star-struct-bigdiagram}
    \begin{tikzpicture}
      [description/.style={fill=white,inner sep=2pt}]
      \matrix (m) [matrix of math nodes, row sep=3em,
      column sep=3em, text height=2ex, text depth=0.25ex]
      { \bar{T(V)} & & & T(V)  \\
        & \bar{V} & V & \\
        \bar{A} & & & A \\ };
      \path[->,font=\scriptsize]
      (m-1-1) edge node[auto] {$*$} (m-1-4)
      (m-1-1) edge node[auto,swap] {$\bar{\widetilde{f}}$} (m-3-1)
      (m-2-2) edge node[auto] {$\bar{\iota_V}$} (m-1-1)
      (m-2-3) edge node[auto,swap] {$\iota_V$} (m-1-4)
      (m-1-4) edge node[auto] {$\widetilde{f}$} (m-3-4)
      (m-2-2) edge node[auto] {$*$} (m-2-3)
      (m-2-2) edge node[auto] {$\bar{f}$} (m-3-1)
      (m-2-3) edge node[auto,swap] {$f$} (m-3-4)
      (m-3-1) edge node[auto,swap] {$*$} (m-3-4);
    \end{tikzpicture}
  \end{equation*} 
  Indeed, the upper trapezoid commutes by Proposition \ref{prop:tensor-alg-star-struct}, the triangle on the right commutes by Lemma \ref{lem:tensor-alg-universal-property}, the lower trapezoid commutes since $f$ is a $*$-map, and the triangle on the left is just the complex conjugate of the triangle on the right.
  Thus the large rectangle commutes also, which means that $\tilde{f}$ is a $*$-map.
\end{proof}

\section{Inner products and adjoints}
\label{sec:inner-products}

We now turn to a discussion of inner products.
In the traditional formulation, an inner product on a complex vector space $V$ is a map $\rinprod : V \times V \to \bbC$ which is antilinear in the first variable, linear in the second variable, satisfies $\rinprod[v,w] = \bar{\rinprod[w,v]}$ for $v,w \in V$, and which is positive definite.
This definition, like that of a $*$-structure, does not lend itself easily to a module-theoretic approach because of the antilinearity.
Inner products allow one to introduce the notion of adjoint linear transformations; this too is an antilinear concept.
In this section we develop these notions in our framework.

Although so far we have discussed $*$-structures only on modules, not on arbitrary complex vector spaces, the discussion extends to complex vector spaces just by taking $H = \bbC$ with $*$-structure given by complex conjugation.
In particular, we can speak of $*$-vector spaces and morphisms of $*$-vector spaces.

\subsection{Inner products}
\label{sec:inner-product-def}

We now formulate the definition of an inner product in the language of complex conjugate vector spaces and modules:

\begin{dfn}
  \label{dfn:inner-product}
  For $V \in \cvect$ we define an \emph{inner product} on $V$ to be a linear map $\inprod : V^e = \bar{V} \otimes V \to \bbC$ such that
  \begin{enumerate}[(a)]
  \item With respect to the $*$-structures on $V^e$ and $\bbC$ constructed in Proposition \ref{prop:enveloping-modules} and Lemma \ref{lem:cself_conj}, respectively, $\inprod$ is a morphism of $*$-vector spaces.
  \item $\inprod[\bar{v},v] \ge 0$ for all $v \in V$, and $\inprod[\bar{v},v] = 0$ only if $v = 0$.
  \end{enumerate}
  If in addition $V$ is an $H$-module and if $\inprod$ is a module map, then we say that $\inprod$ is an inner product in $\hmod$; in this case we will call $V$ together with this inner product a \emph{Hermitian module}.
  If $V$ and $W$ are Hermitian modules with inner products $\inprodv$ and $\inprodw$, respectively, then we say that a module map $T : V \to W$ is an \emph{isometry} if $\inprodw[\bar{Tu},Tv] = \inprodv[\bar{u},v]$ for all $u,v \in V$.
  We say that $T$ is \emph{unitary} if $T$ is an invertible isometry.
\end{dfn}

\begin{rem}
  \label{rem:inner-prod-star-map}
  Untangling the definitions shows that requiring $\inprod$ to be a $*$-map means just that $\inprod[\bar{v},w] = \bar{\inprod[\bar{w},v]}$ for all $v,w \in V$.

  As we have done in previous sections, we could make the Hermitian modules into a category where the morphisms are the isometries.
  This is somewhat restrictive, as the isometry condition forces all morphisms to be injective, and for our purposes we don't need to consider this category separately, so we omit the definition.
\end{rem}

\begin{notn}
  \label{notation:inner-prods}
  As we did in Definition \ref{dfn:inner-product}, we will frequently abuse notation by writing $h(w,x)$ rather than $h(w \otimes x)$ when $h : W \otimes X \to Y$ is a linear map.
  This is nothing more than the canonical identification of bilinear maps $W \times X \to Y$ with linear ones $W \otimes X \to Y$, and we will make this identification without further comment from now on.
\end{notn}

We now show that Definition \ref{dfn:inner-product} is equivalent to the usual definition of an inner product, and we formulate the appropriate notion of $H$-invariance for the traditional incarnation $\rinprod : V \times V \to \bbC$ of the inner product in the case when $V$ is an $H$-module:

\begin{prop}
  \label{prop:constructing-inner-products}
  Let $V \in \cvect$ and let $\inprod : \bar{V} \otimes V \to \bbC$ be a linear map.
  Define a function $\rinprod : V \times V \to \bbC$ by $\rinprod[v,w] = \inprod[\bar{v},w]$ for $v,w \in V$.
  Then
  \begin{enumerate}[(a)]
  \item $\inprod$ is an inner product in the sense of Definition \ref{dfn:inner-product} if and only if $\rinprod$ satisfies the usual definition of an inner product given at the beginning of \S \ref{sec:inner-products}.
  \item If $V \in \hmod$, then $\inprod$ is a module map if and only if $\rinprod$ is $H$-invariant in the sense that $\rinprod[a \rhd v,w] = \rinprod[v, a^* \rhd w]$ for all $v, w \in V$ and $a \in H$.
  \end{enumerate}
\end{prop}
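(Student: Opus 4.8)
The plan is to use the identification $\rinprod[v,w] = \inprod[\bar{v},w]$ to translate each clause of the two competing definitions back and forth, so that both parts reduce to bookkeeping with the conjugate-module action of \cref{dfn:conj-module}. Throughout I would keep in mind that a linear map $\inprod:\bar V\otimes V\to\bbC$ is the same datum as a sesquilinear form, and that being a $*$-map or a module map simply pins down extra symmetry.

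For part (a), I would first note that sesquilinearity of $\rinprod$ is automatic from linearity of $\inprod$: linearity in the second tensor slot gives linearity of $\rinprod$ in $w$, while the conjugate scalar multiplication in $\bar V$, namely $\bar{\lambda v}=\bar\lambda\cdot\bar v$, turns linearity of $\inprod$ in the first slot into antilinearity of $\rinprod$ in $v$. Next, the conjugate-symmetry axiom $\rinprod[v,w]=\bar{\rinprod[w,v]}$ is, after unwinding, exactly $\inprod[\bar v,w]=\bar{\inprod[\bar w,v]}$, which by \cref{rem:inner-prod-star-map} is precisely the condition that $\inprod$ be a $*$-map, i.e.\ clause (a) of \cref{dfn:inner-product}. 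Finally, positivity and nondegeneracy of $\rinprod$ are verbatim clause (b) of that definition. Reading these three equivalences in both directions gives part (a).

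For part (b), I would first spell out that $\inprod:\bar V\otimes V\to\bbC$ is a module map exactly when it is $H$-invariant, $\inprod(a\rhd x)=\counit(a)\inprod(x)$, since $\bbC$ carries the counit action. Feeding the tensor-product action together with $a\rhd\bar v=\bar{S(a)^*\rhd v}$ into this, invariance becomes
\[ \rinprod[S(a_{(1)})^*\rhd v,\ a_{(2)}\rhd w]=\counit(a)\,\rinprod[v,w]\qquad(\star). \]
The direction ``$H$-invariance of $\rinprod$ in the stated sense $\Rightarrow$ module map'' is then a short computation: applying the adjoint relation with element $S(a_{(1)})^*$, using involutivity $(S(a_{(1)})^*)^*=S(a_{(1)})$, and collapsing $S(a_{(1)})a_{(2)}=\counit(a)$ turns the left-hand side of $(\star)$ into $\counit(a)\rinprod[v,w]$.

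The reverse direction, extracting the single-element adjoint relation from the coproduct-entangled $(\star)$, is the main obstacle, and I would handle it by the standard disentangling trick applied to the invariance of $\inprod$ directly. Substituting $a\mapsto c_{(1)}$ and $w\mapsto S(c_{(2)})\rhd w$ into invariance and invoking coassociativity together with the antipode axiom produces the clean rotation identity $\inprod[c\rhd x\otimes w]=\inprod[x\otimes S(c)\rhd w]$ for all $c\in H$. To convert this into the desired formula I would rewrite the left slot via the inverse conjugate action $\bar{a\rhd v}=S^{-1}(a^*)\rhd\bar v$, so that $\rinprod[a\rhd v,w]=\inprod[(S^{-1}(a^*)\rhd\bar v)\otimes w]$; the rotation identity with $c=S^{-1}(a^*)$ and the simplification $S(S^{-1}(a^*))=a^*$ then yield $\rinprod[a\rhd v,w]=\inprod[\bar v\otimes(a^*\rhd w)]=\rinprod[v,a^*\rhd w]$. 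The only real subtlety is keeping the interplay of $S$, $S^{-1}$, and $*$ straight across these substitutions, which is governed by \eqref{eq:sstar} and by $\counit$ being a $*$-homomorphism.
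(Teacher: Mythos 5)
Your proposal is correct and follows essentially the same route as the paper: part (a) is the same unwinding of sesquilinearity, the $*$-map condition, and positivity, and part (b) hinges on the same two facts the paper uses, namely that $\bar{a \rhd v} = S(a)^* \rhd \bar{v}$ and that a module map $h$ into $\bbC$ satisfies the rotation identity $h(a \rhd x, w) = h(x, S(a) \rhd w)$ (which the paper simply asserts as an observation and you derive by the standard substitution). The only cosmetic difference is that you split the two implications in (b) where the paper runs a single chain of equivalences ending with the substitution $a \mapsto S(a)^*$; the content is identical.
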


\begin{proof}
  \begin{enumerate}[(a)]
  \item Since $v \mapsto \bar{v}$ is antilinear, it is clear that $\rinprod[v,w]$ is antilinear in $v$, and linearity in $w$ is also clear.
    The formulations of positive-definiteness for $\inprod$ and $\rinprod$ are also clearly equivalent.
    We claim now that $\inprod$ is a $*$-map, i.e.~that the diagram 
    \[ 
    \begin{CD}
      \bar{\bar{V} \otimes V} @>{\bar{\inprod}}>> \bar{\bbC} \\
      @V{*}VV @VV{\gamma}V \\
      \bar{V} \otimes V @>{\inprod}>> \bbC
    \end{CD}
    \]
    commutes, if and only if $\rinprod$ satisfies conjugate-symmetry.
    For $v,w \in V$, chasing $\bar{\bar{v} \otimes w}$ right and then down in the diagram gives
    \[   \gamma(   c_\bbC( \inprod[\bar{v}, w])  ) =  \bar{\inprod[\bar{v}, w]} \overset{\mathrm{def}}{=} \bar{\rinprod[v,w]}.  \]
    On the other hand, chasing $\bar{\bar{v} \otimes w}$ down and then right gives
    \[ \inprod[ \bar{ \bar{v} \otimes w  }^* ] = \inprod[ \bar{w}, v ]  \overset{\mathrm{def}}{=} \rinprod[w,v],  \]
    so our claim is verified.
  \item First, observe that for $H$-modules $W$ and $X$, a linear map $h : W \otimes X \to \bbC$ (i.e.~a bilinear form) is a module map if and only if $h(a \rhd w, x) = h(w, S(a) \rhd x)$ for all $w \in W$ and $x \in X$.
    Then for $v,w \in V$ and $a \in H$ we have
    \[  \rinprod[a \rhd v, w] = \inprod[ \bar{a \rhd v}, w]  = \inprod[S(a)^* \rhd \bar{v}, w],  \]
    while on the other hand we have
    \[  \rinprod[v, a^* \rhd w] = \inprod[\bar{v}, a^* \rhd w].    \]
    Thus $\rinprod[a \rhd v, w] = \rinprod[v, a^* \rhd w]$ for all $a \in H$ if and only if $\inprod[S(a)^* \rhd \bar{v}, w] = \inprod[\bar{v}, a^* \rhd w]$ for all $a \in H$.  
    Replacing $a$ with $S(a)^*$ and using \eqref{eq:sstar}, the latter equation becomes $\inprod[a \rhd \bar{v}, w] = \inprod[\bar{v}, S(a) \rhd w]$.
    Then our first observation concludes the proof.  \qedhere
  \end{enumerate}
\end{proof}

\begin{prop}
  \label{prop:inner-product-dual-iso}
  Suppose that $V \in \hmod$ is a Hermitian module.
  Then $\mu_V : \bar{V} \to V^*$ given by $\mu_V(\bar{v}) = \inprod[\bar{v}, -]$ is an injective module map, and $\mu_V$ is an isomorphism if $V$ is finite-dimensional.
  If $W \in \hmod$ is another Hermitian module and the module map $T : V \to W$ is an isometry, then the following diagram commutes:
  \begin{equation}
    \label{eq:inner-prod-dual-iso-naturality}
    \begin{CD}
      \bar{V} @>{\bar{T}}>> \bar{W} \\
      @V{\mu_V}VV  @VV{\mu_W}V \\
      V^* @<<{T^{\tr}}< W^*   
    \end{CD}
  \end{equation}
\end{prop}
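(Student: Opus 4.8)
The plan is to verify the three assertions—that $\mu_V$ is an injective module map, that it is bijective when $V$ is finite-dimensional, and that the square \eqref{eq:inner-prod-dual-iso-naturality} commutes—one at a time, reducing each to a short computation. First observe that $\mu_V$ is a well-defined linear map: since $\inprod$ is linear on $\bar V \otimes V$, the assignment $\bar v \mapsto \inprod[\bar v, -]$ is linear in $\bar v$ and takes values in $V^* = \Hom(V,\bbC)$.

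The only part requiring genuine care is that $\mu_V$ intertwines the two $H$-actions, and here I would lean on the invariance criterion of Proposition \ref{prop:constructing-inner-products}(b). Writing $\rinprod[v,w] = \inprod[\bar v, w]$ as in that proposition, I would fix $a \in H$ and $v,w \in V$ and compute $\mu_V(a \rhd \bar v)$ evaluated at $w$. Using the conjugate-module action \eqref{eq:conj-module-def} to rewrite $a \rhd \bar v = \bar{S(a)^* \rhd v}$, this value is $\rinprod[S(a)^* \rhd v, w]$. Applying the invariance identity $\rinprod[a \rhd v, w] = \rinprod[v, a^* \rhd w]$ with $a$ replaced by $S(a)^*$, together with involutivity of the $*$-structure in the form $(S(a)^*)^* = S(a)$, turns this into $\rinprod[v, S(a) \rhd w] = \inprod[\bar v, S(a) \rhd w]$. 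On the other side, the dual action \eqref{eq:dual-action} gives $(a \rhd \mu_V(\bar v))(w) = \mu_V(\bar v)(S(a) \rhd w) = \inprod[\bar v, S(a) \rhd w]$, which matches; hence $\mu_V$ is a module map. I expect this interchange of $S$ and $*$ through the invariance identity to be the main—indeed essentially the only—obstacle, and it is routine once the right substitution is chosen.

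Injectivity then follows from positive-definiteness: if $\mu_V(\bar v) = 0$, then in particular $\inprod[\bar v, v] = 0$, which forces $v = 0$ by Definition \ref{dfn:inner-product}(b). When $V$ is finite-dimensional we have $\dim \bar V = \dim V = \dim V^*$, so an injective linear map between these spaces is automatically an isomorphism.

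Finally, for naturality I would chase the diagram on elements. Commutativity of \eqref{eq:inner-prod-dual-iso-naturality} asserts $\mu_V = T^{\tr} \circ \mu_W \circ \bar T$. For $\bar v \in \bar V$ and $w \in V$, using $\bar T(\bar v) = \bar{Tv}$ and the definition of the transpose, the right-hand side evaluated at $w$ is $\mu_W(\bar{Tv})(Tw) = \inprodw[\bar{Tv}, Tw]$, while the left-hand side is $\inprodv[\bar v, w]$. These agree for all $v,w$ precisely because $T$ is an isometry, i.e.\ because $\inprodw[\bar{Tu}, Tv] = \inprodv[\bar u, v]$; thus the isometry hypothesis is exactly what makes the square commute, and no further work is needed.
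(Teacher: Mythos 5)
Your proof is correct and follows essentially the same route as the paper's: positive-definiteness gives injectivity, a dimension count handles the finite-dimensional case, and the naturality square is verified by the same element chase using the isometry hypothesis. The only cosmetic difference is that you check the module-map property of $\mu_V$ directly via the invariance identity for $\rinprod$ from Proposition \ref{prop:constructing-inner-products}, whereas the paper shows that $v \mapsto \inprod[\bar{v},-]$ is an antimodule map and then invokes Proposition \ref{prop:antimodule-representable}; the underlying computation is identical.
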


\begin{rem}
  \label{rem:inner-product-dual-iso-caveat}
  We caution the reader that $\mu_V$ is not intrinsic to $V$; it is only defined relative to a fixed inner product.
\end{rem}

\begin{proof}[Proof of Proposition \ref{prop:inner-product-dual-iso}]
  Since $\mu_V = \Psi_{V V^*}(v \mapsto \inprod[\bar{v},-])$, we need to show that $v \mapsto \inprod[\bar{v},-]$ is an antimodule map.
  For $a \in H$ and $v, w \in V$ we have
  \begin{align*}
    \inprod[\bar{a \rhd v}, w] & = \inprod[S(a)^* \rhd \bar{v}, w] \\
    & = \inprod[\bar{v}, S(S(a)^*) \rhd w] \\
    & = (S(a)^* \rhd \inprod[\bar{v}, -])(w),
  \end{align*}
  where for the second equality we used the fact that $\inprod$ is a module map, and for the third we used the definition \eqref{eq:dual-action} of the $H$-action on $V^*$.
  Hence $\mu_V$ is a module map by Proposition \ref{prop:antimodule-representable}.
  Injectivity follows immediately from the fact that $\inprod$ is positive-definite, and then $\mu_V$ must be an isomorphism when $V$ is finite-dimensional because the dimensions of $\bar{V}$ and $V^*$ coincide.

  Finally, for any $u,v \in V$, we have
  \begin{align*}
   [ T^{\tr} \mu_W \bar{T}(\bar{u})](v) & = [\mu_W (\bar{Tu})](Tv) \\
   & = \inprod[\bar{Tu},Tv] \\
   & = \inprod[\bar{u},v] \\
   & = [\mu_V(\bar{u})](v),
  \end{align*}
  so $T^{\tr}\mu_W \bar{T} = \mu_V$, and hence \eqref{eq:inner-prod-dual-iso-naturality} commutes.
\end{proof}

\begin{rem}
  \label{rem:inner-prod-left-vs-right}
  Many authors define an inner product to be linear in the first variable and antilinear in the second.
  While this makes no difference for vector spaces, it matters for modules: if we were to translate this alternate definition into our framework, an inner product would become a positive-definite $*$-module map $\inprod : V \otimes \bar{V} \to \bbC$.
  With this definition, Proposition \ref{prop:inner-product-dual-iso} would instead give an isomorphism $\bar{V} \simeq \us V$.
  All of the subsequent theory would carry over, \emph{mutatis mutandis}, but we omit it here for the sake of brevity.
\end{rem}

\subsection{Some remarks on positivity}
\label{sec:positivity}

We have seen in the proof of Proposition \ref{prop:constructing-inner-products} that the conjugate-symmetry condition $\rinprod[v,w] = \bar{\rinprod[w,v]}$ is encapsulated in the requirement that $\inprod$ respects the $*$-structures on $V^e = \bar{V} \otimes V$ and $\bbC$.
Thus we can express conjugate-symmetry entirely as a condition on maps in the category rather than as a condition involving elements.
This raises the question of whether the positivity criterion for an inner product can also be phrased in such a manner.
It appears that the answer is no, although this may be a failure of imagination rather than an insight into necessity.
One way to avoid this question entirely would be to include the notion of ``positive cone'' as part of the data when discussing inner products.
We will begin with an example and then give a general definition.

The fact that $\inprod$ is a $*$-map implies that any self-adjoint element of $V^e$ is mapped to $\bbR$ (the self-adjoint part of $\bbC$).
Positivity is a further restriction: it requires that the particular self-adjoint elements in $V^e$ of the form $\bar{v} \otimes v$ are mapped into the non-negative real numbers $\bbR^+$.
If we denote by $(V^e)^+$ the $\bbR^+$-span of the vectors $\bar{v} \otimes v$, then it is not difficult to convince oneself that $(V^e)^+$ spans the whole self-adjoint part of $V^e$ and that $(V^e)^+ \cap -(V^e)^+ = \set{0}$.
The non-negative reals $\bbR^+$ share the same properties in $\bbC$.
Then the inner product is a $*$-map which which takes $(V^e)^+$ into $\bbR^+$.
This motivates the following definition.

Consider the category $\hmodsp$ of pairs $(V,V^+)$ where $V$ is a $*$-module and $V^+$, the \emph{positive cone of $V$}, is a subset consisting of self-adjoint elements of $V$ which is closed under addition and multiplication by $\bbR^+$.
We also require that $V^+ \cap (-V^+) = \set{0}$ and that $V^+ - V^+ = V^{sa}$, the real subspace of self-adjoint elements of $V$.
A morphism $T : (V,V^+) \to (W,W^+)$ in $\hmodsp$ is a $*$-module map $T : V \to W$ such that $T(V^+) \subseteq W^+$.
Then there is a functor from $\hmod$ to $\hmodsp$ given on objects by $V \mapsto V^e = \bar{V} \otimes V$ and on morphisms by $T \mapsto \bar{T} \otimes T$, and we define the positive cone of $V^e$ to be the $\bbR^+$-span of vectors of the form $\bar{v} \otimes v$ for $v \in V$, as above.
The pair $(\bbC, \bbR^+)$ is an object of $\hmodsp$, and then an inner product on $V$ would be defined to be a morphism $\inprod$ in $\hmodsp$ from $V^e$ to $\bbC$, with positive cones as above.

The necessity to define the positive cone and to include that as part of the data manifests itself in other situations where there may be more than one relevant notion of positivity in a single vector space.
For example, in a $*$-algebra $A$ (in the usual sense, not in the sense presented here, although the discussion could be formulated in our setting as well) one can define the positive cone $A^+$ to be the $\bbR^+$-span of elements of the form $a^*a$.
Then we can say that a linear functional $f \in A^*$ is positive if $f(A^+) \subseteq \bbR^+$, and finally we can define another positive cone in $A$ by declaring $A^{++}$ to be the collection of self-adjoint elements of $A$ which are mapped to $\bbR^+$ by all positive functionals (and then check that this forms a cone).
Certainly $A^+ \subseteq A^{++}$, and there is equality if, for instance, $A$ is a $C^*$-algebra, but in general these notions will not coincide.

Although this would give an even more structural approach to the study of inner products, for our purposes we don't need to make things quite so abstract, so we confine this construction to this subsection.
We caution also that this approach has not been carefully considered by the author, and some tweaking of the conditions above may be necessary to develop the approach rigorously.

\subsection{Adjoints of linear operators}
\label{sec:adjoints}

Now we show how to use our notion of inner products to frame adjoints of linear maps in our language.
We begin with some motivational discussion on the usual formulation of adjoints.
For this we restrict to finite-dimensional modules.

\begin{rem}
  \label{rem:ordinary-adjoints}
  When $V$ and $W$ are finite-dimensional inner product spaces with (ordinary, sesquilinear) inner products $\rinprodv$ and $\rinprodw$, respectively, one defines the \emph{adjoint} of a linear transformation $T : V \to W$ to be the unique linear map $T^\dagger : W \to V$ satisfying 
  \begin{equation}
    \label{eq:adjoint-def-usual}
    \rinprodv[T^\dagger w, v] = \rinprodw[w,Tv] 
  \end{equation}
  for all $v \in V$ and $w \in W$.
  Then one shows that the following properties hold: 
  \begin{enumerate}[(a)]
  \item The map $\Hom(V,W) \to \Hom(W,V)$ given by $T \mapsto T^\dagger$ is antilinear.
  \item Taking the adjoint again gives $(T^\dagger)^\dagger = T$ in $\Hom(V,W)$.
  \item If $X$ is another finite-dimensional inner product space, then for any linear map $U : W \to X$, we have $(UT)^\dagger = T^\dagger U^\dagger$.
  \end{enumerate}
  Rather than repeating the construction of the adjoint operator from scratch, we use the correspondence from Proposition \ref{prop:constructing-inner-products} between sesquilinear inner products and inner products in our sense in order to transport the traditional notion of adjoint given above into our setting.
\end{rem}

\begin{notn}
  \label{conv:inner-prods}
  We make the following notational convention: for a Hermitian module (or just complex vector space) $V$ with inner product $\inprodv$ in the sense of Definition \ref{dfn:inner-product}, we will always denote by $\rinprodv$ the corresponding positive-definite, conjugate-symmetric, sesquilinear form given by
  \[  \rinprodv[v,w] = \inprodv[\bar{v},w]    \]
  for $v,w \in V$, as discussed in Proposition \ref{prop:constructing-inner-products}.
  With this convention in mind, translating the defining property \eqref{eq:adjoint-def-usual} of the adjoint transformation into our language gives
  \begin{equation}
    \label{eq:adjoint-def-new}
    \inprodv[\bar{T^\dagger w}, v] = \inprodw[\bar{w},Tv]
  \end{equation}
  for $v \in V$ and $w \in W$.
\end{notn}

In Proposition \ref{prop:adjoint-basic-properties} we explore some properties of the adjoint map $T^\dagger$.
Part (a) shows that the adjoint of a module map is a module map.
Part (b), while somewhat technical in its statement, is really just formalizing the notion that the adjoint of a linear map is its conjugate transpose.

\begin{prop}
  \label{prop:adjoint-basic-properties}
  Let $V, W \in \hmodf$ be finite-dimensional Hermitian modules, let $T : V \to W$ be a linear map, and let $T^\dagger : W \to V$ be the adjoint of $T$.  
  Then:
  \begin{enumerate}[(a)]
  \item $T$ is a module map if and only if $T^\dagger$ is a module map.
  \item $T^\dagger$ coincides with the composition
   \[  
   \begin{CD}
     W @>{\sigma_W^{-1}}>> \bar{\bar{W}} @>{\bar{\mu_W}}>> \bar{W^*} @>{\bar{T^{\tr}}}>> \bar{V^*} @>{\bar{\mu_V^{-1}}}>> \bar{\bar{V}} @>{\sigma_V}>> V
   \end{CD}
   \] 
  \end{enumerate}
\end{prop}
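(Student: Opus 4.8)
The plan is to treat the two parts separately, in both cases working with the sesquilinear forms $\rinprodv$ and $\rinprodw$ attached to the inner products via Notation \ref{conv:inner-prods}, so that the defining relation \eqref{eq:adjoint-def-new} of the adjoint reads $\rinprodv[T^\dagger w, v] = \rinprodw[w, Tv]$, and invoking the characterization of module inner products from Proposition \ref{prop:constructing-inner-products}(b), namely $\rinprod[a \rhd x, y] = \rinprod[x, a^* \rhd y]$, which applies here since $V$ and $W$ are Hermitian modules.

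For part (a), assume $T$ is a module map and fix $a \in H$ and $w \in W$. I would compute $\rinprodv[T^\dagger(a \rhd w), v]$ for arbitrary $v \in V$ by unfolding it via \eqref{eq:adjoint-def-new} to $\rinprodw[a \rhd w, Tv]$, moving the action across with invariance to get $\rinprodw[w, a^* \rhd Tv]$, using that $T$ commutes with the action to rewrite $a^* \rhd Tv = T(a^* \rhd v)$, and folding back with \eqref{eq:adjoint-def-new} to reach $\rinprodv[T^\dagger w, a^* \rhd v]$; one final application of invariance turns this into $\rinprodv[a \rhd T^\dagger w, v]$. Since these two pairings agree against every $v$ and the inner product is positive-definite, hence non-degenerate, I conclude $T^\dagger(a \rhd w) = a \rhd T^\dagger w$, so $T^\dagger$ is a module map. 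The converse is immediate by applying this direction to $T^\dagger$ and invoking $(T^\dagger)^\dagger = T$ from Remark \ref{rem:ordinary-adjoints}(b).

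For part (b), I would chase a vector $w \in W$ through the displayed composition. The maps act as follows: $\sigma_W^{-1}$ sends $w$ to $\bar{\bar{w}}$; then $\bar{\mu_W}$ gives $\bar{\mu_W(\bar{w})} = \bar{\rinprodw[w, -]}$ by the formula for $\mu_W$ from Proposition \ref{prop:inner-product-dual-iso}; then $\bar{T^{\tr}}$, being the conjugate of precomposition with $T$, produces $\bar{\rinprodw[w, T(-)]}$. The key step is that by \eqref{eq:adjoint-def-new} the functional inside is exactly $\rinprodv[T^\dagger w, -] = \mu_V(\bar{T^\dagger w})$, so applying $\bar{\mu_V^{-1}}$ returns $\bar{\bar{T^\dagger w}}$, and $\sigma_V$ finally yields $T^\dagger w$. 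Hence the composition equals $T^\dagger$.

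The only point demanding care is the bookkeeping of the conjugation functor on maps: I should note that $\bar{\mu_V^{-1}}$ and $\bar{T^{\tr}}$ denote the conjugates of those \emph{linear maps} in the sense of Definition \ref{dfn:conjugation-functor} (not the abstract elements $c_{\Hom}(-)$ flagged in the Notation after Remark \ref{rem:conj_dfn}), and that $\bar{\mu_V^{-1}} = (\bar{\mu_V})^{-1}$ by functoriality (Lemma \ref{lem:functoriality}), so the notation is unambiguous. Beyond this, both parts are essentially formal: part (a) rests entirely on non-degeneracy together with invariance, and part (b) is a direct diagram chase driven by \eqref{eq:adjoint-def-new}. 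I therefore expect no genuine obstacle; the main risk is a placement error when shuttling the $a^*$ and $S(a)$ actions across the pairings in part (a).
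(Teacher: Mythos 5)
Your proposal is correct and follows essentially the same route as the paper: part (a) is the identical chain of pairings (adjoint relation, invariance, module property of $T$, fold back, invariance, non-degeneracy, then $(T^\dagger)^\dagger=T$ for the converse), and part (b) is the same element-wise diagram chase in which the adjoint relation converts the functional $\inprodw[\bar w, T(-)]$ into $\mu_V(\bar{T^\dagger w})$. Your closing remark on tracking $c_{W^*}$ versus the conjugate of a linear map matches the paper's explicit use of $c_{W^*}$ and $c_{V^*}$ for exactly this bookkeeping.
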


\begin{proof}
  \begin{enumerate}[(a)]
  \item Since $(T^\dagger)^\dagger = T$, we only need to do one direction of the proof.
    Assuming that $T$ is a module map, we need to show that $T^\dagger(a \rhd w) = a \rhd (T^\dagger w)$ for $w \in W$ and $a \in H$.
    For any $v \in V$, we have
    \begin{align*}
      \rinprodv[T^\dagger(a \rhd w), v] & = \rinprodw[a \rhd w, Tv] \\
      & = \rinprodw[w, a^* \rhd (Tv) ] \\
      & = \rinprodw[w, T(a^* \rhd v)] \\
      & = \rinprodv[Tw,a^* \rhd v] \\
      & = \rinprodv[a \rhd (Tw),v],
    \end{align*}
    using \eqref{eq:adjoint-def-usual}, Proposition \ref{prop:constructing-inner-products} (b), and the fact that $T$ is a module map.
    Since this holds for all $v \in V$, we conclude that $T^\dagger(a \rhd w) = a \rhd (T^\dagger w)$, so $T^\dagger$ is a module map.
  \item
    Recall that $c_{W^*}$ is the canonical antimodule map from $W^*$ to its conjugate.
    For $w \in W$, we have
    \[  \bar{\mu_W} (\sigma_W^{-1}(w)) = c_{W^*}( \inprodw[\bar{w},-] ).   \]
    Applying $\bar{T^{\tr}}$ to this gives
    \begin{align*}
      \bar{T^{\tr}} (c_{W^*}(\inprodw[\bar{w},-])) & = c_{V^*}\left(\inprodw[\bar{w},T(-)]\right) \\
      & = c_{V^*} \left(\inprodv[\bar{T^\dagger w},-]\right).
    \end{align*}
    Finally, applying $\sigma_V \bar{\mu_V^{-1}}$ now gives
    \begin{align*}
      \sigma_V \bar{\mu_V^{-1}} \left(c_{V^*} \left(\inprodv[\bar{T^\dagger w},-]\right) \right) & = \sigma_V( \bar{\bar{T^\dagger w}}) \\
      & = T^\dagger w,
    \end{align*}
    which proves our claim.  \qedhere
  \end{enumerate}
\end{proof}

We now turn to the module-theoretic properties of the map $T \mapsto T^\dagger$.
Recall from Definition \ref{dfn:hom-actions} that we have two actions of $H$ on each of the linear spaces $\Hom(V,W)$ and $\Hom(W,V)$ which we called \emph{left} and \emph{right}.
We know that taking the adjoint is an antilinear map $\Hom(V,W) \to \Hom(W,V)$, but \emph{a priori} it is not clear whether this should be an antimodule map for the left or right actions on the two $\Hom$-spaces.

It turns out that we need to use the left action for both of them.
The point here is that since we are dealing with finite-dimensional modules, we have isomorphisms $\bar{V} \simeq V^*$ and $\homl(V,W) \simeq W \otimes V^*$ from Propositions \ref{prop:inner-product-dual-iso} and \ref{prop:hom-decomps}, respectively.
Combining these gives an isomorphism $\homl(V,W) \simeq W \otimes \bar{V}$, and similarly we have $\homl(W,V) \simeq V \otimes \bar{W}$.
As in Remark \ref{rem:inner-prod-left-vs-right}, we can see that this is a result of our convention that inner products are maps defined on $\bar{V} \otimes V$; if we had chosen the opposite convention then we would have $\homr$ instead of $\homl$ in the following:

\begin{prop}
  \label{prop:adjoint-module-properties}
  Let $V,W \in \hmodf$ be finite-dimensional Hermitian modules.
  \begin{enumerate}[(a)]
  \item The map $\homl(V,W) \to \homl(W,V)$ given by $T \mapsto T^\dagger$ is an anti-isomorphism of modules.
  \item The map $* : \bar{\homl(V,W)} \to \homl(W,V)$ associated to $T \mapsto T^\dagger$ by Proposition \ref{prop:antimodule-representable} is an isomorphism of modules.
  \end{enumerate}
\end{prop}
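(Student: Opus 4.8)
The plan is to prove part (a) directly and then obtain part (b) as an immediate consequence of the module-level representability statement. For part (a), the map $\mathrm{adj} : \homl(V,W) \to \homl(W,V)$ given by $T \mapsto T^\dagger$ is already known to be antilinear and to satisfy $(T^\dagger)^\dagger = T$ by Remark \ref{rem:ordinary-adjoints}; the latter exhibits it as its own two-sided inverse between the two $\Hom$-spaces, hence a bijection. So the only substantive point is that $\mathrm{adj}$ is an \emph{antimodule} map for the left actions on both sides, that is, that $(a \rhd T)^\dagger = S(a)^* \rhd T^\dagger$ for all $a \in H$, where $\rhd$ denotes the action \eqref{eq:hom-action}.

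To verify this I would work entirely with the sesquilinear form $\rinprod$ and use positive-definiteness (hence nondegeneracy) to compare operators. Starting from the defining relation $\rinprodv[(a \rhd T)^\dagger w, v] = \rinprodw[w, (a \rhd T)v]$ and expanding $(a \rhd T)v = a_{(1)} \rhd T(S(a_{(2)}) \rhd v)$, I would move each factor of $H$ across the inner product using the $H$-invariance identity of Proposition \ref{prop:constructing-inner-products}(b) (in the form $\rinprod[x, b \rhd y] = \rinprod[b^* \rhd x, y]$) together with the defining property of $T^\dagger$. This yields
\[ \rinprodv[(a \rhd T)^\dagger w, v] = \rinprodv[S(a_{(2)})^* \rhd T^\dagger(a_{(1)}^* \rhd w), v] \]
for all $v$, whence $(a \rhd T)^\dagger w = S(a_{(2)})^* \rhd T^\dagger(a_{(1)}^* \rhd w)$ by nondegeneracy.

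It then remains to recognize the right-hand side as $S(a)^* \rhd T^\dagger$. Expanding the latter by \eqref{eq:hom-action} requires the coproduct of $S(a)^*$, and here I use $\cop(S(a)^*) = S(a_{(2)})^* \otimes S(a_{(1)})^*$ (from axiom (c) of the $*$-structure together with the anti-comultiplicativity of $S$, exactly as in the proof of Proposition \ref{prop:conj_tensor_prod}), giving $(S(a)^* \rhd T^\dagger)(w) = S(a_{(2)})^* \rhd T^\dagger(S(S(a_{(1)})^*) \rhd w)$. The two expressions agree precisely when $S(S(a_{(1)})^*) = a_{(1)}^*$, i.e.\ when $S \circ * \circ S = *$, which is a restatement of \eqref{eq:sstar}. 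This matching of Sweedler legs is the crux of the argument and the step I expect to require the most care, since it is exactly where the three ingredients --- $H$-invariance of the inner product, the coproduct of $S(a)^*$, and the antipode relation $S \circ * \circ S = *$ --- must fit together.

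With $\mathrm{adj}$ now established as a bijective antimodule map, part (a) follows. Part (b) is then immediate from Proposition \ref{prop:antimodule-representable}: it carries the bijective antimodule map $T \mapsto T^\dagger$ to the module isomorphism $\bar{\homl(V,W)} \to \homl(W,V)$ obtained by precomposition with $c^{-1}$, which is exactly the map $*$ of the statement.
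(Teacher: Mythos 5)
Your proposal is correct and follows essentially the same route as the paper: both reduce part (a) to the identity $(a \rhd T)^\dagger = S(a)^* \rhd T^\dagger$, verify it by pushing Sweedler legs across the sesquilinear forms via Proposition \ref{prop:constructing-inner-products}(b) and the adjoint relation \eqref{eq:adjoint-def-usual}, and invoke nondegeneracy, with the coproduct formula $\cop(S(a)^*) = S(a_{(2)})^* \otimes S(a_{(1)})^*$ and the relation $S(S(a)^*) = a^*$ from \eqref{eq:sstar} doing exactly the work you identify as the crux (the paper merely runs the chain of equalities in the opposite direction, starting from $\rinprodv[(S(a)^* \rhd T^\dagger)w, v]$). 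Part (b) is handled identically, as an immediate consequence of Proposition \ref{prop:antimodule-representable}.
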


\begin{proof}
  \begin{enumerate}[(a)]
  \item We need to show that $(a \rhd T)^\dagger = S(a)^* \rhd T^\dagger$ for $T \in \homl(V,W)$ and $a \in H$, where the action of $H$ on $\homl(V,W)$ is given by \eqref{eq:hom-action}.
    We will do this by showing that
    \[ \rinprodv[(a \rhd T)^\dagger w, v] = \rinprodv[(S(a)^* \rhd T^\dagger) w,v] \]
    for all $v \in V$, $w \in W$, and $a \in H$.
    Then the result will follow from nondegeneracy of the sesquilinear form.
    For the sake of readability, we omit the $\rhd$ symbols for the $H$-actions on $V$ and $W$ in the following.
    Beginning with the right-hand side, we have
    \begin{align*}
      \rinprodv[(S(a)^* \rhd T^\dagger) w,v] & = \rinprodv[S(a_{(2)})^* T^\dagger S(S(a_{(1)})^*)w, v] \\
      & = \rinprodv[T^\dagger a_{(1)}^* w, S(a_{(2)})v] \\
      & = \rinprodw[w,a_{(1)}TS(a_{(2)})v] \\
      & = \rinprodw[w, (a\rhd T)v] \\
      & = \rinprodv[(a \rhd T)^\dagger w, v],
    \end{align*}
    where we used the property \eqref{eq:adjoint-def-usual} of the adjoint, as well as the invariance of $\rinprodv$ and $\rinprodw$ under the $H$-action described in part (b) of Proposition \ref{prop:constructing-inner-products}.

  \item Follows immediately from (a) together with Proposition \ref{prop:antimodule-representable}.  \qedhere
  \end{enumerate}
\end{proof}

\begin{rem}
  \label{rem:adjoint-not-star}
  Although the notation is suggestive, the map $* : \bar{\homl(V,W)} \to \homl(W,V)$ is not a $*$-structure in the sense of Definition \ref{dfn:star-module} because it is not of the form $* : \bar{Y} \to Y$ for a module $Y$ except when $V = W$.
  We address this case now.
  Since we have not yet shown that $\Endl(V)$ is an $H$-module algebra, we take care of this detail first.
\end{rem}

\begin{prop}
  \label{prop:endV-module-algebra}
  For $V \in \hmod$, the algebra $\Endl(V)$ is an algebra in $\hmod$.
\end{prop}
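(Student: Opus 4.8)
The plan is to check directly that the ordinary composition algebra structure on $\End(V)$ is compatible with the $H$-action $a \rhd T = a_{(1)} T S(a_{(2)})$ of \eqref{eq:hom-action} that defines $\Endl(V)$. We already know from Definition~\ref{dfn:hom-actions} that $\Endl(V)$ is an object of $\hmod$, and as a $\bbC$-algebra under composition with unit $\id_V$ it is associative and unital irrespective of $H$, since the module structure does not affect the underlying algebra. Thus, by the criterion \eqref{eq:module-alg-conditions}, everything reduces to showing that the multiplication map $m : \Endl(V) \otimes \Endl(V) \to \Endl(V)$, $T \otimes U \mapsto T \circ U$, and the unit $u : \bbC \to \Endl(V)$, $1 \mapsto \id_V$, are morphisms in $\hmod$.

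The unit is immediate: evaluating on $v \in V$ gives $(a \rhd \id_V)(v) = a_{(1)} \rhd (S(a_{(2)}) \rhd v) = (a_{(1)} S(a_{(2)})) \rhd v = \counit(a)\, v$, so $a \rhd \id_V = \counit(a)\id_V$, which is the unit condition in \eqref{eq:module-alg-conditions}. For the multiplication I would expand $(a_{(1)} \rhd T) \circ (a_{(2)} \rhd U)$ using the tensor product action and coassociativity, recording the relevant iterated coproduct as $a_{(1)} \otimes a_{(2)} \otimes a_{(3)} \otimes a_{(4)}$, so that this composite sends $v$ to
\[ a_{(1)} \rhd T\bigl( (S(a_{(2)}) a_{(3)}) \rhd U(S(a_{(4)}) \rhd v) \bigr). \]
The crux is the interior factor $S(a_{(2)}) a_{(3)}$: regarding the two middle tensorands as the splitting of a single factor, the antipode identity $\sum S(b_{(1)}) b_{(2)} = \counit(b)1$ collapses it to a scalar, and the resulting counit then contracts the fourfold coproduct back down to the twofold one. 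What remains is exactly $a_{(1)} \rhd T\bigl(U(S(a_{(2)}) \rhd v)\bigr) = (a \rhd (T \circ U))(v)$, which establishes the multiplicativity condition $a \rhd (TU) = (a_{(1)} \rhd T)(a_{(2)} \rhd U)$.

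I expect no genuine obstacle here: this is the standard adjoint-action module-algebra structure on an endomorphism algebra, and the only care needed is bookkeeping with the Sweedler indices and locating the antipode relation at the correct pair of tensorands. As conceptual motivation one may observe that, by Lemma~\ref{lem:hom-eval-maps}, both $\ev$ and $\id_{\Endl(V)} \otimes \ev$ are module maps, so their composite $T \otimes U \otimes v \mapsto T(U(v))$ on $\Endl(V) \otimes \Endl(V) \otimes V$ is a module map, and this composite also equals $\ev \circ (m \otimes \id_V)$; this makes transparent \emph{why} composition should be equivariant. However, extracting equivariance of $m$ itself from this requires the usual antipode cancellation to strip off the evaluation, so I would present the direct computation as the primary proof, since it is self-contained and applies verbatim to infinite-dimensional $V$ without invoking any tensor-hom adjunction.
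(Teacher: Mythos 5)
Your proof is correct and follows essentially the same route as the paper: the paper performs the identical computation at the operator level, writing $(a_{(1)} \rhd x)(a_{(2)} \rhd y) = a_{(1)} x S(a_{(2)}) a_{(3)} y S(a_{(4)})$ and cancelling the middle pair via the antipode axiom, which is exactly your element-level calculation with the evaluation on $v$ suppressed. The unit computation is likewise identical.
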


\begin{proof}
  We need to show that the conditions \eqref{eq:module-alg-conditions} hold.
  For $x,y \in \Endl(V)$ and $a \in H$ we have
  \begin{align*}
    (a_{(1)} \rhd x)(a_{(2)} \rhd y) & = a_{(1)} x S(a_{(2)}) a_{(3)} y S(a_{(4)}) \\
    & = a_{(1)}x \counit (a_{(2)}) y S(a_{(3)}) \\
    & = a_{(1)} xy S(\counit(a_{(2)}) a_{(3)}) \\
    & = a_{(1)} xy S(a_{(2)}) \\
    & = a \rhd (xy),
  \end{align*}
  so multiplication in $\Endl(V)$ is a module map.
  For the unit map, we write $1 = \id_V$; we have
  \[ a \rhd 1 = a_{(1)}  1 S(a_{(2)}) = \counit(a)1, \]
  so the unit is also a module map, which concludes the proof.
\end{proof}

\begin{prop}
  \label{prop:endV-star-structure}
  Let $V \in \hmodf$ be a finite-dimensional Hermitian module.
  Then the map $* : \bar{\Endl(V)} \to \Endl(V)$ from Proposition \ref{prop:adjoint-module-properties} makes $\Endl(V)$ into a $*$-algebra in $\hmod$.
\end{prop}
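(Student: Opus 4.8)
The plan is to reduce everything to Proposition \ref{prop:constructing-star-algebras}, applied to the algebra $A = \Endl(V)$ and the adjoint map $\dagger : T \mapsto T^\dagger$. That proposition says that $* = \Psi_{\Endl(V)\Endl(V)}(\dagger)$ is a $*$-algebra structure precisely when $\dagger$ is an involutive antimodule-algebra homomorphism, so the entire task is to verify those hypotheses for the adjoint.

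First I would observe that most of the ingredients are already in place. Proposition \ref{prop:endV-module-algebra} gives that $\Endl(V)$ is an algebra in $\hmod$, and taking $V = W$ in Proposition \ref{prop:adjoint-module-properties}~(a) shows both that $\dagger : \Endl(V) \to \Endl(V)$ is an antimodule map and that the map $*$ in the statement is exactly $\Psi_{\Endl(V)\Endl(V)}(\dagger)$. Thus only two things remain to be checked: that $\dagger$ is involutive, and that it is an antimodule-algebra morphism in the sense of Definition \ref{dfn:anti-module-alg-maps}, i.e.\ that it reverses products and fixes the unit.

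These are the classical facts about the Hilbert-space adjoint recorded in Remark \ref{rem:ordinary-adjoints}: involutivity $(T^\dagger)^\dagger = T$ is part (b), and product-reversal $(ST)^\dagger = T^\dagger S^\dagger$ is part (c), both of which follow from the defining relation \eqref{eq:adjoint-def-usual} together with nondegeneracy of the sesquilinear form $\rinprodv$. The only genuinely new verification is that $\dagger$ preserves the unit, and this is immediate: for all $v, w \in V$ one has $\rinprodv[(\id_V)^\dagger w, v] = \rinprodv[w, \id_V v] = \rinprodv[w,v]$, so nondegeneracy forces $(\id_V)^\dagger = \id_V$.

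With involutivity, product-reversal, and unit-preservation in hand, $\dagger$ is an involutive antimodule-algebra homomorphism, and Proposition \ref{prop:constructing-star-algebras} then yields at once that $*$ is a $*$-algebra structure on $\Endl(V)$. I do not expect any real obstacle here; the content of the argument lies entirely in recognizing that the abstract framework of Proposition \ref{prop:constructing-star-algebras} reduces the claim to standard properties of adjoints already available from Remark \ref{rem:ordinary-adjoints}, the single small additional check being that the identity operator is self-adjoint.
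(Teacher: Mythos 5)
Your proposal is correct and follows essentially the same route as the paper: reduce to Proposition \ref{prop:constructing-star-algebras} by combining Proposition \ref{prop:endV-module-algebra}, Proposition \ref{prop:adjoint-module-properties}, and properties (b) and (c) of Remark \ref{rem:ordinary-adjoints}. Your explicit verification that $(\id_V)^\dagger = \id_V$ is a small detail the paper leaves implicit, but it is indeed required by Definition \ref{dfn:anti-module-alg-maps} and is a welcome addition.
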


\begin{proof}
  We know already from Proposition \ref{prop:endV-module-algebra} that $\Endl(V)$ is an $H$-module algebra.
  We know also from Proposition \ref{prop:adjoint-module-properties} that $T \mapsto T^\dagger$ is an anti-isomorphism of modules, and according to the properties (b) and (c) listed in Remark \ref{rem:ordinary-adjoints}, we see that this map is actually an involutive antimodule-algebra morphism.
  Thus $*$ is a $*$-structure by Proposition \ref{prop:constructing-star-algebras}.
\end{proof}

\begin{rem}
  \label{rem:adjoint-connection-enveloping-module}
  As in the discussion preceding Proposition \ref{prop:adjoint-module-properties}, note that $\Endl(V) \simeq V \otimes \bar{V}$, which we denoted by $\ue V$ in \S \ref{sec:building-up-hmods}.
  We note that the $*$-structure on $\Endl(V)$ from Proposition \ref{prop:endV-star-structure} corresponds through this isomorphism with the $*$-structure on $\ue V$ defined in Proposition \ref{prop:enveloping-modules}.
\end{rem}

\subsection{Inner products, $*$-structures, and bilinear forms}
\label{sec:inprods-stars-blfs}

Let $V \in \hmodf$.
In this subsection we explore the relationship between inner products, $*$-structures, and bilinear forms on $V$.

By definition, a $*$-structure on $V$ is an isomorphism $* : \bar{V} \to V$ with a certain extra property (namely involutivity).
If $V$ is also a Hermitian module, then we can define a bilinear form $h : V \otimes V \to \bbC$ by $h(\bar{v}^*,w) = \inprod[\bar{v},w]$.
In other words, we define the bilinear form so that the following diagram commutes:
\begin{equation}
  \label{eq:blf-from-star-and-inprod}
  \begin{tikzpicture}
    [baseline=(current bounding box.center)
    description/.style={fill=white,inner sep=2pt}]
    \matrix (m) [matrix of math nodes, row sep=3em,
    column sep=3em, text height=2ex, text depth=0.25ex]
    { \bar{V}\otimes V & V \otimes V \\
      & \bbC \\ };
    \path[->,font=\scriptsize]
    (m-1-1) edge node[auto] {$ * \otimes \id$} (m-1-2)
    (m-1-1) edge node[auto,swap] {$\inprod$} (m-2-2)
    (m-1-2) edge node[auto] {$h$} (m-2-2);
  \end{tikzpicture}
\end{equation} 
Since $*$ is an isomorphism and since the inner product is nondegenerate, the bilinear form $h$ will be nondegenerate as well.
Note that $h$ is a morphism in $\hmod$.

\begin{rem}
  \label{rem:two-out-of-three}
  A $*$-structure on $V$ gives an isomorphism $\bar{V} \simeq V$.
  An inner product on $V$ gives an isomorphism $\bar{V} \simeq V^*$ via $\bar{v} \mapsto \inprod[\bar{v},-]$; see Proposition \ref{prop:inner-product-dual-iso}.
  Similarly, a nondegenerate bilinear form $h : V \otimes V \to \bbC$ gives an isomorphism $V \simeq V^*$ via $v \mapsto h(v,-)$.
  
  We saw above that having a $*$-structure and an inner product allowed us to obtain a bilinear form.
  This corresponds to composing the corresponding isomorphisms $V \simeq \bar{V}$ and $\bar{V} \simeq V^*$ to obtain the isomorphism $V \simeq V^*$ associated to the bilinear form.
  We would now like to ask if there is a ``two-out-of-three'' type result, i.e.~whether having any two of a bilinear form, an inner product, and a $*$-structure, allows us to obtain the third one.

  In the discussion preceding this remark, we got the bilinear form for free.
  But we will see that in fact this bilinear form necessarily satisfies some conditions, so it is not the case that, for example, an arbitrary bilinear form plus an inner product will give a $*$-structure.
  Similarly, we cannot expect an arbitrary bilinear form plus a $*$-structure to give an inner product.
  Thus a completely general two-out-of-three result is not possible.
  But if we restrict the bilinear forms we consider, we do obtain some results.
\end{rem}

\begin{prop}
  \label{prop:inprods-stars-blfs}
  Let $V \in \hmodf$ and let $* : \bar{V} \to V$, $\inprod : \bar{V} \otimes V \to \bbC$, and $h : V \otimes V \to \bbC$ be module maps such that the diagram \eqref{eq:blf-from-star-and-inprod} commutes (we do not assume $*$ to be a $*$-structure nor $\inprod$ to be an inner product).
  Then
  \begin{enumerate}[(a)]
  \item If $\inprod$ is an inner product on $V$, then $*$ is a $*$-structure on $V$ if and only if $h(\bar{w}^*, \bar{v}^*) = \bar{h(v,w)}$ for all $v,w \in V$.
  \item If $*$ is a $*$-structure on $V$, then $\inprod$ is an inner product on $V$ if and only if $h$ satisfies the conditions $h(\bar{w}^*, \bar{v}^*) = \bar{h(v,w)}$ for $v,w \in V$, $h(\bar{v}^*, v) \ge 0$ for $v \in V$, and $h(\bar{v}^*, v) = 0$ only if $v = 0$.
  \end{enumerate}
\end{prop}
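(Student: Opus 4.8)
The plan is to translate all three objects through a single antilinear map. Set $\dagger = \Psi_{VV}^{-1}(*) : V \to V$, so that $v^\dagger = \bar{v}^*$; by Proposition \ref{prop:antimodule-representable} this $\dagger$ is an antimodule map since $*$ is a module map. With this dictionary the commutativity of \eqref{eq:blf-from-star-and-inprod} reads
\[ h(v^\dagger, w) = \inprod[\bar{v}, w] = \rinprod[v, w] \]
for all $v, w \in V$, where $\rinprod$ is the sesquilinear form attached to $\inprod$ in Proposition \ref{prop:constructing-inner-products}. The two facts I will invoke repeatedly are: $\inprod$ is an inner product if and only if $\rinprod$ is conjugate-symmetric and positive-definite (Proposition \ref{prop:constructing-inner-products}(a)); and $*$ is a $*$-structure if and only if $\dagger$ is involutive (Proposition \ref{prop:constructing-stars}(a)). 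Under the dictionary, the conditions $h(\bar{v}^*, v) \ge 0$ and $h(\bar{v}^*, v) = 0 \Rightarrow v = 0$ appearing in part (b) are \emph{literally} the positive-definiteness of $\rinprod$, so only the conjugate-symmetry conditions require work.

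For part (a), I would first note that positive-definiteness forces $\dagger$ to be bijective: if $v^\dagger = 0$ then $\rinprod[v, -] = h(v^\dagger, -) = 0$, whence $\rinprod[v,v] = 0$ and $v = 0$, so $\dagger$ is injective and therefore bijective as $V$ is finite-dimensional. Consequently every element is of the form $b^\dagger$, giving $h(v, w) = \rinprod[\dagger^{-1} v, w]$. Now I rewrite both sides of $h(\bar{w}^*, \bar{v}^*) = \bar{h(v,w)}$, i.e.\ $h(w^\dagger, v^\dagger) = \bar{h(v,w)}$, through $\rinprod$: the left side is $\rinprod[w, v^\dagger]$, while the right side is $\bar{\rinprod[\dagger^{-1} v, w]} = \rinprod[w, \dagger^{-1} v]$ by conjugate-symmetry. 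Hence the condition is equivalent to $\rinprod[w, v^\dagger] = \rinprod[w, \dagger^{-1} v]$ for all $v, w$, and nondegeneracy of $\rinprod$ upgrades this to the pointwise identity $v^\dagger = \dagger^{-1} v$, that is $\dagger^2 = \id$, which by Proposition \ref{prop:constructing-stars}(a) is exactly the assertion that $*$ is a $*$-structure.

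For part (b), involutivity of $\dagger$ is now given, and as noted the positivity conditions on $h$ are precisely positive-definiteness of $\rinprod$; so it remains only to show that $h(w^\dagger, v^\dagger) = \bar{h(v,w)}$ is equivalent to conjugate-symmetry $h(v^\dagger, w) = \bar{h(w^\dagger, v)}$. This is a single substitution: replacing $v$ by $v^\dagger$ in the first identity and using $(v^\dagger)^\dagger = v$ yields $h(w^\dagger, v) = \bar{h(v^\dagger, w)}$, whose complex conjugate is the conjugate-symmetry relation; since $\dagger$ is a bijection these steps reverse, giving the equivalence. Combining the two conjugate-symmetry/positivity halves shows $\inprod$ is an inner product if and only if all three stated conditions on $h$ hold.

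The calculations are entirely routine bookkeeping about which arguments are conjugated and in which order; the only genuinely substantive step — and the single place where finite-dimensionality is used — is the appeal to bijectivity of $\dagger$ and nondegeneracy of $\rinprod$ in part (a), which is what lets me pass from an equality of pairings to the pointwise relation $\dagger^2 = \id$.
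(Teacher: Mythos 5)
Your proof is correct, but it takes a genuinely different route from the paper's. The paper proves both parts at once by a diagram chase on the large diagram \eqref{eq:two-out-of-three-diagram}: it identifies the upper-right triangle with ``$*$ is a $*$-structure,'' the pentagon with the symmetry condition $h(\bar{w}^*,\bar{v}^*)=\bar{h(v,w)}$, and the outer rectangle with ``$\inprod$ is a $*$-map,'' checks that the remaining pieces commute automatically, and concludes that any two of the three conditions force the third. You instead push everything through $\Psi_{VV}$ into the classical language of the antilinear map $\dagger$ and the sesquilinear form $\rinprod$ and compute on elements. The paper's approach buys a structural picture (it is what makes Remark \ref{rem:two-out-of-three-proof} possible, since each condition is literally the commutativity of one face), and it treats the two implications symmetrically. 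Your approach buys explicitness at the one point where the paper is terse: in part (a), the passage from the equality of pairings to the pointwise identity $\dagger^2 = \id$ genuinely requires nondegeneracy of $\rinprod$ (equivalently, in the paper's chase, the step from $\inprod\circ(\id\otimes *)\circ(\id\otimes\bar{*}) = \inprod\circ(\id\otimes\sigma_V)$ to commutativity of the triangle itself), and you isolate this cleanly via the bijectivity of $\dagger$ and positive-definiteness, which is also the only place finite-dimensionality enters. The element-wise verifications in both parts check out against the conventions $v^\dagger = \bar{v}^*$ and $h(v^\dagger,w)=\rinprod[v,w]$, so no gap.
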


\begin{proof}
  Consider the following diagram:
  \begin{equation}
    \label{eq:two-out-of-three-diagram}
    \begin{tikzpicture}
      [baseline=(current bounding box.center)
      description/.style={fill=white,inner sep=2pt}]
      \matrix (m) [matrix of math nodes, row sep=4em,
      column sep=3em, text height=2ex, text depth=0.25ex]
      { \bar{\bar{V} \otimes V} & & \bar{V} \otimes \bar{\bar{V}} & &  \bar{V} \otimes V \\
        & \bar{V \otimes V} & \bar{V} \otimes \bar{V} & V \otimes V & \\ 
        \bar{\bbC} & & & & \bbC \\   };
      \path[->,font=\scriptsize]
      (m-1-1) edge node[auto] {$\rho_{\bar{V} V}$} (m-1-3)
      (m-1-1) edge node[auto] {$\bar{* \otimes \id}$} (m-2-2)
      (m-1-1) edge node[auto,swap] {$\bar{\inprod}$} (m-3-1)
      (m-1-3) edge node[auto] {$\id \otimes \sigma_V$} (m-1-5)
      (m-1-3) edge node[auto,swap] {$\id \otimes \bar{*}$} (m-2-3)
      (m-1-5) edge node[auto] {$* \otimes \id$} (m-2-4)
      (m-1-5) edge node[auto] {$\inprod$} (m-3-5)
      (m-2-2) edge node[auto,swap] {$\rho_{V V}$} (m-2-3)
      (m-2-2) edge node[auto] {$\bar{h}$} (m-3-1)
      (m-2-3) edge node[auto] {$\id \otimes *$} (m-1-5)
      (m-2-3) edge node[auto,swap] {$* \otimes *$} (m-2-4)
      (m-2-4) edge node[auto,swap] {$h$} (m-3-5)
      (m-3-1) edge node[auto,swap] {$\gamma$} (m-3-5);
    \end{tikzpicture}  
  \end{equation}

  Certain parts of this diagram commute automatically.
  The triangle on the right-hand side commutes since it is exactly \eqref{eq:blf-from-star-and-inprod}, and the triangle on the left-hand side is the complex conjugate of that on the right, so it commutes as well.
  The trapezoid on the upper left commutes by Proposition \ref{prop:conj_tensor_prod}.
  The small interior triangle commutes trivially.

  This leaves the triangle on the upper right, the pentagon, and the large rectangle.
  The triangle on the upper right commutes if and only if $*$ is a $*$-structure.
  The pentagon commutes if and only if $h(\bar{w}^*, \bar{v}^*) = \bar{h(v,w)}$ for all $v,w \in V$.
  Noting that the composition across the top of \eqref{eq:two-out-of-three-diagram} is exactly the $*$-structure on $\bar{V} \otimes V$ described in Proposition \ref{prop:enveloping-modules}, we see that the large rectangle commutes if and only if $h$ is a $*$-map.

  If $\inprod$ is an inner product, it is a $*$-map, and hence the large rectangle commutes.
  Then the upper right triangle commutes if and only if the pentagon commutes, i.e.~$*$ is a $*$-structure if and only if $h(\bar{w}^*, \bar{v}^*) = \bar{h(v,w)}$ for all $v,w \in V$.
  This proves (a).

  On the other hand, if $*$ is a $*$-structure, then the upper right triangle commutes.
  Thus the large rectangle commutes if and only if the pentagon commutes, i.e.~$\inprod$ is a $*$-map if and only if $h(\bar{w}^*, \bar{v}^*) = \bar{h(v,w)}$ for all $v,w \in V$.
  From \eqref{eq:blf-from-star-and-inprod} we have $h(\bar{v}^*,w) = \inprod[\bar{v},w]$, so we see that $\inprod$ satisfies the appropriate positivity requirement if and only if $h(\bar{v}^*, v) \ge 0$ for $v \in V$ and $h(\bar{v}^*, v) = 0$ only if $v = 0$.
  This proves (b).
\end{proof}

\begin{rem}
  \label{rem:two-out-of-three-proof}
  We now explain the meaning of the condition $h(\bar{w}^*, \bar{v}^*) = \bar{h(v,w)}$.
  We said in the proof of Proposition \ref{prop:inprods-stars-blfs} that this condition is equivalent to commutativity of the pentagon in the diagram \eqref{eq:two-out-of-three-diagram}.
  If $*$ is a $*$-structure on $V$, then the composition $(* \otimes *) \circ \rho_{VV}$ (i.e.~the top of the pentagon) is the $*$-structure on $V \otimes V$ described in \ref{prop:starmods-tensor-powers} (for $n=2$).
  Then the statement that the pentagon commutes means exactly that $h : V \otimes V \to \bbC$ is a $*$-map.
  We could also formulate the positivity criterion $h(\bar{v}^*,v) \ge 0$ in terms of positive cones, as discussed in \S \ref{sec:positivity}.
\end{rem}

\section{$*$-structures and braidings}
\label{sec:stars-and-braidings}

In this section we examine the interaction between \(\ast\)-structures, R-matrices, and braidings.
We show that the complex conjugate of a braiding on the module category of a Hopf \(\ast\)-algebra is again a braiding.
When the braiding comes from an R-matrix, we show how conditions on the R-matrix translate into relations between the braiding and its conjugate, and relations between the braiding and the \(\ast\)-structure on modules.
In particular, when the R-matrix is \emph{real} (see Definition \ref{dfn:rmatrix-real-inverse-real}), then the braiding of \(V\) with itself is a module map for any \(\ast\)-module \(V\).

\subsection{Braidings and their complex conjugates}
\label{sec:conj-of-braidings}

In this subsection we define a braiding and show that the complex conjugate of a braiding on a category of $H$-modules gives another braiding.
Although the notion of braiding makes sense for any monoidal category, we define it here only for $\hmod$ (or for a sub-monoidal category of $\hmod$).
This simplifies our presentation because we already understand the monoidal structure well: it is the tensor product of vector spaces over $\bbC$.
Thus we can and do suppress the associativity isomorphisms $U \otimes (V \otimes W) \simeq (U \otimes V) \otimes W$.

\begin{dfn}
  \label{dfn:braiding}
  Let $\subcat$ be a subcategory of $\hmod$ containing $\bbC$ and such that $V \otimes W \in \subcat$ for all $V,W \in \subcat$ (a \emph{sub-monoidal category}).
  A \emph{braiding} on $\cC$ is a collection of isomorphisms $\psi_{VW} : V \otimes W \to W \otimes V$ (in $\subcat$) for all $V,W \in \subcat$ satisfying the conditions
  \begin{equation}
    \label{eq:braid-axioms}
    \psi_{U,V \otimes W} = (\id_V \otimes \psi_{UW})(\psi_{UV} \otimes \id_W), \quad \psi_{U \otimes V, W} = (\psi_{U,W} \otimes \id_V)(\id_U \otimes \psi_{VW}),
  \end{equation}
  and such that for any modules $U,V,W,X$ and any morphisms $f : U \to W$ and $g : V \to X$ in $\subcat$, the following diagram commutes:
  \begin{equation}
    \label{eq:braiding-naturality}
    \begin{CD}
      U \otimes V @>{\psi_{UV}}>> V \otimes U \\
      @V{f \otimes g}VV @VV{g \otimes f}V \\
      W \otimes X @>>{\psi_{WX}}> X \otimes W
    \end{CD}
  \end{equation}
  We refer to the property \eqref{eq:braiding-naturality} as \emph{naturality} of the braiding.
  We refer to the relations \eqref{eq:braid-axioms} as the \emph{hexagon axioms}; the reason for this is that the relevant diagrams would each form a hexagon if we did not suppress the associators.
  \end{dfn}

\begin{rem}
  \label{rem:braiding-technical-def}
  A fancier way to define a braiding is as follows.
  One can form the Cartesian product category $\subcat \times \subcat$.
  Then the tensor product in $\subcat$ defines a functor $\bigotimes : \subcat \times \subcat \to \subcat$, which takes a pair of objects $(U,V)$ to their tensor product $U \otimes V$, and a pair of morphisms $(f,g)$ to the tensor product morphism $f \otimes g$.
  The fact that the tensor product is associative means that there is a natural isomorphism between the functors $\bigotimes \circ (\bigotimes \times \id)$ and $\bigotimes \circ (\id \times \bigotimes)$ from $\subcat \times \subcat \times \subcat$ to $\subcat$.
  Similarly, there is a functor $\bigotimes^{op}$ which takes the pairs $(U,V)$ and $(f,g)$ to $V \otimes U$ and $g \otimes f$, respectively.
  Then a braiding can be defined as a natural isomorphism between the functors $\bigotimes$ and $\bigotimes^{op}$.
  
  It follows from the axioms that the braidings $\psi_{\bbC V}$ and $\psi_{V \bbC}$ are compatible with the canonical identifications of $\bbC \otimes V$ and $V \otimes \bbC$ with $V$.
  We refer to \S 2 of \cite{JoyStr93} for further details on braidings in general.
\end{rem}

Our task now is to show that complex conjugation of a braiding is again a braiding.
While somewhat tedious, this is essentially straightforward.
For the rest of $\S \ref{sec:conj-of-braidings}$, assume that $\subcat$ is subcategory of $\hmod$ which is equipped with a braiding $\psi = (\psi_{VW})_{V,W \in \subcat}$ as in Definition \ref{dfn:braiding}.
Suppose furthermore that $\subcat$ is closed under complex conjugation, i.e.~that $\bar{V} \in \subcat$ for all $V \in \subcat$ and $\bar{f}$ is a morphism in $\subcat$ for all morphisms $f$ in $\subcat$.
Finally, assume that the isomorphisms $\rho_{VW} : \bar{V \otimes W} \to \bar{W} \otimes \bar{V}$ and $\sigma_V : \bar{\bar{V}} \to V$ from Proposition \ref{prop:conj_tensor_prod} and Lemma \ref{lem:conj_involutory}, respectively, along with their inverses, are in $\subcat$ as well.

We now define a new family of isomorphisms obtained from the original braiding $\psi$ by conjugation.
As this is somewhat technical, we make the definition in two stages.
First, for each $X,Y \in \subcat$, we define an isomorphism $\xi_{XY} : \bar{X} \otimes \bar{Y} \to \bar{Y} \otimes \bar{X}$ via the composition
\begin{equation}
  \label{eq:conj-braiding-step1}
  \begin{CD}
      \xi_{XY} : \bar{X} \otimes \bar{Y} @>{\rho^{-1}_{YX}}>> \bar{Y \otimes X}  @>{\bar{\psi_{XY}}}>>   \bar{X \otimes Y} @>{\rho_{XY}}>>  \bar{Y} \otimes \bar{X}.
  \end{CD}
\end{equation}
Note that our assumptions ensure that $\xi_{XY}$ is a morphism in $\subcat$.

We have now provided an isomorphism $\xi_{XY}$ between the modules $\bar{X} \otimes \bar{Y}$ and $\bar{Y} \otimes \bar{X}$ for any $X$ and $Y$.
In order to define the conjugated braiding, we use the fact that every module is naturally isomorphic to the conjugate of some module (namely to the conjugate of its conjugate):

\begin{dfn}
  \label{dfn:conjugated-braiding}
  For any $X,Y \in \subcat$ we define an isomorphism $\bar{\psi}_{XY}$ (not to be confused with $\bar{\psi_{XY}}$) from $X \otimes Y$ to $Y \otimes X$ via the composition
  \begin{equation}
    \label{eq:conjugated-braiding}
    \begin{CD}
      \bar{\psi}_{XY} : X \otimes Y @>{\sigma_X^{-1} \otimes \sigma_Y^{-1}}>> \bar{\bar{X}} \otimes \bar{\bar{Y}} @>{\xi_{\bar{X} \, \bar{Y}}}>> \bar{\bar{Y}} \otimes \bar{\bar{X}} @>{\sigma_Y \otimes \sigma_X}>> Y \otimes X.
    \end{CD}
  \end{equation}
  We refer to the family $\bar{\psi} = (\bar{\psi}_{XY})_{X,Y \in \subcat}$ as the \emph{conjugate braiding} to $\psi$.
\end{dfn}
As the name suggests, the family $\bar{\psi}$ is also a braiding on $\subcat$.
Verifying the details is a little tedious.
We begin by proving the relevant properties of the maps $\xi_{XY}$ before using them to establish the properties of $\bar{\psi}$.
First we show that the $\xi_{XY}$'s are natural with respect to module maps.

\begin{lem}
  \label{lem:xis-are-natural}
  Let $U,V,W,X \in \subcat$ and let $f : U \to W$ and $g : V \to X$ be morphisms in $\subcat$.
  Then the following diagram commutes:
  \begin{equation}
    \label{eq:xis-are-natural}
    \begin{CD}
      \bar{U} \otimes \bar{V} @>{\xi_{UV}}>> \bar{V} \otimes \bar{U} \\
      @V{\bar{f} \otimes \bar{g}}VV @VV{\bar{g} \otimes \bar{f}}V \\
      \bar{W} \otimes \bar{X} @>>{\xi_{WX}}> \bar{X} \otimes \bar{W}
    \end{CD}
  \end{equation}
\end{lem}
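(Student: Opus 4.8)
The plan is to expand each $\xi$ according to its defining composition \eqref{eq:conj-braiding-step1} and then factor the square \eqref{eq:xis-are-natural} into three stacked rectangles, one for each of the three arrows $\rho^{-1}$, $\bar{\psi_{UV}}$, $\rho$ that constitute $\xi_{UV}$ (and likewise $\xi_{WX}$). Concretely, I would insert the intermediate objects $\bar{V \otimes U}$ and $\bar{U \otimes V}$ along the top edge and $\bar{X \otimes W}$, $\bar{W \otimes X}$ along the bottom edge, joining them by the vertical maps $\bar{g \otimes f} : \bar{V \otimes U} \to \bar{X \otimes W}$ and $\bar{f \otimes g} : \bar{U \otimes V} \to \bar{W \otimes X}$. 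It then suffices to verify that each of the three resulting rectangles commutes, after which pasting them along their shared edges yields the claim.

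The outer two rectangles, those built from $\rho$ and $\rho^{-1}$, commute by the naturality of $\rho$ recorded in Proposition \ref{prop:conj_tensor_prod}, diagram \eqref{eq:conj_tensor_prod}. Applying that naturality with $f : U \to W$ and $g : V \to X$ in the appropriate slots gives $(\bar{g} \otimes \bar{f}) \circ \rho_{UV} = \rho_{WX} \circ \bar{f \otimes g}$, which is exactly commutativity of the rectangle formed by the rightmost $\rho$'s. Running the same naturality statement with the two tensor factors interchanged and then inverting $\rho$ produces $\rho^{-1}_{XW} \circ (\bar{f} \otimes \bar{g}) = \bar{g \otimes f} \circ \rho^{-1}_{VU}$, which is commutativity of the rectangle formed by the leftmost $\rho^{-1}$'s.

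The middle rectangle, involving the conjugated braiding maps, carries the real content, and it is obtained simply by conjugating the naturality square \eqref{eq:braiding-naturality} of the original braiding $\psi$. Since complex conjugation is a functor (Lemma \ref{lem:functoriality}) that sends module maps to module maps (Proposition \ref{prop:conj-module-maps}), applying $\bar{\phantom{x}}$ to the commuting square of $\psi$ associated with $f$ and $g$ produces a commuting square for the conjugated morphisms, and this is precisely the middle rectangle. No further computation is needed once the square for $\psi$ is in hand.

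The only real difficulty is bookkeeping. Because $\rho$ reverses the order of the tensor factors, one must track carefully which of $\bar{f \otimes g}$ and $\bar{g \otimes f}$ appears as the vertical arrow in each rectangle, and confirm that the intermediate vertical arrows genuinely agree where two adjacent rectangles meet; one must also pin down the direction of the conjugated braiding arrow so that the middle square lines up with the correct instance of \eqref{eq:braiding-naturality}. Once all objects and arrows are labeled consistently, each of the three rectangles reduces to a direct citation of naturality of $\rho$ or of $\psi$, and the lemma follows by concatenation.
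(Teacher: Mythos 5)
Your proof is correct and follows the same route as the paper, which simply cites naturality of the braiding \eqref{eq:braiding-naturality} together with two applications of the naturality of $\rho$ from \eqref{eq:conj_tensor_prod}. You have merely spelled out the three-rectangle pasting that the paper leaves implicit, and your bookkeeping of which of $\bar{f \otimes g}$ and $\bar{g \otimes f}$ appears in each rectangle checks out.
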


\begin{proof}
  This follows from \eqref{eq:braiding-naturality} together with \eqref{eq:conj_tensor_prod} (used twice).
\end{proof}

The harder part of showing that $\bar{\psi}$ is a braiding is checking that the hexagon axioms \eqref{eq:braid-axioms} hold.
We now prove a technical lemma, which will assist us in checking these relations.

\begin{lem}
  \label{lem:xi-hexagons}
  Let $U,V,W \in \subcat$.
  The following diagram commutes:
  \begin{equation}
    \label{eq:xi-hexagons}
    \begin{tikzpicture}
      [baseline=(current bounding box.center)
      description/.style={fill=white,inner sep=2pt}]
      \matrix (m) [matrix of math nodes, row sep=3em,
      column sep=3em, text height=2ex, text depth=0.25ex]
      { \bar{U} \otimes \bar{V \otimes W} & & \bar{V \otimes W} \otimes \bar{U} \\
        \bar{U} \otimes \bar{W} \otimes \bar{V} & \bar{W} \otimes \bar{U} \otimes \bar{V} & \bar{W} \otimes \bar{V} \otimes \bar{U} \\  };
      \path[->,font=\scriptsize]
      (m-1-1) edge node[auto] {$\xi_{U,V \otimes W}$} (m-1-3)
      (m-1-1) edge node[auto,swap] {$\id_{\bar{U}} \otimes \rho_{VW}$} (m-2-1)
      (m-1-3) edge node[auto] {$\rho_{VW} \otimes \id_{\bar{U}}$} (m-2-3)
      (m-2-1) edge node[auto,swap] {$\xi_{UW} \otimes \id_{\bar{V}}$} (m-2-2)
      (m-2-2) edge node[auto,swap] {$\id_{\bar{W}} \otimes \xi_{UV}$} (m-2-3);
    \end{tikzpicture}
  \end{equation}
\end{lem}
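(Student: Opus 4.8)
The plan is to reduce commutativity of \eqref{eq:xi-hexagons} to a single hexagon axiom for $\psi$. First I would expand each of the three maps $\xi_{U,V\otimes W}$, $\xi_{UW}$, $\xi_{UV}$ appearing in the diagram according to its definition \eqref{eq:conj-braiding-step1}, replacing each $\xi$ by the corresponding string $\rho\circ\bar{\psi_{\bullet\bullet}}\circ\rho^{-1}$. After this substitution the two legs of \eqref{eq:xi-hexagons} become composites built only from the isomorphisms $\rho_{\bullet\bullet}$ (and their inverses) together with conjugated braiding maps $\bar{\psi_{\bullet\bullet}}$, and the enlarged diagram visibly separates into regions of two types: regions assembled purely from the maps $\rho$, and a single central region carrying all of the braiding data.

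The $\rho$-regions commute by the naturality and coherence properties of $\rho$ recorded in Proposition \ref{prop:conj_tensor_prod}: the naturality square \eqref{eq:conj_tensor_prod} is used to slide the conjugated braidings past the $\rho$'s, and the associativity coherence \eqref{eq:conj_tensor_prod_assoc} identifies the two ways of reassociating the triple conjugate $\bar{U\otimes V\otimes W}$ onto $\bar{W}\otimes\bar{V}\otimes\bar{U}$. Once the surrounding $\rho$'s have been absorbed, the central region is exactly the image under the complex conjugation functor of the hexagon identity
\[
  \psi_{V\otimes W,\,U}=(\psi_{VU}\otimes\id_W)(\id_V\otimes\psi_{WU}),
\]
which is the second relation of \eqref{eq:braid-axioms} with $U,V,W$ relabeled. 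Since conjugation is a functor (Lemma \ref{lem:functoriality}) it preserves composition, so it carries this commuting hexagon for $\psi$ to a commuting diagram for the maps $\bar{\psi_{\bullet\bullet}}$, which settles the central region; assembling the pieces yields \eqref{eq:xi-hexagons}.

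I expect the main obstacle to be bookkeeping rather than anything conceptual: because $\rho$ reverses the order of the tensor factors, one must keep careful track of which $\rho$ and which slot each identity map occupies, and it is precisely this order-reversal that forces the braiding of $V\otimes W$ \emph{past} $U$ (that is, $\psi_{V\otimes W,U}$) rather than the other orientation. For this reason the cleanest execution is probably a direct element chase in place of the large pasted diagram: using the explicit formula $\rho_{VW}(\bar{v\otimes w})=\bar{w}\otimes\bar{v}$, I would track a simple tensor $\bar{u}\otimes\bar{v\otimes w}$ along both legs. Each leg evaluates to the additive map $\Theta\colon U\otimes V\otimes W\to\bar{W}\otimes\bar{V}\otimes\bar{U}$, $x\otimes y\otimes z\mapsto\bar{z}\otimes\bar{y}\otimes\bar{x}$, applied to one of the two sides of the hexagon identity above evaluated at $v\otimes w\otimes u$. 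As the two sides of the hexagon agree in $U\otimes V\otimes W$ and $\Theta$ is additive, the two legs agree in $\bar{W}\otimes\bar{V}\otimes\bar{U}$, which is the assertion.
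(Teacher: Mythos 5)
Your proposal is correct and follows essentially the same route as the paper: the paper likewise expands each $\xi$ via \eqref{eq:conj-braiding-step1} into a large pasted diagram whose regions commute by the naturality square \eqref{eq:conj_tensor_prod} and the associativity coherence \eqref{eq:conj_tensor_prod_assoc} of $\rho$, together with the image under the conjugation functor of the second hexagon axiom for $\psi$. Your concluding element-chase is a sound, slightly tidier execution of the same reduction, not a different argument.
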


\begin{proof}
  Commutativity of \eqref{eq:xi-hexagons} follows from commutativity of the diagram \eqref{eq:xi-hexagons-proof}.
  Indeed, expanding all of the $\xi$'s in \eqref{eq:xi-hexagons} according to the definition gives the outer edges of \eqref{eq:xi-hexagons-proof}, so if \eqref{eq:xi-hexagons-proof} commutes then \eqref{eq:xi-hexagons} will as well.
  \begin{equation}
    \label{eq:xi-hexagons-proof}
    \begin{tikzpicture}
      [baseline=(current bounding box.center)
      description/.style={fill=white,inner sep=2pt}]
      \matrix (m) [matrix of math nodes, row sep=3em,
      column sep=2em, text height=2ex, text depth=0.25ex]
      { \bar{U} \otimes \bar{V \otimes W} & \bar{V \otimes W \otimes U} & & \bar{U \otimes V \otimes W} & \bar{V \otimes W} \otimes \bar{U} & \bar{W} \otimes \bar{V} \otimes \bar{U} \\
        & & \bar{V \otimes U \otimes W} & & & \\
        \bar{U} \otimes \bar{W} \otimes \bar{V} & \bar{W \otimes U} \otimes \bar{V} & \bar{U \otimes W} \otimes \bar{V} & \bar{W} \otimes \bar{U} \otimes \bar{V} & \bar{W} \otimes \bar{V \otimes U} & \bar{W} \otimes \bar{U \otimes V} \\
 };
      \path[->,font=\scriptsize]
      (m-1-1) edge node[auto] {$\rho^{-1}$} (m-1-2)
      (m-1-1) edge node[auto] {$\id \otimes \rho$} (m-3-1)
      (m-1-2) edge node[auto] {$\bar{\psi}$} (m-1-4)
      (m-1-2) edge node[auto] {$\bar{\id \otimes \psi}$} (m-2-3)
      (m-1-2) edge node[auto] {$\rho$} (m-3-2)
      (m-1-4) edge node[auto] {$\rho$} (m-1-5)
      (m-1-4) edge node[auto] {$\rho$} (m-3-6)
      (m-1-5) edge node[auto] {$\rho \otimes \id$} (m-1-6)
      (m-2-3) edge node[auto] {$\bar{\psi \otimes \id}$} (m-1-4)
      (m-2-3) edge node[auto] {$\rho$} (m-3-3)
      (m-2-3) edge node[auto] {$\rho$} (m-3-5)
      (m-3-1) edge node[auto,swap] {$\rho^{-1} \otimes \id$} (m-3-2)
      (m-3-2) edge node[auto,swap] {$\bar{\psi} \otimes \id$} (m-3-3)
      (m-3-3) edge node[auto,swap] {$\rho \otimes \id$} (m-3-4)
      (m-3-4) edge node[auto,swap] {$\id \otimes \rho^{-1}$} (m-3-5)
      (m-3-5) edge node[auto,swap] {$\id \otimes \bar{\psi}$} (m-3-6)
      (m-3-6) edge node[auto] {$\id \otimes \rho$} (m-1-6);
    \end{tikzpicture}    
  \end{equation}
  We have omitted the subscripts labeling the objects on the $\rho$'s and $\psi$'s for legibility.
  This should cause no confusion because there is only one choice of subscripts in each case that matches the domain and codomain of each arrow.

  It is left to prove that the diagram above commutes.
  We begin at the left-hand side and move to the right.
  The rectangle commutes by \eqref{eq:conj_tensor_prod_assoc}.
  The trapezoid commutes by \eqref{eq:conj_tensor_prod}.
  The upper triangle commutes since $\psi$ is a braiding.
  The lower triangle commutes by \eqref{eq:conj_tensor_prod_assoc}.
  The quadrilateral commutes by \eqref{eq:conj_tensor_prod}.
  Finally, the triangle on the right-hand side commutes by \eqref{eq:conj_tensor_prod_assoc}.
  This completes the proof of the lemma.
\end{proof}

We are now ready to prove our first main result in this section:
\begin{prop}
  \label{prop:conjugate-braiding-is-a-braiding}
  The conjugate braiding $\bar{\psi}$ is a braiding on $\subcat$.
\end{prop}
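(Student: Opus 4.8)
The plan is to verify the three defining conditions of Definition~\ref{dfn:braiding} for the family $\bar\psi = (\bar\psi_{XY})_{X,Y \in \subcat}$ defined by \eqref{eq:conjugated-braiding}: that each $\bar\psi_{XY}$ is an isomorphism, that the family is natural in the sense of \eqref{eq:braiding-naturality}, and that it satisfies both hexagon axioms \eqref{eq:braid-axioms}. The first condition is immediate: each $\bar\psi_{XY}$ is by construction the composite $(\sigma_Y \otimes \sigma_X) \circ \xi_{\bar X \bar Y} \circ (\sigma_X^{-1} \otimes \sigma_Y^{-1})$ of isomorphisms, and $\xi_{\bar X \bar Y}$ is itself an isomorphism since by \eqref{eq:conj-braiding-step1} it is built from the isomorphisms $\rho$ and $\bar{\psi_{XY}}$. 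So only naturality and the hexagons require work.

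For naturality, given morphisms $f : U \to W$ and $g : V \to X$ in $\subcat$, I would unwind both $\bar\psi_{UV}$ and $\bar\psi_{WX}$ via \eqref{eq:conjugated-braiding} and chase $f \otimes g$ through. Two ingredients suffice. First, the naturality of $\sigma$ recorded in Lemma~\ref{lem:conj_involutory} (the commutativity of \eqref{eq:conj_involutory}) rewrites $\sigma_W^{-1} \circ f = \bar{\bar f} \circ \sigma_U^{-1}$ and, at the other end, $\sigma_X \circ \bar{\bar g} = g \circ \sigma_V$, together with the analogous identities for the remaining factors. Second, Lemma~\ref{lem:xis-are-natural}, applied to the conjugated morphisms $\bar f : \bar U \to \bar W$ and $\bar g : \bar V \to \bar X$, gives $\xi_{\bar W \bar X} \circ (\bar{\bar f} \otimes \bar{\bar g}) = (\bar{\bar g} \otimes \bar{\bar f}) \circ \xi_{\bar U \bar V}$, allowing me to slide the conjugated morphisms past $\xi$ while swapping their order. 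Reassembling the composite yields exactly $\bar\psi_{WX} \circ (f \otimes g) = (g \otimes f) \circ \bar\psi_{UV}$, which is \eqref{eq:braiding-naturality} for $\bar\psi$.

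The hexagon axioms are the substantive part, and here Lemma~\ref{lem:xi-hexagons} carries the load: applying it to the triple $\bar U, \bar V, \bar W$ produces the hexagon relation for the $\xi$'s, and the remaining task is to transport it along the $\sigma$'s into the corresponding statement for $\bar\psi$. The one genuine obstacle is that $\bar\psi_{U, V \otimes W}$ involves the single map $\sigma_{V \otimes W}$, whereas the right-hand side of the hexagon involves the factorwise $\sigma_V \otimes \sigma_W$; these differ by the tensor-reversing isomorphisms $\rho$. The plan is therefore first to establish the coherence identity $\sigma_{V \otimes W} = (\sigma_V \otimes \sigma_W) \circ \rho_{\bar W \bar V} \circ \bar{\rho_{VW}}$, which is readily checked on simple tensors (and is consistent with the naturality squares \eqref{eq:conj_tensor_prod} and \eqref{eq:conj_tensor_prod_assoc} of Proposition~\ref{prop:conj_tensor_prod}). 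With this identity in hand, a diagram chase combining it with Lemma~\ref{lem:xi-hexagons} and the definition \eqref{eq:conjugated-braiding} yields the first hexagon axiom for $\bar\psi$. The second hexagon axiom is obtained by an entirely parallel argument, using the mirror-image analogue of Lemma~\ref{lem:xi-hexagons} (proved the same way, with the roles of the two factors interchanged); I expect the bookkeeping of the $\sigma$-versus-$\rho$ reconciliation, rather than any conceptual difficulty, to be the crux of the whole proposition.
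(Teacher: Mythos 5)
Your proposal is correct and follows essentially the same route as the paper: naturality of $\bar\psi$ from Lemma~\ref{lem:xis-are-natural} together with the naturality of $\sigma$, and the hexagon axioms from Lemma~\ref{lem:xi-hexagons} transported along the $\sigma$'s. The only presentational difference is that you isolate the coherence identity $\sigma_{V\otimes W} = (\sigma_V\otimes\sigma_W)\circ\rho\circ\bar{\rho}$ as an explicit step (and correctly flag the mirror version of Lemma~\ref{lem:xi-hexagons} for the second hexagon), whereas the paper absorbs this into the ``tautologically commuting'' pieces of its large diagram, using naturality of $\xi$ with respect to $\rho$ once more to pass from $\xi_{\bar X,\,\bar{Y\otimes Z}}$ to $\xi_{\bar X,\,\bar Z\otimes\bar Y}$.
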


\begin{proof}
  First we check the naturality property \eqref{eq:braiding-naturality}.
  Let $U,V,W,X \in \subcat$ and let $f : U \to W$ and $g : V \to X$ be morphisms in $\subcat$.
  Then we claim that the following diagram commutes (where we have removed most of the subscripts on the maps for legibility):
  \begin{equation}
    \label{eq:conjugate-braiding-naturality}
    \begin{CD}
      U \otimes V @>{\sigma^{-1} \otimes \sigma^{-1}}>> \bar{\bar{U}} \otimes \bar{\bar{V}} @>{\xi_{\bar{U} \, \bar{V}}}>> \bar{\bar{V}} \otimes \bar{\bar{U}} @>{\sigma \otimes \sigma}>> V \otimes U \\
      @V{f \otimes g}VV @V{\bar{\bar{f}} \otimes \bar{\bar{g}}}VV @VV{\bar{\bar{g}} \otimes \bar{\bar{f}}}V @VV{g \otimes f}V \\
      W \otimes X @>>{\sigma^{-1} \otimes \sigma^{-1}}> \bar{\bar{W}} \otimes \bar{\bar{X}} @>>{\xi_{\bar{W} \, \bar{X}}}> \bar{\bar{X}} \otimes \bar{\bar{W}} @>>{\sigma \otimes \sigma}> X \otimes W
    \end{CD}
  \end{equation}
  Each of the two outer squares is a tensor product of two diagrams of the form \eqref{eq:conj_involutory}, so they commute by Lemma \ref{lem:conj_involutory}.
  The central square commutes by Lemma \ref{lem:xis-are-natural} applied to the modules $\bar{U},\bar{V},\bar{W},\bar{X}$ and the morphisms $\bar{f}$ and $\bar{g}$.
  Noting that the compositions across the top and bottom of \eqref{eq:conjugate-braiding-naturality} are exactly $\bar{\psi}_{UV}$ and $\bar{\psi}_{WX}$, respectively, this completes the proof of naturality.

  For the hexagon axioms, we will verify only the equality
  \begin{equation}
    \label{eq:conjugated-braiding-hexagon}
    \bar{\psi}_{X,Y \otimes Z} = (\id_Y \otimes \bar{\psi}_{XZ})(\bar{\psi}_{XY} \otimes \id_Z);
  \end{equation}
  the other one is similar.
  Consider the following diagram:
  \begin{equation*}
    \begin{tikzpicture}
      [baseline=(current bounding box.center)
      description/.style={fill=white,inner sep=2pt}]
      \matrix (m) [matrix of math nodes, row sep=3em,
      column sep=3em, text height=2ex, text depth=0.25ex]
      {
        X \otimes Y \otimes Z & \bar{\bar{X}} \otimes \bar{\bar{Y \otimes Z}} & \bar{\bar{Y \otimes Z}} \otimes \bar{\bar{X}} & Y \otimes Z \otimes X \\
        & \bar{\bar{X}} \otimes \bar{\bar{Z} \otimes \bar{Y}} & \bar{\bar{Z} \otimes \bar{Y}} \otimes \bar{\bar{X}} & \\
        \bar{\bar{X}} \otimes \bar{\bar{Y}} \otimes Z & \bar{\bar{X}} \otimes \bar{\bar{Y}} \otimes \bar{\bar{Z}} & \bar{\bar{Y}} \otimes \bar{\bar{Z}} \otimes \bar{\bar{X}} & \\
        \bar{\bar{Y}} \otimes \bar{\bar{X}} \otimes Z & \bar{\bar{Y}} \otimes \bar{\bar{X}} \otimes \bar{\bar{Z}} & & \\
        Y \otimes X \otimes Z & Y \otimes \bar{\bar{X}} \otimes \bar{\bar{Z}} & Y \otimes \bar{\bar{Z}} \otimes \bar{\bar{X}} & \\
      };
      \path[->,font=\scriptsize]
      (m-1-1) edge node[auto] {$\sigma^{-1} \otimes \sigma^{-1}$} (m-1-2)
      (m-1-1) edge node[auto,swap] {$\sigma^{-1} \otimes \sigma^{-1} \otimes \id$} (m-3-1)
      (m-1-2) edge node[auto] {$\xi$} (m-1-3)
      (m-1-2) edge node[auto,swap] {$\id \otimes \bar{\rho}$} (m-2-2)
      (m-1-3) edge node[auto] {$\sigma \otimes \sigma$} (m-1-4)
      (m-1-3) edge node[auto,swap] {${\bar{\rho} \otimes \id}$} (m-2-3)
      (m-2-2) edge node[auto] {$\xi$} (m-2-3)
      (m-2-2) edge node[auto,swap] {$\id \otimes \rho$} (m-3-2)
      (m-2-3) edge node[auto,swap] {$\rho \otimes \id$} (m-3-3)
      (m-3-1) edge node[auto] {$\id \otimes \id \otimes \sigma^{-1}$} (m-3-2)
      (m-3-1) edge node[auto,swap] {$\xi \otimes \id$} (m-4-1)
      (m-3-2) edge node[auto,swap] {$\xi \otimes \id$} (m-4-2)
      (m-3-3) edge node[auto,swap] {$\sigma \otimes \sigma \otimes \sigma$} (m-1-4)
      (m-3-3) edge node[auto] {$\sigma \otimes \id \otimes \id$} (m-5-3)
      (m-4-1) edge node[auto] {$\id \otimes \id \otimes \sigma^{-1}$} (m-4-2)
      (m-4-1) edge node[auto,swap] {$\sigma \otimes \sigma \otimes \id$} (m-5-1)
      (m-4-2) edge node[auto] {$\id \otimes \xi$} (m-3-3)
      (m-4-2) edge node[auto] {$\sigma \otimes \xi$} (m-5-3)
      (m-4-2) edge node[auto,swap] {$\sigma \otimes \id \otimes \id$} (m-5-2)
      (m-5-1) edge node[auto,swap] {$\id \otimes \sigma^{-1} \otimes \sigma^{-1}$} (m-5-2)

      (m-5-2) edge node[auto,swap] {$\id \otimes \xi$} (m-5-3)
      (m-5-3) edge [bend right=30] node[auto,swap] {$\id \otimes \sigma \otimes \sigma$} (m-1-4);
    \end{tikzpicture}    
  \end{equation*}
  Although this looks rather unpleasant, it is mostly harmless.
  Almost all of the pieces commute tautologically.
  The only pieces that are not obvious are the square in the top middle part of the diagram, and the pentagon immediately below that square.
  The square commutes by Lemma \ref{lem:xis-are-natural}, and the pentagon commutes by Lemma \ref{lem:xi-hexagons}.

  To complete the proof, note that the left-hand side of \eqref{eq:conjugated-braiding-hexagon} is exactly the composition along the top row of the diagram, while the right-hand side of \eqref{eq:conjugated-braiding-hexagon} is the composition down the left side, across the bottom, and up to the top right corner.
\end{proof}

\subsection{Quasitriangular Hopf $*$-algebras}
\label{sec:quasitriangularity}

Quasitriangular Hopf algebras are important because their module categories are automatically equipped with a braiding.
Our goal in \S \ref{sec:quasitriangularity} is to understand what the conjugate braiding $\bar{\psi}$ looks like when $H$ is a quasitriangular Hopf $*$-algebra.
We begin by recalling the definition and properties of quasitriangular Hopf algebras.

Recall that a Hopf algebra $H$ is \emph{quasitriangular} \cite{KliSch97}*{Chapter 8} if there is an invertible element $\rmat \in H \otimes H$ such that
\begin{gather}
  \label{eq:rmatrix-coprod-reversal}
  \rmat \cop(a) \rmat^{-1} = \cop^{op}(a) \overset{\mathrm{def}}{=} \tau\circ \cop (a),\\
  \intertext{and}
  \label{eq:rmatrix-qybe-coproduct-formula}
  (\cop \otimes \id)(\rmat) = \rmat_{13} \rmat_{23}, \quad (\id \otimes \cop)(\rmat) = \rmat_{13} \rmat_{12},
\end{gather}
where $\tau$ is the tensor flip.
The element $\rmat$ is called a \emph{universal R-matrix} for $H$.
In \eqref{eq:rmatrix-qybe-coproduct-formula} we are using the so-called \emph{leg-numbering notation}: if $\rmat = \sum_j x_j \otimes y_j$, then $\rmat_{12} = \sum_j x_j \otimes y_j \otimes 1$, $\rmat_{13} = \sum_j x_j \otimes 1 \otimes y_j$, and $\rmat_{23} = \sum_j 1 \otimes  x_j \otimes y_j$.
We also denote $\rmat_{21} = \tau(\rmat) = \sum_j y_j \otimes x_j$.

The important point here is that if $H$ is quasitriangular, then $\hmod$ acquires a braiding from the action of the R-matrix.
Before describing the braiding, we record here for later use some of the consequences of quasitriangularity.
The proofs of these statements can be found in \S 8.1 of \cite{KliSch97}.

\begin{prop}
  \label{prop:quasitriangular-properties}
  Suppose that $H$ is a quasitriangular Hopf algebra with universal R-matrix $\rmat$.
  Then the following hold:
  \begin{gather}
    \label{eq:rmatrix-qybe}
    \rmat_{12} \rmat_{13} \rmat_{23} = \rmat_{23} \rmat_{13} \rmat_{12} \\
    \label{eq:rmatrix-counit-equation}
    (\counit \otimes \id)(\rmat) = (\id \otimes \counit)(\rmat) = 1 \\
    \label{eq:rmatrix-antipode}
    (S \otimes \id)(\rmat) = \rmat^{-1}, \quad (\id \otimes S)(\rmat^{-1}) = \rmat, \quad (S \otimes S)(\rmat) = \rmat.
  \end{gather}
  Furthermore, the element $u = m(S \otimes \id)(\rmat_{21})$ is invertible in $H$ with inverse $u^{-1} = m(\id \otimes S^2)(\rmat_{21})$ (here $m$ denotes the multiplication map of $H$), and the element $uS(u) = S(u)u$ is central in $H$.
  The antipode of $H$ is invertible, and we have
  \begin{equation}
    \label{eq:rmatrix-antipode-squared}
    S^2(a) = uau^{-1}, \quad S^{-1}(a) = u^{-1}S(a)u
  \end{equation}
  for all $a \in H$.
\end{prop}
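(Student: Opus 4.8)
The plan is to obtain each R-matrix identity by applying a suitably chosen linear map to one of the two coproduct formulas in \eqref{eq:rmatrix-qybe-coproduct-formula}, simplifying with the counit and antipode axioms, and cancelling by means of the invertibility of $\rmat$. For the counit relation \eqref{eq:rmatrix-counit-equation} I would apply $\counit \otimes \id \otimes \id$ to $(\cop \otimes \id)(\rmat) = \rmat_{13}\rmat_{23}$: the counit axiom collapses the left-hand side to $\rmat$, while the right-hand side becomes $(1 \otimes c)\rmat$ with $c = (\counit \otimes \id)(\rmat)$, so invertibility of $\rmat$ forces $c = 1$. The second equality follows symmetrically from $(\id \otimes \cop)(\rmat) = \rmat_{13}\rmat_{12}$.

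For the antipode relations \eqref{eq:rmatrix-antipode}, I would apply to $(\cop \otimes \id)(\rmat) = \rmat_{13}\rmat_{23}$ the map $m(S \otimes \id) \otimes \id$ that applies $S$ to the first leg and multiplies it into the second. The antipode axiom $m(S \otimes \id)\cop(a) = \counit(a)1$ collapses the left-hand side to $1 \otimes 1$, while the right-hand side factors as $(S \otimes \id)(\rmat)\cdot\rmat$; hence $(S \otimes \id)(\rmat) = \rmat^{-1}$. A similar contraction applied to $(\id \otimes \cop)(\rmat) = \rmat_{13}\rmat_{12}$ produces $(\id \otimes S^{-1})(\rmat) = \rmat^{-1}$, which rearranges to $(\id \otimes S)(\rmat^{-1}) = \rmat$; applying $\id \otimes S$ to $(S \otimes \id)(\rmat) = \rmat^{-1}$ and invoking this last identity then yields $(S \otimes S)(\rmat) = \rmat$. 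The quantum Yang--Baxter equation \eqref{eq:rmatrix-qybe} is the one relation not of this form: here I would combine the coproduct-reversal axiom \eqref{eq:rmatrix-coprod-reversal} (tensored on the right with $\id$ and applied to the two tensor factors of $\rmat$) with both coproduct formulas, computing
\[ \rmat_{12}\rmat_{13}\rmat_{23} = \rmat_{12}(\cop \otimes \id)(\rmat) = (\cop^{op} \otimes \id)(\rmat)\,\rmat_{12} = \rmat_{23}\rmat_{13}\rmat_{12}, \]
using $(\cop^{op} \otimes \id)(\rmat) = \rmat_{23}\rmat_{13}$ for the final equality.

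The assertions about the element $u = m(S \otimes \id)(\rmat_{21})$ are the substantive part. The key intermediate claim is that $u$ implements $S^2$ by conjugation, that is $ua = S^2(a)u$ for all $a \in H$; I would prove this by expanding both sides in leg-numbering notation, using the coproduct formulas \eqref{eq:rmatrix-qybe-coproduct-formula}, the antipode identities just established, and the QYBE. Invertibility of $u$ I would settle by checking directly that $m(\id \otimes S^2)(\rmat_{21})$ is a two-sided inverse, another leg-numbering computation resting on \eqref{eq:rmatrix-antipode}. The conjugation formula $S^2(a) = uau^{-1}$ in \eqref{eq:rmatrix-antipode-squared} is then immediate; since the antipode is already invertible by \eqref{eq:sstar}, substituting $S^{-1}(a)$ for $a$ gives $S^{-1}(a) = u^{-1}S(a)u$. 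For the centrality of $uS(u) = S(u)u$, I would apply $S$ to $ua = S^2(a)u$ and use that $S$ reverses products to deduce that $S(u)$ implements $S^{-2}$ by conjugation; conjugation by $uS(u)$ then acts as $S^2 \circ S^{-2} = \id$ on all of $H$, forcing $uS(u)$ to be central, and the equality $uS(u) = S(u)u$ drops out by conjugating the central element $uS(u)$ by $u^{-1}$.

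The main obstacle is the identity $ua = S^2(a)u$ together with the verification that $m(\id \otimes S^2)(\rmat_{21})$ inverts $u$: these are the genuine computations, where the QYBE and the antipode relations must be combined in precisely the right way and where the full strength of quasitriangularity is actually used. By contrast, the counit, antipode, and Yang--Baxter identities \eqref{eq:rmatrix-qybe}--\eqref{eq:rmatrix-antipode} are routine once the organizing principle---apply a counit or antipode contraction to a coproduct formula and cancel the invertible $\rmat$---is in hand.
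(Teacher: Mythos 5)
Your outline is sound, but be aware that the paper does not prove this proposition at all: it records these as standard facts and defers to \S 8.1 of \cite{KliSch97}, which is essentially the argument you reconstruct. Your derivations of \eqref{eq:rmatrix-counit-equation}, \eqref{eq:rmatrix-antipode}, and \eqref{eq:rmatrix-qybe} are the standard contractions and are correct as written; in particular the chain $\rmat_{12}\rmat_{13}\rmat_{23} = \rmat_{12}(\cop\otimes\id)(\rmat) = (\cop^{op}\otimes\id)(\rmat)\,\rmat_{12} = \rmat_{23}\rmat_{13}\rmat_{12}$ is exactly right. Two caveats. First, your route to $(\id\otimes S)(\rmat^{-1})=\rmat$ passes through $(\id\otimes S^{-1})(\rmat)=\rmat^{-1}$, and you later obtain $S^{-1}(a)=u^{-1}S(a)u$ by substituting $S^{-1}(a)$ for $a$; both steps presuppose that $S$ is invertible, which is one of the \emph{conclusions} of the proposition as stated for an arbitrary quasitriangular Hopf algebra. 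In this paper's ambient setting $H$ is a Hopf $*$-algebra, so $S^{-1}$ exists by \eqref{eq:sstar} and there is no circularity; but a self-contained proof should instead contract $(\id\otimes\cop)(\rmat^{-1})=\rmat_{12}^{-1}\rmat_{13}^{-1}$ with $\id\otimes m(\id\otimes S)$ to get $(\id\otimes S)(\rmat^{-1})=\rmat$ directly, and then establish invertibility of $S$ by checking that $a\mapsto u^{-1}S(a)u$ is a two-sided inverse, using $S^2=u(\cdot)u^{-1}$ together with the centrality of $uS(u)$. Second, the two statements you rightly single out as the crux --- the identity $ua=S^2(a)u$ and the verification that $m(\id\otimes S^2)(\rmat_{21})$ is a two-sided inverse of $u$ --- are only asserted, not carried out; these multi-line leg-numbering computations are where all the substance lies, so a complete write-up must include them. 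Your deduction of the centrality of $uS(u)$ from the two conjugation formulas, and of $uS(u)=S(u)u$ from that centrality, is correct.
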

The relation \eqref{eq:rmatrix-qybe} is called the \emph{quantum Yang-Baxter equation}.

If $U,V$ are any two $H$-modules, then $\rmat$ acts in $U \otimes V$ coordinate-wise.
Then we have the following result \cite{KliSch97}*{\S 8.1.2}:

\begin{prop}
  \label{prop:rmatrix-braiding}
  Suppose that $H$ is a quasitriangular Hopf algebra with universal R-matrix $\rmat$.
  For any $U,V \in \hmod$, define 
  \[ \psi_{UV} = \tau \circ \rmat : U \otimes V \to V \otimes U.  \]
  Then $\psi = (\psi_{UV})_{U,V \in \hmod}$ is a braiding on $\hmod$.
\end{prop}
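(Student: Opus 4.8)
The plan is to verify the three requirements in Definition \ref{dfn:braiding}: that each $\psi_{UV}$ is an isomorphism in $\hmod$, that the family $(\psi_{UV})$ is natural, and that the two hexagon axioms \eqref{eq:braid-axioms} hold. Throughout I would fix the notation $\rmat = \sum_j x_j \otimes y_j$ (with implied summation), so that unwinding the coordinate-wise action of $\rmat$ followed by the flip gives the explicit formula
\[ \psi_{UV}(u \otimes v) = \sum_j (y_j \rhd v) \otimes (x_j \rhd u). \]
Invertibility is immediate: since $\rmat$ is invertible in $H \otimes H$ and $\tau$ is a linear bijection, the composite $\psi_{UV} = \tau \circ \rmat$ is a linear bijection, with inverse obtained by acting by $\rmat^{-1}$ after the flip.

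Next I would check that $\psi_{UV}$ is a morphism in $\hmod$, which is the step that uses \eqref{eq:rmatrix-coprod-reversal}. The action of $a \in H$ on $U \otimes V$ is through $\cop(a)$, while after the flip the action on $V \otimes U$ is through $\cop^{op}(a) = \tau \circ \cop(a)$. Hence $a \rhd \psi_{UV}(u \otimes v) = \tau\bigl(\cop^{op}(a)\, \rmat \cdot (u \otimes v)\bigr)$, whereas $\psi_{UV}\bigl(a \rhd (u \otimes v)\bigr) = \tau\bigl(\rmat\, \cop(a) \cdot (u \otimes v)\bigr)$; since $\tau$ is injective, equality of these two is exactly the identity $\rmat \cop(a) = \cop^{op}(a)\rmat$, which is a rearrangement of \eqref{eq:rmatrix-coprod-reversal}. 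Naturality \eqref{eq:braiding-naturality} is then the easiest part: for module maps $f$ and $g$, applying $g \otimes f$ to $\sum_j (y_j \rhd v) \otimes (x_j \rhd u)$ and using that $f$ and $g$ intertwine the $H$-action moves them inside the actions of $x_j$ and $y_j$, yielding precisely $\psi_{WX}\circ (f \otimes g)$.

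The substance of the argument lies in the hexagon axioms \eqref{eq:braid-axioms}, and this is where I expect the real bookkeeping. For the first identity $\psi_{U, V \otimes W} = (\id_V \otimes \psi_{UW})(\psi_{UV} \otimes \id_W)$, the key point is that the left-hand side involves the action of $\rmat$ on $U \otimes (V \otimes W)$, whose second leg acts on $V \otimes W$ through the coproduct; this situation is governed by the relation $(\id \otimes \cop)(\rmat) = \rmat_{13}\rmat_{12}$ from \eqref{eq:rmatrix-qybe-coproduct-formula}. Writing the two copies of $\rmat$ in this factorization with distinct dummy indices, say $\rmat_{13} = \sum_s a_s \otimes 1 \otimes b_s$ and $\rmat_{12} = \sum_t c_t \otimes d_t \otimes 1$, and chasing an element $u \otimes v \otimes w$ through both sides, I would find that both reduce to $\sum_{s,t} (d_t \rhd v) \otimes (b_s \rhd w) \otimes (a_s c_t \rhd u)$. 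The second hexagon is entirely analogous, using instead $(\cop \otimes \id)(\rmat) = \rmat_{13}\rmat_{23}$. The main obstacle is purely organizational: keeping the leg-numbering, the two flips, and the coproduct applied to a single tensor factor straight so that the indices on the two sides of each hexagon match up. Once the element-chase is set up carefully, each hexagon collapses to exactly one of the two coproduct formulas for $\rmat$ recorded in \eqref{eq:rmatrix-qybe-coproduct-formula}.
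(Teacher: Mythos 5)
Your proof is correct and is the standard argument; the paper itself does not prove this proposition but simply cites \cite{KliSch97}*{\S 8.1.2}, where essentially this same element-chase appears. All three verifications check out: the module-map property reduces to $\rmat\cop(a) = \cop^{op}(a)\rmat$, and each hexagon collapses to the corresponding coproduct formula in \eqref{eq:rmatrix-qybe-coproduct-formula}, with both sides of the first hexagon indeed equal to $\sum_{s,t}(d_t \rhd v)\otimes(b_s \rhd w)\otimes(a_s c_t \rhd u)$ as you claim.
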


Now that the prerequisites are in place, we can begin to explore the effect of conjugation on the braiding coming from the R-matrix.

\begin{notn}
  \label{notn:star-in-tensor-prod}
  We extend the $*$-structure of $H$ to a $*$-structure on $H \otimes H$ coordinate-wise.
  For a simple tensor $a \otimes b \in H \otimes H$, we have $(a \otimes b)^* = a^* \otimes b^*$.
  In particular, for $\rmat = \sum_j x_j \otimes y_j$ we have 
  \[  \rmat^* = \sum_j x_j^* \otimes y_j^*. \]
\end{notn}

The following definition is inspired by \cite{KliSch97}{Definition 2 in \S 10.1.1}:
\begin{dfn}
  \label{dfn:rmatrix-real-inverse-real}
  Suppose that $H$ is a quasitriangular Hopf $*$-algebra with universal R-matrix $\rmat$.
  We say that $\rmat$ is \emph{real} if $\rmat^* = \rmat_{21}$.
  We say that $\rmat$ is \emph{inverse real} if $\rmat^* = \rmat^{-1}$.
\end{dfn}

When the R-matrix is either real or inverse real, we can draw some conclusions about the conjugated braiding:

\begin{prop}
  \label{prop:rmatrix-reality-and-conjugated-braiding}
  Suppose that $H$ is a quasitriangular Hopf $*$-algebra with universal R-matrix $\rmat$.
  Let $\psi$ be the braiding on $\hmod$ as in Proposition \ref{prop:rmatrix-braiding}
  \begin{enumerate}[(a)]
  \item If $\rmat$ is real, then $\bar{\psi} = \psi$, i.e.~for any $U,V \in \hmod$ we have $\bar{\psi}_{UV} = \psi_{UV}$.
  \item If $\rmat$ is inverse real, then $\bar{\psi} = \psi^{-1}$, i.e.~for any $U,V \in \hmod$ we have $\bar{\psi}_{UV} = \psi_{VU}^{-1}$.
  \end{enumerate}
\end{prop}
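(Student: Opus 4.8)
The plan is to prove both statements by computing $\bar{\psi}_{UV}$ explicitly on a simple tensor $u \otimes v \in U \otimes V$ and comparing the result with the corresponding R-matrix formula. Since the braiding $\psi$, the structure isomorphisms $\sigma$ and $\rho$, and hence the conjugate braiding $\bar{\psi}$ are all given by explicit formulas on elements, it suffices to verify the claimed identities on simple tensors; no appeal to naturality or to the fact that $\bar{\psi}$ is a braiding is needed.

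First I would unwind the definition \eqref{eq:conjugated-braiding}, tracking $u \otimes v$ through the composition $(\sigma_V \otimes \sigma_U) \circ \xi_{\bar{U},\bar{V}} \circ (\sigma_U^{-1} \otimes \sigma_V^{-1})$. The map $\sigma_U^{-1} \otimes \sigma_V^{-1}$ sends $u \otimes v$ to $\bar{\bar{u}} \otimes \bar{\bar{v}}$, and the outer $\sigma_V \otimes \sigma_U$ strips off two bars at the end, so all of the content lies in $\xi_{\bar{U},\bar{V}}$. Expanding $\xi$ via \eqref{eq:conj-braiding-step1}, using $\psi = \tau \circ \rmat$ from Proposition \ref{prop:rmatrix-braiding}, and applying the conjugate module action $a \rhd \bar{w} = \bar{S(a)^* \rhd w}$ from \eqref{eq:conj-module-def} to evaluate the $\rmat$-action on $\bar{U} \otimes \bar{V}$, a direct computation yields
\begin{equation*}
  \bar{\psi}_{UV}(u \otimes v) = \sum_j (S(x_j)^* \rhd v) \otimes (S(y_j)^* \rhd u),
\end{equation*}
where $\rmat = \sum_j x_j \otimes y_j$.

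The next step is to identify the element $\sum_j S(x_j)^* \otimes S(y_j)^*$ of $H \otimes H$. Applying the coordinate-wise $*$-operation (Notation \ref{notn:star-in-tensor-prod}) to the identity $(S \otimes S)(\rmat) = \rmat$ from \eqref{eq:rmatrix-antipode} gives $\sum_j S(x_j)^* \otimes S(y_j)^* = \rmat^*$. Now the two cases diverge. For (a), reality means $\rmat^* = \rmat_{21} = \sum_j y_j \otimes x_j$, so $\bar{\psi}_{UV}(u \otimes v) = \sum_j (y_j \rhd v) \otimes (x_j \rhd u) = \psi_{UV}(u \otimes v)$. For (b), inverse reality means $\rmat^* = \rmat^{-1}$; writing $\rmat^{-1} = \sum_k p_k \otimes q_k$, we obtain $\bar{\psi}_{UV}(u \otimes v) = \sum_k (p_k \rhd v) \otimes (q_k \rhd u)$, which is precisely $\psi_{VU}^{-1}(u \otimes v) = \rmat^{-1} \cdot \tau(u \otimes v)$, since $\psi_{VU} = \tau \circ \rmat$ as a map $V \otimes U \to U \otimes V$.

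I expect the main obstacle to be purely organizational: correctly bookkeeping the nested conjugations $\bar{\bar{(\cdot)}}$ and the interplay between the antipode $S$ and the $*$-operation hidden in the conjugate action, through the full chain of $\sigma^{\pm 1}$ and $\rho^{\pm 1}$ maps. Once the closed formula for $\bar{\psi}_{UV}(u \otimes v)$ is established, the identity $(S \otimes S)(\rmat) = \rmat$ together with the reality hypotheses makes the remaining comparisons immediate.
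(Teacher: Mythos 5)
Your proposal is correct and follows essentially the same route as the paper: both proofs unwind $\xi$ on simple tensors using the conjugate module action $a \rhd \bar{w} = \bar{S(a)^* \rhd w}$, invoke $(S \otimes S)(\rmat) = \rmat$ to recognize the resulting element as $\rmat^*$, and then apply the reality hypothesis. The only (cosmetic) difference is that the paper handles the final conjugation by $\sigma \otimes \sigma$ via naturality of $\psi$, whereas you push the element-wise computation through the $\sigma$'s directly using $a \rhd \bar{\bar{w}} = \bar{\bar{a \rhd w}}$; both are valid.
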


\begin{proof}
  Let us write $\rmat = x_j \otimes y_j$ with implied summation.
  First we compute the maps $\xi_{VW}$ that we defined in \eqref{eq:conj-braiding-step1}.
  For $v \in V$, $w \in W$ we have
  \begin{align*}
    \xi_{VW} (\bar{v} \otimes \bar{w}) & = \rho_{VW} \left( \bar{\psi_{WV}(w \otimes v)} \right) \\
    & = \rho_{VW} \left(\bar{\tau \circ \rmat(w \otimes v)}\right) \\
    & = \rho_{VW} \left( \bar{(y_j \rhd v) \otimes (x_j \rhd w)}\right) \\
    & = \bar{(x_j \rhd w)} \otimes \bar{(y_j \rhd v)} \\
    & = (S(x_j)^* \rhd \bar{w}) \otimes (S(y_j)^* \rhd \bar{v}) \\
    & = \left[ (S \otimes S)(\rmat)^* \right](\bar{w} \otimes \bar{v}) \\
    & = \rmat^* (\bar{w} \otimes \bar{v}) \\
    & = (\tau \circ \rmat_{21}^*)(\bar{v} \otimes \bar{w}).
  \end{align*}
  In other words, we have shown that $\xi_{VW} = \tau \circ \rmat_{21}^*$.

  When $\rmat$ is real, then we have 
  \[ \xi_{VW} = \tau \circ \rmat_{21}^* = \tau \circ \rmat  = \psi_{\bar{V} \, \bar{W}}. \]
  Applying naturality of the braiding $\psi$ to the morphisms $\sigma_{V}$ and $\sigma_W$ (and suppressing subscripts for readability), we have
  \[  \bar{\psi}_{VW} = (\sigma \otimes \sigma) \xi_{\bar{V} \, \bar{W}} (\sigma^{-1}\otimes \sigma^{-1}) = (\sigma \otimes \sigma) \psi_{\bar{\bar{V}} \, \bar{\bar{W}}} (\sigma^{-1}\otimes \sigma^{-1}) = \psi_{VW}.\]
  This establishes (a).

  When $\rmat$ is inverse real, then our initial computation gives
  \[  \xi_{VW} = \tau \circ \rmat_{21}^* = \tau \circ \rmat_{21}^{-1} = \rmat^{-1} \circ \tau = \psi_{\bar{W} \, \bar{V}}^{-1};    \]
  as above, using naturality of the braiding $\psi$, we conclude that $\bar{\psi}_{VW} = \psi_{WV}^{-1}$.
  This establishes (b).
\end{proof}

\subsection{Braidings and $*$-structures}
\label{sec:braidings-and-star-structures}

For this subsection we will assume that $H$ is a quasitriangular Hopf $*$-algebra, so that $\hmod$ is braided.
As above, we denote the braiding coming from the R-matrix by $\psi$.
Recall from Proposition \ref{prop:starmods-tensor-powers} that if $V$ is a $*$-module, then $V \otimes V$ is also a $*$-module, with $*$-structure given by $(\bar{u \otimes v})^* = \bar{v}^* \otimes \bar{u}^*$.
It therefore makes sense to ask: under what circumstances is the braiding map $\psi_{VV}$ a morphism of $*$-modules?
In other words, when does the following diagram commute:
\begin{equation}
  \label{eq:braiding-is-a-star-map}
  \begin{CD}
    \bar{V \otimes V} @>*>> V \otimes V \\
    @V{\bar{\psi_{VV}}}VV @VV{\psi_{VV}}V \\
    \bar{V \otimes V} @>>{*}> V \otimes V
  \end{CD}
\end{equation}
Note that on the left-hand side of \eqref{eq:braiding-is-a-star-map} we really mean the complex conjugate of the map $\psi_{VV}$, not the conjugated braiding $\bar{\psi}_{VV}$ (which wouldn't make sense anyway).

\begin{prop}
  \label{prop:braiding-is-star-map-real-case}
  Let $V \in \hmod$ be a $*$-module, and give $V \otimes V$ the $*$-structure described in Proposition \ref{prop:starmods-tensor-powers}. 
  If the R-matrix $\rmat$ of $H$ is real, then the diagram \eqref{eq:braiding-is-a-star-map} commutes, so $\psi_{VV}$ is a morphism of $*$-modules.
\end{prop}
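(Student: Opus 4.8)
The plan is to work with the involutive-antimodule-map reformulation of \(*\)-maps from Proposition \ref{prop:constructing-stars}, as recommended in Remark \ref{rem:star-structs-equiv}, rather than chasing the diagram \eqref{eq:braiding-is-a-star-map} directly with all the bars in place. Let \(\dagger : V \to V\) be the involutive antimodule map with \(* = \Psi_{VV}(\dagger)\); then by Proposition \ref{prop:starmods-tensor-powers} (with \(n=2\)) the \(*\)-structure on \(V \otimes V\) corresponds to the involutive antimodule map \((u \otimes v)^\dagger = v^\dagger \otimes u^\dagger\). Since \(\psi_{VV}\) is already a module map by Proposition \ref{prop:rmatrix-braiding}, part (b) of Proposition \ref{prop:constructing-stars} reduces the claim to the single identity
\[ \psi_{VV}\big( (u \otimes v)^\dagger \big) = \big( \psi_{VV}(u \otimes v) \big)^\dagger \qquad \text{for all } u,v \in V. \]

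Next I would compute both sides explicitly. Writing \(\rmat = x_j \otimes y_j\) with implied summation, the braiding is \(\psi_{VV}(u \otimes v) = (y_j \rhd v) \otimes (x_j \rhd u)\). The left-hand side becomes \(\psi_{VV}(v^\dagger \otimes u^\dagger) = (y_j \rhd u^\dagger) \otimes (x_j \rhd v^\dagger)\), while the right-hand side, after applying the definition of \(\dagger\) on \(V \otimes V\) and then the antimodule property \((a \rhd w)^\dagger = S(a)^* \rhd w^\dagger\), becomes \((S(x_j)^* \rhd u^\dagger) \otimes (S(y_j)^* \rhd v^\dagger)\). As \(\dagger\) is a bijection on \(V\), matching these for all \(u,v\) is equivalent to the operator identity on \(V \otimes V\)
\[ \sum_j y_j \otimes x_j = \sum_j S(x_j)^* \otimes S(y_j)^*, \]
that is, \(\rmat_{21} = (S \otimes S)(\rmat)^*\), where the coordinatewise \(*\) on \(H \otimes H\) is the one from Notation \ref{notn:star-in-tensor-prod}.

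Finally I would verify this identity in \(H \otimes H\). By \eqref{eq:rmatrix-antipode} we have \((S \otimes S)(\rmat) = \rmat\), so \((S \otimes S)(\rmat)^* = \rmat^*\); and reality of \(\rmat\) means precisely \(\rmat^* = \rmat_{21}\) by Definition \ref{dfn:rmatrix-real-inverse-real}. Hence the two sides agree as elements of \(H \otimes H\), and therefore as operators on \(V \otimes V\), which establishes the required identity and completes the proof. The one point requiring care — and the only real obstacle — is the bookkeeping of the tensor flip \(\tau\) together with the order-reversal built into \(\dagger\) on \(V \otimes V\): one must keep straight which leg of \(\rmat\) lands on which factor before and after applying \(\tau\), and confirm that the antimodule twist \(a \mapsto S(a)^*\) lines up leg by leg with \((S\otimes S)(\rmat)^*\). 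No genuinely new computation is needed beyond the R-matrix identities already recorded in Proposition \ref{prop:quasitriangular-properties}, and indeed this is essentially the same element \(\rmat_{21}^* = (S\otimes S)(\rmat)^*\) that appeared in the computation of \(\xi_{VW}\) in the proof of Proposition \ref{prop:rmatrix-reality-and-conjugated-braiding}.
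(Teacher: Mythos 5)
Your proposal is correct and follows essentially the same route as the paper: both compute the two sides of the compatibility condition on elements, reduce the claim to the identity $\rmat_{21} = (S\otimes S)(\rmat)^*$ acting on $V\otimes V$, and conclude from $(S\otimes S)(\rmat)=\rmat$ together with reality $\rmat^*=\rmat_{21}$. The only difference is that you phrase the computation via the involutive antimodule map $\dagger$ (as Remark \ref{rem:star-structs-equiv} suggests) while the paper chases the diagram \eqref{eq:braiding-is-a-star-map} directly with bars and $*$'s; the content is identical.
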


\begin{proof}
  Again, let us denote the universal R-matrix by $\rmat = x_j \otimes y_j$, with implied summation.
  Let $u,v \in V$.
  On the one hand, we have
  \begin{align*}
    \psi_{VV} \left( (\bar{u \otimes v})^*  \right) & = \psi_{VV}(\bar{v}^* \otimes \bar{u}^*) \\
    & = (\tau \circ \rmat)(\bar{v}^* \otimes \bar{u}^*) \\
    & = (y_j \rhd \bar{u}^*) \otimes (x_j \rhd \bar{v}^*) \\
    & = \rmat_{21} (\bar{u}^* \otimes \bar{v}^*).
  \end{align*}
  On the other hand, we have
  \begin{align*}
    \left(  \bar{\psi_{VV}}(\bar{u \otimes v})  \right)^* & = \left(  \bar{\psi_{VV}(u \otimes v)}  \right)^* \\
    & = \left(  \bar{(\tau \circ \rmat)  (u \otimes v)}  \right)^* \\
    & = \left( \bar{(y_j \rhd v) \otimes (x_j \rhd u)}   \right)^* \\
    & = \bar{(x_j \rhd u)}^* \otimes \bar{(y_j \rhd v)}^* \\
    & = (S(x_j)^* \rhd \bar{u}^*) \otimes (S(y_j)^* \rhd \bar{v}^*) \\
    & = (S \otimes S)(\rmat)^*(\bar{u}^* \otimes \bar{v}^*) \\
    & = \rmat^*(\bar{u}^* \otimes \bar{v}^*).
  \end{align*}
  If $\rmat$ is real, then $\rmat^* = \rmat_{21}$ by definition, so the two expressions agree, and thus \eqref{eq:braiding-is-a-star-map} commutes.
  Hence $\psi_{VV}$ is a $*$-map.
\end{proof}

\begin{bibdiv}
  \begin{biblist}
    \bib{BakKir01}{book}{
      author={Bakalov, Bojko},
      author={Kirillov, Alexander, Jr.},
      title={Lectures on tensor categories and modular functors},
      series={University Lecture Series},
      volume={21},
      publisher={American Mathematical Society},
      place={Providence, RI},
      date={2001},
      pages={x+221},
      isbn={0-8218-2686-7},
      review={\MR {1797619 (2002d:18003)}},
    }

    \bib{BeaBulTor07}{article}{
      author={Beattie, Margaret},
      author={Bulacu, Daniel},
      author={Torrecillas, Blas},
      title={Radford's $S\sp 4$ formula for co-Frobenius Hopf algebras},
      journal={J. Algebra},
      volume={307},
      date={2007},
      number={1},
      pages={330--342},
      issn={0021-8693},
      review={\MR {2278058 (2007h:16043)}},
      doi={10.1016/j.jalgebra.2006.06.004},
    }

    \bib{BeaBul09}{article}{
      author={Beattie, Margaret},
      author={Bulacu, Daniel},
      title={On the antipode of a co-Frobenius (co)quasitriangular Hopf algebra},
      journal={Comm. Algebra},
      volume={37},
      date={2009},
      number={9},
      pages={2981--2993},
      issn={0092-7872},
      review={\MR {2554186 (2010i:16055)}},
      doi={10.1080/00927870802502647},
    }

    \bib{ChaPre95}{book}{
      author={Chari, Vyjayanthi},
      author={Pressley, Andrew},
      title={A guide to quantum groups},
      note={Corrected reprint of the 1994 original},
      publisher={Cambridge University Press},
      place={Cambridge},
      date={1995},
      pages={xvi+651},
      isbn={0-521-55884-0},
      review={\MR {1358358 (96h:17014)}},
    }

    \bib{FreYet92}{article}{
      author={Freyd, Peter},
      author={Yetter, David N.},
      title={Coherence theorems via knot theory},
      journal={J. Pure Appl. Algebra},
      volume={78},
      date={1992},
      number={1},
      pages={49--76},
      issn={0022-4049},
      review={\MR {1154897 (93d:18013)}},
      doi={10.1016/0022-4049(92)90018-B},
    }

    \bib{JoyStr93}{article}{
      author={Joyal, Andr{\'e}},
      author={Street, Ross},
      title={Braided tensor categories},
      journal={Adv. Math.},
      volume={102},
      date={1993},
      number={1},
      pages={20--78},
      issn={0001-8708},
      review={\MR {1250465 (94m:18008)}},
      doi={10.1006/aima.1993.1055},
    }

    \bib{KliSch97}{book}{
      author={Klimyk, Anatoli},
      author={Schm{\"u}dgen, Konrad},
      title={Quantum groups and their representations},
      series={Texts and Monographs in Physics},
      publisher={Springer-Verlag},
      place={Berlin},
      date={1997},
      pages={xx+552},
      isbn={3-540-63452-5},
      review={\MR {1492989 (99f:17017)}},
    }

    \bib{Mon93}{book}{
      author={Montgomery, Susan},
      title={Hopf algebras and their actions on rings},
      series={CBMS Regional Conference Series in Mathematics},
      volume={82},
      publisher={Published for the Conference Board of the Mathematical Sciences, Washington, DC},
      date={1993},
      pages={xiv+238},
      isbn={0-8218-0738-2},
      review={\MR {1243637 (94i:16019)}},
    }

    \bib{Rad76}{article}{
      author={Radford, David E.},
      title={The order of the antipode of a finite dimensional Hopf algebra is finite},
      journal={Amer. J. Math.},
      volume={98},
      date={1976},
      number={2},
      pages={333--355},
      issn={0002-9327},
      review={\MR {0407069 (53 \#10852)}},
    }

    \bib{Tak71}{article}{
      author={Takeuchi, Mitsuhiro},
      title={Free Hopf algebras generated by coalgebras},
      journal={J. Math. Soc. Japan},
      volume={23},
      date={1971},
      pages={561--582},
      issn={0025-5645},
      review={\MR {0292876 (45 \#1958)}},
    }

    \bib{Vin94}{collection}{
      title={Lie groups and Lie algebras, III},
      series={Encyclopaedia of Mathematical Sciences},
      volume={41},
      editor={Vinberg, {\`E}. B.},
      note={Structure of Lie groups and Lie algebras; A translation of {\it Current problems in mathematics. Fundamental directions. Vol.\ 41} (Russian), Akad.\ Nauk SSSR, Vsesoyuz.\ Inst.\^^MNauchn.\ i Tekhn.\ Inform., Moscow, 1990 [ MR1056485 (91b:22001)]; Translation by V. Minachin [V. V. Minakhin]; Translation edited by A. L. Onishchik and \`E.\ B. Vinberg},
      publisher={Springer-Verlag},
      place={Berlin},
      date={1994},
      pages={iv+248},
      isbn={3-540-54683-9},
      review={\MR {1349140 (96d:22001)}},
    }

  \end{biblist}
\end{bibdiv}

\end{document}